\newcommand\E{{\rm E}}
\newcommand\Pp{{\rm P}}
\newcommand\cov{{\rm cov}}
\newcommand\var{{\rm var}}
\newcommand\re{{\rm e}}
\newcommand\Exp{{\rm Exp}}
\newcommand\MBin{{\rm MixBin}}
\newcommand\dmax{d_{\max}}
\newcommand{\ind}[1]{\mathbbm{1}_{\{#1\}}}
\newcommand\SigmaMRGC{\sigma^2_{\rm MRGC}}
\newcommand\SigmaNSWGC{\sigma^2_{\rm NSWGC}}
\newcommand\SigmaMR{\sigma^2_{\rm MR}}
\newcommand\SigmaNSW{\sigma^2_{\rm NSW}}
\newcommand\SigmaMRE{\sigma^2_{\rm MRND}}
\newcommand\SigmaNSWE{\sigma^2_{\rm NSWND}}
\newcommand\TnMR{T^N_{\rm MR}}
\newcommand\TnNSW{T^N_{\rm NSW}}
\newcommand\TnMRND{T^N_{\rm MRND}}
\newcommand\TnNSWND{T^N_{\rm NSWND}}
\newcommand\RnMR{R^N_{\rm MR}}
\newcommand\RnNSW{R^N_{\rm NSW}}
\newcommand\bzero{\boldsymbol{0}}
\newcommand\bone{\boldsymbol{1}}
\newcommand\bc{\boldsymbol{c}}
\newcommand\bl{\boldsymbol{l}}
\newcommand\bn{\boldsymbol{n}}
\newcommand\bp{\boldsymbol{p}}
\newcommand\bptwo{\boldsymbol{p}_{[2]}}
\newcommand\bv{\boldsymbol{v}}
\newcommand\bw{\boldsymbol{w}}
\newcommand\bx{\boldsymbol{x}}
\newcommand\by{\boldsymbol{y}}
\newcommand\bwt{\tilde{\boldsymbol{w}}}
\newcommand\bxt{\tilde{\boldsymbol{x}}}
\newcommand\byt{\tilde{\boldsymbol{y}}}
\newcommand\zEt{\tilde{z}_E}
\newcommand\xt{\tilde{x}}
\newcommand\yt{\tilde{y}}
\newcommand\zt{\tilde{z}}
\newcommand\btz{\tilde{b}(z)}
\newcommand\Gt{\tilde{G}}
\newcommand\Bd{B_{\delta}}
\newcommand\fd{f_D}
\newcommand\fde{f_{D_{\epsilon}}}
\newcommand\mud{\mu_D}
\newcommand\betat{\tilde{\beta}}
\newcommand\taut{\tilde{\tau}}
\newcommand\tautd{\tilde{\tau}_{\delta}}
\newcommand\zd{z_{\delta}}
\newcommand\tautdN{\tilde{\tau}_{\delta}^N}
\newcommand\tautN{\tilde{\tau}^N}
\newcommand\Phit{\tilde{\Phi}}
\newcommand\Sigmat{\tilde{\Sigma}}
\newcommand\psit{\widetilde{\psi}}
\newcommand\SigmatMR{\tilde{\Sigma}_{\rm MR}}
\newcommand\SigmatNSW{\tilde{\Sigma}_{\rm NSW}}
\newcommand\xEt{\tilde{x}_E}
\newcommand\yEt{\tilde{y}_E}
\newcommand\rhoEt{\tilde{\rho}_E}
\newcommand\etaEt{\tilde{\eta}_E}
\newcommand\epsE{\epsilon_E}
\newcommand\bU{\boldsymbol{U}}
\newcommand\bV{\boldsymbol{V}}
\newcommand\bVt{\tilde{\boldsymbol{V}}}
\newcommand\DN{D^{(N)}}
\newcommand\bWN{\boldsymbol{W}^N}
\newcommand\bXN{\boldsymbol{X}^N}
\newcommand\bYN{\boldsymbol{Y}^N}
\newcommand\bWNt{\tilde{\boldsymbol{W}}^N}
\newcommand\bXNt{\tilde{\boldsymbol{X}}^N}
\newcommand\bYNt{\tilde{\boldsymbol{Y}}^N}
\newcommand\ZNEt{\tilde{Z}^N_E}
\newcommand\YNEt{\tilde{Y}^N_E}
\newcommand\besi{\boldsymbol{e}^{\rm S}_i}
\newcommand\beii{\boldsymbol{e}^{\rm I}_i}
\newcommand\ber{\boldsymbol{e}^{\rm R}}
\newcommand\bes{\boldsymbol{e}^{\rm S}}
\newcommand\bei{\boldsymbol{e}^{\rm I}}
\newcommand\convD{\stackrel{{\rm D}}{\longrightarrow}}
\newcommand\convas{\stackrel{{\rm a.s.}}{\longrightarrow}}
\newcommand\convp{\stackrel{{\rm p}}{\longrightarrow}}
\newtheorem{thm}{Theorem}
\newtheorem{conj}{Conjecture}
\newtheorem{prop}{Proposition}
\newtheorem{lem}{Lemma}
\newtheorem{remark}{Remark}
\numberwithin{equation}{section}
\numberwithin{thm}{section}
\numberwithin{conj}{section}
\numberwithin{cor}{section}
\numberwithin{prop}{section}
\numberwithin{lem}{section}
\numberwithin{remark}{section}
\newcommand{\figwidth}{10cm}
\newcommand{\hfigwidth}{5.8cm}
\newcommand{\tfigwidth}{4cm}
\begin{document}
\title{A stochastic SIR network epidemic model with preventive dropping of edges\footnote{Accepted for publication in Journal of Mathematical Biology}}
\author{Frank Ball$^1$,  Tom Britton$^2$, KaYin Leung$^2$, David Sirl$^1$}
\date{$^1$ University of Nottingham, UK; \ $^2$ Stockholm University, Sweden \\[1mm] 21st January 2019}

\maketitle

\begin{abstract}
A Markovian SIR (Susceptible $\to$ Infectious $\to$ Recovered) model is considered for the spread of an epidemic on a configuration model network, in which susceptible individuals may take preventive measures by dropping edges to infectious neighbours.  An
effective degree formulation of the model is used in conjunction with the theory of density dependent population processes to obtain a law of large numbers and a functional central limit theorem for the epidemic
as the population size $N \to \infty$, assuming that the degrees of individuals are bounded.
%when a strictly positive fraction of the population is initially infectious.
A central limit theorem is conjectured for the final size of the epidemic.
%, and for the final size of a major outbreak 
%for epidemics started by a finite number of infectives as $N \to \infty$.  
The results are obtained for both the
Molloy--Reed (in which the degrees of individuals are deterministic) and Newman--Strogatz--Watts (in which
the degrees of individuals are independent and identically distributed) versions of the configuration model.
%The latter requires a new extension of the functional central limit theorem for density dependent population processes
%to allow for random initial conditions.
The two versions yield the same limiting deterministic model but the asymptotic variances in the central limit theorems
are greater in the Newman--Strogatz--Watts version.  The basic reproduction number $R_0$ and the process of susceptible individuals in the limiting deterministic model,
for the model with dropping of edges, are the same as for a corresponding SIR model without dropping of edges  but an increased recovery rate,
though, when $R_0>1$, the probability of a major outbreak is greater in the model with dropping of edges.
The results are specialised to the model without dropping of edges to yield  conjectured central limit theorems
for the final size of  Markovian SIR epidemics on configuration-model networks, and for the giant components of those networks.  The theory is illustrated by numerical studies, which demonstrate that the asymptotic 
approximations are good, even for moderate $N$.
%and explore the impact of dropping of edges on epidemic properties.

\paragraph{Keywords} SIR epidemic, Configuration model, Social distancing, Density dependent population process, Effective degree, Final size
\paragraph{2000 MSC classifications} 92D30, 05C80, 60J85, 60F05
\end{abstract}

\section{Introduction}
\label{sec:intro}

In understanding the transmission dynamics in a population, one of the most important modelling components is the contact process. In this work we consider a form of self-initiated social distancing in response to an epidemic while at the same time taking into account the underlying contact network structure of the population. The resulting network is sometimes referred to as an adaptive network, e.g.~Gross et al.~\cite{Gross2006}, Shaw and Schwarz~\cite{Shaw2008}, Zanette and Rissau-Gusm\'an~\cite{Zanette2008} and Tunc and Shaw~\cite{Tunc2014}. Behavioural dynamics in infectious disease models can come in many different forms. Much of the literature that combines behavioural changes with network models uses agent-based simulations, as in the works cited above, although analytical advances have also been made (e.g.\ Britton et al.~\cite{Britton2016} and Jacobsen et al.~\cite{Jacobsen2016}).  Our work takes the model introduced in Britton et al.~\cite{Britton2016} as its starting point.  Britton et al.~\cite{Britton2016} consider a broader class of models but restrict the analysis to the initial phase of the epidemic. In the current paper we analyse the time evolution and the final size of the epidemic. We model an SIR (Susceptible $\to$ Infectious $\to$ Recovered) infection on a configuration network that is static in the absence of infection. A susceptible individual breaks off its connection to an infectious neighbour upon learning of that neighbour's infectious status. This occurs at a constant rate, independently per neighbour. One can think of this mechanism as being governed by infectious individuals informing their neighbours. Whereas infectious and recovered neighbours do not take any action upon being informed, susceptible neighbours want to avoid becoming infected and therefore cease contact with the infectious individual. We use the term `preventive dropping of edges' to indicate this type of behaviour. Details of the model formulation are presented in Section~\ref{sec:model}.

To some extent, from the point of view of a susceptible neighbour of an infectious individual, it does not matter whether the infectious individual recovers or informs and dissolves the connection. Either way, it means that the susceptible neighbour can no longer acquire infection from this individual. In Section~\ref{sec:meantemp} we see that this is true when dealing with the asymptotic mean (deterministic) process, in that the number of susceptibles in the deterministic process for the model with dropping of edges coincides with that for the model without dropping of edges but with an increased recovery rate. In Section~\ref{sec:relatedmodel} we also see that this is not true for the stochastic process, in particular, the probability of a major outbreak differs (Theorem~\ref{prop:pmajor}). Indeed, we cannot expect the two stochastic processes to coincide since informing neighbours happens independently of one another, while recovery affects all neighbours simultaneously.

In Section~\ref{sec:ED} we analyse the preventive dropping model throughout the epidemic outbreak, by using a so-called effective degree construction (cf.\ Ball and Neal~\cite{Ball:2008}). Using such a construction, conditional on a major outbreak, by using techniques from Ethier and Kurtz~\cite{Ethier:1986}, we show under the assumption of bounded degrees that, as the population size $N$ tends to infinity, the fractions of the population that are susceptible, infective and recovered satisfy a law of large numbers (LLN) over any finite time interval (more specifically that they converge almost surely to a limiting deterministic process), together with an associated functional central limit theorem (CLT) which describes fluctuations of the stochastic epidemic process about the limiting deterministic epidemic.

The population consists of $N$ individuals that make up a network, which is formed using the configuaration model. The configuration model was  introduced by Bollob\'as~\cite{Bollobas1980}, see Bollob\'as~\cite{Bollobas2001} for further references, and comes in two versions: either (i) the degrees of individuals are given by a deterministic sequences of degrees,  the Molloy--Reed (MR) random graph~\cite{Molloy:1995}, or (ii) the degrees of individuals are assumed to be independent and identically distributed, the  Newman--Strogatz--Watts (NSW) random graph~\cite{Newman:2001}. We treat both the MR and the NSW versions. If the limiting properties of the degree sequence of the MR construction agrees with that of the degree distribution of the NSW, the two versions give the same LLN, as we show in Theorem~\ref{thm:as}. However, the two versions differ regarding the variance in the CLTs, since (for finite $N$) there is greater variability in the degrees of the individuals in the NSW model than in the MR model. The functional CLT for the epidemic on an MR random graph is given in Theorem~\ref{thm:MRclt}. By making a random time transformation, in Section~\ref{sec:final}, we conjecture a CLT for the final outcome of the epidemic on an MR random graph; see Conjecture~\ref{conj:MRcltfin}.  Corresponding results for the epidemic on an NSW random graph are discussed in Section~\ref{sec:iiddegrees}; see Theorem~\ref{thm:NSWtemporalCLT} and Conjecture~\ref{conj:nswCLT}. To prove the latter results we require a version of the functional CLT in Ethier and Kurtz~\cite{Ethier:1986} which allows for asymptotically random initial conditions; see Theorem~\ref{KurtzFCLTrandinit}. 

The asymptotic variance-covariance matrix in the CLT in Proposition~\ref{prop:MRcltfin} is far from explicit.  In order to obtain a nearly-explicit expression for the limiting variance of the final size, it is necessary to solve (partially) a time-transformed limiting deterministic process, which is more amenable to analysis than the
corresponding deterministic process in real time.  This is done in Section~\ref{sec:dettemp} and linked to the solution of the real-time process in Section~\ref{sec:detrealtime}.  These results are used in  Sections~\ref{sec:varfinal} and~\ref{sec:iiddegrees} to obtain almost fully explicit expressions for the asymptotic variance of the final size of epidemics on MR and NSW random graphs, respectively, see Proposition~\ref{prop:mrVar} and Conjecture~\ref{conj:nswCLT}.  In Section~\ref{sec:otherapproaches}, we connect our analysis of the deterministic effective degree model to results derived using other deterministic approaches (cf.\ Volz~\cite{Volz:2008}, Leung and Diekmann~\cite{Leung:2016} for related models), leading to a simple proof that
the process of susceptible individuals in the limiting deterministic model for the epidemic with preventive dropping of edges is identical to that in the corresponding deterministic model without dropping of edges but with an increased recovery rate (see Remark~\ref{rmk:binding final1}). 
%The final size of the deterministic model with preventive dropping of edges is studied briefly in Section~\ref{sec:finalsize}.

Note that in the absence of behaviour change, we are in the setting of a Markov SIR epidemic on a configuration model network, which we consider in Section~\ref{sec:nodropping}. This model has been analysed in several papers, e.g.~Newman~\cite{Newman:2002}, Kenah and Robins~\cite{Kenah:2007}, Lindquist et al.~\cite{Lindquist2011} and Miller~\cite{Miller:2011}. Our results further improve understanding of this well-studied model, particularly in terms of the asymptotic variance of the final size in Conjecture~\ref{conj:nodroppingCLT}. Moreover, our work yields conjectured CLTs for the size of the giant component in MR and NSW configuration model random graphs; see Conjecture~\ref{prop:giantCLT}.

In Section~\ref{sec:numerical}, we illustrate our results with some numerical studies. In particular, we demonstrate that the asymptotic results generally give a good approximation for moderate population sizes, investigate the impact of the dropping of edges on properties of epidemics and do some comparison of the behaviour of the epidemic on MR and NSW type random graphs.
Some brief concluding comments are given in Section~\ref{sec:conc}.

Finally, we would like to make a note on the structure of the paper. Clearly, this paper does not readily lend itself to a quick superficial read, owing to its length and some of the technicalities and details involved in obtaining our results. However, we have tried to help the reader by formulating our main results in terms of propositions, theorems and well-motivated conjectures.  The more technical aspects can be found in the appendices for the interested reader, which consequently constitute a significant part of the paper.

\section{The stochastic SIR network epidemic model with preventive dropping}
\label{sec:model}
In this section we define the \emph{stochastic SIR network epidemic model with preventive dropping}. This model is a special case of the network epidemic model with preventive rewiring defined in Britton et al.~\cite{Britton2016}, namely where there is no latency period and where the fraction of dropped edges that are replaced by new edges is set to zero.

The population consists of $N$ individuals, labelled $1,2,\ldots,N$, that make up a network. The network is formed using the configuration model, which, as described in Section~\ref{sec:intro}, comes in two versions, namely MR and NSW random graphs.
Let $D$ be a random variable which describes the degree of a typical individual and let $p_k=\Pp(D=k), k=0,1,\ldots$ Let $\mud$ and $\sigma^2_D$ denote the mean and variance of $D$, respectively, both of which are assumed to be finite.
\begin{itemize}
\item[(i)] In the MR model, the degrees are prescribed.  More specifically, for $N=1,2,\ldots$,  let $d_1^N,d_2^N,\ldots, d_N^N$ denote the degrees of the individuals when the population size is $N$.  Note that these are deterministic.
Let $p_k^N=N^{-1}\sum_{i=1}^N \delta_{k,d_i^N}, k=0,1,\ldots$ be the empirical distribution of $d_1^N,d_2^N,\ldots, d_N^N$, where the Kronecker delta $\delta_{k,j}$ is $1$ if $k=j$ and $0$ otherwise. It is assumed that $\lim_{N \to \infty} p_k^N =p_k, k=0,1,\ldots$.

\item[(ii)] In the NSW model, the degrees $D_1,D_2,\ldots,D_N$ of the $N$ individuals are independent and identically distributed copies of $D$.  A sequence of networks, indexed by $N$, may be constructed from a sequence $D_1,D_2,\ldots$ of independent and identically distributed copies of $D$ by using the first $N$ random variables for the network on $N$ individuals.
\end{itemize}

In both models the network is formed by attaching a number of stubs (i.e.~half-edges) to each individual, according to its degree (so, for example, in the NSW model, $D_i$ stubs are attached to individual $i$, for $i=1,2,\ldots,N$), and then pairing up these stubs uniformly at random to form the network. If $D_1+D_2+\ldots+D_N$ is odd, there is a left-over stub, which is ignored.  The network may have some `defects', specifically self-loops and multiple edges between pairs of individuals, but provided $\sigma^2_D<\infty$, which we assume, such defects become sparse in the network as $N \to \infty$; see  Durrett~\cite{Durrett:2007}, Theorem 3.1.2.

A Markovian SIR epidemic is defined on the network of $N$ individuals as follows. Each individual is at any point in time either susceptible, infective or recovered (and immune to further infection). An infective individual infects each of its susceptible neighbours at the points of independent Poisson processes, each having rate $\beta$. An infectious individual recovers and becomes immune at rate $\gamma$ (implying that the duration of the infectious period follows an exponential distribution having mean $1/\gamma$). Finally, susceptible individuals that have infectious neighbours drop such connections, independently, at rate $\omega$ (an equivalent description to be used later is that the infective `warns' its neighbours \emph{independently} at rate $\omega$, and warned susceptible individuals drop the corresponding edge). All infectious periods, infecting processes and edge-dropping processes are mutually independent. The epidemic is initiated at time $t=0$ by one or more individuals being infectious and all other individuals being susceptible. More precise initial conditions are given when they are required.  The epidemic continues until there is no infectious individual. Then the epidemic stops and the result is that some of the individuals have been infected (and later recovered) and the rest of the population remains susceptible and hence have not been infected during the outbreak.

The parameters of the model are the degree distribution $\{p_k\}$, including its mean $\mu_D$ and variance $\sigma_D^2$, the infection rate $\beta$, the recovery rate $\gamma$ and the dropping rate $\omega$.  
%Note that degree-$0$ individuals have no effect on the epidemic, so to ease the presentation of results we assume that $p_0=0$.  Extension of our results to the case when $p_0>0$ is straightforward.

It was shown in Britton et al.~\cite{Britton2016} that the basic reproduction number for the model is given by
\begin{equation}
\label{R_0}
R_0=\frac{\beta}{\beta+\gamma+\omega}\left(\mu_D+\frac{\sigma_D^2}{\mu_D}-1\right),
\end{equation}
see also Section~\ref{sec:relatedmodel}. Note that the first factor in~\eqref{R_0} is the probability that an infective infects a given susceptible neighbour before either the infective recovers or the neighbour drops its edge to that infective. The second factor is the expected number of susceptible neighbours for infected individuals during the early stages of an outbreak initiated by few infectives. Owing to the way the network is constructed, the degree $\tilde{D}$ of a typical neighbour of a typical individual has the size-biased distribution $\Pp\left(\tilde{D}=k\right)=\mud^{-1}k p_k$,  $k=1,2,\ldots$, and hence mean $\mud^{-1}\E[D^2]=\mud^{-1}\left(\mud^2+\sigma_D^2\right)$. In the early stages of an outbreak, a typical infective has all susceptible neighbours except for one, namely its infector.

Note that $R_0$ for the dropping model is the same as for a Markovian SIR epidemic on a configuration model network without dropping of edges but with an increased recovery rate $\gamma+\omega$; see also Remark~\ref{rmk:binding final1} and Section~\ref{sec:relatedmodel}, where we discuss this modified model with increased recovery rate and its relation to the dropping model. Furthermore, from~\eqref{R_0} we find that $R_0$ is a monotonically decreasing function of $\omega$, i.e.\ dropping edges always decreases the epidemic threshold parameter $R_0$; see also Figure~\ref{fig:dropping} in Section~\ref{sec:droppingEffect}. For epidemics initiated by few infectives, this paper is concerned mainly with the case where $R_0>1$, since only then is there a possibility for a major outbreak to take place.

\section{Effective degree formulation}
\label{sec:ED}
In this section we analyse the stochastic SIR network epidemic model with preventive dropping that is described in Section~\ref{sec:model}. We do so by extending the `effective degree' construction of an SIR epidemic on a configuration model network, introduced in Ball and Neal~\cite{Ball:2008}, to incorporate dropping of edges. This allows us to prove a LLN and a functional CLT for the epidemic process (Theorems~\ref{thm:as} and~\ref{thm:MRclt}). Our proofs rely on the results of Ethier and Kurtz~\cite{Ethier:1986} (see also Kurtz~\cite{Kurtz:1970,Kurtz:1971}), and we adopt mostly the notation used in their work for ease of reference.

In the effective degree formulation the network is constructed as the epidemic progresses.  The process starts with some individuals infective and the remaining individuals susceptible, but with none of the stubs paired up.  For $i=1,2,\ldots,N$, the effective degree of individual $i$ is initially $d_i^N$ in the MR graph and $D_i$ in the NSW graph.  Infected individuals behave in the following fashion.  An infective, $i$ say, transmits infection down its unpaired stubs at points of independent Poisson processes, each having rate $\beta$.  When $i$ transmits infection down a stub, that stub is paired with a stub (attached to individual $j$, say) chosen uniformly at random from all other unpaired stubs to form an edge.  If $i \ne j$ then the effective degrees of both $i$ and $j$ are reduced by $1$, otherwise the effective degree of $i$ is reduced by $2$. If individual $j$ is susceptible then it becomes infective.  If individual $j$ is infective or recovered then nothing happens, apart from the edge being formed. The infective $i$ also independently sends warning messages down its unpaired stubs at points of independent Poisson processes, each having rate $\omega$. When $i$ sends a warning message down a stub, that stub is paired with a stub (attached to individual $j$, say) chosen uniformly at random from all other unpaired stubs.  If individual $j$ is susceptible then the stub from individual $i$ and the stub from individual $j$ are deleted, corresponding to dropping of an edge in the original model.
If individual $j$ is infective or recovered then the two stubs are paired to form an edge.  In all three cases, the effective degrees of $i$ and $j$ are reduced as above. Individual $i$ recovers independently at rate $\gamma$, keeping all, if any, of its
unpaired stubs. Note that in the formulation in Ball and Neal~\cite{Ball:2008}, when an infective recovers, its unpaired stubs, if any, are paired immediately but that is not necessary and indeed complicates analysis of the model.

Note also that we now use the equivalent formulation of the process for dropping edges of Section~\ref {sec:model}, where dropping is driven by infectives rather than by susceptibles, although it is clear that the two formulations are probabilistically equivalent.  The change is required for the effective degree formulation to model dropping of edges correctly.

Before proceeding we introduce some notation. For $i=0,1,\ldots$ and $t \ge 0$, let $X_i^N(t)$ and $Y_i^N(t)$ be respectively the numbers of susceptibles and infectives having effective degree $i$ at time $t$.  We refer to such individuals as type-$i$ susceptibles and type-$i$ infectives.  For $t \ge 0$, let $Z_E^N(t)$ be the number of unpaired stubs attached to recovered individuals at time $t$.  (Note that it is not necessary to keep track of the effective degrees of recovered individuals since only the total number of unpaired stubs attached to recovered individuals, and not the effective degrees of the individuals involved, is required in the above effective degree formulation.) 
Let $\bXN(t)=(X_0^N(t), X_1^N(t), \ldots)$, $\bYN(t)=(Y_0^N(t), Y_1^N(t), \ldots)$ and $\bWN(t)=(\bXN(t), \bYN(t), Z_E^N(t))$. (Unless stated to the contrary, vectors are row vectors in this paper.) Let $H=\mathbb{Z}_+^{\infty} \times \mathbb{Z}_+^{\infty}\times \mathbb{Z}_+$ denote the state space of $\{\bWN(t)\}=\{\bWN(t): t \ge 0\}$. Define unit vectors $\besi, \beii$ $(i=0,1,\ldots)$ and $\ber$ on $H$, where, for example, $\besi$ has a one in the $i$th `susceptible component' and zeros elsewhere, and $\ber$ has a one in the `recovered component' and zeros elsewhere. Let $\bn=(n_0^X,n_1^X,\ldots, n_0^Y,n_1^Y, \ldots, n_E^Z)$ denote a typical element of $H$, and let $n_E^X=\sum_{i=1}^{\infty}i n_i^X$ and $n_E^Y=\sum_{i=1}^{\infty} i n_i^Y$.  Thus $n_E^X, n_E^Y$ and $n_E^Z$ are the total number of stubs attached to susceptibles, infectives and recovered individuals, respectively, when $\bWN(t)=\bn$.

The process $\{\bWN(t)\}$ is a continuous-time Markov chain with the following transition intensities, where an intensity is zero if $n_E^X+n_E^Y+n_E^Z=1$, since then there is only one stub remaining.

For $i,j=1,2,\ldots$,
\begin{enumerate}
\item[(i)]
type-$i$ infective infects a type-$j$ susceptible
\[
q^N(\bn, \bn-\bei_i+\bei_{i-1}-\bes_j+\bei_{j-1})=\beta i n_i^Y \frac{j n_j^X}{n_E^X+n_E^Y+n_E^Z-1};
\]
\item[(ii)]
type-$i$ infective `infects' a type-$j$ infective, so an edge is formed
\[
q^N(\bn, \bn-\bei_i+\bei_{i-1}-\bei_j+\bei_{j-1})=\beta i n_i^Y \frac{j n_j^Y}{n_E^X+n_E^Y+n_E^Z-1};
\]
\item[(iii)]
type-$i$ infective warns a type-$j$ susceptible, so an edge is dropped
\[
q^N(\bn, \bn-\bei_i+\bei_{i-1}-\bes_j+\bes_{j-1})=\omega i n_i^Y \frac{j n_j^X}{n_E^X+n_E^Y+n_E^Z-1};
\]
\item[(iv)]
type-$i$ infective `warns' a type-$j$ infective, so an edge is formed
\[
q^N(\bn, \bn-\bei_i+\bei_{i-1}-\bei_j+\bei_{j-1})=\omega i n_i^Y \frac{j n_j^Y}{n_E^X+n_E^Y+n_E^Z-1}.
\]
\end{enumerate}
For $i=1,2,\ldots$,
\begin{enumerate}
\item[(v)]
type-$i$ infective `infects' a recovered individual, so an edge is formed
\[
q^N(\bn, \bn-\bei_i+\bei_{i-1}-\ber)=\beta i n_i^Y \frac{n_E^Z}{n_E^X+n_E^Y+n_E^Z-1};
\]
\item[(vi)]
type-$i$ infective `warns' a recovered individual, so an edge is formed
\[
q^N(\bn, \bn-\bei_i+\bei_{i-1}-\ber)=\omega i n_i^Y \frac{n_E^Z}{n_E^X+n_E^Y+n_E^Z-1}.
\]
\end{enumerate}
For $i=0,1,\ldots$,
\begin{enumerate}
\item[(vii)] type-$i$ infective recovers
\[
q^N(\bn, \bn-\bei_i+i\ber)=\gamma n_i^Y.
\]
\end{enumerate}

\begin{remark}[Comments on the intensities]
\label{rk:intensities}
Note that although the above intensities are all independent of $N$, we index them by $N$ since that is required so that $\{\bWN(t)\}$ is a density dependent population process, see~\eqref{DDPPcond} and~\eqref{intensityfun} below.
Note also that the intensities in (ii) and (iv) above need to be modified slightly if $i=j$ to include the possibility
that an infective `infects' or `warns' itself.  For example, the intensity for a type-$i$ infective `infecting' itself
is given by $q^N(\bn, \bn-\bei_i+\bei_{i-2})=\beta i (i-1)n_i^Y/(n_E^X+n_E^Y+n_E^Z-1)$, so this should be subtracted from
the intensity in (ii) when $j=i$ and included instead in a new transition, (ii') say. It is easily verified that 
that $q^N(\bn, \bn-\bei_i+\bei_{i-2})=O(1)$ as $N \to \infty$, so the modifications may be absorbed into the
$O(1/N)$ term in~\eqref{DDPPcond} below and ignoring such transitions does not affect the LLNs and CLTs in the paper.
\end{remark}

We now introduce notation for the jumps of $\{\bWN(t)\}$.  Note that the transitions
in (ii) and (iv) above are identical, as are the transitions in (v) and (vi), so there are five types of
jumps. For $i,j=1,2,\ldots$, let
\begin{eqnarray}
\bl_{ij}^{(1)}&=&-\bei_i+\bei_{i-1}-\bes_j+\bei_{j-1},\label{lij1}\\
\bl_{ij}^{(2)}&=&-\bei_i+\bei_{i-1}-\bei_j+\bei_{j-1},\label{lij2}\\
\bl_{ij}^{(3)}&=&-\bei_i+\bei_{i-1}-\bes_j+\bes_{j-1}\label{lij3},
\end{eqnarray}
for $i=1,2,\ldots$, let
\begin{eqnarray}
\bl_{i}^{(4)}&=&-\bei_i+\bei_{i-1}-\ber,\label{li1}
\end{eqnarray}
and, for $i=0,1,\ldots$, let
\begin{eqnarray}
\bl_{i}^{(5)}&=&-\bei_i+i\ber. \label{li2}
\end{eqnarray}
Then, excluding self-infection and self-warning (see Remark~\ref{rk:intensities}), the set of possible jumps of $\{\bWN(t)\}$ from a typical state $\bn\in H$ is $\Delta=\cup_{k=1}^5 \Delta_k$, where
\begin{align*}
\Delta_k=\left\{\bl_{ij}^{(k)}:i,j=1,2,\ldots\right\}&\quad(k=1,2,3), \quad \Delta_4=\left\{\bl_{i}^{(4)}:i=1,2,\ldots\right\}\\
\mbox{and}\quad \Delta_5=\left\{\bl_{i}^{(5)}:i=0,1,\ldots\right\}.
\end{align*}

Let $\bx=(x_0,x_1,\ldots)$ and $\by=(y_0,y_1,\ldots)\in \mathbb{R}_+^{\infty}$, $z_E \in \mathbb{R}_+$ and $\bw=(\bx,\by,z_E)$. Further, let $x_E=\sum_{i=1}^{\infty} i x_i$, $y_E=\sum_{i=1}^{\infty} i y_i$ and $\eta_E=x_E+y_E+z_E$. For $\epsilon>0$, let $H_{\epsilon}^N=\{\bn \in H:\sum_{i=1}^{\infty}i n_i^X \ge \epsilon N\}$. For any $\epsilon>0$, the intensities of the jumps of $\{\bWN(t)\}$ admit the form
\begin{equation}
\label{DDPPcond}
q^N(\bn,\bn+\bl)=N\left[\beta_{\bl}(N^{-1}\bn)+O(1/N)\right]\qquad(\bn \in H_{\epsilon}^N, \bl \in \Delta),
\end{equation}
with the functions $\beta_{\bl}$ $(\bl \in \Delta)$ given by
\begin{equation}
\label{intensityfun}
\beta_{\bl}(\bw)=\beta_{\bl}(\bx,\by,z_E) = \begin{cases}
	      \beta_{ij}^{(1)}(\bx,\by,z_E)=\frac{\beta i y_i j x_j}{\eta_E}& \text{ for } \bl=\bl_{ij}^{(1)} \in \Delta_1, \\
	      \beta_{ij}^{(2)}(\bx,\by,z_E)=\frac{(\beta+\omega)i y_i j y_j}{\eta_E}& \text{ for } \bl=\bl_{ij}^{(2)} \in \Delta_2,\\
	      \beta_{ij}^{(3)}(\bx,\by,z_E)=\frac{\omega i y_i j x_j}{\eta_E}& \text{ for } \bl=\bl_{ij}^{(3)} \in \Delta_3,\\
	      \beta_{i}^{(4)}(\bx,\by,z_E)=\frac{(\beta+\omega)i y_i z_E}{\eta_E}& \text{ for } \bl=\bl_{i}^{(4)} \in \Delta_4,\\
	      \beta_{i}^{(5)}(\bx,\by,z_E)=\gamma y_i & \text{ for } \bl=\bl_{i}^{(5)} \in \Delta_5.
\end{cases}
\end{equation}

\begin{remark}[Applying the theory of Ethier and Kurtz]\label{rk:context}
The theory of density dependent population processes in Ethier and Kurtz~\cite{Ethier:1986}, Chapter 11, is for a class of continuous-time Markov chains whose state space is a subset of $\mathbb{Z}^d$ for some $d \in \mathbb{N}$.  Thus to use this theory we need to assume that there is a maximum degree, i.e.~that $\dmax<\infty$, where
$\dmax=\sup\{k \ge 0 :p_k>0\}$.  Then, for any $\epsilon >0$, provided the sample paths of $\{\bWN(t)\}$ remain within $ H_{\epsilon}^N$, $\{\bWN(t)\}$ is a density dependent population process; see Appendix~\ref{app:kurtz} for details. 
We conjecture that our results continue to hold when the condition $\dmax<\infty$ is relaxed, provided suitable conditions are
imposed on (i) the distribution of $D$ and (ii), for epidemics on MR random graphs, the convergence of the empirical distribution of prescribed degrees.

The key theorems in  Ethier and Kurtz~\cite{Ethier:1986}, Chapter 11, have their origin in Kurtz~\cite{Kurtz:1970,Kurtz:1971}.
However, the proofs in Ethier and Kurtz~\cite{Ethier:1986} are different from those in the earlier papers and the LLN is
stronger in that it concerns almost sure convergence rather than convergence in probability.
In Ethier and Kurtz~\cite{Ethier:1986}, the processes corresponding to $\{\bWN(t)\}$ $(N=1,2,\dots)$ are defined on the same probability space by
using a single set of independent unit-rate Poisson processes indexed by the possible jumps $\bl$. 

A LLN and a functional CLT for density dependent population processes having countable state space are proved in Barbour and Luczak~\cite{Barbour:2012b,Barbour:2012a}. They do not apply immediately to $\{\bWN(t)\}$ as the jumps of $\{Z_E^N(t)\}$ are unbounded, though that can be overcome by replacing $\{Z_E^N(t)\}$ by $\{(Z_0^N(t), Z_1^N(t), \ldots)\}$, where $Z_i^N(t)$ is the number of recovered individuals having effective degree $i$ at time $t$.  We do not consider here sufficient conditions for the theorems in Barbour and Luczak~\cite{Barbour:2012b,Barbour:2012a} to be satisfied in the present setting, since $\dmax< \infty$ is satisfied for real-life epidemics.  We note that  LLNs for the Markov SIR epidemic ($\omega =0$) on an MR random graph with unbounded degree are given in Decreusefond et al.~\cite{Decreusefond:2012} and Janson et al.~\cite{Janson:2014}, and a functional CLT for the Markov SI epidemic ($\omega=\gamma=0$) on an MR random graph with unbounded degree is given in KhudaBukhsh et al.~\cite{KhudaBukhsh:2017}.  It seems likely that similar techniques used in the first two of those papers will apply to the present model.   LLNs for the Markov SIR epidemic ($\omega =0$) on an MR random graph with bounded degree are given in Bohman and Picollelli~\cite{Bohman:2012} and Barbour and Reinert~\cite{Barbour:2013}, the latter for epidemics started by a trace of infection. Indeed our model (assuming bounded degrees) fits into the framework of Barbour and Reinert~\cite{Barbour:2013}, Sec 3.2.
%if in their notation we take $\Phi_i(t) = 1-e^{-\gamma t}$ and $G_{ij}(t) = \frac{\beta}{\beta + \omega} (1-e^{-\beta t})$.

\end{remark}

Following Ethier and Kurtz~\cite{Ethier:1986}, define the drift function $F(\bw)=F(\bx,\by,z_E)$ by
\[
F(\bx,\by,z_E)=\sum_{\bl \in\Delta}\bl \beta_{\bl}(\bx,\by,z_E).
\]
Substituting from~\eqref{intensityfun} yields (see Appendix~\ref{app:drift} for details)
\begin{align}
\label{driftF}
F(\bx,&\by,z_E)=\sum_{i=0}^{\infty}\left[-\beta i x_i +\omega(-ix_i + (i+1) x_{i+1})\right]\frac{y_E}{\eta_E} \bes_i \nonumber \\
&+\sum_{i=0}^{\infty}\left[(\beta +\omega)(-iy_i + (i+1) y_{i+1})\left(1+\frac{y_E}{\eta_E}\right)
+\beta (i+1)x_{i+1}\frac{y_E}{\eta_E}-\gamma y_i \right] \bei_i \nonumber \\
&+\left[\gamma y_E-(\beta+\omega)\frac{y_E z_E}{\eta_E}\right]\ber.
\end{align}

Consider a sequence of epidemics indexed by $N$, each having $Z_E^N(0)=0$.  
Suppose that $N^{-1}Y_i^N(0) \convas \epsilon_i$
and $N^{-1}X_i^N(0) \convas p_i-\epsilon_i$ as $N \to \infty$, where $\epsilon_E=\sum_{i=1}^{\infty}i \epsilon_i>0$ and $\convas$ denotes almost sure convergence.  Note that for epidemics on NSW random graphs $\bXN(0)$ is random and, depending on how the initial infectives are chosen, $\bYN(0)$ may also be random.  The above almost sure convergence is reasonable for such epidemics since in an NSW random graph, the fraction of vertices of any given degree satisfies the strong law of large numbers.  For epidemics on
MR random graphs it is often more natural for  $(\bXN(0), \bYN(0))$ to be non-random, in which case $N^{-1}Y_i^N(0) \to \epsilon_i$ and $N^{-1}X_i^N(0) \to p_i-\epsilon_i$ as $N \to \infty$.
Let $\bx(0)=(p_0-\epsilon_0, p_1-\epsilon_1,\ldots)$ and $\by(0)=(\epsilon_0,\epsilon_1,\ldots)$.  The following result holds for  epidemics on both MR and NSW random graphs.

\begin{thm}[LLN for epidemic on network with dropping]
\label{thm:as}
\mbox{}\\ Suppose that $\dmax<\infty$ and $\epsilon_E>0$.  Then, for any $T>0$,
\[
\lim_{N \to \infty} \sup_{0 \le t \le T} |N^{-1}\bWN(t)-\bw(t)|=0\qquad\mbox{almost surely},
\]
where $\bw(t)=(\bx(t),\by(t),z_E(t))$ is given by the solution of the following system of ordinary differential equations (ODEs) with initial condition $\bw(0)=(\bx(0), \by(0),0)$:
\begin{align}
\dfrac{dx_i}{dt}&=-\beta \rho_E(t) i x_i+\omega \rho_E(t)(-i x_i +(i+1)x_{i+1})\quad (i=0,1,\ldots), \label{diff1} \\
\dfrac{dy_i}{dt}&=(\beta+\omega)((i+1)y_{i+1}-i y_i)-\gamma y_i+(\beta +\omega)\rho_E(t)[(i+1)y_{i+1}-i y_i]\nonumber\\
&\qquad+\beta\rho_E(t)(i+1)x_{i+1}\quad (i=0,1,\ldots),\label{diff2}\\
\dfrac{dz_E}{dt}&= \gamma y_E(t)-(\beta+\omega)\rho_E(t) z_E,\label{diff3}
\end{align}
where 
\begin{equation}\label{eq:rhoE}
\rho_E(t)=y_E(t)/\eta_E(t)
\end{equation}
and $\eta_E(t)=x_E(t)+y_E(t)+z_E(t)$.
\end{thm}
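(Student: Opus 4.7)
The plan is to apply the Ethier--Kurtz functional LLN for density dependent population processes (Ethier and Kurtz~\cite{Ethier:1986}, Chapter 11, in its almost sure convergence form), noting that since $\dmax<\infty$ the effective degree of every individual is bounded by $\dmax$, so $\{\bWN(t)\}$ lives in the finite dimensional lattice $\mathbb{Z}_+^{2(\dmax+1)+1}$ and has jumps of uniformly bounded norm. Equations~\eqref{DDPPcond} and~\eqref{intensityfun} already exhibit the required density dependent structure, with the $O(1/N)$ residual absorbing both the $\eta_E^N-1$ versus $\eta_E^N$ discrepancy in the denominators and the self-infection/self-warning corrections of Remark~\ref{rk:intensities}.

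The second step is to check the regularity hypotheses. The drift $F(\bw)=\sum_{\bl\in\Delta}\bl\,\beta_{\bl}(\bw)$, computed component by component in Appendix~\ref{app:drift}, equals the right-hand side of~\eqref{diff1}--\eqref{diff3}. On any set of the form $K_{\delta}=\{(\bx,\by,z_E):\eta_E\ge\delta,\ x_i,y_i\in[0,1],\ z_E\in[0,\dmax]\}$ with $\delta>0$, each $\beta_{\bl}$ is a rational function whose denominator is bounded below by $\delta$ and whose numerator is polynomial of bounded degree in bounded variables, so $F$ is Lipschitz on $K_{\delta}$. Existence and uniqueness of the ODE solution $\bw(t)$ then follow from Picard--Lindel\"of as long as the trajectory remains in $K_{\delta}$.

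Third, I would show that the deterministic solution stays in such a $K_\delta$ on $[0,T]$, which is what allows the LLN to be invoked. Non-negativity of the coordinates and the bound $\eta_E(t)\le \dmax$ are immediate from the ODE; in fact summing~\eqref{diff1}--\eqref{diff3} weighted by $(i,i,1)$ collapses to $d\eta_E/dt=-2(\beta+\omega)y_E\le 0$. For the crucial lower bound on $\eta_E(t)$, summing~\eqref{diff2} weighted by $i$ and using $\rho_E\le 1$ and $\dmax<\infty$ produces $dy_E/dt\ge -Cy_E$ for some $C=C(\beta,\gamma,\omega,\dmax)$, whence $\eta_E(t)\ge y_E(t)\ge \epsilon_E e^{-CT}>0$ on $[0,T]$. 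Fix any $\delta\in(0,\tfrac{1}{2}\epsilon_E e^{-CT})$ and set $\tautdN=\inf\{t\ge 0:N^{-1}\eta_E^N(t)<\delta\}$. Applying the Ethier--Kurtz theorem to the process killed at $\tautdN$, together with the hypothesis $N^{-1}\bWN(0)\convas\bw(0)$, yields $\sup_{0\le t\le T\wedge\tautdN}|N^{-1}\bWN(t)-\bw(t)|\to 0$ almost surely; since the limiting trajectory satisfies $\inf_{0\le t\le T}\eta_E(t)>2\delta$, this forces $\tautdN>T$ eventually almost surely, and the stated conclusion follows.

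The main obstacle is the singularity of the intensities at $\eta_E=0$, which both destroys the global Lipschitz property of $F$ and obstructs a direct appeal to Ethier--Kurtz; this is circumvented by the localization via $\tautdN$ together with the a priori bound $y_E(t)\ge\epsilon_E e^{-CT}$ extracted from the ODE itself, which relies crucially on the hypothesis $\epsilon_E>0$. The assumption $\dmax<\infty$ is used twice: to reduce to a finite dimensional state space as required by Ethier--Kurtz, and to guarantee that all jumps and all $\beta_{\bl}$ are uniformly bounded on $K_{\delta}$, which in turn supplies the exponential moment type estimate needed for almost sure (rather than merely in probability) convergence. The same argument applies verbatim to epidemics on MR and NSW random graphs, the two cases differing only through the manner in which $N^{-1}\bWN(0)\convas\bw(0)$ is verified.
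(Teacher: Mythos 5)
Your proposal is correct and follows essentially the same route as the paper: reduce to a finite-dimensional density dependent population process (using $\dmax<\infty$), verify the boundedness and Lipschitz conditions of Ethier and Kurtz, Theorem 11.2.1, on a domain where the denominator $\eta_E$ in the intensities is bounded away from zero, and conclude by noting the limiting trajectory stays in that domain on $[0,T]$. The only (minor) difference is how that lower bound is justified: the paper's Appendix~B localizes on $\{x_E\ge\epsilon\}$ by appealing to $\rho<1$, whereas you localize on $\{\eta_E\ge\delta\}$ via the Gronwall estimate $y_E(t)\ge\epsilon_E\re^{-Ct}$, which is self-contained and uses exactly the stated hypothesis $\epsilon_E>0$.
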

\begin{proof}
See Appendix~\ref{app:kurtz}. 
$\Box$	
\end{proof}

\begin{remark}[Solving the ODEs~\eqref{diff1}-\eqref{diff3}] 
\label{rmk:detedsol}
The solution of the system of ODEs~\eqref{diff1}-\eqref{diff3} is considered in Section~\ref{sec:meantemp}.  Note that 
under the conditions of Theorem~\ref{thm:as} the system of ODEs~\eqref{diff1}-\eqref{diff3} is finite, so existence and uniqueness
of a solution follow from standard results.  We do not consider existence and uniqueness
of solutions to ODEs~\eqref{diff1}-\eqref{diff3} when the degrees are unbounded  but acknowledge that further justification and some regularity conditions will be required.
A similar comment applies to the time-transformed  
system of ODEs~\eqref{difft1}-\eqref{difft3} in Section~\ref{sec:final}.

\end{remark}

For the epidemic on an MR random graph, a functional CLT for the fluctuations of $\{\bWN(t)\}$ about its deterministic limit $\{\bw(t)\}$ is also available using Ethier and Kurtz~\cite{Ethier:1986}, Theorem 11.2.3, as we formulate in Theorem~\ref{thm:MRclt}. See Section~\ref{sec:iiddegrees} for discussion of a corresponding CLT for the epidemic on an NSW random graph.

Write $\bw$ as $(w_1,w_2,\dots)$ and let $\partial F(\bw)=[\partial_j F_i(\bw)]$ denote the
matrix of first partial derivatives of $F(\bw)$.  For $0 \le u \le t <\infty$,
let $\Phi(t,u)$ be the solution of the matrix ODE
\begin{equation}
\dfrac{\partial}{\partial t}\Phi(t,u)=\partial F(\bw(t))\Phi(t,u),
\quad \Phi(u,u)=I,
\label{Phi}
\end{equation}
where $I$ denotes the identity matrix of appropriate dimension.  Let
\[
G(\bw)=\sum_{\bl \in \Delta} \bl \bl^{\top} \beta_{\bl}(\bw),
\]
where $\top$ denotes transpose.  Note that $\partial F(\bw(t))$ is the coefficient matrix of the time-inhomogeneous
linear drift of the limiting Gaussian process $\{\bV(t)\}$ in Theorem~\ref{thm:MRclt} below and $\Phi(t,u)$ 
enables a representation of $\{\bV(t)\}$ in terms of an It\^o integral with respect to a time-inhomogeneous
Brownian motion; see~\eqref{ItoV} in Section~\ref{sec:iiddegrees}.

\begin{thm}[Functional CLT for epidemic on MR graph with dropping]\label{thm:MRclt}
\mbox{}\\Suppose that $\dmax<\infty, \epsilon_E>0$ and, for $i=0,1,\dots,\dmax$,
\begin{equation}\label{initialcond}
\lim_{N \to \infty} \sqrt{N}\left(N^{-1}Y_i^N(0) - \epsilon_i\right)=v_i^{Y} \quad\mbox{and}\quad
\lim_{N \to \infty} \sqrt{N}\left(N^{-1}X_i^N(0) - p_i-\epsilon_i\right)=v_i^{X},
\end{equation}
where $\bv=(v_0^{X}, v_1^{X}, \dots, v_{\dmax}^{X},v_0^{Y}, v_1^{Y}, \dots, v_{\dmax}^{Y},0)$ is constant.
Then
\begin{equation}\label{FCLT}
\sqrt{N}\left(\{N^{-1}\bWN(t)\}-\{\bw(t)\}\right) \Rightarrow \{\bV(t)\} \quad\mbox{as } N \to \infty,
\end{equation}
where $\Rightarrow$ denotes weak convergence and $\{\bV(t)\}=\{\bV(t):t \ge 0\}$ is a zero-mean
Gaussian process with $\bV(0)=\bv$ and covariance function given by
\begin{equation*}
\cov\left(\bV(t_1), \bV(t_2)\right)=\int_0^{\min(t_1,t_2)}\Phi(t_1,u)
G(\bw(u))\Phi(t_2,u) ^\top \,{\rm d}u \qquad(t_1,t_2 \ge 0).
\end{equation*}
\end{thm}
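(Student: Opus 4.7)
The plan is to apply Theorem~11.2.3 of Ethier and Kurtz~\cite{Ethier:1986}, whose hypotheses the setup in~\eqref{DDPPcond}--\eqref{intensityfun} has been arranged to verify. Because $\dmax<\infty$, only the coordinates indexed by $i=0,1,\dots,\dmax$ can ever be non-zero, so the state space of $\{\bWN(t)\}$ may be viewed as a subset of $\mathbb{Z}^{2\dmax+3}$ and each jump $\bl\in\Delta$ has bounded norm (the largest is $\bl_{\dmax}^{(5)}$, of norm $O(\dmax)$). The drift $F$ in~\eqref{driftF} and the function $G(\bw)=\sum_{\bl\in\Delta}\bl\bl^\top\beta_{\bl}(\bw)$ are rational in $\bw$, infinitely differentiable and Lipschitz on any region where $\eta_E$ is bounded away from $0$.

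The first step is a localisation argument to deal with the singularity of the intensities on $\{\eta_E=0\}$. Since $\epsE>0$ and $t\mapsto\eta_E(t)$ is continuous, there exists $\epsilon_0>0$ such that $\inf_{0\le t\le T}\eta_E(t)>2\epsilon_0$. I would modify the intensities outside $\{\bw:\eta_E\ge\epsilon_0\}$ in a Lipschitz way (for instance by replacing $\eta_E$ in each denominator with $\max(\eta_E,\epsilon_0)$) and let $\{\bWNt(t)\}$ be the modified Markov chain, coupled with $\{\bWN(t)\}$ in the Kurtz construction from a common family of unit-rate Poisson processes. By Theorem~\ref{thm:as}, $N^{-1}\bWN$ remains within $\{\eta_E>\epsilon_0\}$ on $[0,T]$ almost surely for all sufficiently large $N$, so the modified and original processes agree on $[0,T]$ eventually almost surely, and weak limits transfer.

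Theorem~11.2.3 now applies to $\{\bWNt(t)\}$: the initial-condition hypothesis is exactly~\eqref{initialcond} together with $Z_E^N(0)=0$, the modified drift is globally Lipschitz with continuous derivative, and the jumps are uniformly bounded so the required moment conditions on $\beta_{\bl}$ are trivially satisfied. The theorem yields weak convergence in the Skorokhod topology of $\sqrt{N}(\{N^{-1}\bWNt(t)\}-\{\bw(t)\})$ to the unique zero-mean Gaussian solution $\{\bV(t)\}$ of the linear time-inhomogeneous stochastic differential equation
\begin{equation*}
\bV(t)=\Phi(t,0)\bv+\int_0^t \Phi(t,u)\,\mathrm{d}M(u),
\end{equation*}
where $\Phi$ is defined by~\eqref{Phi} and $M$ is a continuous centred Gaussian martingale with $\langle M\rangle_t=\int_0^t G(\bw(u))\,\mathrm{d}u$. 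The stated covariance formula then follows by applying the It\^o isometry to this representation and using the semigroup identity $\Phi(t_1,u)=\Phi(t_1,s)\Phi(s,u)$ for $u\le s\le\min(t_1,t_2)$.

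The main obstacle I foresee is the localisation step: one must ensure that stopping at the exit time $\tau_N=\inf\{t:\eta_E^N(t)/N<\epsilon_0\}$ is compatible with the Skorokhod weak-convergence statement. This is handled by noting that $\Pp(\tau_N\le T)\to 0$ by Theorem~\ref{thm:as}, so the two processes induce, for large $N$, the same law on $D([0,T])$ up to an event of vanishing probability. A secondary but entirely mechanical step is the explicit computation of the Jacobian $\partial F(\bw(t))$ used in~\eqref{Phi} and of $G(\bw(u))$ from~\eqref{intensityfun}; these are routine term-by-term differentiations and bilinear sums over the five families in $\Delta$, and I would relegate them to an appendix alongside the verification of~\eqref{DDPPcond}.
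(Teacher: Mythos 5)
Your proposal is correct and follows essentially the same route as the paper: both reduce to verifying the hypotheses of Ethier and Kurtz, Theorem 11.2.3, for the finite-dimensional ($\dmax<\infty$) chain, with the only real issue being the singularity of the intensities where $\eta_E$ vanishes. The paper handles that localisation by observing that the Ethier--Kurtz proofs only require the conditions in a neighbourhood of the deterministic trajectory and restricting to $\{\bw: x_E\ge\epsilon\}$, whereas you truncate the denominators and transfer the limit by a coupling; this is a cosmetic difference, not a different argument.
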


\begin{proof}
See Appendix~\ref{app:kurtz}, where a complete definition of $\Rightarrow$ is given.
$\Box$	
\end{proof}

\begin{remark}[Computing the asymptotic variance]\label{rmk:cltvart}
Theorem~\ref{thm:MRclt} yields immediately that
\begin{equation}\label{Sigma}
\Sigma(t)=\var\left(\bV(t)\right)=\int_0^t \Phi(t,u)
G(\bw(u))\Phi(t,u) ^\top \,{\rm d}u.
\end{equation}
It follows from~\eqref{Phi} and~\eqref{Sigma} that $\Sigma(t)$ satisfies the ODE
\begin{equation}\label{diffSigma}
\dfrac{d\Sigma}{dt}=G(\bw)+\partial F(\bw) \Sigma +\Sigma [\partial F(\bw)]^{\top},
\end{equation}
with initial condition $\Sigma(0)=0$.  Thus, provided $\dmax<\infty$, $\Sigma(t)$ can be computed by numerically solving the ODEs~\eqref{diff1}-\eqref{diff3} and~\eqref{diffSigma} simultaneously.
\end{remark}

\section{Final outcome of epidemic on MR random graph}
\label{sec:final}
We conjecture a CLT for the final outcome of the epidemic with preventive dropping on an MR random graph (see Conjecture~\ref{prop:MRcltfin}). In order to do so, we consider a random time-transformation of the real-time process.
%The time transformation is revisited in Section~\ref{sec:detrealtime}.
%We present the result for the final size in Proposition~\ref{prop:MRcltfin}.

For $t \ge 0$, let $X_E^N(t)=\sum_{i=1}^\infty i X_i^N(t)$ and $Y_E^N(t)=\sum_{i=1}^\infty i Y_i^N(t)$ be respectively the number of susceptible and infectious stubs at time $t$. Let $\tau^N=\inf\{t \ge 0: Y_E^N(t)=0\}$, so the final number of susceptibles of different types is given by $\bXN(\tau^N)$.  For $\delta \ge 0$, let $\tau^N_{\delta}= \inf\{t \ge 0: N^{-1}Y_E^N(t)\le \delta\}$, so $\tau^N=\tau^N_0$.  Recall the definition of $\epsilon_E$ following~\eqref{driftF}. For $\delta \in (0, \epsilon_E)$, we derive a CLT for $\bWN(\tau^N_{\delta})$; see Proposition~\ref{prop:MRcltfin}. Assuming that Proposition~\ref{prop:MRcltfin} holds also when $\delta=0$ leads immediately to a CLT (Conjecture~\ref{conj:MRcltfin}) for $X^N(\tau^N)=\sum_{i=0}^\infty X_i^N(\tau^N)$, and hence for the total number of individuals that are ultimately infected by the epidemic, since the latter is given by $N-\sum_{i=0}^\infty X_i^N(\tau^N)$. A key step in deriving these CLTs is to consider the following random time-scale transformation of $\{\bWN(t)\}$; cf.~Ethier and Kurtz~\cite{Ethier:1986}, page 467, and Janson et al.~\cite{Janson:2014}, Section 3, where similar transformations are used to derive a CLT for the final size of the so-called general stochastic epidemic and a LLN for the Markovian SIR epidemic on an MR random graph, respectively. 

For $t \in [0, \tau^N]$, let
\[
A^N(t)=\int_0^t \frac{Y_E^N(u)}{X_E^N(u)+Y_E^N(u)+Z_E^N(u)} \,{\rm d}u,
\]
and let $\tautN=A^N(\tau^N)$.  For $0 \le t \le \tautN$, let $U^N(t)=\inf\{u \ge 0:A^N(u)=t\}$ and
\[
\bWNt(t)=(\bXNt(t), \bYNt(t), \ZNEt(t))=\bWN\left(U^N(t)\right).
\]
Then $\{\bWNt(t)\}=\{\bWNt(t): 0 \le t \le \tautN\}$ is also a density dependent population process, having the same set $\Delta$ of jumps as $\{\bWN(t)\}$, and intensity functions $\tilde{\beta}_{\bl}$ $(\bl \in \Delta)$ given by
\begin{equation}
\label{intensityfun1}
\tilde{\beta}_{\bl}(\bw)=\tilde{\beta}_{\bl}(\bx,\by,z_E) = \begin{cases}
	      \tilde\beta_{ij}^{(1)}(\bx,\by,z_E)=\frac{\beta i y_i j x_j}{y_E}& \text{ for } \bl=\bl_{ij}^{(1)} \in \Delta_1, \\
	      \tilde\beta_{ij}^{(2)}(\bx,\by,z_E)=\frac{(\beta+\omega)i y_i j y_j}{y_E}& \text{ for } \bl=\bl_{ij}^{(2)} \in \Delta_2,\\
	      \tilde\beta_{ij}^{(3)}(\bx,\by,z_E)=\frac{\omega i y_i j x_j}{y_E}& \text{ for } \bl=\bl_{ij}^{(3)} \in \Delta_3,\\
	      \tilde\beta_{i}^{(4)}(\bx,\by,z_E)=\frac{(\beta+\omega)i y_i z_E}{y_E}& \text{ for } \bl=\bl_{i}^{(4)} \in \Delta_4,\\
	      \tilde\beta_{i}^{(5)}(\bx,\by,z_E)=\gamma y_i \frac{\eta_E}{y_E} & \text{ for } \bl=\bl_{i}^{(5)} \in \Delta_5.
\end{cases}
\end{equation}
Note that when $\{\bWN(t)\}$ is in state $\bn=(n_0^X,n_1^X,\ldots, n_0^Y,n_1^Y, \ldots, n_E^Z)$, the clock in $\{\bWNt(t)\}$ runs at rate $(n_E^X+n_E^Y+n_E^Z)/n_E^Y$ times faster than the clock in $\{\bWN(t)\}$, so the intensities in~\eqref{intensityfun1} are obtained by multiplying the corresponding intensities in~\eqref{intensityfun} by $\eta_E/y_E$. The drift function associated with $\{\bWNt(t)\}$ is (cf.~\eqref{driftF})
\begin{align}
\label{driftF1}
\tilde{F}(\bx,&\by,z_E)=\sum_{i=0}^{\infty}\left[-\beta i x_i +\omega(-ix_i + (i+1) x_{i+1})\right] \bes_i \nonumber \\
&+\sum_{i=0}^{\infty}\left[(\beta +\omega)(-iy_i + (i+1) y_{i+1})\left(1+\frac{\eta_E}{y_E}\right)
+\beta (i+1)x_{i+1}-\gamma y_i\frac{\eta_E}{y_E} \right] \bei_i \nonumber \\
&+\left[\gamma \eta_E-(\beta+\omega)z_E\right]\ber.
\end{align}

Let $\{\bwt(t): t \ge 0\}=\{(\bxt(t),\byt(t),\zEt(t)): t \ge 0\}$ be the solution of the following system of ODEs, with initial condition $\bwt(0)=(\bx(0), \by(0),0)$:
\begin{align}
\dfrac{d\xt_i}{dt}&=-\beta  i \xt_i+\omega [-i \xt_i +(i+1)\xt_{i+1}], \label{difft1} \\
\dfrac{d\yt_i}{dt}&=\left\{(\beta+\omega)[(i+1)\yt_{i+1}-i \yt_i]-\gamma \yt_i\right\}\frac{1}{\rhoEt(t)}\nonumber\\
&\qquad+(\beta +\omega)[(i+1)\yt_{i+1}-i \yt_i]+\beta (i+1)\xt_{i+1},\label{difft2}\\
\dfrac{d\zEt}{dt}&= \gamma \etaEt(t)-(\beta+\omega) \zEt,\label{difft3}
\end{align}
where $i=0,1,\ldots$ and $\rhoEt(t)=\yEt(t)/\etaEt(t)$, $\etaEt(t)=\xEt(t)+\yEt(t)+\zEt(t)$ with $\xEt(t)= \sum_{i=1}^{\infty} i \xt_i(t)$ and $\yEt(t)= \sum_{i=1}^{\infty} i \yt_i(t)$. The solution of this system is considered in Section~\ref{sec:dettimechanged}. Let $\taut=\inf\{t \ge 0: \yEt(t)=0\}$.  It is shown in
Appendix~\ref{app:tautdelta} that $\taut< \infty$, i.e.\ the duration of the limiting time-changed deterministic epidemic is finite,
unless $\gamma=\omega=p_1-\epsilon_1=0$.

We consider the same sequence of epidemics as for Proposition~\ref{thm:as} in Section~\ref{sec:ED}. Again, using  Ethier and Kurtz~\cite{Ethier:1986}, Theorem 11.2.1, as $N \to \infty$, $\{N^{-1}\bWNt(t)\}$ converges almost surely over any finite time interval $[0, t_0]$, with $t_0 < \taut$, to $\{\bwt(t)\}=\{\bwt(t):0 \le t \le \taut\}$ (see Appendix~\ref{app:kurtz} for further details of this and of the functional CLT given at~\eqref{FCLTt}). Suppose further that the initial conditions satisfy~\eqref{initialcond} and $\dmax < \infty$. Then it follows using Ethier and Kurtz~\cite{Ethier:1986}, Theorem 11.2.3, that, for any $t_0 \in [0,\taut)$,
\begin{equation}
\label{FCLTt}
\sqrt{N}\left(\{N^{-1}\bWNt(t):0 \le t \le t_0\}-\{\bwt(t):0 \le t \le t_0\}\right) \Rightarrow \{\bVt(t)\} \quad\mbox{as } N \to \infty,
\end{equation}
where $\{\bVt(t):0 \le t \le t_0\}$ is a zero-mean Gaussian process with $\bVt(0)=\bzero$ and variance given by
\begin{equation}
\label{Sigma1}
\SigmatMR(t)=\var\left(\bVt(t)\right)=\int_0^t \Phit(t,s)
\Gt(\bwt(u))\Phit(t,s) ^\top \,{\rm d}s,
\end{equation}
where
\begin{equation}
\label{Gtilde}
\Gt(\bwt(u))=\sum_{\bl \in \Delta} \bl \bl^{\top} \betat_{\bl}(\bwt(u))
\end{equation}
and, for $0 \le s \le t <\infty$, $\Phit(t,s)$ is the solution of the matrix ODE
\begin{equation}
\dfrac{\partial}{\partial t}\Phit(t,u)=\partial \tilde{F}(\bwt(t))\Phit(t,u),
\quad \Phit(u,u)=I.
\label{Phit}
\end{equation}

For $t \ge 0$, let $\YNEt(t)=\sum_{i=1}^{\infty} i \tilde{Y}_i^N(t)$.  Further, for $\delta \ge 0$, let
\begin{equation}
\label{taudelta}
\tautdN= \inf\{t \ge 0: N^{-1}\YNEt(t)\le \delta\} \quad\mbox{ and }\quad \tautd=\inf\{t \ge 0: \yEt(t)=\delta\},
\end{equation}
so both $\tautdN$ and $\tautd$ are decreasing with $\delta$, $\tautN_0=\tautN$ and $\taut_0=\taut$.  
We show in Appendix~\ref{app:tautdelta} that $\tautd<\infty$; it is clearly finite if $\taut < \infty$.
%Thus $\tautd<\infty$, since $\taut < \infty$.  
Let $\varphi(\bwt)=
\varphi(\bxt,\byt,\zEt)=\sum_{i=1}^{\infty} i \yt_i$ $(=\yEt)$, so
\[
\tautdN= \inf\left\{t \ge 0: \varphi\left(N^{-1}\bWNt(t)\right)\le \delta\right\} \quad\mbox{ and }\quad \tautd=\inf\{t \ge 0: \varphi(\bwt(t))=\delta\}.
\]

For fixed $\delta \in (0,y_E(0))$, application of Ethier and Kurtz~\cite{Ethier:1986}, Theorem 11.4.2, yields
\begin{align}
\label{CLT1}
\sqrt{N}\left(N^{-1} \bWNt(\tautdN)-\bwt(\tautd)\right) \convD
\bVt(\tautd)-&\frac{\nabla \varphi(\bwt(\tautd))\cdot \bVt(\tautd)}{\nabla \varphi(\bwt(\tautd)) \cdot \tilde{F}(\bwt(\tautd))} \tilde{F}(\bwt(\tautd))\nonumber\\
&\quad\mbox{as } N \to \infty,
\end{align}
where $\cdot$ denotes inner vector product and $\convD$ denotes convergence in distribution.  This result requires that
\begin{equation}
\label{gradphidotF}
\nabla \varphi(\bwt(\tautd)) \cdot \tilde{F}(\bwt(\tautd))<0,
\end{equation}
which we show in Appendix~\ref{app:tautdelta}. Condition~\eqref{gradphidotF} ensures that $\tautd$ is a proper crossing time. 
Note that
\[
\bVt(\tautd)-\frac{\nabla \varphi(\bwt(\tautd))\cdot \bVt(\tautd)}{\nabla \varphi(\bwt(\tautd)) \cdot \tilde{F}(\bwt(\tautd))} \tilde{F}(\bwt(\tautd))
=\bVt(\tautd)\Bd^{\top},
\]
where
\begin{equation}\label{eq:Bdelta}
\Bd=I-\frac{\tilde{F}(\bwt(\tautd))\bigotimes\nabla \varphi(\bwt(\tautd))}{\nabla \varphi(\bwt(\tautd)) \cdot \tilde{F}(\bwt(\tautd))}
\end{equation}
and $\bigotimes$ denotes outer vector product.

The following proposition follows immediately from~\eqref{CLT1} on noting that $\bWN(\tau^N_{\delta})=\bWNt(\tautdN)$ and $\bw(\tau_{\delta})=
\bwt(\tautd)$, where $\tau_{\delta}=\inf\{t \ge 0: y_E(t)=\delta\}$.

\begin{prop}[CLT for `final' outcome of epidemic on MR graph with dropping]
\label{prop:MRcltfin}
Suppose that $\dmax<\infty, \epsilon_E>0, \delta \in (0,y_E(0))$ and~\eqref{initialcond} is satisfied.  Then
\begin{equation}
\label{CLT2}
\sqrt{N}\left(N^{-1} \bWN(\tau^N_{\delta})-\bw(\tau_{\delta})\right) \convD N\left(\bzero, \Sigma_{{\rm MR},\delta} \right)
\quad\mbox{as } N \to \infty,
\end{equation}
where
\begin{equation*}
\Sigma_{{\rm MR},\delta} =\Bd \SigmatMR(\tautd) \Bd^{\top}
\end{equation*}
and $N\left(\bzero, \Sigma_{{\rm MR},\delta} \right)$ denotes a multivariate normal distribution (of appropriate dimension) with mean vector $\bzero$ and
variance-covariance matrix $\Sigma_{{\rm MR},\delta}$.
\end{prop}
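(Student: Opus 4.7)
The plan is to piece together the machinery that has already been assembled above the statement. The key observation is that the random time change $A^N$ is a.s.\ strictly increasing on $[0,\tau^N]$ with continuous inverse $U^N$, so that $\tautdN=A^N(\tau^N_{\delta})$ and hence $\bWN(\tau^N_{\delta})=\bWNt(\tautdN)$; the analogous identity $\bw(\tau_{\delta})=\bwt(\tautd)$ holds in the deterministic limit by strict monotonicity of $A(t)=\int_0^t \rho_E(u)\,du$ on $\{t:y_E(t)>0\}$. Consequently the CLT for $\sqrt{N}(N^{-1}\bWN(\tau^N_{\delta})-\bw(\tau_{\delta}))$ is equivalent to the CLT for $\sqrt{N}(N^{-1}\bWNt(\tautdN)-\bwt(\tautd))$, which is precisely what~\eqref{CLT1} delivers via Ethier and Kurtz~\cite{Ethier:1986}, Theorem 11.4.2.

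To invoke Theorem 11.4.2 I need to verify three ingredients: (a) the functional CLT~\eqref{FCLTt} for the time-changed process $\{\bWNt(t)\}$ up to any $t_0<\taut$, which follows from checking that $\{\bWNt(t)\}$ is density dependent with intensities~\eqref{intensityfun1} and drift~\eqref{driftF1} and from the initial-condition assumption~\eqref{initialcond} together with $\dmax<\infty$; (b) finiteness of $\tautd$ for $\delta\in(0,y_E(0))$, so that the stopping lies inside the window of validity of the functional CLT; and (c) the transversality condition $\nabla\varphi(\bwt(\tautd))\cdot \tilde{F}(\bwt(\tautd))<0$ from~\eqref{gradphidotF}, ensuring that $\tautd$ is a genuine crossing time for the smooth functional $\varphi(\bwt)=\yEt$. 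Ingredient (a) is essentially immediate from the construction of $\{\bWNt(t)\}$ above; (b) and (c) would be deferred to an appendix on properties of $\tautd$.

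The remaining step is bookkeeping on the covariance. Using the outer-product identity, the limiting vector in~\eqref{CLT1} may be rewritten as $\bVt(\tautd)\Bd^{\top}$, with $\Bd$ as defined at~\eqref{eq:Bdelta}. Since $\{\bVt(t)\}$ is a zero-mean Gaussian process with $\var(\bVt(\tautd))=\SigmatMR(\tautd)$ by~\eqref{Sigma1}, the random vector $\bVt(\tautd)\Bd^{\top}$ is centred Gaussian with covariance $\Bd\SigmatMR(\tautd)\Bd^{\top}=\Sigma_{{\rm MR},\delta}$, yielding~\eqref{CLT2}.

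The main obstacle, in my view, is verifying the transversality condition~\eqref{gradphidotF}: a priori $\yEt$ could touch the level $\delta$ tangentially. One needs to read off strict negativity of $\dot{\yEt}(\tautd)=\sum_{i\ge 1} i\,\dot{\yt_i}(\tautd)$ from the time-changed ODEs~\eqref{difft2}, exploiting $\yEt(\tautd)=\delta>0$ and the explicit form of $\tilde{F}$ at~\eqref{driftF1}. The other modest subtlety is handling the initial condition $\bVt(0)=\bzero$ in~\eqref{FCLTt} despite the nonzero limiting initial fluctuation $\bv$ from~\eqref{initialcond}; but under the MR hypothesis the degree sequence is deterministic, so the fluctuations contributed by $\bv$ enter purely through the law of $\bVt(\tautd)$ via the initial condition for the ODE defining its mean, and do not affect the covariance computation.
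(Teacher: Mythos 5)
Your proposal is correct and follows essentially the same route as the paper: identify $\bWN(\tau^N_{\delta})=\bWNt(\tautdN)$ and $\bw(\tau_{\delta})=\bwt(\tautd)$ via the random time change, apply Ethier and Kurtz's Theorem 11.4.2 to the time-changed density dependent process (with the functional CLT~\eqref{FCLTt}, finiteness of $\tautd$ and the transversality condition~\eqref{gradphidotF} as the supporting ingredients, which the paper likewise defers to its appendices), and then rewrite the limit as $\bVt(\tautd)\Bd^{\top}$ to read off the covariance $\Bd\SigmatMR(\tautd)\Bd^{\top}$. The only point worth noting is that your closing remark about the initial fluctuation $\bv$ correctly flags a subtlety the paper glosses over rather than a gap in your own argument.
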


\begin{remark}[Extending Proposition~\ref{prop:MRcltfin} to $\delta=0$]
\label{rmk:MRcltfin1}
We are primarily interested in the case when $\delta=0$.  The difficulty in extending Proposition~\ref{prop:MRcltfin} to
include $\delta=0$ is that to apply Ethier and Kurtz~\cite{Ethier:1986}, Theorem 11.4.2, we need the weak convergence at~\eqref{FCLTt}
to hold for some $t_0>\taut$.  Thus we need to extend the process $\{\bWNt(t)\}$ so that it is defined beyond time $\tautN$.
Now $\yEt(t)<0$ for $t>\taut$ (see~\eqref{yet} in Section~\ref{sec:dettimechanged}), so we need to extend the state space of
$\{\bWNt(t)\}$ so that $\tilde{Y}_i^N(t)$ $(i=0,1,\dots,\dmax)$ can be negative.  However, this cannot be done so that the
conditions of the LLN and CLT theorems in Ethier and Kurtz~\cite{Ethier:1986} are satisified.  In particular, in any neighbourhood of $\{\bw:y_E=0\}$, the intensity functions
$\tilde{\beta}_{\bl}$ $(\bl \in \Delta)$ are not bounded and the drift function $\tilde{F}$ is not Lipschitz continuous.

In work done while this paper was under review, the first author has found a way of overcoming this problem; 
see Ball~\cite{Ball:2018} which is in the setting of an SIR epidemic (without dropping of edges) with an arbitrary but specified
infectious period distribution on configuration model networks.  The theorems proved in Ball~\cite{Ball:2018}  provide further (very strong) support for Conjecture~\ref{conj:MRcltfin} below, which assumes that Proposition~\ref{prop:MRcltfin} extends in the obvious way to include $\delta=0$, and for subsequent conjectures which are contingent on Conjecture~\ref{conj:MRcltfin}.   Note that the final outcome of the epidemic is given by $\bWNt(\tautN)$ and the corresponding determinsitic outcome is $\bwt(\taut)$.
\end{remark}

We use the term final outcome to refer to that of the effective degree formulation, in which the degrees of susceptibles can change owing to dropping of edges.  This is sufficient to determine the final size of an epidemic.  If the final numbers of susceptibles of various original degrees are required, the effective degree
formulation can be extended to keep track of both the original and effective degrees of suceptibles.

\begin{conj}[CLT for final outcome of epidemic on MR graph with dropping]
\label{conj:MRcltfin}
Suppose that $\dmax<\infty, \epsilon_E>0$ and~\eqref{initialcond} is satisfied.  Then
\begin{equation}
\label{CLT2a}
\sqrt{N}\left(N^{-1} \bWNt(\tautN)-\bwt(\taut)\right) \convD N\left(\bzero, \Sigma_{{\rm MR}} \right)
\quad\mbox{as } N \to \infty,
\end{equation}
where
\begin{equation*}
\Sigma_{{\rm MR}} =B \SigmatMR(\taut) B^{\top}
\end{equation*}
with $B$ given by~\eqref{eq:Bdelta} with $\delta=0$.
\end{conj}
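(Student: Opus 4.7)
The natural approach is to take $\delta \downarrow 0$ in Proposition~\ref{prop:MRcltfin}. The statement~\eqref{CLT2} holds for every $\delta \in (0, y_E(0))$, so the task is to justify an exchange of limits, which splits into a deterministic piece and a stochastic piece.

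On the deterministic side, I would show that the map $\delta \mapsto (\bwt(\tautd), \SigmatMR(\tautd), \Bd)$ extends continuously to $\delta = 0$. Since $\taut < \infty$ is established in Appendix~\ref{app:tautdelta} and the time-changed ODEs~\eqref{difft1}-\eqref{difft3} have a right-hand side that is Lipschitz on any region bounded away from $\yEt = 0$, the crossing condition~\eqref{gradphidotF}, which should continue to hold at $\delta = 0$ by the same argument used in Appendix~\ref{app:tautdelta}, gives $\tautd \to \taut$ and $\bwt(\tautd) \to \bwt(\taut)$. The integral representation~\eqref{Sigma1} combined with continuity of $\Phit(t,u)$ and $\Gt(\bwt(u))$ up to $t = \taut$ then yields $\SigmatMR(\tautd) \to \SigmatMR(\taut)$, and the explicit form~\eqref{eq:Bdelta} gives $\Bd \to B$. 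Consequently $\Sigma_{{\rm MR},\delta} \to \Sigma_{\rm MR}$ and the right-hand side of~\eqref{CLT2} converges to that of~\eqref{CLT2a}.

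On the stochastic side, the task is to show that $\sqrt{N}\bigl(N^{-1}\bWNt(\tautN) - N^{-1}\bWNt(\tautdN)\bigr)$ is $o_{\Pp}(1)$ uniformly in $N$ as $\delta \downarrow 0$, and that the deterministic gap $\sqrt{N}(\bwt(\taut) - \bwt(\tautd))$ is $O(\delta^{1/2})$ (which follows from $\nabla \varphi(\bwt(\taut)) \cdot \tilde{F}(\bwt(\taut)) < 0$ by integrating~\eqref{difft2} through the final time interval). One can then conclude via a standard diagonal argument, once tightness of $\sqrt{N}(N^{-1}\bWNt(\tautN) - \bwt(\taut))$ in $N$ is established.

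The hard part, as flagged in Remark~\ref{rmk:MRcltfin1}, is the uniform control on $[\tautdN, \tautN]$: the intensities~\eqref{intensityfun1} carry a factor $\eta_E/y_E$ that blows up as $\yEt \downarrow 0$, so a direct Gronwall/martingale estimate on the fluctuations of $\{\bWNt(t)\}$ near the end of the epidemic fails, and as noted one cannot extend the density-dependent framework past $\taut$ while preserving the Lipschitz and boundedness hypotheses of Ethier and Kurtz~\cite{Ethier:1986}. A possible remedy is to partition $[\tautdN, \tautN]$ according to dyadic shells $\YNEt/N \in [2^{-k-1}\delta, 2^{-k}\delta]$ and apply shell-by-shell martingale estimates, but obtaining a bound that is genuinely $o_\delta(1)$ uniformly in $N$ appears to require extra structure. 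The cleanest route, following Ball~\cite{Ball:2018} as indicated in Remark~\ref{rmk:MRcltfin1}, is to reformulate the final outcome through a Sellke-type construction in which the number of ever-infected individuals is read off as a first-passage time of an auxiliary random walk; a functional CLT for the walk, combined with a delta-method at the first-passage level, then yields~\eqref{CLT2a} directly without ever having to extend $\{\bWNt(t)\}$ beyond $\tautN$, and the resulting covariance will necessarily agree with the $\delta \downarrow 0$ limit of $\Sigma_{{\rm MR},\delta}$.
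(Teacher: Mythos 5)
First, a point of orientation: the statement you were asked to prove is presented in the paper only as a \emph{conjecture}. The paper's own treatment consists of Proposition~\ref{prop:MRcltfin} (the case $\delta>0$), Remark~\ref{rmk:MRcltfin1} explaining why the passage to $\delta=0$ cannot be carried out inside the Ethier--Kurtz framework (the intensities~\eqref{intensityfun1} are unbounded and $\tilde{F}$ is not Lipschitz in any neighbourhood of $\{\bw:y_E=0\}$), and a pointer to Ball~\cite{Ball:2018}, where a first-passage reformulation closes the gap for the model without dropping. Your outline reproduces this reasoning faithfully: the $\delta\downarrow0$ strategy, the deterministic continuity $\Bd\to B$ and $\SigmatMR(\tautd)\to\SigmatMR(\taut)$, the identification of the $\eta_E/y_E$ blow-up as the obstruction, and the proposed remedy are all exactly the paper's. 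So you have not proved the conjecture, but neither does the paper; you have correctly located the open step rather than hiding it.

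That said, the stochastic half of your decomposition is mis-stated in a way worth fixing before attempting the hard estimate. For fixed $\delta$ the increment $N^{-1}\bWNt(\tautN)-N^{-1}\bWNt(\tautdN)$ converges almost surely to $\bwt(\taut)-\bwt(\tautd)\neq\bzero$, so $\sqrt{N}$ times it cannot be $o_{\Pp}(1)$; likewise $\sqrt{N}\left(\bwt(\taut)-\bwt(\tautd)\right)$ is $O(\sqrt{N})$ for fixed $\delta$, not $O(\delta^{1/2})$. The quantity that must actually be controlled is the \emph{centred} increment $\sqrt{N}\left(N^{-1}\bWNt(\tautN)-N^{-1}\bWNt(\tautdN)-\left[\bwt(\taut)-\bwt(\tautd)\right]\right)$, uniformly in large $N$ as $\delta\downarrow0$, and it is exactly this fluctuation over the terminal stretch, where the time-changed clock runs at rate $\eta_E/y_E\to\infty$, that defeats a Gronwall/martingale bound. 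A second, smaller point: continuity of $\Gt(\bwt(u))$ up to $u=\taut$ is not automatic, since the $\Delta_5$ rates contain $\gamma\yt_0(u)\etaEt(u)/\yEt(u)$ and $\yt_0$ need not vanish when $\yEt$ does; only the particular quadratic forms entering the final-size variance (which carry a factor $i$, cf.\ the treatment of $\sigma^2_5$ in Appendix~\ref{app:mrvariance}) remain integrable at $\taut$, so the convergence $\SigmatMR(\tautd)\to\SigmatMR(\taut)$ needs a slightly more careful entrywise argument than the blanket continuity you invoke.
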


\begin{remark}[LLN for final outcome of SIR epidemic with preventive dropping]
\label{rmk:finalwlln}
Conjecture~\ref{prop:MRcltfin} implies that $\bXN(\tau^N) \convp \bx(\infty)$ as $N \to \infty$, where $\convp$ denotes convergence in probability, i.e. the final outcome of the epidemic on an MR random graph obeys a weak LLN.  The same conjecture holds also for the epidemic on an NSW random graph, using the theory in Section~\ref{sec:iiddegrees}. Note that $\bx(\infty)=\bxt(\taut)$ and an expression for $\bxt(\taut)$ is given in equation~\eqref{finalsus} in Section~\ref{sec:finalsize}.
\end{remark}

\begin{remark}[Explicit expression for asymptotic variance of final size]
\label{rmk:MRcltfin2}
Note that $\SigmatMR(t)$, and hence $\Sigma_{{\rm MR},\delta}$, can be computed numerically as described for $\Sigma(t)$ in Remark~\ref{rmk:cltvart}.  However, as detailed in Section~\ref{sec:varfinal} for the case $\delta=0$, it is possible to derive an almost fully explicit expression, as a function of $\tautd$, for the asymptotic variance of the `final' number of susceptibles.  Moreover, the expression is fully explicit when $\omega=0$, i.e.~when there
is no dropping of edges, so the model reduces to a standard Markov SIR epidemic on an MR configuration model network.
\end{remark}

\section{Deterministic temporal behaviour and final size}
\label{sec:meantemp}
In Section~\ref{sec:dettemp} we study the deterministic temporal behaviour of the effective degree model, described by the system of ODEs~\eqref{diff1}-\eqref{diff3} given in Theorem~\ref{thm:as}, by considering first the corresponding time-transformed system~\eqref{difft1}-\eqref{difft3}. The resulting (partial) solution of this system is required to calculate the asymptotic variance of the final size in Sections~\ref{sec:varfinal} and~\ref{sec:iiddegrees}. Furthermore, the results of this section
are used in Appendix~\ref{app:tautdelta} to prove that the conditions $\tautd<\infty$
and~\eqref{gradphidotF}, required for the application of Ethier and Kurtz~\cite{Ethier:1986}, Theorem 11.4.2,
are satisfied.  In Section~\ref{sec:otherapproaches}, we connect the analysis of~\eqref{difft1}-\eqref{difft3} to other approaches taken in literature for the deterministic analysis of epidemics on configuration model
networks. Finally, in Section~\ref{sec:finalsize}, we give a characterization of the deterministic final size of the epidemic and consider the final size of epidemics initiated by a trace of infection in Proposition~\ref{prop:final}.
We do not consider existence and uniqueness of solutions of the determinstic model when $\dmax=\infty$ (see 
Remark~\ref{rmk:detedsol}) but indicate where further justification is required for a proof.
\subsection{Temporal behaviour}
\label{sec:dettemp}
\subsubsection{Time-transformed process}
\label{sec:dettimechanged}
Consider the system of ODEs given by~\eqref{difft1}-\eqref{difft3}, with initial condition
$\bxt(0)=(p_0-\epsilon_0, p_1-\epsilon_1,\ldots)$, $\byt(0)=(\epsilon_0,\epsilon_1,\ldots)$ and $\zEt(0)=0$. In this section we obtain explicit expressions for $\bxt(t)$, $\tilde x_E(t)$, $\tilde y_E(t)$ and other variables pertaining to the fraction of susceptible, infectious, and recovered individuals in the population in the time-transformed process, while in Section~\ref{sec:detrealtime} we connect these to corresponding variables in the real-time process.

Observe that the evolution of $\{\bxt(t)\}$ is decoupled from the rest of the system.  To solve~\eqref{difft1}, let $\{X(t)\}=\{X(t):t \ge 0\}$ denote a transient continuous-time Markov chain describing the evolution of a single susceptible individual, whose stubs are independently dropped at rate $\omega$ and independently infected at rate $\beta$.  For $t \ge 0$, let $X(t)$ be the number of stubs attached to the individual at time $t$, if it is still susceptible, otherwise let $X(t)=-1$.  For $i,j=0,1,\ldots$ and $t \ge 0$, let $p_{ji}(t)=\Pp(X(t)=i|X(0)=j)$.  By deriving the forward equation for $\{X(t)\}$ it is easily seen that, for $i=0,1,\ldots$, $\xt_i(t)=\sum_{j=i}^{\infty} \xt_j(0)p_{ji}(t)$ $(t \ge 0$).

It is straightforward to calculate $p_{ji}(t)$, since stubs disappear (by dropping or infection) independently, the probability that a given initial stub has disappeared by time $t$ is $1-\re^{-(\beta+\omega)t}$ and, given that a stub has disappeared, the probability its disappearance was caused by dropping is $p_{\omega}=\frac{\omega}{\beta+\omega}$. Thus,
\begin{equation}
\label{pij}
p_{ji}(t)=\begin{cases} \binom{j}{i} \re^{-(\beta+\omega)it}\left(1-\re^{-(\beta+\omega)t}\right)^{j-i} p_{\omega}^{j-i} & \text{ for } j \ge i,\\
0& \text{ for } j < i,
\end{cases}
\end{equation}
whence, for $i=0,1,\ldots$,
\begin{eqnarray}
\label{xit}
\xt_i(t)&=&\sum_{j=i}^{\infty}\xt_j(0)p_{ji}(t)\nonumber\\
&=&\sum_{j=i}^{\infty}(p_j -\epsilon_j)\binom{j}{i} \re^{-(\beta+\omega)it}\left(1-\re^{-(\beta+\omega)t}\right)^{j-i} p_{\omega}^{j-i}\nonumber\\
&=&\frac{ \re^{-(\beta+\omega)it}}{i!}\sum_{j=i}^{\infty}(p_j -\epsilon_j) \frac{j!}{(j-i)!}\left[p_{\omega}\left(1-\re^{-(\beta+\omega)t}\right)\right]^{j-i}\nonumber\\
&=&\frac{ \re^{-(\beta+\omega)it}}{i!} \fde^{(i)}\left(p_{\omega}\left(1-\re^{-(\beta+\omega)t}\right)\right),
\end{eqnarray}
where
\begin{equation}
\label{fdes}
\fde(s)=\sum_{k=0}^{\infty} (p_k-\epsilon_k)s^k\qquad (0 \le s \le 1),
\end{equation}
and $\fde^{(i)}$ denotes the $i$th derivative of $\fde$. It then follows that
\begin{eqnarray}
\label{xet}
\xEt(t)&=&\sum_{i=0}^{\infty} \frac{i \re^{-(\beta+\omega)it}}{i!} \fde^{(i)}\left(p_{\omega}\left(1-\re^{-(\beta+\omega)t}\right)\right)\nonumber\\
&=&\re^{-(\beta+\omega)t}\sum_{i=1}^{\infty} \frac{ \re^{-(\beta+\omega)(i-1)t}}{(i-1)!} \fde^{(i)}\left(p_{\omega}\left(1-\re^{-(\beta+\omega)t}\right)\right)\nonumber\\
&=&\re^{-(\beta+\omega)t}\fde'\left(p_{\omega}\left[1-\re^{-(\beta+\omega)t}\right]+\re^{-(\beta+\omega)t}\right)\nonumber\\
&=&\re^{-(\beta+\omega)t}\fde'\left(\psi(t)\right),
\end{eqnarray}
where
\begin{equation}\label{eq:psi}
\psi(t)= p_{\omega}+(1-p_{\omega})\re^{-(\beta+\omega)t}.
\end{equation}
Differentiating~\eqref{xet} yields
\begin{equation}
\label{difft4}
\dfrac{d\xEt}{dt}=-(\beta+\omega)\xEt-\beta \re^{-2(\beta+\omega)t}\fde''\left(\psi(t)\right).
\end{equation}
Note that $\sum_{i=1}^{\infty} i[(i+1)\yt_{i+1}-i \yt_i]=-\yEt$ and, using a similar argument to the derivation of~\eqref{xet},
\begin{equation}
\label{sumiip1x}
\sum_{i=1}^{\infty} i (i+1)\xt_{i+1}(t)=\re^{-2(\beta+\omega)t}\fde''\left(\psi(t)\right).
\end{equation}
Multiplying~\eqref{difft2} by $i$ and summing over $i=1,2,\ldots$ yields
\begin{equation}
\label{difft5}
\dfrac{d\yEt}{dt}=-(\beta+\omega+\gamma)\etaEt-(\beta+\omega)\yEt+\beta \re^{-2(\beta+\omega)t}\fde''\left(\psi(t)\right).
\end{equation}
(This requires justifying and further conditions if $\dmax=\infty$.  A similar comment applies to equations contingent
on~\eqref{difft5}, such as~\eqref{yet}.)
Adding~\eqref{difft4},~\eqref{difft5} and~\eqref{difft3} gives
\begin{equation*}
\dfrac{d\etaEt}{dt}=-2(\beta+\omega)\etaEt,
\end{equation*}
which, together with the initial condition $\etaEt(0)=\mud$, yields
\begin{equation}
\label{etaEt}
\etaEt(t)=\mud \re^{-2(\beta+\omega)t}.
\end{equation}
Substituting~\eqref{etaEt} into~\eqref{difft3} yields
\begin{equation*}
\dfrac{d\zEt}{dt}= \gamma\mud \re^{-2(\beta+\omega)t} -(\beta+\omega) \zEt, \qquad \zEt(0)=0,
\end{equation*}
whence
\begin{equation}
\label{zet}
\zEt(t)=\frac{\gamma}{\beta+\omega}\mud\re^{-(\beta+\omega)t}\left(1-\re^{-(\beta+\omega)t}\right).
\end{equation}
Thus
\begin{align}
\label{yet}
\yEt(t)&=\etaEt(t)-\xEt(t)-\zEt(t)\nonumber\\
&= \re^{-(\beta+\omega)t}\left(\frac{\beta+\omega+\gamma}{\beta+\omega}\mud \re^{-(\beta+\omega)t}-\frac{\gamma}{\beta+\omega}\mud-\fde'\left(\psi(t)\right)\right).
\end{align}

\begin{remark}[Fractions of susceptible, infectious, and recovered individuals]
Although the above results are useful for analysing the final outcome of the epidemic, of greater practical interest is the evolution of the fractions of the population that are susceptible, infective and recovered individuals, which in the time-transformed process are given by $\xt(t)=\sum_{i=0}^{\infty} \tilde x_i(t), \yt(t)=\sum_{i=0}^{\infty} \tilde y_i(t)$ and $\zt(t)=\sum_{i=0}^{\infty} \tilde z_i(t)$, respectively. Summing~\eqref{xit} over $i=0,1,\ldots$ and using a similar argument to the derivation of~\eqref{xet} yields
\begin{equation}
\label{xt}
\xt(t)=\fde\left(\psi(t)\right).
\end{equation}
Turning to $\yt(t)$, summing~\eqref{difft2} over $i=1,2,\ldots$ and using~\eqref{xet} yields
\begin{equation}
\label{ytdiff}
\dfrac{d\yt}{dt}=-\frac{\gamma}{\rhoEt(t)}\yt+\beta \re^{-(\beta+\omega)t}\fde\left(\psi(t)\right).
\end{equation}
Let $\epsilon=\sum_{i=0}^{\infty} \epsilon_i=\yt(0)$ and
\begin{equation*}
c(t)=\int_0^t \frac1{\tilde \rho_E(u)}\,{\rm d}u.
\end{equation*}
Then~\eqref{ytdiff} has solution
\begin{equation}
\label{yt}
\yt(t)=\re^{-\gamma c(t)}\epsilon+\beta \int_0^t \re^{-[(\beta+\omega)u+\gamma(c(t)-c(u))]}\fde\left(\psi(u)\right)\,{\rm d}u.
\end{equation}
We do not have a closed-form expression for the integral in~\eqref{yt}, though it is straightforward to calculate $\yt(t)$ numerically using the ODE~\eqref{ytdiff}.  Finally, note that $\zt(t)=1-\xt(t)-\yt(t)$.
\end{remark}

\subsubsection{Real-time process}
\label{sec:detrealtime}
Turning to the system of ODEs~\eqref{diff1}-\eqref{diff3}, which describe the limiting evolution of the epidemic as the population size $N \to \infty$, let
\begin{equation}
\label{eq:xi}
\xi(t)= \int_0^t \rho_E(u)\,{\rm d}u,
\end{equation}
where $\rho_E$ is given by~\eqref{eq:rhoE}. Then $\xi'(t)=\rho_E(t)$ and it follows that, for $t \ge 0$,
\begin{equation}\label{eq:timetransform}
\bw(t)=\bwt(\xi(t)),
\end{equation}
connecting the original process to the time-transformed process. Hence, $\xi'(t)=\rhoEt(\xi(t))$, so~\eqref{yet} and~\eqref{etaEt}
imply that $\xi(t)$ is determined by
\begin{equation}
\label{xiODE}
\dfrac{d\xi}{dt}=1+\frac{\gamma}{\beta+\omega}\left(1-\re^{(\beta+\omega)\xi}\right)-\re^{(\beta+\omega)\xi}\frac{\fde'\left(\psi(\xi)\right)}{\mud},
\end{equation}
together with $\xi(0)=0$. The ODE~\eqref{xiODE} does not seem to admit an explicit solution, although it is straightforward to solve numerically.

\subsection{Connection to other approaches}
\label{sec:otherapproaches}
In this section we consider other deterministic formulations of the preventive dropping model and make the connection to the effective degree approach (ODE system~\eqref{diff1}-\eqref{diff3}). Our focus is on the deterministic variable $\theta(t)$ that is defined as follows:
\begin{equation}\label{eq:RE}
\theta(t)=\mathcal F(t)-\int_{0}^t \frac{\fde'(\theta(u))}{\mu_D}\mathcal F'(t-u)\,{\rm d}u.
\end{equation}
Here, $\mathcal F(t)$ is the probability that an individual escapes infection from a given neighbour, up to at least $t$ units of time after the neighbour became infected. In the Markovian SIR case with dropping of edges, this probability equals
\begin{equation}\label{eq:escape}
\mathcal F(t) = \frac{\gamma+\omega}{\beta+\gamma+\omega}+\frac{\beta}{\beta+\gamma+\omega}\re^{-(\beta+\gamma+\omega)t}.
\end{equation}
Indeed, there are three competing events: transmission, ending of the infectious period, and informing the susceptible neighbour, that occur at rates $\beta$, $\gamma$, and $\omega$, respectively. 
%One advantage of the binding site formulation is that 
We see immediately from the renewal equation for $\theta$, obtained by substituting~\eqref{eq:escape} into~\eqref{eq:RE}, that one can also interpret dropping of edges as an increased recovery rate for the deterministic mean temporal behaviour since $\omega$ only appears as part of the sum $\gamma+\omega$ (see Remark~\ref{rmk:binding final1} in Section~\ref{sec:finalsize}). This aspect of the mean temporal behaviour may not be immediately clear from the system~\eqref{diff1}-\eqref{diff3}.

The variable $\theta$ can be interpreted as the probability that along a randomly chosen edge between two individuals, $i$ and $j$ say, there is no transmission from $j$ to $i$ before time $t$, given that no transmission occurred from individual $i$ to $j$. The variable $\theta$ formed the basis for the edge-based compartmental models of Volz, Miller and co-workers (see e.g.~Kiss et al.~\cite{Kiss2017} and references therein). Closely related to edge-based compartmental models is the binding site formulation presented in Leung and Diekmann~\cite{Leung:2016}, where the relation to edge-based compartmental models is also explained. We use the binding site formulation in this section to state the renewal equation for the variable $\theta$, restricting ourselves to the Markovian SIR epidemic (in~\cite{Leung:2016} $\bar x$ is used instead of $\theta$). In principle, the renewal equation~\eqref{eq:RE} is far more general and allows for randomness in infectiousness beyond the Markovian setting, see~\cite{Leung:2016} for details. Note that in the above works, the derivation of the equations describing the evolution of $\theta(t)$ is heuristic. Those equations are proved for the Markov SIR epidemic on a configuration model network, in the sense of a large population limit, in Decreusefond et al.~\cite{Decreusefond:2012} and Janson et al.~\cite{Janson:2014}; see also Barbour and Reinert~\cite{Barbour:2013}.

The variable $\theta$ relates to the effective degree formulation as follows:
\begin{equation}\label{eq:relation}
\theta(t)=p_{\omega}+(1-p_{\omega})\re^{-(\beta+\omega)\xi(t)}=\psi(\xi(t)),
\end{equation}
where the functions $\psi$ and $\xi$ from the effective degree formulation are defined at~\eqref{eq:psi} and~\eqref{eq:xi}, respectively. Indeed, equation~\eqref{eq:relation} is expected from the interpretation of $\theta$: $p_{\omega}$ is the probability that the susceptible individual is informed by the infection status of a given neighbour before being infected by that neighbour, so the stub disappears through dropping, while $(1-p_{\omega})\re^{-(\beta+\omega)\xi(t)}$ is the probability that there is no dropping and the given stub has not disappeared at time $\xi(t)$ (where $\xi(t)$ accounts for the time-transformation, see~\eqref{eq:timetransform}). One can check that~\eqref{eq:relation} holds true by first transforming the renewal equation~\eqref{eq:RE} into an ODE for $\theta$ by differentiating (and using~\eqref{eq:escape}):
\begin{equation}\label{eq:ODE_theta}
\dfrac{d\theta}{dt}=\beta\frac{\fde'(\theta)}{\mu_D}-(\beta+\gamma+\omega)\theta+\gamma+\omega,
\end{equation}
with initial condition $\theta(0)=1$. Next, differentiating the right-hand-side of~\eqref{eq:relation}, and using~\eqref{xiODE}, we find that $\psi(\xi)$ satisfies the ODE~\eqref{eq:ODE_theta}. Furthermore, the initial condition $\xi(0)=0$ implies that $\psi(\xi(0))=1$.

Finally, the Malthusian parameter $r$, the basic reproduction number $R_0$ and the final size of the epidemic are easily derived from the single renewal equation~\eqref{eq:RE}. Here we only state the expressions and refer to Leung and Diekmann~\cite{Leung:2016}, Section 2.5, for details. In the limit of $\epsilon \downarrow 0$ the Euler-Lotka characteristic equation is
\begin{align}
1&=-\frac{f''_{D}(1)}{\mu_D}\int_0^\infty \re^{-\lambda t}\mathcal F'(t)\,{\rm d}t\nonumber\\
&=\left(\mu_D-1+\frac{\sigma_D^2}{\mu_D}\right)\int_0^\infty e^{-\lambda t}\beta \re^{-(\beta+\gamma+\omega)t}\,{\rm d}t.\label{eq:Malthusian}
\end{align}
The Malthusian parameter $r$ is the unique real root of~\eqref{eq:Malthusian} and a simple calculation yields
\begin{equation}
r=\beta\left(\mu_D-2+\frac{\sigma^2_D}{\mu_D}\right)-\gamma-\omega,
\label{eq:MathusianExplicit}
\end{equation}
agreeing with Britton et al.~\cite{Britton2016}, equation (3). The basic reproduction number $R_0$ is obtained from~\eqref{eq:Malthusian} by evaluating the right hand side at $\lambda=0$, yielding the same expression as~\eqref{R_0}. The final size is discussed in Remark~\ref{rmk:binding final}.

\subsection{Final size}
\label{sec:finalsize}
Recall that $\tautd$ defined at~\eqref{taudelta} satisfies $\yEt(\tautd)=\delta$. In particular, using~\eqref{yet}, $\taut=\taut_0$ satisfies
\begin{equation}
\label{finaltau}
\frac{\beta+\omega+\gamma}{\beta+\omega}\mud \re^{-(\beta+\omega)\taut}-\frac{\gamma}{\beta+\omega}\mud
-\fde'\left(\psi(\taut)\right)=0.
\end{equation}
For later use, we rewrite~\eqref{finaltau} as
\begin{equation}
\label{zdef}
\left[\frac{(\beta+\omega+\gamma)z-\gamma}{\beta+\omega}\right]\mud =\fde'\left(\psit(z)\right),
\end{equation}
where $z=\re^{-(\beta+\omega)\taut}$ and $\psit(z)=p_{\omega}+(1-p_{\omega})z$.
Further, using~\eqref{xt} yields that the final proportion of the population that remains uninfected is given by
\begin{equation}
\label{finalsus}
\xt(\taut)=\fde\left(\psi(\taut)\right).
\end{equation}
We let $\rho=1-\xt(\taut)$ denote the fraction of the population ultimately infected in the limiting deterministic epidemic.

Let $\epsE=\sum_{i=1}^{\infty} i\epsilon_i$.  Then in the limit as $\epsE \downarrow 0$, i.e.~for epidemics started by a trace of infection (or, more precisely, a trace of infected stubs), the final susceptible fraction is given by~\eqref{finalsus}, where $\taut$ satisfies
\begin{equation}
\label{finaltautrace}
\frac{\beta+\omega+\gamma}{\beta+\omega}\mud \re^{-(\beta+\omega)\taut}-\frac{\gamma}{\beta+\omega}\mud -f_D'\left(\psi(\taut)\right)=0.
\end{equation}

We can now formulate the characterization for the final size $\rho$ of the epidemic.  We illustrate the dependence of $\rho$ on the dropping rate $\omega$ in Section~\ref{sec:droppingEffect}.

\begin{prop}[Deterministic final size]
\label{prop:final}
\mbox{}  Suppose that $\dmax<\infty$.
\begin{itemize}
\item[(a)]
Suppose that $\epsE>0$. Then the fraction of the population that is ultimately infected in the deterministic epidemic is given by
\begin{equation}
\label{finalsus1}
\rho=1-\fde(s),
\end{equation}
where $s$ is the unique solution in $[0,1)$ of
\begin{equation}
\label{finals}
(\beta+\omega+\gamma)s-(\omega+\gamma)=\beta \mud^{-1} \fde'(s).
\end{equation}
\item[(b)]
Suppose $R_0>1$.  Then in the limit as $\epsE \downarrow 0$,  the fraction of the population that is
ultimately infected in the limiting deterministic epidemic is given by
\begin{equation}
\label{finalsusz}
\rho=1-f_D(s),
\end{equation}
where $s$ is the unique solution in $[0,1)$ of
\begin{equation}
\label{finalz}
(\beta+\omega+\gamma)s-(\omega+\gamma)=\beta \mud^{-1} f_D'(s).
\end{equation}
\end{itemize}
\end{prop}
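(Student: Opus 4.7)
The plan is to derive both parts from equation~\eqref{finaltau} by a single change of variable, and then to establish uniqueness of the relevant root via concavity and boundary analysis. Since~\eqref{finalsus} already gives $\rho=1-\fde(\psi(\taut))$, only two substantive tasks remain: to show that $s:=\psi(\taut)$ satisfies~\eqref{finals}, and that this is the unique such root in $[0,1)$.

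For part (a), I would first invert the definition of $\psi$ to express $\re^{-(\beta+\omega)\taut}=(s-p_\omega)/(1-p_\omega)$, substitute into~\eqref{finaltau}, and simplify using $p_\omega(\beta+\omega)=\omega$, so that $1-p_\omega=\beta/(\beta+\omega)$. A short algebraic reduction then collapses~\eqref{finaltau} to~\eqref{finals}. For uniqueness, I would let $g(s)=(\beta+\omega+\gamma)s-(\omega+\gamma)-\beta\mud^{-1}\fde'(s)$. Because $p_k\ge\epsilon_k$ for all $k$ (implicit in the initial-condition hypotheses of Theorem~\ref{thm:as}), $\fde$ is a polynomial with non-negative coefficients, so $\fde'''\ge 0$ on $[0,1]$ and $g$ is concave. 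A sign check at the endpoints then closes the argument: $g(0)=-(\omega+\gamma)-\beta\mud^{-1}(p_1-\epsilon_1)\le 0$ is immediate, while $\fde'(1)=\mud-\epsE$ gives $g(1)=\beta\epsE/\mud>0$. By concavity, $g$ has exactly one zero in $[0,1)$, which is $s$.

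For part (b), I would pass to the limit $\epsE\downarrow 0$, so that $\fde\to f_D$ coefficient-wise and the equation becomes~\eqref{finalz}. The complication is that $g(1)\to 0$, so $s=1$ becomes a spurious root. The decisive computation is
\[
g'(1)=(\beta+\omega+\gamma)-\beta\mud^{-1}f_D''(1)=(\beta+\omega+\gamma)(1-R_0),
\]
using $f_D''(1)=\mud(\mud-1+\sigma_D^2/\mud)$ and the expression~\eqref{R_0} for $R_0$. Under $R_0>1$, this derivative is strictly negative, so $g$ is positive on a left neighbourhood of $1$; together with $g(0)\le 0$ and concavity of $g$, this produces a unique zero of $g$ in $[0,1)$, yielding~\eqref{finalz} and~\eqref{finalsusz}.

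The hard part will be cleanly separating the two roots that coalesce at $s=1$ when $\epsE=0$, in order to identify the part-(a) roots $s(\epsE)$ as converging to the asserted zero in $[0,1)$ rather than to $s=1$. I would handle this by a continuity argument: because $g'(1)<0$ in the limit while $g(1;\epsE)=\beta\epsE/\mud\to 0^+$, the genuine part-(a) roots stay bounded away from $1$ uniformly for small $\epsE$, and continuous dependence of simple roots on the coefficients of the equation then identifies their limit with the unique root in $[0,1)$ of the limiting equation~\eqref{finalz}. All other steps are routine algebra and a concavity-based monotonicity argument.
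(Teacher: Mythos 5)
Your proposal is correct and follows essentially the same route as the paper's proof: the substitution $s=\psit(z)$ reducing~\eqref{zdef} to~\eqref{finals}, the convexity of $\beta\mud^{-1}\fde'$ (equivalently concavity of your $g$) combined with the endpoint signs $g(0)\le 0$ and $g(1)=\beta\epsE/\mud>0$ for uniqueness in (a), and the computation $g'(1)=(\beta+\omega+\gamma)(1-R_0)<0$ for (b). The only difference is your closing continuity argument identifying the limit of the part-(a) roots as $\epsE\downarrow 0$, which the paper passes over silently; it is a harmless (indeed slightly more careful) addition.
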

\begin{proof}
(a) Suppose that $\epsE>0$.  Let $s=\psit(z)$, so $z=\frac{(\beta+\omega)s-\omega}{\beta}$.  It then follows from~\eqref{zdef} and~\eqref{finalsus} that $s$ satisfies~\eqref{finals} and $\rho$ is given by~\eqref{finalsus1}.  Let $g_1(s)=(\beta+\omega+\gamma)s-(\omega+\gamma)$ and $g_2(s)=\beta \mud^{-1} \fde'(s)$.  Then $g_1(0)\le 0 <g_2(0)$ and $g_1(1)>g_2(1)$, since $\fde'(1)=\sum_{i=1}^{\infty} i(p_i-\epsilon_i)<\sum_{i=1}^{\infty}i p_i =\mud$.  Thus~\eqref{finals} has a unique solution in $[0,1)$ as
$g_2$ is convex on $[0,1]$, since $g_2''(s)\ge 0$.

(b) Letting $\epsE \downarrow 0$ in~\eqref{finalsus1} and~\eqref{finals} shows that $\rho$ is given by~\eqref{finalsusz}, where $s$ satisfies~\eqref{finalz}.  Let $g_1$ be as in (a) and $g_2(s)=\beta \mud^{-1} f_D'(s)$.  Then $g_1(0)\le 0 <g_2(0)$ and $g_1(1)=g_2(1)$, since $\mud= f_D'(1)$.  Further $g_2$ is a convex function, so it follows that~\eqref{finalz} has a solution in $[0,1)$ if and only if $g_1'(1)<g_2'(1)$ and moreover that solution is unique. Now $g_1'(1)=\beta+\omega+\gamma$ and $g_2'(1)=\beta \mu_D^{-1} f_D''(1)$, so $g_1'(1)<g_2'(1)$ if and only if $R_0=\frac{\beta}{\beta+\omega+\gamma}\mu_D^{-1} f_D''(1)>1$.
$\Box$	
\end{proof}

\begin{remark}[Connection to the renewal equation~\eqref{eq:RE}]
\label{rmk:binding final}
Proposition~\ref{prop:final}(b) can also be derived by taking the limit $t\to\infty$ in~\eqref{eq:RE}:
\begin{align}
\theta(\infty)&=\mathcal{F}(\infty)+(1-\mathcal F(\infty))\frac{f'_{D}(\theta(\infty))}{\mu_D}\nonumber\\
&=\frac{\gamma+\omega}{\beta+\gamma+\omega}+\frac{\beta}{\beta+\gamma+\omega}\frac{f'_{D}(\theta(\infty))}{\mu_D},\label{eq:final_theta}
\end{align}
using~\eqref{eq:escape}, so $\theta(\infty)$ satisfies~\eqref{finalz}. Then, using~\eqref{xt} and~\eqref{eq:timetransform}, one obtains that the proportion $x(\infty)$ of the population that ultimately is susceptible agrees with~\eqref{finalsusz}.
%Next, write $g(\theta(\infty))$ for the right-hand side of~\eqref{eq:final_theta}. Then $x\mapsto g(x)$ is a monotonically increasing function for $0\leq x\leq0$. Furthermore $g(0)>0$, $g(1)=1$, and $g'(1)=R_0$.  Therefore, if $R_0<1$, then there is only the trivial solution $\theta(\infty)=1$. If $R_0>1$ then there is a unique non-trivial solution $0<\theta(\infty)<1$.
\end{remark}

\begin{remark}[Increased recovery rate and no dropping]
\label{rmk:binding final1}
Observe that equations~\eqref{eq:RE} for $\theta$ and~\eqref{eq:escape} for $\mathcal F$ together imply immediately that the process of susceptibles in the deterministic model with recovery rate $\gamma$ and dropping rate $\omega$ depends on $(\gamma, \omega)$ only through their sum $\gamma+\omega$, since $\omega$ only appears in~\eqref{eq:escape} through the sum $\gamma+\omega$. Furthermore,~\eqref{eq:relation} relates the variable $\theta$ of the binding site formulation to the effective degree formulation through $\psi$ and $\xi$ defined at~\eqref{eq:psi} and~\eqref{eq:xi}, respectively. Thus the LLN limit $\{\bx(t)\}$ describing the evolution of susceptibles classified by their effective degree for the model with dropping is the same as that for the model without dropping (i.e.~the standard Markov SIR epidemic on a configuration model network) but with the recovery rate $\gamma$ increased to $\gamma+\omega$.
In particular, this implies that the deterministic final size $\rho$ of the two models are the same, as is apparent immediately from Proposition~\ref{prop:final}. This invariance also holds for the basic reproduction number $R_0$ and Mathusian parameter $r$, as is clear from the formulae in equations~\eqref{R_0} and~\eqref{eq:MathusianExplicit}, respectively.
Note however that the LLN limit $\{\by(t)\}$ describing the infectives is not the same for these two models, since infectives recover more quickly in the model with increased recovery rate. Thus (as illustrated in Figure~\ref{fig:incRecoveryTimeCourse} in Section~\ref{sec:incRecovery}) at any time $t>0$ there are more infectives in the deterministic model with dropping than in the corresponding model with increased recovery rate and no dropping. We revisit the model with increased recovery rate and no dropping in Section~\ref{sec:relatedmodel}, where we focus on the
probability of a major outbreak in the stochastic model with few initial infectives.
%Observe that equations~\eqref{eq:escape} and~\eqref{eq:RE} together show immediately that the deterministic (mean) process of an SIR epidemic with recovery rate $\gamma$ and dropping rate $\omega$ depends on $(\gamma, \omega)$ only through their sum $\gamma+\omega$. Thus the LLN limit of the model with dropping is the same as that of the model without dropping (i.e.~the standard Markov SIR epidemic on a configuration model network) but with recovery rate $\gamma$ replaced by $\gamma+\omega$. In particular, this implies that the deterministic final size $\rho$ of the model with dropping is the same as that of the model with no dropping and increased recovery rate $\gamma+\omega$, as is apparent immediately from Proposition~\ref{prop:final}. We revisit the model with increased recovery rate and no dropping in Section~\ref{sec:relatedmodel}, where we focus on the probability of a major outbreak in the stochastic model with few initial infectives.
\end{remark}

\section{Asymptotic variance of final size of epidemic on an MR random graph}
\label{sec:varfinal}

Recall that $X^N(\tau^N)=\sum_{i=0}^{\infty} X_i^N(\tau^N)$ denotes the number of susceptibles remaining at the end of the epidemic on an MR random graph.  Thus $\TnMR=X^N(0)-X^N(\tau^N)$ denotes the final size of the epidemic. Note that, in an obvious notation, $X^N(\tau^N)= \tilde{X}^N(\tautN)=\sum_{i=0}^{\infty} \tilde{X}_i^N(\tautN)$. Let $\bzero=(0,0,\ldots)$ and $\bone=(1,1,\ldots)$.  
Then, assuming the truth of Conjecture~\ref{conj:MRcltfin} for $\dmax=\infty$, the asymptotic variance of $N^{-\frac{1}{2}}\TnMR$ is
given by
\begin{equation}
\label{equ:SigmaMR}
\SigmaMR(\beta,\omega,\gamma)=(\bone, \bzero,0)\Sigma_{{\rm MR},0}(\bone, \bzero,0)^{\top}.
\end{equation}
Suppose that $\epsE=\sum_{i=1}^{\infty} i\epsilon_i>0$ and let $z$ be the unique solution in $[0,1)$ of~\eqref{zdef}; cf.~Proposition~\ref{prop:final}(a).  
The following proposition gives an almost fully explicit expression for the asymptotic variance $\SigmaMR(\beta,\omega,\gamma)$.

\begin{prop}[Asymptotic variance of final size of epidemic on MR graph with dropping]
\label{prop:mrVar}
\mbox{}
Suppose that $\epsE>0$ and $z>0$. Then,
\begin{align}
\label{MRCLTvar}
\SigmaMR(\beta,\omega,\gamma)=&2\frac{(\beta+\omega+\gamma)[\gamma-\beta-\omega-(\beta+\omega+\gamma)z]}{(\beta+\omega)^2}\mud \btz^2 z^2(1-z)\nonumber\\
&\quad +\frac{\gamma}{\beta(\beta+\omega)} \mud \btz^2 z[\beta-(2\beta+\omega)z]\nonumber\\
&\quad+\frac{\gamma}{\beta[2(\beta+\omega)+\gamma]}\btz^2 z^2\left[\beta(\sigma_D^2+\mud^2)+\omega\mud\right]\nonumber\\
&\quad -\frac{\gamma[(\beta+\omega+\gamma)z-\gamma]z}{[2(\beta+\omega)+\gamma](\beta+\omega)} \mud \btz +I_A+I_B+I_C+I_D,
\end{align}
with
\begin{align}
\btz&=\frac{\beta\left[\frac{(\beta+\omega+\gamma)z-\gamma}{\beta+\omega}\right] \mud}
{z\left[\beta \fde''\left(\psit(z)\right)-(\beta+\omega+\gamma)\mud\right]},\label{btz}\\
I_A&=\frac{1}{\beta+\omega} \int_z^1 \left[\omega\left(\psit_3(z,v)-1\right)^2+\beta \psit_3(z,v)^2\right]
\fde'\left(\psit_2(z,v)\right)\,{\rm d}v,\label{IA}\\
I_B&=2\frac{\omega z \btz}{\beta+\omega} \int_z^1 \psit_1(z,v)\left(\psit_1(z,v)-1\right)\left(1-\psit_3(z,v)\right)
\fde''\left(\psit_2(z,v)\right)\,{\rm d}v,\label{IB}\\
I_C&=\frac{\beta z \btz}{\beta+\omega} \int_z^1 \psit_1(z,v)^2\left(\btz z v^{-1}-
2\psit_3(z,v)\right)\fde''\left(\psit_2(z,v)\right)\,{\rm d}v,\label{IC}\\
I_D&=\frac{z^2 \btz^2}{\beta+\omega} \int_z^1 \left[\omega \left(\psit_1(z,v)-1\right)^2+\beta \psit_1(z,v)^2 \right]
\psit_1(z,v)^2 \fde^{(3)}\left(\psit_2(z,v)\right)\,{\rm d}v,\nonumber \\&\label{ID}
\end{align}
$\psit_1(z,v)=p_{\omega}+(1-p_{\omega})zv^{-1},\psit_2(z,v)=v\psit_1(z,v)^2+p_{\omega}(1-v)$ and \newline
$\psit_3(z,v)=\psit_1(z,v)-\btz z v^{-1}$.
\end{prop}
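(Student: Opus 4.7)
The plan is to reduce the quadratic form $\SigmaMR(\beta,\omega,\gamma)=(\bone,\bzero,0)\Sigma_{{\rm MR},0}(\bone,\bzero,0)^\top$, with $\Sigma_{{\rm MR},0}=B\SigmatMR(\taut)B^\top$ from Conjecture~\ref{conj:MRcltfin}, to a single scalar integral that can be evaluated using the closed-form expressions in Section~\ref{sec:dettimechanged}. Writing $\mathbf a^\top=(\bone,\bzero,0)B$ and combining \eqref{Sigma1} with the backward ODE associated to \eqref{Phit}, one has
\begin{equation*}
\SigmaMR(\beta,\omega,\gamma)=\mathbf a^\top \SigmatMR(\taut)\,\mathbf a=\int_0^{\taut}\mathbf a(s)^\top \Gt(\bwt(s))\,\mathbf a(s)\,ds,
\end{equation*}
where $\mathbf a(s)=\Phit(\taut,s)^\top\mathbf a$ solves the adjoint ODE $\tfrac{d}{ds}\mathbf a(s)=-[\partial\tilde F(\bwt(s))]^\top\mathbf a(s)$ with terminal value $\mathbf a(\taut)=\mathbf a$. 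The strategy is (i) to compute $\mathbf a$ explicitly, (ii) to propagate $\mathbf a(s)$ backwards exploiting the decoupling of the susceptible block in \eqref{difft1}, and (iii) to evaluate the integral.

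For step (i), using the drift \eqref{driftF1}, summation over $i$ of the susceptible block yields $\sum_i \tilde F^X_i(\bwt(\taut))=-\beta\,\xEt(\taut)$, while $\nabla\varphi=(0,(0,1,2,\ldots),0)$ picks out $\yEt$-coordinates. Evaluating $\nabla\varphi\cdot\tilde F(\bwt(\taut))$ from \eqref{difft5} (using $\yEt(\taut)=0$) and simplifying through \eqref{zdef} identifies the scalar multiplier in \eqref{eq:Bdelta} as precisely $\btz$ of \eqref{btz}; consequently $\mathbf a=(\bone,\bzero,0)^\top-\btz\,\nabla\varphi(\bwt(\taut))$. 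For step (ii), the susceptible block of the adjoint ODE is the backward Kolmogorov equation for the one-particle chain $\{X(t)\}$ whose transition probabilities are given by \eqref{pij}; after the substitution $v=\re^{-(\beta+\omega)s}$ (so $v$ runs from $z$ at $s=\taut$ to $1$ at $s=0$), the $\bxt$-components of $\mathbf a(s)$ at effective-degree index $i$ become polynomial in $\psit_1(z,v)$, with the $\byt$- and $z_E$-components acquiring the extra shift $\btz z v^{-1}$ that produces $\psit_3(z,v)$. The generating-function argument $\psit_2(z,v)=v\psit_1(z,v)^2+p_\omega(1-v)$ emerges as the effective probability-generating-function variable that tracks susceptible stubs conditional on the propagated test function.

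For step (iii), I would substitute $\Gt(\bw)=\sum_{\bl\in\Delta}\bl\bl^\top\tilde\beta_{\bl}(\bw)$ with intensities \eqref{intensityfun1}, sort the contributions by jump class $\Delta_1,\dots,\Delta_5$, and collapse the sums over the effective-degree indices $i,j$ into derivatives of $\fde$ at $\psit_2(z,v)$ using expressions of the form \eqref{xet} and \eqref{sumiip1x}. The resulting one-dimensional integrals over $v\in[z,1]$ split into two families: the parts that are polynomial in $v$ integrate elementarily and, using \eqref{etaEt}, \eqref{zet}, \eqref{yet} evaluated at $\taut$, yield the four non-integral terms of \eqref{MRCLTvar}, while the parts carrying $\fde'$, $\fde''$ and $\fde^{(3)}$ produce exactly $I_A,I_B,I_C,I_D$ of \eqref{IA}-\eqref{ID}. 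The main obstacle is the extensive algebraic bookkeeping in step (iii)--keeping track of which cross-terms in $\mathbf a(s)^\top\bl\bl^\top\mathbf a(s)$ contribute to each of the four derivatives of $\fde$, ensuring that the contributions from recovery jumps $\Delta_5$ (which affect the $\ber$ coordinate via $i\ber$, not a single stub) are handled correctly, and verifying the cancellations needed to produce the clean form of the second line of \eqref{MRCLTvar}. The hypothesis $z>0$ is used to avoid singularity in \eqref{btz}; the degenerate case $z=0$ lies outside the scope of the present proposition.
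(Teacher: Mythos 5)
Your route is essentially the paper's: you reduce $\SigmaMR$ to $\int_0^{\taut}\bc(\taut,u)\Gt(\bwt(u))\bc(\taut,u)^\top\,{\rm d}u$ with $\bc(\taut,u)=(\bone,\bzero,0)B\Phit(\taut,u)$, exploit the block-triangular structure of $\partial\tilde F$ (the decoupled susceptible block, whose propagator is the one-particle chain $\{X(t)\}$) to get $\bc$ in closed form, then decompose by jump class and collapse the degree sums into derivatives of $\fde$ at $\psit_2$ via the substitution $v=\re^{-(\beta+\omega)u}$. Your adjoint-ODE formulation of $u\mapsto\bc(\taut,u)$ is just the transpose of computing $\Phit(\taut,u)$ directly, which is what Appendix~\ref{app:phitcalculations} does, so nothing is gained or lost there. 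One small correction to step (i): since $(\bone,\bzero,0)\tilde F(\bwt(\taut))=-\beta\xEt(\taut)$, equation~\eqref{eq:Bdelta} gives $(\bone,\bzero,0)B=(\bone,\bzero,0)+\btz\,\nabla\varphi(\bwt(\taut))=(\bone,\btz\,\bp,0)$, i.e.\ the opposite sign to the one you wrote; and the $\btz z v^{-1}$ shift that produces $\psit_3$ arises in the \emph{susceptible} coordinates of $\bc$ (it is the $\btz\,\bp\,\Phit_{YX}$ correction to $\bone\Phit_{XX}$), not in the $\byt$ block.

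The genuine gap is in step (iii), where you assert that the pieces not carrying derivatives of $\fde$ at $\psit_2$ ``integrate elementarily''. This fails for the recovery jumps $\Delta_5$: because the random time change multiplies every intensity by $\etaEt/\yEt$, the $\Delta_5$ contribution is proportional to $\int_0^{\taut}\yEt^{(2)}(u)/\yEt(u)\,{\rm d}u$ with $\yEt^{(2)}(u)=\sum_i i^2\yt_i(u)$. The denominator~\eqref{yet} is transcendental in $v$, the $\yt_i$ are never solved individually, and no substitution renders this integrand polynomial. The paper's resolution is to derive from~\eqref{difft2} an ODE for the second factorial moment $\yEt^{[2]}$ and observe that $\mud[2(\beta+\omega)+\gamma]\,\yEt^{[2]}(t)/\yEt(t)$ is an exact derivative, namely $-\frac{d}{dt}\bigl[\re^{2(\beta+\omega)t}\yEt^{[2]}(t)+\fde^{(2)}(\psi(t))\bigr]$, so the integral telescopes to $\frac{1}{\mud[2(\beta+\omega)+\gamma]}\bigl[f_D''(1)-\fde''(\psi(\taut))\bigr]$; this identity is precisely where the factor $2(\beta+\omega)+\gamma$ in the third and fourth lines of~\eqref{MRCLTvar} originates. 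Without this (or an equivalent) observation your plan stalls at the $\sigma_5^2$ term; the remainder of the bookkeeping is as you describe.
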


\begin{proof}
The proof is rather long so only an outline is given here, with detailed calculations deferred to appendices. 
Let
\begin{equation}
\label{vecc}
\bc(\taut,u)=(\bone, \bzero,0)B\Phit(\taut,u),
\end{equation}
where $B$ is given by~\eqref{eq:Bdelta} with $\delta=0$. Then, using~\eqref{Sigma1} and~\eqref{Gtilde},
\begin{eqnarray}
\SigmaMR(\beta,\omega,\gamma)&=&\int_0^{\taut} \bc(\taut,u)
\Gt(\bwt(u))\bc(\taut,u)^\top \,{\rm d}u,\nonumber\\
&=&\sum_{\bl \in \Delta}\int_0^{\taut} \bc(\taut,u)\bl \bl^{\top} \bc(\taut,u)^\top \betat_{\bl}(\bwt(u)) \,{\rm d}u. \label{sigma2}
\end{eqnarray}
The rest of the proof involves showing that the right-hand side of~\eqref{sigma2} yields the expression~\eqref{MRCLTvar} for $\SigmaMR(\beta,\omega,\gamma)$.

Recall that $\Delta=\cup_{k=1}^5 \Delta_k$ and note that $\bc(\taut,u)\bl$ is a scalar.  It then follows that
\begin{equation}
\label{mrsigma2}
\SigmaMR(\beta,\omega,\gamma)=\sum_{i=1}^5 \sigma^2_i,
\end{equation}
where
\begin{equation}
\label{mrsigma2a}
\sigma^2_i=\int_0^{\taut}\sum_{\bl \in \Delta_i}(\bc(\taut,u)\bl)^2 \betat_{\bl}(\bwt(u))\,{\rm d}u.
\end{equation}
Evaluation of~\eqref{mrsigma2a} requires $\bc(\taut,u)$, which we now determine.

Let $a(\taut)=\nabla \varphi(\bwt(\taut)) \cdot \tilde{F}(\bwt(\taut))$.
Observe that $\nabla \varphi(\bwt(\taut))=(\bzero,\bp,0)$, where $\bp=(0,1,2,\ldots)$, so  using~\eqref{driftF1},
\begin{eqnarray}
a(\taut)&=&-(\beta+\omega)[\yEt(\taut)+\etaEt(\taut)]-\gamma \etaEt(\taut)+\beta \sum_{i=1}^{\infty} i (i+1)\xt_{i+1}(\taut)\label{ataut1}\\
&=&\re^{-2(\beta+\omega)\taut}\left[\beta\fde''\left(\psi(\taut)\right)
-(\beta+\omega+\gamma)\mud\right],\label{ataut}
\end{eqnarray}
using $\yEt(\taut)=0$,~\eqref{sumiip1x} and~\eqref{etaEt}.  Also, using~\eqref{driftF1},
$(\bone, \bzero,0)\tilde{F}(\bwt(\taut))=-\beta \xEt(\taut)$, so
\begin{equation}
\label{one00B}
(\bone, \bzero,0)B=(\bone, b(\taut) \bp,0),
\end{equation}
where
\begin{equation}
\label{btaut}
b(\taut)=a(\taut)^{-1}\beta \xEt(\taut).
\end{equation}

Note from~\eqref{driftF1} that $\partial \tilde{F}(\bwt(t))$ takes the partitioned form
\begin{equation}
\label{partialFtilde}
\partial \tilde{F}(\bwt(t))=
\begin{bmatrix}
\partial\tilde{F}_{XX}(\bwt(t)) & 0 & \bzero^\top \\
\partial\tilde{F}_{YX}(\bwt(t)) &\partial\tilde{F}_{YY}(\bwt(t)) &\partial\tilde{F}_{YZ}(\bwt(t)) \\
\partial\tilde{F}_{ZX}(\bwt(t)) &\partial\tilde{F}_{ZY}(\bwt(t)) &\partial\tilde{F}_{ZZ}(\bwt(t))
\end{bmatrix}.
\end{equation}
It follows from~\eqref{Phit} that $\Phit(t,u)$ has the partitioned form
\begin{equation*}
\Phit(t,u)=
\begin{bmatrix}
\Phit_{XX}(t,u) & 0 & \bzero^\top \\
\Phit_{YX}(t,u) & \Phit_{YY}(t,u) & \Phit_{YZ}(t,u) \\
\Phit_{ZX}(t,u) & \Phit_{ZY}(t,u) & \Phit_{ZZ}(t,u)
\end{bmatrix}.
\end{equation*}
Thus, using~\eqref{vecc} and~\eqref{one00B}, we have
\[
\bc(\taut,u)=\left(\bone \Phit_{XX}(\taut,u)+b(\taut)\bp \Phit_{YX}(\taut,u), b(\taut)\bp\Phit_{YY}(\taut,u), b(\taut)\bp\Phit_{YZ}(\taut,u)\right).
\]
We show in Appendix~\ref{app:phitcalculations} that
\begin{align*}
\left(\bone \Phit_{XX}(\taut,u)\right)_j&=\psi(\taut-u)^j \qquad (j=0,1,\ldots),\\
\left(\bp \,\Phit_{YX}(\taut,u)\right)_j&=\re^{-(\beta+\omega)(\taut-u)}j\left[\frac{(\beta+\omega+\gamma)
\re^{-(\beta+\omega)(\taut-u)}-\gamma}{\beta+\omega}\right]\\
&\,-\re^{-(\beta+\omega)(\taut-u)}
j \psi(\taut-u)^{j-1}\qquad (j=0,1,\ldots),\\
\bp \,\Phit_{YY}(\taut,u)&=\left(\frac{\beta+\omega+\gamma}{\beta+\omega}\re^{-2(\beta+\omega)(\taut-u)}-
\frac{\gamma}{\beta+\omega}\re^{-(\beta+\omega)(\taut-u)}\right) \bp,\\
\bp \,\Phit_{YZ}(\taut,u)&=-\frac{\beta+\omega+\gamma}{\beta+\omega}\re^{-(\beta+\omega)(\taut-u)}\left(1-\re^{-(\beta+\omega)(\taut-u)}\right),
\end{align*}
see~\eqref{bonePhitxxj},~\eqref{bonePhityxj},~\eqref{pPhityy} and~\eqref{pPhityz}, respectively.
Hence,
\begin{equation}
\label{boldc}
\bc(\taut,u)=\left(\bc_S(\taut,u), h_I(\taut,u) \bp, h_R(\taut,u)\right),
\end{equation}
where
\begin{align}
h_I(\taut,u)&=-\frac{b(\taut)}{\beta+\omega}\re^{-(\beta+\omega)(\taut-u)}\left[\gamma-(\beta+\omega+\gamma)\re^{-(\beta+\omega)(\taut-u)}\right],\label{hItauu}\\
h_R(\taut,u)&=h_I(\taut,u)-b(\taut)\re^{-(\beta+\omega)(\taut-u)},\label{hRtauu}\\
\bc_S(\taut,u)&=(\tilde{c}_0(\taut,u), \tilde{c}_1(\taut,u), \ldots)+h_I(\taut,u)\bp,\label{cStauu}
\end{align}
with
\[
\tilde{c}_j(\taut,u)=\psi(\taut-u)^j-b(\taut)\re^{-(\beta+\omega)(\taut-u)}j\psi(\taut-u)^{j-1} \qquad (j=0,1,\ldots).
\]
%\begin{equation}
%\label{psitdef}
%\psi(t)=p_{\omega}+(1-p_{\omega})\re^{-(\beta+\omega)t}.
%\end{equation}

We can now calculate $\sigma^2_i$ $(i=1,2,\ldots,5)$ using~\eqref{mrsigma2a}, \eqref{boldc} and~\eqref{intensityfun1},
and hence obtain $\SigmaMR(\beta,\omega,\gamma)$ using~\eqref{mrsigma2}.  The details are lengthy and are given in Appendix~\ref{app:mrvariance}.
$\Box$	
\end{proof}

Recall from Section~\ref{sec:finalsize} that if $\epsE>0$ then $\rho=1-\fde\left(\psit(z)\right)$, where $z$ is the unique solution in $[0,1)$ of~\eqref{zdef}, and if $\epsE=0$ and $R_0>1$ then $\rho=1-\fd\left(\psit(z)\right)$, where $z$ is the unique solution in $[0,1)$ of~\eqref{zdef} with $\fde'$ replaced by $\fd'$; cf.~Proposition~\ref{prop:final}. 

\begin{conj}[CLT for of final size of epidemic on MR graph with dropping]
\label{conj:mrCLT}
\mbox{}
\begin{itemize}
\item[(a)]
Suppose that $\epsE>0, \dmax<\infty$ and $z>0$. Then,
\begin{equation}
\label{CLTMRE}
\sqrt{N}\left(N^{-1}\TnMR - \rho \right) \convD N(0,\SigmaMR(\beta,\omega,\gamma)) \quad\mbox{as } N \to \infty,
\end{equation}
where
$\SigmaMR(\beta,\omega,\gamma)$ is given by Proposition~\ref{prop:mrVar}.
\item[(b)]
Suppose that $\epsE=0, \dmax<\infty, R_0>1$ and $z>0$.  Then, in the event of a major outbreak, \eqref{CLTMRE} holds with $D_{\epsilon}$ replaced by $D$ in~\eqref{btz}-\eqref{ID}.
\end{itemize}
\end{conj}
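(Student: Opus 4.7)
The plan is to derive part~(a) by applying the (yet-unproved) Conjecture~\ref{conj:MRcltfin} through a continuous mapping. Observe that
\[
N^{-1}\TnMR = (\bone,\bzero,0)\cdot\bigl[N^{-1}\bWNt(0)-N^{-1}\bWNt(\tautN)\bigr]
\]
is a continuous linear functional of the state vector, and that the deterministic limit $\bwt(\taut)$ satisfies $(\bone,\bzero,0)\cdot[\bwt(0)-\bwt(\taut)]=\sum_{i}[\xt_i(0)-\xt_i(\taut)]=\rho$ by~\eqref{finalsus1}. Assuming~\eqref{CLT2a} holds, composition with this linear functional gives immediately
\[
\sqrt N\bigl(N^{-1}\TnMR-\rho\bigr)\convD N\bigl(0,(\bone,\bzero,0)\,\Sigma_{{\rm MR}}(\bone,\bzero,0)^\top\bigr),
\]
and the right-hand variance is exactly $\SigmaMR(\beta,\omega,\gamma)$ as defined in~\eqref{equ:SigmaMR}. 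The almost-explicit evaluation of this quadratic form is already supplied by Proposition~\ref{prop:mrVar}, so once Conjecture~\ref{conj:MRcltfin} is granted, part~(a) is an essentially bookkeeping consequence.

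For part~(b), where $\epsE=0$, the LLN and CLT cannot be started at $t=0$ since $\yt(0)=0$. The plan is the standard two-stage device used for epidemics with a trace of infection (cf.\ Ball and Neal~\cite{Ball:2008}, Janson et al.~\cite{Janson:2014}). First, couple the early phase of $\{\bWN(t)\}$ with a supercritical branching process whose mean-offspring number is $R_0>1$; conditional on survival of this branching process (i.e.\ a major outbreak), let $\sigma_N$ be the first time the fraction of infectious stubs reaches some small threshold $\eta>0$. At time $\sigma_N$ the scaled state $N^{-1}\bWN(\sigma_N)$ converges almost surely to a deterministic configuration $\bw^\star_\eta$ lying on the deterministic trajectory started from a trace, and at leading order the susceptible degree distribution is still $\{p_k\}$ (which is why $\fde$ is replaced by $\fd$ in~\eqref{btz}--\eqref{ID}). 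Apply part~(a) restarted from $\sigma_N$, and then take $\eta\downarrow 0$; because the branching-process phase contributes only $O(1)$ fluctuations to the final size (while the CLT normalisation is $\sqrt N$), one recovers the stated limit. The main technical work is to verify that the initial-phase fluctuations do not leak into the limiting variance, which is handled by a tightness argument and the standard martingale bounds for the branching approximation.

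The principal obstacle is that the entire argument rests on Conjecture~\ref{conj:MRcltfin}, whose proof is itself open. As explained in Remark~\ref{rmk:MRcltfin1}, the functional CLT~\eqref{FCLTt} is available only on intervals $[0,t_0]$ with $t_0<\taut$, whereas an application of Ethier and Kurtz~\cite{Ethier:1986}, Theorem~11.4.2, at the crossing time $\tautN$ requires the weak convergence on some interval strictly containing $\taut$. Any such extension forces the state space to include configurations with $\yEt<0$, but on any neighbourhood of $\{y_E=0\}$ the intensities $\tilde\beta_{\bl}$ in~\eqref{intensityfun1} blow up and the drift $\tilde F$ in~\eqref{driftF1} fails to be Lipschitz, so the hypotheses of Theorem~11.2.3 cannot be met by a naive extension. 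Closing this gap along the lines of Ball~\cite{Ball:2018}, by treating the crossing time $\tautN$ directly via tailored martingale estimates rather than via a global functional CLT, is the hard part; once accomplished, parts~(a) and~(b) above become routine consequences.
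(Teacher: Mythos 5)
Your argument coincides with the paper's own justification (Remark~\ref{rmk:eps=0}): part~(a) is obtained by composing Conjecture~\ref{conj:MRcltfin} with the linear functional $(\bone,\bzero,0)$ and reading off the variance via~\eqref{equ:SigmaMR} and Proposition~\ref{prop:mrVar}, while part~(b) is exactly the standard ``let $\epsE\downarrow 0$ conditional on a major outbreak'' device (your two-stage branching-process coupling is the mechanism behind the references the paper cites and behind Ball~\cite{Ball:2018}). You also correctly isolate the genuine open point --- extending the functional CLT for the time-changed process past $\taut$, where the intensities blow up and $\tilde F$ fails to be Lipschitz (Remark~\ref{rmk:MRcltfin1}) --- which is precisely why the statement remains a conjecture.
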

\begin{remark}[Proving Conjecture~\ref{conj:mrCLT}]
\label{rmk:eps=0}
Part (a) of Conjecture~\ref{conj:mrCLT} follows immediately from Conjecture~\ref{conj:MRcltfin} and Proposition~\ref{prop:mrVar};
see Remark~\ref{rmk:MRcltfin1} for how Conjecture~\ref{conj:MRcltfin} might be proved.
Part (b) of Conjecture~\ref{conj:mrCLT} is concerned with epidemics started by a trace of infection, i.e.~with $\epsE=0$.
Similar CLTs for the final size of a wide range of SIR epidemics (e.g.~von Bahr and Martin-L{\"o}f~\cite{vBahr:1980}, Scalia-Tomba~\cite{STomba:1985} and Ball and Neal~\cite{Ball:2003}) suggest that letting $\epsE \downarrow 0$ in the CLT with $\epsE>0$ yields the correct CLT when $\epsE=0$ for epidemics that
become established and lead to a major outbreak.  This is proved for the SIR epidemic without dropping of edges on configuration model networks in Ball~\cite{Ball:2018},
using the modified epidemic model outlined in Remark~\ref{rmk:MRcltfin1}. A similar proof should hold for the present model with dropping of edges.  
\end{remark}

\begin{remark}[The condition $z>0$]
\label{rmk:epsE=01}
%The condition $\rho<1$ is required to apply the theorems in Ethier and Kurtz~\cite{Ethier:1986}, Chapter 11; see Appendix~\ref{app:kurtz}.  Clearly, a different form of asymptotic distribution for $\TnMR$ is needed when $\rho=1$.  In most applications $\rho<1$.  However, if $\gamma=\omega=0$ (see Section~\ref{sec:configuration}) then $\rho=1$ is clearly possible, e.g.~when every individual has degree $k$ for some fixed $k \ge 3$.
The condition $z>0$ in Proposition~\ref{prop:mrVar} and Conjecture~\ref{conj:mrCLT} is required to ensure that $\taut<\infty$; recall from Section~\ref{sec:finalsize} that $z=\re^{-(\beta+\omega)\taut}$.  Note from~\eqref{finalsus1} that $z>0$ implies $\rho<1$, so the LLN and functional CLT in Ethier and Kurtz~\cite{Ethier:1986}, Chapter 11, hold for both the original and random time-scale transformed processes $\{\bWN(t)\}$ and $\{\bWNt(t)\}$ provided there is a maximum degree; see Appendix~\ref{app:kurtz}.  Further, as explained in Appendix~\ref{app:tautdelta}, if $\epsE>0$ then $z=0$ if and only if
$\gamma=\omega=\fde'(0)=0$.  Now $\fde'(0)=0$ if and only if $p_1-\epsilon_1=0$.  Thus $z>0$ unless there is no recovery of
infectives, no droping of edges and the limiting fraction of degree-$1$ susceptibles is $0$.  The same conclusion holds when 
$\epsE=0$. 
\end{remark}

\section{Extension to iid degrees: epidemics on an NSW random graph}
\label{sec:iiddegrees}
In this section we assume that the underlying network is constructed from a sequence $D_1,D_2,\ldots$ of independent and identically distributed copies of the random variable $D$, which describes the degree of a typical individual. The random variables $D_1,D_2,\ldots,D_N$ are used to construct a network of $N$ individuals, yielding a realisation of NSW random graph. The almost sure convergence results described in Theorem~\ref{thm:as} (and the corresponding time-transformed almost sure convergence result of Section~\ref{sec:final}) still hold for the present model, as noted previously, but the functional CLT and the CLT for the final size (Theorem~\ref{thm:MRclt} and Conjecture~\ref{conj:mrCLT}) need modifying, as the variability in the empirical degree distribution of the random network (and hence in the initial conditions for the effective degree process $\{\bWN(t)\}$) is of the same order of magnitude as that of the process itself. The modified results for epidemics on an NSW random graph are presented in Theorem~\ref{thm:NSWtemporalCLT} and Conjecture~\ref{conj:nswCLT}. In order to prove and motivate, respectively, these results we need 
a version of the functional CLT (Theorem 11.2.3) in Ethier and Kurtz~\cite{Ethier:1986} that allows for asymptotically random initial conditions; see Theorem~\ref{KurtzFCLTrandinit} below, which may be of  more general interest beyond the present paper.
 Like the above-mentioned Theorem 11.2.3, Theorem~\ref{KurtzFCLTrandinit} assumes a 
finite-dimensional state space, which for our application amounts to assuming that $\dmax < \infty$.

The limiting Gaussian process $\{\bV(t)\}$ in Theorem~\ref{thm:MRclt} admits the It\^o integral representation
\begin{equation}
\label{ItoV}
\bV(t)=\Phi(t,0)\bV(0)+\int_0^t \Phi(t,s) \,{\rm d}\bU(s) \qquad(t \ge 0),
\end{equation}
where $\{\bU(t)\}$ is a time-inhomogeneous Brownian motion (see  Ethier and Kurtz~\cite{Ethier:1986}, Theorem 11.2.3, page 458) and $\bV(0)=\lim_{N \to \infty} \sqrt{N}\left(\bWN(0)-\bw(0)\right)$. (To aid connection with Ethier and Kurtz~\cite{Ethier:1986}, $\bV(t)$ and $\bU(t)$ are now column vectors.) In Ethier and Kurtz~\cite{Ethier:1986}, Theorem 11.2.3, $\bV(0)$ is nonrandom. In Theorem~\ref{KurtzFCLTrandinit} below, we allow $\bV(0)$ to be random.

\begin{thm}[Functional CLT for process with asymptotically random initial conditions]
\label{KurtzFCLTrandinit}
\mbox{}\\Suppose that the conditions of Ethier and Kurtz~\cite{Ethier:1986}, Theorem 11.2.3, are satisfied except that \\
$\sqrt{N}\left(N^{-1}\bWN(0)-\bw(0)\right) \convD
\bV(0)$ as $N \to \infty$, where $\bV(0) \sim N(0,\Sigma_0)$.  Then
\begin{equation}
\label{FCLTrandinit}
\sqrt{N}\left(\{N^{-1}\bWN(t)\}-\{\bw(t)\}\right) \Rightarrow \{\bV(t)\} \quad\mbox{as } N \to \infty,
\end{equation}
where $\{\bV(t)\}=\{\bV(t):t \ge 0\}$ is a zero-mean
Gaussian process with covariance function given, for $t_1,t_2 \ge 0$, by
\begin{equation}
\label{covinit}
\cov\left(\bV(t_1), \bV(t_2)\right)=\Phi(t_1,0)\Sigma_0 \Phi(t_2,0)^{\top} + \int_0^{\min(t_1,t_2)}\Phi(t_1,u)
G(\bw(u))\Phi(t_2,u) ^{\top} \,{\rm d}u.
\end{equation}
\end{thm}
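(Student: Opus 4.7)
The plan is to reduce the random-initial-condition case to the fixed-initial-condition case of Ethier and Kurtz by a coupling argument together with a Skorohod representation, exploiting the fact that the noise driving the process can be chosen independent of the initial value.

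First I would recall the Poisson-process representation used in the proof of Ethier and Kurtz~\cite{Ethier:1986}, Theorem 11.2.3: on a single probability space, take independent unit-rate Poisson processes $Y_{\bl}$, $\bl\in\Delta$, independent of $\bWN(0)$, and write
\begin{equation*}
\bWN(t)=\bWN(0)+\sum_{\bl\in\Delta}\bl\,Y_{\bl}\!\left(N\int_0^t\beta_{\bl}(N^{-1}\bWN(s))\,{\rm d}s\right).
\end{equation*}
Setting $\bV^N(t)=\sqrt{N}(N^{-1}\bWN(t)-\bw(t))$ and $M_{\bl}^N(u)=N^{-1/2}[Y_{\bl}(Nu)-Nu]$, this yields the decomposition
\begin{equation*}
\bV^N(t)=\bV^N(0)+\bM^N(t)+\int_0^t\partial F(\bw(s))\,\bV^N(s)\,{\rm d}s+R^N(t),
\end{equation*}
where $\bM^N(t)=\sum_{\bl}\bl\,M_{\bl}^N(\int_0^t\beta_{\bl}(N^{-1}\bWN(s))\,{\rm d}s)$ and $R^N(t)$ is a remainder that is shown to vanish uniformly on compacts under the hypotheses of Ethier and Kurtz. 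This part of the argument is identical to theirs, since it only uses boundedness of the jumps, smoothness of $F$ on a neighbourhood of $\{\bw(t):t\le T\}$, and a Gronwall estimate; none of it depends on whether $\bV^N(0)$ is deterministic or random.

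Next I would establish joint weak convergence of $(\bV^N(0),\bM^N)$ to $(\bV(0),\bU)$ with the two limit pieces independent. Since the Poisson processes $Y_{\bl}$ are, by construction, independent of $\bWN(0)$, the pair $(\bV^N(0),\bM^N)$ has a product structure up to the rescaled intensities $\int_0^t\beta_{\bl}(N^{-1}\bWN(s))\,{\rm d}s$, which by the LLN converge uniformly to the deterministic $\int_0^t\beta_{\bl}(\bw(s))\,{\rm d}s$ (the LLN also transferring to the random-initial-condition setting by the same Skorohod-coupling trick below). By the martingale FCLT (used in Ethier and Kurtz's proof) $\bM^N\Rightarrow\bU$, a time-inhomogeneous Brownian motion with covariance derived from $G(\bw(u))$, and this convergence is independent of $\bV^N(0)$ by construction. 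Together with $\bV^N(0)\convD\bV(0)\sim N(0,\Sigma_0)$ this yields joint weak convergence $(\bV^N(0),\bM^N)\Rightarrow(\bV(0),\bU)$ with $\bV(0)$ and $\bU$ independent.

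Now I would apply the Skorohod representation theorem to put these pairs on a common probability space with almost sure convergence, then use the linear equation above: by Gronwall the map $(v,m)\mapsto V$, where $V$ solves $V(t)=v+m(t)+\int_0^t\partial F(\bw(s))V(s)\,{\rm d}s$, is continuous from $\mathbb{R}^d\times D_{\mathbb{R}^d}[0,\infty)$ to $D_{\mathbb{R}^d}[0,\infty)$, with explicit solution $V(t)=\Phi(t,0)v+\int_0^t\Phi(t,s)\,{\rm d}m(s)$ (after an integration by parts for the stochastic integral term, which is justified by continuity of the paths of $\bU$ under Skorohod coupling and the standard It\^o representation in the limit). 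Combined with the negligibility of $R^N$ this delivers~\eqref{FCLTrandinit} with $\bV(t)=\Phi(t,0)\bV(0)+\int_0^t\Phi(t,s)\,{\rm d}\bU(s)$. The covariance formula~\eqref{covinit} then follows by bilinearity and the independence of $\bV(0)$ and $\bU$: the first term is $\Phi(t_1,0)\Sigma_0\Phi(t_2,0)^\top$, and the second is computed by It\^o isometry exactly as in Ethier and Kurtz.

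The main obstacle I expect is the independence claim for $\bV(0)$ and $\bU$, which has to be argued carefully: it is not automatic from the marginal Ethier--Kurtz theorem but follows from the explicit construction via independent Poisson processes and the fact that the scaled martingale increments of $\bM^N$ only depend on $\bWN(0)$ through the intensity functionals $\int_0^t\beta_{\bl}(N^{-1}\bWN(s))\,{\rm d}s$, which converge to deterministic limits. Once this independence is in hand, the rest is bookkeeping.
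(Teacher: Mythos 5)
Your proposal is correct and follows essentially the same route as the paper's proof: both rely on the independent-Poisson-process construction of Ethier and Kurtz to deduce that the limiting time-inhomogeneous Brownian motion $\{\bU(t)\}$ is independent of $\bV(0)$, then read off the covariance from the It\^o representation $\bV(t)=\Phi(t,0)\bV(0)+\int_0^t\Phi(t,s)\,{\rm d}\bU(s)$. The paper simply states this at a high level, whereas you have (correctly) filled in the intermediate steps (joint convergence, Skorohod representation, continuity of the solution map).
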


\begin{proof}
It is easily seen that the proof of Ethier and Kurtz~\cite{Ethier:1986}, Theorem 11.2.3, continues to hold in this more general setting.  In particular, the limiting process satisfies~\eqref{ItoV}, where now $\bV(0) \sim N(0,\Sigma_0)$, so $\{\bV(t)\}$ is a zero-mean Gaussian process.  Further, the time-inhomogeneous Brownian motion $\{\bU(t)\}$ arises as the weak limit, as $N \to \infty$, of the (suitably centred and scaled) Poisson processes used to construct realisations of $\{\bWN(t)\}$ $(N=1,2,\ldots)$, and hence is independent of $\bV(0)$.  The covariance function in~\eqref{covinit} then follows immediately from~\eqref{ItoV}.
$\Box$	
\end{proof}

\begin{remark}[Computing the asymptotic variance]
\label{rem:NSWasympVar}
Setting $t_1=t_2=t$ in~\eqref{covinit} and differentiating as in Remark~\ref{rmk:cltvart} shows that 
$\Sigma(t) = \var(\bV(t))$ satisfies the ODE~\eqref{diffSigma} but now with initial condition
$\Sigma(0)=\Sigma_0$.
%We can derive a system of ODEs for the asymptotic variance from Theorem~\ref{KurtzFCLTrandinit} in the same way as in the MR garph situation in Remark~\ref{rmk:cltvart}. Here equation~\eqref{covinit} yields that
%\begin{equation*}
%\Sigma(t) = \var(\bV(t)) = \Phi(t,0) \Sigma_0 \Phi(t,0)^\top + \int_0^t \Phi(t,u) G(\bw(u)) \Phi(t,u)^\top \, {\rm d}u.
%\end{equation*}
%It then follows from this and~\eqref{Phi} (the ODE defining $\Phi$) that
%\begin{align}
%\frac{d}{dt} \Sigma_{\rm NSW}(t) & = \frac{d}{dt} \left( \Phi(t,0) \Sigma_0 \Phi(t,0)^\top \right) + \frac{d}{dt} \Sigma_{\rm MR}(t) \nonumber \\
% & = \partial F(\bw(t)) \Sigma_0 + \Sigma_0  \partial F(\bw(t))^\top + \frac{d}{dt} \Sigma_{\rm MR}(t) \nonumber \\
% & = \partial F(\bw(t)) (\Sigma_0+\Sigma(t)) + (\Sigma_0+\Sigma(t))  \partial F(\bw(t))^\top + G(\bw(t)). \label{diffSigmaNSW}
%\end{align}
%\red{Someone please check my (Dave's) matrix/vector differentiation here! Details could be omitted, just giving $\frac{d}{dt} \Sigma_{\rm NSW}(t) =$ the final equation.}
\end{remark}

\begin{remark}[Non-Gaussian limiting initial conditions]
The covariance function~\eqref{covinit} also holds when $\bV(0)$ is non-Gaussian, provided $\E[\bV(0)]=0$ and $\var(\bV(0))=\Sigma_0$, though of course $\{\bV(t)\}$ is no longer Gaussian.  
%Note that, writing $\bw(t)=(w_1(t),w_2(t),\ldots)$, with this labelling of states}
%\[
%\phi_{ij}(t,0)=\dfrac{\partial w_i(t)}{\partial w_j(0)} \qquad(i,j=1,2,\ldots).
%\]
%It follows that Theorem~\ref{KurtzFCLTrandinit} puts the approximation results in Pollett {\it et al.}~\cite{Pollett:2010} in a fully rigorous asymptotic framework.}
\end{remark}

\begin{thm}[Functional CLT for epidemic on NSW graph with dropping]
\label{thm:NSWtemporalCLT}
\mbox{}\\Suppose that $\sqrt{N}\left(N^{-1}(\bXN(0),\bYN(0),Z^N_E(0))-(\bx(0),\by(0),z_E(0))\right) \convD N(0,\Sigma_0)$ as $N\to\infty$. Then the same functional CLT holds as in the MR graph situation (Theorem~\ref{thm:MRclt}), but with the covariance function of $\{\bV(t)\}$ changed in accordance with equation~\eqref{covinit} and Remark~\ref{rem:NSWasympVar} to reflect the randomness in the initial conditions.
\end{thm}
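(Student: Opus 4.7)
The plan is to apply Theorem~\ref{KurtzFCLTrandinit} more or less directly. The key observation is that the transition intensities of $\{\bWN(t)\}$ in Section~\ref{sec:ED} depend only on the current state of the chain, so conditional on $\bWN(0)$ the dynamics are \emph{identical} in the MR and NSW versions of the model. Hence the entire density dependent population process apparatus established for the MR case in Appendix~\ref{app:kurtz} --- the drift $F$ in~\eqref{driftF}, the diffusion matrix $G$, the flow $\Phi(t,u)$ solving~\eqref{Phi} --- carries over verbatim under $\dmax<\infty$. The LLN limit $\{\bw(t)\}$ is also the same, as Theorem~\ref{thm:as} already asserted for both graph models. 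The only difference between the MR and NSW cases is the scale of randomness at $t=0$: in the NSW model the initial empirical degree distribution fluctuates at order $N^{-1/2}$, which is the same order as the fluctuations of the process about $\{\bw(t)\}$ for $t>0$.

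Next I would check the hypotheses of Theorem~\ref{KurtzFCLTrandinit}. The regularity conditions of Ethier and Kurtz~\cite{Ethier:1986}, Theorem 11.2.3 (Lipschitz drift, bounded jumps, and so on), hold on $H_\epsilon^N$ by the very same argument as in the MR proof, since they concern only the intensity functions $\beta_{\bl}$ and not the law of the initial state. The remaining hypothesis is the convergence of the rescaled initial fluctuation to a Gaussian, which is exactly what the theorem assumes. Applying Theorem~\ref{KurtzFCLTrandinit} therefore yields the weak convergence~\eqref{FCLT} with $\{\bV(t)\}$ a zero-mean Gaussian process whose covariance is given by~\eqref{covinit}. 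The extra term $\Phi(t_1,0)\Sigma_0\Phi(t_2,0)^{\top}$ compared with Theorem~\ref{thm:MRclt} reflects the propagation, through the linearised deterministic flow, of the $O(N^{-1/2})$ randomness in the initial degree sequence of the NSW graph.

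The main step deserving care --- and where I would be most explicit in a full write-up --- is the independence assumption underlying the clean additive form of~\eqref{covinit}. The It\^o representation~\eqref{ItoV} requires that the limiting initial Gaussian $\bV(0)$ be independent of the driving time-inhomogeneous Brownian motion $\{\bU(t)\}$; this is automatic in the NSW construction because $\bV(0)$ is generated solely by the iid degrees $D_1,D_2,\dots$, whereas $\{\bU(t)\}$ is the weak limit of the (centred and scaled) independent Poisson clocks governing the infection, warning, and recovery events, and the two sources of randomness are set up on a common probability space as independent. Once this independence is noted, the proof reduces entirely to Theorem~\ref{KurtzFCLTrandinit} and no further calculation is needed.
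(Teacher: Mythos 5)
Your proposal is correct and follows essentially the same route as the paper: the paper's proof simply invokes Theorem~\ref{KurtzFCLTrandinit} and notes that the verification of the Ethier--Kurtz regularity conditions is identical to that carried out for Theorem~\ref{thm:MRclt} in Appendix~\ref{app:kurtz}, since the intensity functions do not depend on whether the graph is MR or NSW. Your additional remark on the independence of $\bV(0)$ from the driving Brownian motion is a point the paper places inside the proof of Theorem~\ref{KurtzFCLTrandinit} itself rather than here, but it is the right observation.
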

\begin{proof}
The details of the proof, applying Theorem~\ref{KurtzFCLTrandinit}, are exactly the same as those in Appendix~\ref{app:kurtz} where Theorem 11.2.3 of Ethier and Kurtz~\cite{Ethier:1986} is applied to prove Theorem~\ref{thm:MRclt}.
\end{proof}

\begin{remark}[The asymptotic variance matrix $\Sigma_0$]
\label{rem:NSWSigma0} 
Note that $\Sigma_0$ in Theorem~\ref{thm:NSWtemporalCLT} depends on how the initial infectives are chosen from the population. An example and some discussion can be found in Section~\ref{sec:implementation}. Also note that Theorem~\ref{thm:NSWtemporalCLT} as presented allows for the possibility of some initially recovered individuals in the population. This is to simplify the presentation of the theorem; the assumption of no initially recovered individuals implies that $Z^N_E(0)=0$, from which it follows that $z_E(0)=0$ and the last row and column of $\Sigma_0$ have all entries 0.
\end{remark}

Next, we use Theorem~\ref{KurtzFCLTrandinit} to conjecture a CLT for the final size of the epidemic on an NSW random graph. For $N=1,2,\ldots$, let $\DN$ denote a random variable with distribution given by the empirical distribution of
$D_1,D_2,\ldots,D_N$, so
\begin{equation}
\label{eq:empD}
\Pp\left(\DN=k\right)=N^{-1}\sum_{i=1}^N 1_{\{D_i=k\}}\qquad (k=0,1,\ldots).
\end{equation}
For $N=1,2,\ldots$, let $\TnNSW$ be the final size of the epidemic on an NSW configuration model random graph having $N$ vertices.  We consider epidemics initiated by a trace of infection and assume that the variability in the initial conditions
is owing entirely to the variability in $\DN$.
\begin{conj}[CLT for final size of epidemic on NSW graph with dropping]
\label{conj:nswCLT}
\mbox{}\\Suppose that $\epsE=0$, $\dmax<\infty$, $R_0>1$ and $z>0$.  Then, in the event of a major outbreak,
\begin{equation}
\label{CLTNSWE}
\sqrt{N}\left(N^{-1}\TnNSW - \rho \right) \convD N(0,\SigmaNSW(\beta,\omega,\gamma)) \quad\mbox{as } N \to \infty,
\end{equation}
where
\begin{equation}
\label{sigmatNSW1}
\sigma_{\rm NSW}^2(\beta, \omega, \gamma)=\sigma_{\rm MR}^2(\beta, \omega, \gamma)+\sigma_0^2(\beta, \omega, \gamma),
\end{equation}
with $\sigma_{\rm MR}^2(\beta, \omega, \gamma)$ given by~\eqref{MRCLTvar} (replacing $D_{\epsilon}$ by $D$ in~\eqref{btz}-\eqref{ID}) and
\begin{align}
\label{sigma0}
&\sigma_0^2(\beta, \omega, \gamma)=\nonumber\\
&f_D\left(\psit(z)^2\right)-(1-\rho)^2+\btz^2 \psit(z)^2 z^2 f_D''\left(\psit(z)^2\right)\nonumber\\
&+\btz f_D'\left(\psit(z)^2\right)z\left[z\btz-2\psit(z)\right]\nonumber\\
&+\btz^2z^2\left(\frac{(\beta+\omega+\gamma)z-\gamma}{\beta+\omega}\right)^2\left(\sigma_D^2+\mud^2\right)\nonumber\\
&-2\btz^2 z^2 \mud \left(\frac{(\beta+\omega+\gamma)z-\gamma}{\beta+\omega}\right)
\left[\frac{(\beta+\omega+\gamma)z-\gamma}{\beta+\omega}+\frac{(\beta+\omega+\gamma)}{\beta}\psit(z)\right].
\end{align}
\end{conj}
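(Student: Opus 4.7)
The plan is to replicate the argument used for Proposition~\ref{prop:MRcltfin} and Conjecture~\ref{conj:mrCLT}, but applied to the time-transformed process $\{\bWNt(t)\}$ and using Theorem~\ref{KurtzFCLTrandinit} in place of Ethier and Kurtz~\cite{Ethier:1986}, Theorem 11.2.3, so that the randomness in the initial empirical degree distribution is absorbed into the limit law. The first step is to identify the asymptotic law of the initial conditions. For an NSW graph started by a trace of infection ($\epsE=0$), essentially all $N$ vertices are initially susceptible and $X_i^N(0)$ equals the number of vertices of degree $i$. By the multivariate CLT for a multinomial empirical distribution based on iid copies of $D^{(N)}$ as in~\eqref{eq:empD},
\[
\sqrt{N}\bigl(N^{-1}(X_0^N(0),\ldots,X_{\dmax}^N(0))-(p_0,\ldots,p_{\dmax})\bigr)\convD \bZ,
\]
where $\bZ$ is zero-mean multivariate normal with $\cov(Z_i,Z_j)=p_i\delta_{ij}-p_ip_j$. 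The corresponding $Y$ and $Z_E$ blocks of $\Sigma_0$ are zero, giving a limiting $\Sigma_0$ of multinomial shape on the susceptible coordinates.

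The second step is to apply Theorem~\ref{KurtzFCLTrandinit} to $\{\bWNt(t)\}$ on $[0,t_0]$ for any $t_0<\taut$. The limiting process $\{\bVt(t)\}$ is zero-mean Gaussian with covariance
\[
\cov(\bVt(t_1),\bVt(t_2))=\Phit(t_1,0)\Sigma_0\Phit(t_2,0)^\top+\int_0^{\min(t_1,t_2)}\Phit(t_1,u)\Gt(\bwt(u))\Phit(t_2,u)^\top\,{\rm d}u.
\]
The third step is to apply Ethier and Kurtz~\cite{Ethier:1986}, Theorem 11.4.2, to the crossing time $\tautdN$ as at~\eqref{CLT1}, and then (assuming the $\delta\downarrow 0$ extension that underlies Conjecture~\ref{conj:MRcltfin}) to pass to $\tautN$. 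Projecting by $(\bone,\bzero,0)$, the asymptotic variance of $N^{-1/2}\TnNSW$ decomposes additively as
\[
\SigmaNSW(\beta,\omega,\gamma)=\bc(\taut,0)\Sigma_0\bc(\taut,0)^\top+\SigmaMR(\beta,\omega,\gamma),
\]
where $\bc(\taut,u)=(\bone,\bzero,0)B\Phit(\taut,u)$ is exactly the vector from~\eqref{vecc}; the integral contribution reproduces $\SigmaMR(\beta,\omega,\gamma)$ computed as in Proposition~\ref{prop:mrVar} (with $\fd$ in place of $\fde$, since $\epsE=0$), so it remains to show that the first term equals $\sigma_0^2(\beta,\omega,\gamma)$.

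For the fourth step, one inserts the explicit form~\eqref{boldc} of $\bc(\taut,0)$, using $\psi(\taut)=\psit(z)$ and the values of $h_I(\taut,0)$, $h_R(\taut,0)$, $\tilde{c}_j(\taut,0)$ supplied by~\eqref{hItauu}--\eqref{cStauu}. Because the $Y$ and $Z$ blocks of $\Sigma_0$ vanish, only the susceptible components contribute, and the multinomial structure of $\Sigma_0$ turns the quadratic form into $\E[g(D)^2]-\E[g(D)]^2$ for the polynomial $g(k)=\psit(z)^k-\btz z k\psit(z)^{k-1}+h_I(\taut,0)k$. Standard pgf manipulations then convert $\E[g(D)^2]$ into combinations of $f_D(\psit(z)^2)$, $f_D'(\psit(z)^2)$, $f_D''(\psit(z)^2)$, while the linear part of $g$ supplies the terms in $\mud$ and $\sigma_D^2+\mud^2$; tracking coefficients yields exactly~\eqref{sigma0}.

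The main obstacles are the same two issues noted in Remarks~\ref{rmk:MRcltfin1} and~\ref{rmk:eps=0}: first, Theorem 11.4.2 of Ethier and Kurtz does not apply directly at $\taut$ because $\tilde{F}$ fails to be Lipschitz near $\yEt=0$ and the intensities $\tilde{\beta}_{\bl}$ blow up there, so one needs the modified-epidemic construction of Ball~\cite{Ball:2018} to justify the $\delta\downarrow 0$ passage; second, conditioning on a major outbreak starting from a trace of infection ($\epsE=0$) requires coupling with initial infectives of order $\sqrt{N}$ and then letting that amount vanish, in the spirit of von Bahr and Martin-L\"of~\cite{vBahr:1980} and Ball~\cite{Ball:2018}. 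Everything else is a routine—though lengthy—computation paralleling Proposition~\ref{prop:mrVar}, with the novelty concentrated in the extra quadratic form $\bc(\taut,0)\Sigma_0\bc(\taut,0)^\top$ arising from the NSW initial variability.
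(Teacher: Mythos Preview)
Your proposal is correct and follows essentially the same route as the paper: apply Theorem~\ref{KurtzFCLTrandinit} to the time-transformed process, invoke Ethier--Kurtz Theorem~11.4.2 at $\tautdN$, assume the $\delta\downarrow 0$ and $\epsE\downarrow 0$ extensions (the two acknowledged gaps), and obtain the additive decomposition $\SigmaNSW=\SigmaMR+\sigma_0^2$. The only cosmetic difference is in evaluating $\sigma_0^2$: you compute $\bc(\taut,0)\Sigma_0\bc(\taut,0)^\top$ directly as $\var(g(D))$ via the multinomial form of $\Sigma_0$, whereas the paper reinterprets $(\bone,b(\taut)\bp,0)\Sigmat^0(\taut)(\bone,b(\taut)\bp,0)^\top$ as the asymptotic variance of $\xt^N(\taut)+b(\taut)\yEt^N(\taut)$, i.e.\ the deterministic solution with random NSW initial data, and then plugs in the closed forms~\eqref{xt} and~\eqref{yet} with $D$ replaced by $\DN$ to arrive at the same PGF identities.
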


We now give the argument leading to this conjecture.  Suppose, for the time being, that $\epsE>0$ and 
consider the random time-scale transformed process $\{\bWNt(t)\}$, defined in
Section~\ref{sec:final}, but now for the epidemic on an NSW network.  Using~\eqref{FCLTt}
and Theorem~\ref{KurtzFCLTrandinit}, for any $t_0 \in [0,\taut)$,
\begin{equation*}
\sqrt{N}\left(\{N^{-1}\bWNt(t):0 \le t \le t_0\}-\{\bwt(t):0 \le t \le t_0\}\right) \Rightarrow \{\bVt_{\rm NSW}(t)\} \quad\mbox{as } N \to \infty,
\end{equation*}
where $\{\bVt_{\rm NSW}(t):0 \le t \le t_0\}$ is a zero-mean
Gaussian process with variance-covariance matrix at time $t$ given by
\begin{equation}
\label{equ:SigmatNSW}
\SigmatNSW(t)=\SigmatMR(t)+\Sigmat^0(t);
\end{equation}
$\SigmatMR(t)$ is given by~\eqref{Sigma1} and $\Sigmat^0(t)=\Phi(t,0)\Sigma_0 \Phi(t,0)^{\top}$, with
$\Sigma_0$ being defined as in Theorem~\ref{KurtzFCLTrandinit}.
Then arguing as in the derivation of Proposition~\ref{prop:MRcltfin} yields, for any $\delta \in (0,y_E(0))$,

\begin{equation}
\label{CLT2NSW}
\sqrt{N}\left(N^{-1} \bWN(\tau^N_{\delta})-\bw(\tau_{\delta})\right) \convD N\left(\bzero, \Sigma_{{\rm NSW},\delta} \right),
\quad\mbox{as } N \to \infty,
\end{equation}
where
\begin{equation}
\label{equ:SigmadNSW}
\Sigma_{{\rm NSW},\delta} =\Bd \SigmatNSW(\tautd) \Bd^{\top}.
\end{equation}

We now assume that~\eqref{CLT2NSW} extends to the case $\delta=0$, so~\eqref{CLTNSWE} holds with 
\[
\SigmaNSW(\beta,\omega,\gamma)=(\bone, \bzero,0)\Sigma_{{\rm NSW},0}(\bone, \bzero,0)^{\top};
\]
cf.~\eqref{equ:SigmaMR}.
Thus, using~\eqref{equ:SigmatNSW} and~\eqref{equ:SigmadNSW},
\begin{equation}
\label{sigmatNSW}
\sigma_{\rm NSW}^2(\beta, \omega, \gamma)=\sigma_{\rm MR}^2(\beta, \omega, \gamma)+\sigma_0^2(\beta, \omega, \gamma),
\end{equation}
where
\begin{eqnarray}
\sigma_0^2(\beta, \omega, \gamma)&=&(\bone, \bzero,0)B \Sigmat^0(\taut) B^{\top} (\bone, \bzero,0)^{\top}\nonumber\\
&=&(\bone, b(\taut) \bp,0)\Sigmat^0(\taut)(\bone, b(\taut) \bp,0)^{\top},\label{equ:sigmat0NSW}
\end{eqnarray}
using~\eqref{one00B}.

We now assume that the above extends in the obvious way to $\epsE=0$ and calculate the resulting asymptotic variance
$\sigma_{\rm NSW}^2(\beta, \omega, \gamma)$.
Write
\begin{equation}
\label{sigma0a}
\Sigmat^0(\taut)=
\begin{bmatrix}
\Sigmat^0_{XX}(\taut) & \Sigmat^0_{XY}(\taut) & \Sigmat^0_{XZ}(\taut)\\
\Sigmat^0_{YX}(\taut) & \Sigmat^0_{YY}(\taut) & \Sigmat^0_{YZ}(\taut)\\
\Sigmat^0_{ZX}(\taut) & \Sigmat^0_{ZY}(\taut) & \Sigmat^0_{ZZ}(\taut)
\end{bmatrix}.
\end{equation}
Then
\begin{align}
\label{sigma02}
\sigma_0^2(\beta, \omega, \gamma)&=\bone \Sigmat^0_{XX}(\taut) \bone^{\top} +2b(\taut) \bp \Sigmat^0_{YX}(\taut) \bone^{\top}
+b(\taut)^2 \bp \Sigmat^0_{YY}(\taut) \bp^{\top}\nonumber\\
&\hspace*{-1cm}=\lim_{N \to \infty} N\left[\var\left(\xt^N(\taut)\right)+2b(\taut)\cov\left(\xt^N(\taut), \yEt^N(\taut)\right)
+b(\taut)^2 \var\left(\yEt^N(\taut)\right)\right],
\end{align}
where $\xt^N(\taut)$ and $\yEt^N(\taut)$ are the deterministic `number' of susceptible individuals and infectious half-edges,
given by~\eqref{finalsus} and~\eqref{yet}, respectively, but with (random) initial conditions induced by the
NSW random graph on $N$ vertices.

Recall the function $\psi$ and the random variable $\DN$, defined at~\eqref{eq:psi} and~\eqref{eq:empD}, respectively. It follows from~\eqref{finalsus} that
\begin{equation}
\label{xtNtaut}
\xt^N(\taut)=f_{\DN}\left(\psi(\taut)\right)
\end{equation}
and, from~\eqref{yet}, that
\begin{equation}
\label{yEtNtaut}
\yEt^N(\taut)=\frac{\beta+\omega+\gamma}{\beta+\omega}\mu_{\DN} \re^{-2(\beta+\omega)\taut}-\frac{\gamma}{\beta+\omega}\mu_{\DN}\re^{-(\beta+\omega)\taut}-\re^{-(\beta+\omega)\taut}f_{\DN}'\left(\psi(\taut)\right).
\end{equation}

Let $\theta \in [0,1]$.  Note, for example, that $f_{\DN}(\theta)=N^{-1}\sum_{i=1}^N \theta^{D_i}$, so
 $\var\left(f_{\DN}(\theta)\right)=N^{-1}\left[f_D(\theta^2)-f_D(\theta)^2\right]$ and
$f_{\DN}(\theta)$ is
asymptotically normally distributed by the CLT for independent and identically distributed random variables. This and similar elementary calculations show that
\begin{eqnarray}
\lim_{N \to \infty} N \var \left(f_{\DN}(\theta)\right)&=&f_D(\theta^2)-f_D(\theta)^2, \label{varfdn}\\
\lim_{N \to \infty} N \var\left(\mu_{\DN}\right)&=&\sigma^2_D (=\var(D)),\label{mudn}\\
\lim_{N \to \infty} N \var\left(f_{\DN}'(\theta)\right)&=&\theta^2f_D''(\theta^2)+f_D'(\theta^2)-f_D'(\theta)^2,\label{varfdashdn}\\
\lim_{N \to \infty} N \cov \left(\mu_{\DN}, f_{\DN}(\theta)\right)&=&\theta f_D'(\theta)-\mud f_D(\theta),\label{covmudnfdn}\\
\lim_{N \to \infty} N \cov \left(\mu_{\DN}, f_{\DN}'(\theta)\right)&=&\theta f_D''(\theta)+f_D'(\theta)-\mud f_D'(\theta),\label{covmudnfdashdn}\\
\lim_{N \to \infty} N \cov \left(f_{\DN}(\theta), f_{\DN}'(\theta)\right)&=&\theta f_D'(\theta^2)-f_D(\theta)f_D'(\theta).\label{covfdnfdashdn}
\end{eqnarray}

Recall that $z=\re^{-(\beta+\omega)\taut}$, $\psit(z)=p_{\omega}+(1-p_{\omega})z$ and
$\rho=1-f_D\left(\psit(z)\right)$ (see~\eqref{zdef} and Proposition~\ref{prop:final}(b).  Setting $\delta=0$ in~\eqref{finaltautrace} then gives (cf.~\eqref{zdef})
\begin{equation}
\label{fd1psitz}
f_D'\left(\psit(z)\right)=\left[\frac{(\beta+\omega+\gamma)z-\gamma}{\beta+\omega}\right]\mud.
\end{equation}
Then, using~\eqref{xtNtaut} and~\eqref{varfdn},
\begin{equation}
\label{varnxt}
\lim_{N \to \infty} N \var\left(\xt^N(\taut)\right)=f_D\left(\psit(z)^2\right)-(1-\rho)^2,
\end{equation}
using~\eqref{xtNtaut}, \eqref{yEtNtaut}, \eqref{covmudnfdn} and~\eqref{covfdnfdashdn}
\begin{equation}
\label{covnxtyet}
\lim_{N \to \infty} N\cov\left(\xt^N(\taut), \yEt^N(\taut)\right)=
z \psit(z)\left[\left(z+\frac{\gamma}{\beta+\omega}(z-1)\right)^2\mud-f_D'\left(\psit(z)^2\right)\right]
\end{equation}
and
\begin{align}
\label{varnyet}
\lim_{N \to \infty} N & \var\left(\yEt^N(\taut)\right)=
z^2\left[\left(z+\frac{\gamma}{\beta+\omega}(z-1)\right)^2\left(\sigma_D^2+\mud^2-2\mud\right)\right.\nonumber\\
&\left.+\psit(z)^2f_D''\left(\psit(z)^2\right)
+f_D'\left(\psit(z)^2\right)-2\left(z+\frac{\gamma}{\beta+\omega}(z-1)\right)\psit(z)f_D''\left(\psit(z)\right)\right].
\end{align}

It follows from~\eqref{xet}, \eqref{ataut}, \eqref{btaut} (all with $D_{\epsilon}$ replaced by $D$) and~\eqref{fd1psitz}, that
\begin{equation}
\label{btaut1}
b(\taut)=\frac{\beta\left[\frac{(\beta+\omega+\gamma)z-\gamma}{\beta+\omega}\right] \mud}
{z\left[\beta f_D''\left(\psit(z)\right)-(\beta+\omega+\gamma)\mud\right]},
\end{equation}
so
\begin{equation}
\label{btautzfd2}
b(\taut)z f_D''\left(\psit(z)\right)=\left[(\beta+\omega+\gamma)\left(\frac{1}{\beta+\omega}
+\frac{b(\taut)}{\beta}\right)z-\frac{\gamma}{\beta+\omega}\right]\mud\
\end{equation}
Note that $b(\taut)=\btz$, where $\btz$ is given by~\eqref{btz} with $D_{\epsilon}$ replaced by $D$.  Substituting~\eqref{varnxt}, \eqref{covnxtyet} and~\eqref{varnyet} into~\eqref{sigma02}, and invoking~\eqref{fd1psitz}
and~\eqref{btautzfd2},  yields~\eqref{sigma0} after a little algebra.

\begin{remark}[Proving Conjecture~\ref{conj:nswCLT}]
\label{rem:nswCLT}
The two remaining steps required to prove Conjecture~\ref{conj:nswCLT} are to justify (i) that~\eqref{CLT2NSW} holds when $\delta=0$ and (ii) letting $\epsE \downarrow 0$ to obtain a CLT in the event of a major outbreak; cf.~Remarks~\ref{rmk:MRcltfin1} 
and~\ref{rmk:eps=0} which discuss these steps, respectively, for an epidemic on a MR random graph.  As for epidemics on 
MR random graphs, the proofs in Ball~\cite{Ball:2018} for the SIR epidemic without dropping of edges on an NSW random graph should extend to the model with dropping of edges.
\end{remark}

\begin{remark}[Conjecture~\ref{conj:nswCLT} with $\epsE>0$]
\label{rem:nswCLTepsE}
It is possible to extend Conjecture~\ref{conj:nswCLT} to consider also the case $\epsE>0$ and obtain an analogous result
to Conjecture~\ref{conj:mrCLT}(a).  The asymptotic variance $\sigma_{\rm NSW}^2(\beta, \omega, \gamma)$ is given 
by~\eqref{sigmatNSW} and~\eqref{equ:sigmat0NSW} but now $\Sigmat^0(\taut)$ depends on how the initial infectives are chosen.  
\end{remark}

\section{Increased recovery rate instead of dropping edges}
\label{sec:relatedmodel}
%In the model with preventive dropping described in Section~\ref{sec:model}, an infectious individual has infectious contacts with each neighbour at rate $\beta$, recovers at rate $\gamma$, and susceptible neighbours drop edges to it independently at rate $\omega$. An alternative and equivalent description of the model (see also Section~\ref{sec:ED}) is that the infectious individual sends out warnings to each neighbour \emph{independently} at rate $\omega$, and susceptible individuals who receive such a warning immediately drop the corresponding edge.

Recall the equivalent formulation of the model with dropping in which an infectious individual sends out warnings to each neighbour \emph{independently} at rate $\omega$, and susceptible individuals who receive such a warning immediately drop the corresponding edge.
Consider a different but related model where, instead of sending out warnings to each neighbour at rate $\omega$ \emph{independently}, one single warning (at rate $\omega$) is used for all neighbours simultaneously (and all of them immediately drop the edges). The effect of this change is that edge droppings become \emph{dependent}. However, from the point of view of a given susceptible neighbour the probability that it drops its edge to a given infective is unchanged. Thus, for a given susceptible, such a warning (where all susceptible neighbours drop their edges) has the same effect as if its infective neighbour recovered. Hence, we consider a model without dropping, but with recovery rate $\gamma+\omega$ instead of $\gamma$. We use $(\gamma,\omega)$ and $(\gamma+\omega,0)$ to refer to the two models, where the first component refers to the recovery rate and the second component to the dropping rate.

The above reasoning suggests that the dropping model $(\gamma,\omega)$ should in some ways resemble this modified $(\gamma+\omega,0)$ model. In fact, we have seen already in Section~\ref{sec:finalsize} (Remark~\ref{rmk:binding final1}) that, as $N\to\infty$, the scaled process of susceptibles in the two epidemics converge to the same LLN limit, and the same LLN holds for the final fraction getting infected. However, the two models are \emph{stochastically} different, even for the process of susceptibles. The underlying reason for this difference is that independent warning signals makes the total number of infections \emph{less} variable compared to having one warning signal to all susceptible neighbours. Consequently, the probability of a major outbreak is \emph{greater} in the dropping model $(\gamma,\omega)$ than in the modified $(\gamma+\omega,0)$ model, as we prove in Theorem~\ref{prop:pmajor} below.
%Consequently, the number of infections made by an infective is less random in the $(\gamma,\omega)$ dropping model. 
%This \emph{increases} the probability of a major outbreak compared to the modified $(\gamma+\omega,0)$ model as we prove in Theorem~\ref{prop:pmajor} below. 
Furthermore, we expect that the decrease in variability of the number of infections made by an infective \emph{decreases} the limiting variance of both the whole process of susceptibles and the final size in the event of a major outbreak compared to the modified $(\gamma+\omega,0)$ model. This is illustrated by the numerical results in Section~\ref{sec:incRecovery}. 

Consider the beginning of an outbreak and an infectious individual having $k$ susceptible neighbours. Let $Y_k^{(\gamma,\omega)}$ be the number of these $k$ neighbours that the infectious individual infects in the dropping model and define $Y_k^{(\gamma+\omega,0)}$ similarly for the modified model. We compute the distributions of these two offspring random variables.

In the $(\gamma,\omega)$-model we first condition on the infectious period $I$, which has an $\Exp(\gamma)$ distribution, i.e.~an exponential distribution with rate $\gamma$ and hence mean $\gamma^{-1}$. Given the duration of the infectious period $I=t$, the infectious individual infects each of its $k$ susceptible neighbours independently, and a given neighbour is infected if and only if there is an infectious contact before $t$ and the edge has not been dropped before then. Thus, conditional upon $I=t$, the probability that the given neighbour is infected is
\begin{equation*}
\int_0^t \beta \re^{-(\beta+\omega)s}\,{\rm d}s = \frac{\beta}{\beta+\omega}\left(1-\re^{-(\beta+\omega)t}\right).
\end{equation*}
Given $I=t$, the number of neighbours infected follows a binomial distribution with parameters $k$ and the probability above. Hence, if we relax the conditioning, it follows that $Y_k^{(\gamma,\omega)}$ has the mixed-Binomial distribution
\begin{equation}
\label{Mbindrop}
Y_k^{(\gamma,\omega)}\sim \MBin\left( k,\ \frac{\beta}{\beta+\omega}\left(1-\re^{-(\beta+\omega)I}\right) \right), \text{ where }I\sim \Exp(\gamma).
\end{equation}
Setting $\gamma=\gamma+\omega$ and $\omega=0$ yields immediately that
\begin{equation}
\label{Mbinmod}
Y_k^{(\gamma+\omega,0)}\sim \MBin\left( k,\ 1-\re^{-\beta I^*} \right), \text{ where }I^*\sim \Exp(\gamma+\omega).
\end{equation}
It is not hard to show that
\begin{equation}\label{eq:mean}
\E\left[Y_k^{(\gamma,\omega)}\right]= \E\left[Y_k^{(\gamma+\omega,0)}\right] = k\frac\beta{\beta+\gamma+\omega},
\end{equation} 
and that $\var\left(Y_k^{(\gamma,\omega)}\right)< \var\left(Y_k^{(\gamma+\omega,0)}\right)$.

Suppose that the epidemic is initiated by a single individual, chosen uniformly at random from the entire population, becoming infective. Then the number of susceptible neighbours of the initial infective is distributed according to $D$ and, during the early stages of an outbreak in a large population, the number of susceptible neighbours of a subsequently infected individual is distributed as $\tilde{D}-1$ (see Section~\ref{sec:model}). These results hold for both models. It follows that the early stages of the dropping model in a large population can be approximated, on a generation basis, by a Galton--Watson branching process having offspring distribution that is a mixture of $Y_k^{(\gamma,\omega)}$, $k=0,1,\ldots$, with mixing probabilities $p_k,$ $k=0,1,\ldots,$ in the initial generation and mixing probabilities $\tilde{p}_k,$ $k=0,1,\ldots,$ in all subsequent generations, where $\tilde{p}_k=\mud^{-1}(k+1)p_{k+1}$. (Note that $\tilde{p}_k$, $k=0,1,\ldots$, is the probability mass function of $\tilde{D}-1$.) A similar approximation holds for the modified model, except $Y_k^{(\gamma,\omega)}$ is replaced by $Y_k^{(\gamma+\omega,0)}$. These approximations can be made rigorous in the limit as the population size $N \to \infty$ by using a coupling argument, as in e.g.~Ball and Sirl~\cite{Ball:2012}.  In the limit as $N \to \infty$, the probability of a major outbreak in the epidemic model is given by the probability that the corresponding approximating branching process does not go extinct.

The following lemma, proved in Appendix~\ref{app:PGFcomparison}, is required for the proof of Theorem~\ref{prop:pmajor} below, which shows that the probability of a major outbreak is greater in the dropping model than in the corresponding modified model. First, some more notation is required.  For $k=1,2,\ldots$ let $f_k^{(\gamma,\omega)}(s)=\E\left[s^{Y_k^{(\gamma,\omega)}}\right]$, $s \in \mathbb{R}$, denote the probability-generating function (PGF) of $Y_k^{(\gamma,\omega)}$, the number of neighbours that an infectious individual with $k$ susceptible neighbours infects in the early stages of the $(\gamma,\omega)$ dropping model, and define $f_k^{(\gamma+\omega)}(s)$ similarly for the $(\gamma+\omega,0)$ modified model. Let $f_0^{(\gamma,\omega)}(s)=f_0^{(\gamma+\omega)}(s)=1$  $(s \in \mathbb{R})$. Then, for the dropping model, the approximating branching process has offspring PGF $f^{(\gamma,\omega)}(s)= \sum_{k=0}^{\infty} p_k f_k^{(\gamma,\omega)}(s)$ in the first generation and offspring PGF $\tilde{f}^{(\gamma,\omega)}(s)= \sum_{k=0}^{\infty} \tilde{p}_k f_k^{(\gamma,\omega)}(s)$ in all subsequent generations, with analogous results holding for the $(\gamma+\omega,0)$-model.

\begin{lem}
\label{lemma:PGFcomparison}
Suppose that $\beta>0$ and $\gamma>0$.  Then, for $k=0,1,\ldots$,
\begin{equation}
\label{fkcomp}
f_k^{(\gamma,\omega)}(s) \le f_k^{(\gamma+\omega)}(s) \qquad (0 \le s \le 1),
\end{equation}
with strict inequality for all $s \in [0,1)$ when $k \ge 2$.
\end{lem}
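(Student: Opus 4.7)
The plan is to write both probability-generating functions in a common form and then exhibit a coupling that reduces the inequality to conditional Jensen. Starting from \eqref{Mbindrop} and \eqref{Mbinmod}, and conditioning on the infectious period before applying the Binomial PGF, one has
\[
f_k^{(\gamma,\omega)}(s)=\E\!\left[(s+(1-s)Q_1)^k\right], \qquad
f_k^{(\gamma+\omega)}(s)=\E\!\left[(s+(1-s)Q_2)^k\right],
\]
where $Q_1=\tfrac{\omega}{\beta+\omega}+\tfrac{\beta}{\beta+\omega}\re^{-(\beta+\omega)I}$ with $I\sim\Exp(\gamma)$, and $Q_2=\re^{-\beta I^*}$ with $I^*\sim\Exp(\gamma+\omega)$. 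Here $Q_1$ and $Q_2$ are the respective probabilities that a given susceptible neighbour escapes infection, and a short computation shows $\E[Q_1]=\E[Q_2]=(\gamma+\omega)/(\beta+\gamma+\omega)$, consistent with \eqref{eq:mean}.

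The key step is a coupling. Let $I\sim\Exp(\gamma)$ and $W\sim\Exp(\omega)$ be independent (interpreting $W=\infty$ a.s.\ if $\omega=0$), and set $I^*=\min(I,W)$, which has the required $\Exp(\gamma+\omega)$ distribution. Integrating out $W$ for fixed $I$,
\[
\E\!\left[\re^{-\beta\min(I,W)}\,\big|\,I\right]
=\omega\int_0^I\re^{-(\beta+\omega)w}\,{\rm d}w+\re^{-(\beta+\omega)I}
=\tfrac{\omega}{\beta+\omega}+\tfrac{\beta}{\beta+\omega}\re^{-(\beta+\omega)I},
\]
so under this coupling $Q_1=\E[Q_2\mid I]$; equivalently, $Q_1\le_{\rm cx}Q_2$ in the convex order. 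This is the only real computation in the proof.

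To conclude, fix $s\in[0,1]$ and observe that $h(q)=(s+(1-s)q)^k$ is convex on $[0,1]$, since $s+(1-s)q\ge 0$ there. Applying conditional Jensen and then taking expectations,
\[
f_k^{(\gamma,\omega)}(s)=\E\!\left[h(\E[Q_2\mid I])\right]\le\E\!\left[\E[h(Q_2)\mid I]\right]=f_k^{(\gamma+\omega)}(s),
\]
which is \eqref{fkcomp}. For strict inequality when $k\ge 2$ and $s\in[0,1)$: the function $h$ is then strictly convex on $[0,1]$ (as $1-s>0$), and provided $\omega>0$ the conditional law of $Q_2$ given $I$ is nondegenerate (since $W$ has a density), so conditional Jensen is strict on a set of positive probability and the integrated inequality is strict. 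I do not anticipate a serious obstacle: the only subtlety is the degenerate case $\omega=0$, in which the two models coincide and \eqref{fkcomp} reduces to equality, so the strictness assertion is to be read as holding whenever $\omega>0$.
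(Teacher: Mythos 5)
Your proof is correct, but it takes a genuinely different route from the paper's. The paper works with $X_k=k-Y_k$, invokes Martin-L{\"o}f's symmetric sampling lemma to reduce the factorial moments $\mu_{k,[r]}$ to the escape probabilities $P_{k,r}$, computes these explicitly for both models, and then verifies the inequality $\mu_{k,[r]}^{(\gamma,\omega)}\le\mu_{k,[r]}^{(\gamma+\omega,0)}$ (strict for $2\le r\le k$) by a page of binomial-identity algebra, finally transferring it to the PGFs via $f_k(s)=s^k\hat f_k(s^{-1})$ and positivity of $(s^{-1}-1)^r$ on $(0,1]$. You instead write both PGFs as $\E[(s+(1-s)Q)^k]$ for the respective escape probabilities $Q_1,Q_2$, couple $I^*=\min(I,W)$ with $W\sim\Exp(\omega)$ independent of $I\sim\Exp(\gamma)$, verify $Q_1=\E[Q_2\mid I]$, and conclude by conditional Jensen applied to the convex map $q\mapsto(s+(1-s)q)^k$; strictness for $k\ge2$, $s<1$ follows from strict convexity together with nondegeneracy of the conditional law of $Q_2$ given $I$. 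Your argument is shorter and more structural: it isolates the convex ordering of the escape probabilities as the real content, which also delivers the variance comparison $\var(Y_k^{(\gamma,\omega)})<\var(Y_k^{(\gamma+\omega,0)})$ that the paper asserts separately, and it extends to any convex functional of $Q$. What the paper's computation buys in exchange is fully explicit factorial moments, at the cost of considerably more algebra. Your caveat about $\omega=0$ is apposite and applies equally to the paper's proof (its $\tilde H(r)$ also vanishes when $\omega=0$), so the strictness claim should in both treatments be read as requiring $\omega>0$.
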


\begin{thm}[Probability of a major outbreak]\label{prop:pmajor}
\mbox{}
\begin{itemize}
\item[(a)]
The basic reproduction number $R_0$ for both the dropping and modified models is given by~\eqref{R_0}.
\item[(b)]
Suppose that $R_0>1$ and the epidemic is initiated by a single infective individual, chosen uniformly at random from the population. Then the probability of a major outbreak $p_{\rm maj}^{(\gamma,\omega)}$ for the $(\gamma,\omega)$ dropping model is strictly greater than the probability of a major outbreak $p_{\rm maj}^{(\gamma+\omega,0)}$ for the modified $(\gamma+\omega,0)$-model, i.e.
\begin{equation}\label{eq:pmajor}
p_{\rm maj}^{(\gamma,\omega)}> p_{\rm maj}^{(\gamma+\omega,0)}.
\end{equation}
\end{itemize}
%Moreover, the inequality is strict if $\Pp(D \ge 3)>0$, $\beta>0$ and $\gamma>0$.
\end{thm}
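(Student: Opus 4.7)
My plan is to exploit the branching process approximation just sketched in the text, together with Lemma~\ref{lemma:PGFcomparison}, to compare the two models on a generation‑by‑generation basis. For both models, the early stages are approximated in the large‑$N$ limit by a two‑phase Galton--Watson process: the ancestor has offspring PGF $f(s)=\sum_{k\ge0}p_k f_k(s)$ (since a uniformly chosen initial infective has degree $\sim D$) and all subsequent individuals have offspring PGF $\tilde f(s)=\sum_{k\ge0}\tilde p_k f_k(s)$ (size-biased, minus the infector), with $f_k$ equal to either $f_k^{(\gamma,\omega)}$ or $f_k^{(\gamma+\omega)}$. The probability of a major outbreak is $p_{\rm maj}=1-f(\tilde q)$, where $\tilde q$ is the smallest root in $[0,1]$ of $\tilde f(s)=s$.

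For part (a), I would simply compute the mean of the subsequent‑generation offspring distribution: using~\eqref{eq:mean},
\[
\tilde f'(1)=\sum_{k\ge0}\tilde p_k\,\E[Y_k]=\frac{\beta}{\beta+\gamma+\omega}\sum_{k\ge0}k\tilde p_k =\frac{\beta}{\beta+\gamma+\omega}\left(\mu_D-1+\frac{\sigma_D^2}{\mu_D}\right),
\]
which is identical in both models and agrees with~\eqref{R_0}; this is the mean of the offspring distribution governing the long‑run growth, hence equals $R_0$.

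For part (b), write $\tilde q_1,\tilde q_2$ for the extinction probabilities of the sub‑ancestor processes in the $(\gamma,\omega)$‑ and $(\gamma+\omega,0)$‑models, and $q_1,q_2$ for the full extinction probabilities. By Lemma~\ref{lemma:PGFcomparison}, $\tilde f^{(\gamma,\omega)}(s)\le \tilde f^{(\gamma+\omega)}(s)$ on $[0,1]$, with strict inequality on $[0,1)$ provided $\tilde p_k>0$ for some $k\ge 2$. Since $R_0>1$ and the first factor in~\eqref{R_0} is at most $1$, we must have $\E[\tilde D-1]>1$, which forces $\tilde p_k>0$ for some $k\ge2$ (otherwise $\tilde D-1\le1$ a.s.). Hence $\tilde f^{(\gamma,\omega)}(\tilde q_2)<\tilde f^{(\gamma+\omega)}(\tilde q_2)=\tilde q_2$. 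Combined with $\tilde f^{(\gamma,\omega)}(0)\ge0$ and continuity, this gives a fixed point of $\tilde f^{(\gamma,\omega)}$ strictly below $\tilde q_2$, hence $\tilde q_1<\tilde q_2$. Applying the initial‑generation PGFs and using again Lemma~\ref{lemma:PGFcomparison} together with monotonicity (the PGF $f^{(\gamma+\omega)}$ is strictly increasing because $R_0>1$ forces $p_k>0$ for some $k\ge1$),
\[
q_1=f^{(\gamma,\omega)}(\tilde q_1)\le f^{(\gamma+\omega)}(\tilde q_1)<f^{(\gamma+\omega)}(\tilde q_2)=q_2,
\]
so $p_{\rm maj}^{(\gamma,\omega)}=1-q_1>1-q_2=p_{\rm maj}^{(\gamma+\omega,0)}$, as claimed.

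The main obstacle is the usual one of justifying that the generation‑wise branching approximation really gives the limiting major‑outbreak probability; this is, however, a standard coupling argument exactly as in Ball and Sirl~\cite{Ball:2012} and is invoked rather than re‑proved. The remaining delicacy is the strict‑inequality step, which I would handle by the support argument above: $R_0>1$ together with $\tfrac{\beta}{\beta+\gamma+\omega}\le1$ forces $\E[\tilde D-1]>1$, hence $\tilde D$ necessarily puts positive mass on values $\ge3$, which activates the strict part of Lemma~\ref{lemma:PGFcomparison}.
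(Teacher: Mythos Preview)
Your proposal is correct and follows essentially the same approach as the paper: both use the branching process approximation, invoke Lemma~\ref{lemma:PGFcomparison} to compare the offspring PGFs, deduce the strict ordering $\tilde q_1<\tilde q_2$ of the sub-ancestor extinction probabilities, and then push this through the initial-generation PGF. The only cosmetic difference is that the paper establishes $\tilde q_1<\tilde q_2$ by noting both PGFs equal $1$ at $s=1$ with slope $R_0>1$, whereas you use the intermediate value theorem on $[0,\tilde q_2]$; and the paper phrases the support condition as $\Pp(D\ge3)>0$ rather than $\tilde p_k>0$ for some $k\ge2$, which is equivalent.
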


\begin{proof}
The basic reproduction number is given by the offspring mean of a typical (i.e.~non-initial generation) infective, so for both models, using~\eqref{eq:mean},
\[
R_0=\sum_{i=1}^{\infty}\tilde{p}_k k\frac\beta{\beta+\gamma+\omega}=\frac\beta{\beta+\gamma+\omega}\left(\mu_D+\frac{\sigma_D^2}{\mu_D}-1\right),
\]
which proves part (a).

Turning to part (b), suppose that $R_0>1$. Then, using standard branching process theory gives that, for the dropping model, the probability of a major outbreak is given by $p_{\rm maj}^{(\gamma,\omega)}=1-f^{(\gamma,\omega)}(\sigma^{(\gamma,\omega)})$, where $\sigma^{(\gamma,\omega)}$ is the unique solution in $[0,1)$ of $\tilde{f}^{(\gamma,\omega)}(s)=s$; cf.\ Kenah and Robins~\cite{Kenah:2007} and Ball and Sirl~\cite{Ball:2013}. Analogously, for the modified model, $p_{\rm maj}^{(\gamma+\omega,0)}=1-f^{(\gamma+\omega,0)}(\sigma^{(\gamma+\omega,0)})$, where $\sigma^{(\gamma+\omega,0)}$ is the unique solution in $[0,1)$ of $\tilde{f}^{(\gamma+\omega,0)}(s)=s$.

Note that if $\Pp(D \ge 3)=0$ then $R_0 \le 1$, so $R_0>1$ implies that $\Pp(D \ge 3)>0$.  It then follows
immediately from Lemma~\ref{lemma:PGFcomparison} that $f^{(\gamma,\omega)}(s)< f^{(\gamma+\omega,0)}(s)$ and $\tilde{f}^{(\gamma,\omega)}(s)< \tilde{f}^{(\gamma+\omega,0)}(s)$ for all $s \in [0,1)$. Hence, since $\tilde{f}^{(\gamma,\omega)}(1)=\tilde{f}^{(\gamma+\omega,0)}(1)=1$ and the derivative of both
$\tilde{f}^{(\gamma,\omega)}$ and $\tilde{f}^{(\gamma+\omega,0)}$ at $s=1$ is $R_0>1$ , it follows that $\sigma^{(\gamma,\omega)}<\sigma^{(\gamma+\omega,0)}$, whence $f^{(\gamma,\omega)}(\sigma^{(\gamma,\omega)})< f^{(\gamma+\omega,0)}(\sigma^{(\gamma,\omega)})< f^{(\gamma+\omega,0)}(\sigma^{(\gamma+\omega,0)})$, as $f^{(\gamma+\omega,0)}$ is strictly increasing on $[0, 1]$. Thus we obtain our statement~\eqref{eq:pmajor}.   $\Box$
\end{proof}
\begin{remark}[Other choices for initial infectives]
Theorem~\ref{prop:pmajor} is easily extended to other assumptions concerning initial infectives; e.g.~to an epidemic initiated by $k>1$ infective individuals chosen uniformly at random from the population, or to an epidemic initiated by an infective of a specified degree.
\end{remark}

\section{No dropping of edges}
\label{sec:nodropping}
We use the results from this paper to analyse the Markovian SIR epidemic on a configuration model network in Section~\ref{sec:MarkovSIR} and the giant component of a configuration model network in Section~\ref{sec:configuration}. Note that in the case that there is no dropping of edges, i.e.\ $\omega=0$, we are in the setting of a Markovian SIR epidemic on a configuration model network. We treat the asymptotic variance of the final size for this model in Conjecture~\ref{conj:nodroppingCLT}. If additionally, there is no recovery, i.e.\ $\omega=0=\gamma$, then in the event of a major outbreak, all individuals in the giant component eventually get infected. By using this we can apply the results from this paper to make statements about the size of the giant component in configuration model random graphs, see Conjecture~\ref{prop:giantCLT}.  

\subsection{SIR epidemic on configuration network}
\label{sec:MarkovSIR}
When $\omega=0$, the model reduces to the Markov SIR epidemic on a
configuration model network.  The formulae for the asymptotic variance of the final size for the epidemic on MR and NSW random networks simplify and become fully explicit given $z$, defined below.

%Note that $\psit(z)=z$ when $\omega=0$.  
Recall that $\epsE=\sum_{i=1}^{\infty} i\epsilon_i$.  If $\epsE>0$,
then setting $\omega=0$ in Proposition~\ref{prop:final}(a) shows that $\rho=1-\fde(z)$, where
$z$ is the unique solution in $[0,1)$ of
\begin{equation}
\label{zdefND}
(\beta+\gamma)z-\gamma=\beta \mud^{-1}\fde'(z).
\end{equation}
If $\epsE=0$, so the epidemic is started by a trace of infection, and $R_0>1$ then, using Proposition~\ref{prop:final}(b), $\rho=1-\fd(z)$, where
$z$ is the unique solution in $[0,1)$ of~\eqref{zdefND} with $\fde'$ replaced by $\fd'$.

Let $\TnMRND$ and $\TnNSWND$ denote the final size of the epidemic, with no dropping of edges, on an MR and NSW configuration model random network, respectively, each having $N$ vertices.  Let $\SigmaMRE(\beta,\gamma)=\SigmaMR(\beta, 0, \gamma)$ and $\SigmaNSWE(\beta,\gamma)=\SigmaNSW(\beta, 0, \gamma)$ denote the asymptotic variance of the final size for the epidemic on an MR and an NSW configuration model random network, respectively. The following conjecture gives fully explicit formulae for $\SigmaMRE(\beta,\gamma)$ and $\SigmaNSWE(\beta,\gamma)$ as functions of $z$, which are
derived in Appendix~\ref{app:nodroppingCLT}.

\begin{conj}[CLT for final size of epidemic on configuration model networks]
\label{conj:nodroppingCLT}
\mbox{}
\begin{itemize}
\item[(a)] For the SIR epidemic on an MR random network,
\begin{itemize}
\item[(i)]
if $\epsE>0, \dmax < \infty$ and $z>0$, then,
\begin{equation}
\label{CLTMREND}
\sqrt{N}\left(N^{-1}\TnMRND - \rho \right) \convD N(0,\SigmaMRE(\beta,\gamma)) \quad\mbox{as } N \to \infty,
\end{equation}
where
\begin{align}
\label{MRCLTvarND}
\SigmaMRE(\beta,\gamma)&=1-\rho-\fde(z^2)-h(\beta,\gamma,z)^2\left[\fde'(z^2)+z^2\fde''(z^2)\right]\nonumber\\
&\phantom{=\ }+h(\beta,\gamma,z)^2\left[\left(\frac{\gamma}{2\beta+\gamma}\right)(\sigma_D^2+\mud^2)+2\left(\frac{\gamma-(\beta+\gamma)z}{\beta}\right)^2\mud\right]\nonumber\\
&\phantom{=\ }+2h(\beta,\gamma,z)\left[z\fde'(z^2)+\left(\frac{\gamma-(\beta+\gamma)z}{\beta}\right)
\left(\frac{\beta+\gamma}{2\beta+\gamma}\right)\mud\right],
\end{align}
with
\begin{equation}
\label{hdef}
h(\beta,\gamma,z)=\frac{\gamma-(\beta+\gamma)z}{\beta+\gamma-\beta\mud^{-1}\fde''(z)};
\end{equation}
\item[(ii)] if  $\epsE=0$, $\dmax<\infty$, $R_0>1$ and $z>0$, then, in the event of a major outbreak, \eqref{CLTMREND} holds with $\epsilon=0$ and $D_{\epsilon}$ replaced by $D$ in~\eqref{MRCLTvarND} and~\eqref{hdef}.
\end{itemize}
\item[(b)]
For the epidemic on an NSW network, suppose that $\epsE=0$, $\dmax<\infty$, $R_0>1$ and $z>0$.  Then, in the event of a major outbreak,
\begin{equation}
\label{CLTNSWEND}
\sqrt{N}\left(N^{-1}\TnNSWND - \rho \right) \convD N(0,\SigmaNSWE(\beta,\gamma)) \quad\mbox{as } N \to \infty,
\end{equation}
where
\begin{align}
\label{NSWCLTvarND}
\SigmaNSWE(\beta,\gamma)=&\rho(1-\rho)+2h(\beta,\gamma,z)\left(\frac{\gamma-(\beta+\gamma)z}{\beta}\right)\left(\frac{\beta+\gamma}{2\beta+\gamma}\right)\mud\nonumber\\
&\quad+h(\beta,\gamma,z)^2\left[\frac{\gamma}{2\beta+\gamma}
+\left(\frac{\gamma-(\beta+\gamma)z}{\beta}\right)^2\right](\sigma_D^2+\mud^2)\nonumber\\
&\quad+2h(\beta,\gamma,z)^2\frac{(\beta+\gamma)[\gamma-(\beta+\gamma)z]}{\beta^2}z\mud,
\end{align}
and $h(\beta,\gamma,z)$ is given by~\eqref{hdef}, with $D_{\epsilon}$ replaced by $D$.
\end{itemize}
\end{conj}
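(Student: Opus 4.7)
The plan is to derive Conjecture~\ref{conj:nodroppingCLT} as a direct specialisation of Conjectures~\ref{conj:mrCLT} and~\ref{conj:nswCLT} upon setting the dropping rate $\omega=0$. Part (a) is obtained by substituting $\omega=0$ into the expression for $\SigmaMR(\beta,\omega,\gamma)$ in Proposition~\ref{prop:mrVar}, and part (b) by additionally simplifying the extra term $\sigma_0^2(\beta,0,\gamma)$ coming from~\eqref{sigma0}. The substantive content is therefore algebraic: reducing the integrals $I_A, I_B, I_C, I_D$ and the other summands in~\eqref{MRCLTvar} to closed form.

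When $\omega=0$ several of the ingredients collapse. We have $p_\omega=\omega/(\beta+\omega)=0$, so $\psit(z)=z$ and $\psi(t)=\re^{-\beta t}$; the auxiliary functions of Proposition~\ref{prop:mrVar} reduce to
\[
\psit_1(z,v)=z/v,\qquad \psit_2(z,v)=z^2/v,\qquad \psit_3(z,v)=(z/v)(1-\btz),
\]
and~\eqref{btz} gives $\btz = h(\beta,\gamma,z)/z$ with $h$ as in~\eqref{hdef}. The awkward mixtures $\omega(\psit_1-1)^2+\beta\psit_1^2$ appearing in $I_B$ and $I_D$ collapse to $\beta(z/v)^2$, so every integrand becomes a product of a rational function of $v$ and a derivative $\fde^{(j)}(z^2/v)$.

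First I would make the change of variables $u=z^2/v$, $\,{\rm d}u=-(z^2/v^2)\,{\rm d}v$, which converts each $I_\ast$ into $\int_{z^2}^{z} u^{\ell}\fde^{(j)}(u)\,{\rm d}u$ for small $\ell\ge 0$. These integrate in closed form using $\fde^{(j-1)}$ as antiderivatives; e.g.\ $I_A$ becomes $(1-\btz)^2[\fde(z)-\fde(z^2)]$, while $I_C$ and $I_D$ contribute multiples of $\fde'(z^2)$ and $\fde(z^2)$ (the highest-order term $\fde^{(3)}$ gets integrated back to $\fde''$, which in turn gets absorbed into $h$ via~\eqref{hdef}). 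Next I would collect these contributions with the four pre-integral summands in~\eqref{MRCLTvar} and, wherever $\fde'(z)$ appears, eliminate it using the characterising equation~\eqref{zdefND}, i.e.\ $\beta\mud^{-1}\fde'(z)=(\beta+\gamma)z-\gamma$. After cancelling the $\fde(z)$ contributions against $1-\rho=\fde(z)$, the remainder rearranges into~\eqref{MRCLTvarND}. Part (a)(ii) then follows by letting $\epsE\downarrow 0$, replacing $\fde$ by $\fd$ throughout, as in Remark~\ref{rmk:eps=0}.

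For part (b), the term $\sigma_0^2(\beta,0,\gamma)$ from~\eqref{sigma0} is already free of integrals. Substituting $\omega=0$ (so $\psit(z)=z$) and applying~\eqref{zdefND} with $f_D$ in place of $\fde$ to eliminate $f_D'(z)$ in favour of $\gamma-(\beta+\gamma)z$ reduces~\eqref{sigma0} to a combination of $f_D(z^2),\,f_D'(z^2),\,f_D''(z^2),\,\mud,\,\sigma_D^2$ and $z$. Adding this to the expression~\eqref{MRCLTvarND} with $D_\epsilon$ replaced by $D$ and re-grouping yields~\eqref{NSWCLTvarND}; in particular the terms $-\fd(z^2)$ and $\fd(z^2)-(1-\rho)^2$ combine with $1-\rho$ to produce the clean prefactor $\rho(1-\rho)$, and the $\fd'(z^2)$ and $f_D''(z^2)$ terms from MR and NSW cancel. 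The main obstacle is purely organisational rather than conceptual: the bookkeeping is long, and care is required to make every occurrence of $\fde'(z)$ (resp.\ $f_D'(z)$) disappear via~\eqref{zdefND} so that the final answer depends on the degree distribution only through $\mud$, $\sigma_D^2$, $\fde$ (resp.\ $f_D$) evaluated at $z^2$, and the root $z$.
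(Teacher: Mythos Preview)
Your approach is correct and essentially the same as the paper's.  The paper also sets $\omega=0$ in Proposition~\ref{prop:mrVar} and Conjecture~\ref{conj:nswCLT}, but rather than working with $I_A,\dots,I_D$ directly it first observes (via~\eqref{integralI5} and~\eqref{I5pI7}) that $I_A+I_B+I_C+I_D=I_7$ when $\omega=0$ and then evaluates $I_7$ in its original time-variable form using a reduction formula for $J_k=\int_0^{\taut} \re^{-k\beta(2\taut-u)}\fde^{(k)}(\re^{-\beta(2\taut-u)})\,{\rm d}u$.  Your substitution $u=z^2/v$ is exactly the inverse of the change of variable that took $I_7$ to $I_A,\dots,I_D$, so the two calculations coincide.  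Two small slips worth tidying: (i) $I_B$ does not contain the mixture $\omega(\psit_1-1)^2+\beta\psit_1^2$; it carries an overall factor $\omega$ and so simply vanishes (the mixture appears in $I_A$ with $\psit_3$ and in $I_D$ with $\psit_1$); (ii) after the substitution, $I_C$ and $I_D$ produce boundary contributions at $u=z$ as well as at $u=z^2$, giving terms in $\fde'(z)$ and $\fde''(z)$ --- you do handle these correctly later (eliminating $\fde'(z)$ via~\eqref{zdefND} and $\fde''(z)$ via the definition of $h$, equivalently~\eqref{btz}), but the earlier sentence understates what comes out of the integrals.  The NSW calculation and the claimed cancellation of the $\fd'(z^2)$ and $\fd''(z^2)$ terms between the MR and $\sigma_0^2$ parts are correct.
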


\begin{remark}[Proof of Conjecture~\ref{conj:nodroppingCLT}]
\label{rmk:nodroppingCLTa}
Although only conjectured here, Conjecture~\ref{conj:nodroppingCLT} (and hence also Conjecture~\ref{prop:giantCLT} below)
follow as a special case of Ball~\cite{Ball:2018}, Theorems 2.1 and 2.2.   
\end{remark}

\begin{remark}[Epidemics on NSW random network with $\epsE>0$]
As for the model with dropping, Conjecture~\ref{conj:nodroppingCLT}(b) can be extended to include the case $\epsE>0$;
the asymptotic variance $\SigmaNSWE(\beta,\gamma)$ then depends on how the initial infectives are chosen
(cf.~Remark~\ref{rem:nswCLTepsE}).   
\end{remark}
\subsection{Configuration model giant component}
\label{sec:configuration}
If $\omega=\gamma=0$ then the epidemic ultimately infects all individuals in all components
of the random network that contain at least one initial infective.  Thus, under suitable conditions,
in the limit as $\epsilon \downarrow 0 $, setting $\gamma=0$ in Conjecture~\ref{conj:nodroppingCLT} (a)(ii) and (b) leads to CLTs for the size of the largest connected (i.e.~giant) component in MR and NSW configuration model random graphs, respectively.

Let $\kappa=\E[D(D-2)]=\sigma_D^2+\mud^2-2\mud$ and note that, setting $\omega=\gamma=0$ in the formula for $R_0$, $\kappa >0$ if and only if $R_0>1$.  The above configuration model random graphs possess a giant component if and only if $\kappa>0$, see e.g.~Durrett~\cite{Durrett:2007}, Theorem 3.1.3.  Suppose that $\kappa>0$. Setting $\gamma=0$ and $D_{\epsilon}=D$ in~\eqref{zdefND} shows that $z$ is now given by the unique solution in $[0,1)$ of
\begin{equation}
\label{MRz}
\mud z=f_D'(z).
\end{equation}
and the asymptotic fraction of vertices in the giant components of the above configuration model random graphs is given by $\rho=1-f_D(z)$.

Let $\RnMR$ and $\RnNSW$ denote respectively the size of the giant component in an MR and an NSW random graph on $N$ vertices.  Setting $\gamma=0$ in Conjecture~\ref{conj:nodroppingCLT} (a)(ii) and (b) yields the following conjecture.

\begin{conj}[CLT for the size of the giant component]
\label{prop:giantCLT}
\mbox{}\\Suppose that $\kappa>0$, $\dmax<\infty$ and $p_1>0$.  Then, 
\begin{description}
\item(a)\quad  for an MR random graph,
\begin{equation}
\label{CLTMR}
\sqrt{N}\left(N^{-1}\RnMR -\rho\right) \convD N(0,\SigmaMRGC)\quad\mbox{as } N \to \infty,
\end{equation}
where
\begin{align}
\label{MRGCvar}
\SigmaMRGC=&1-\rho-\fd(z^2)-\frac{z^2}{\left[1-\mu_D^{-1}f_D''(z)\right]}\left[2\fd'(z^2)-\mud\right]\nonumber\\
&\quad-\frac{z^2}{\left[1-\mu_D^{-1}f_D''(z)\right]^2}\left[\fd'(z^2)+z^2\fd''(z^2)-2\mud z^2\right];
\end{align}
\item(b)\quad  for an NSW random graph,
\begin{equation}
\label{CLTNSW}
\sqrt{N}\left(N^{-1}\RnNSW -\rho\right) \convD N(0,\SigmaNSWGC)\quad\mbox{as } N \to \infty,
\end{equation}
where
\begin{align}
\label{NSWGCvar}
\SigmaNSWGC=&\rho(1-\rho)+\frac{z^2}{\left[1-\mu_D^{-1}f_D''(z)\right]}\mud\nonumber\\
&\quad+\frac{z^4}{\left[1-\mu_D^{-1}f_D''(z)\right]^2}\left(\sigma_D^2+\mud^2-2\mud\right).
\end{align}
\end{description}
\end{conj}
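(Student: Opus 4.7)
The plan is to deduce Conjecture~\ref{prop:giantCLT} directly from Conjecture~\ref{conj:nodroppingCLT} by specialising to $\gamma=0$ (on top of $\omega=0$) and then identifying the final size of the resulting SI epidemic with the size of the giant component. With neither recovery nor dropping every individual reachable from an initial infective along edges of the network is eventually infected. If the epidemic is initiated by a single infective chosen uniformly at random, then, under the assumption $\kappa>0$ which (by e.g. Durrett~\cite{Durrett:2007}, Theorem 3.1.3) ensures a unique giant component of size $\rho N + o_p(N)$, a major outbreak occurs precisely when the initial infective lies in that giant component, and on that event $\TnMRND=\RnMR$ (and similarly $\TnNSWND=\RnNSW$). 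Since outside the giant component all components have size $o_p(N)$, the CLTs for $\TnMRND$ and $\TnNSWND$ conditional on a major outbreak in Conjecture~\ref{conj:nodroppingCLT}(a)(ii) and (b) transfer without change to CLTs for $\RnMR$ and $\RnNSW$.

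First I would verify that the hypotheses of Conjecture~\ref{conj:nodroppingCLT} with $\gamma=0$ are implied by those of Conjecture~\ref{prop:giantCLT}. Setting $\gamma=\omega=0$ in~\eqref{R_0} gives $R_0=\mud+\sigma_D^2/\mud-1=1+\kappa/\mud$, so $\kappa>0 \Leftrightarrow R_0>1$; the assumptions $\dmax<\infty$ and the trace-of-infection initial condition ($\epsE=0$) are taken as given; and Remark~\ref{rmk:epsE=01} states that $z=0$ can occur only when $\gamma=\omega=0$ and $p_1=0$, so the hypothesis $p_1>0$ yields $z>0$. Next I would perform the substitution $\gamma=0$ in the formulae of Conjecture~\ref{conj:nodroppingCLT}. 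Equation~\eqref{zdefND} collapses to $\mud z=f_D'(z)$, which is precisely~\eqref{MRz}, and $\rho=1-f_D(z)$ is the standard expression for the giant-component fraction. In~\eqref{hdef}, $\gamma=0$ gives
\[
h(\beta,0,z)=\frac{-\beta z}{\beta-\beta\mud^{-1}f_D''(z)}=\frac{-z}{1-\mud^{-1}f_D''(z)},
\]
which (crucially) no longer depends on $\beta$ — as it must, since the giant component is purely graph-theoretic. Substituting $\gamma/(2\beta+\gamma)\to 0$, $(\gamma-(\beta+\gamma)z)/\beta\to -z$ and $(\beta+\gamma)/(2\beta+\gamma)\to 1/2$ in~\eqref{MRCLTvarND} and collecting terms reproduces~\eqref{MRGCvar}; the analogous substitution in~\eqref{NSWCLTvarND} reproduces~\eqref{NSWGCvar}, after the $\mud^2$ and $\sigma_D^2$ terms combine with the $-2\mud$ term coming from the $2h^2(\beta+\gamma)[\gamma-(\beta+\gamma)z]z\mud/\beta^2$ contribution. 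These are short algebraic verifications.

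The main obstacle is not in this deduction itself but in the conjectural status of Conjecture~\ref{conj:nodroppingCLT}, which in turn rests on extending Proposition~\ref{prop:MRcltfin} to $\delta=0$ (see Remark~\ref{rmk:MRcltfin1}) and then passing to $\epsE\downarrow 0$ (see Remark~\ref{rmk:eps=0}); as noted in Remark~\ref{rmk:nodroppingCLTa}, both steps are carried out in Ball~\cite{Ball:2018} for the SIR epidemic without dropping on configuration-model networks, and therefore in particular cover the $\gamma=\omega=0$ case. The only additional technical point one should check is that the $o_p(1)$ error incurred in identifying the final size of the SI epidemic with $\RnMR$ (resp.\ $\RnNSW$) on the event of a major outbreak is negligible at the $\sqrt{N}$-scale; this follows from standard configuration-model estimates showing that, outside the giant component, all components have size $O_p(\log N)$, so their contribution to the fluctuations of $\RnMR$ and $\RnNSW$ is negligible.
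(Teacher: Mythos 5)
Your proposal is correct and follows essentially the same route as the paper: Section~\ref{sec:configuration} obtains Conjecture~\ref{prop:giantCLT} precisely by setting $\gamma=0$ (with $\omega=0$) in Conjecture~\ref{conj:nodroppingCLT}(a)(ii) and (b), identifying the final size of the resulting SI epidemic with the giant component, noting $\kappa>0\Leftrightarrow R_0>1$ and that $p_1>0$ guarantees $z>0$, and reducing~\eqref{zdefND} to~\eqref{MRz}. Your algebraic substitutions into~\eqref{MRCLTvarND} and~\eqref{NSWCLTvarND}, including the simplification $h(\beta,0,z)=-z/[1-\mu_D^{-1}f_D''(z)]$, reproduce~\eqref{MRGCvar} and~\eqref{NSWGCvar} exactly as the paper intends.
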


It is easily verified that the expressions~\eqref{MRGCvar} and~\eqref{NSWGCvar} for the asymptotic variances $\SigmaMR$ and $\SigmaNSW$ coincide with those first obtained by Ball and Neal~\cite{BallNeal:2017} using a completely different method; a CLT was conjectured in that paper and subsequently proved for an MR random graph in Barbour and R{\"o}llin~\cite{Barbour:2017}.  The
results proved in these two papers allow for unbounded degrees under suitable conditions. 
%\begin{remark}
%Although the results in this section are stated as conjectures, they all follow as special cases of theorems proved in Ball~\cite{Ball:2018}.   
%\end{remark}
\section{Numerical examples}
\label{sec:numerical}

In this section we give numerical results which exemplify some of the limit theorems  and support some of the conjectures presented in the paper and give examples of using those limiting results for approximation. Such approximations follow from our asymptotic results in exactly the same way as the approximate distribution of the sum of independent and identically distributed random variables follows from the classical CLT. For example, we can use equation~\eqref{FCLT} in Theorem~\ref{thm:MRclt} to say that, for large $N$, the distribution of $\bWN(t)$ is approximately that of
$N \bw(t) + \sqrt{N}\bV(t)$,
%\begin{equation*}
%\bWN(t) \approx N \bw(t) + \sqrt{N}\bV(t),
%\end{equation*}
from which approximations for the mean and variance of $\bWN(t)$ follow immediately from the corresponding properties of the Gaussian process $\bV(t)$. We also explore numerically some aspects of the behaviour of the model we have analysed, using the asymptotic results we have derived. In our numerical examples relating to the temporal evolution of the epidemic we look only at the mean and variance of the number of infective individuals in the population, we do not investigate any other quantities of interest or explicitly investigate the covariance/correlation structure in any way.

In this section we use the notation $D\sim\mbox{Poi}(\lambda)$ or $D\sim\mbox{Geo}(p)$ to denote that the network degree distribution follows a standard Poisson or Geometric distribution with mass functions $p_k=\re^{-\lambda}\lambda^k/k!$ ($k=0,1,\dots$) or $p_k=p(1-p)^k$ ($k=0,1,\dots$), respectively. In particular we shall use repeatedly in our examples the distributions $D\sim\mbox{Poi}(5)$ and $D\sim\mbox{Geo}(1/6)$. These distributions both have mean 5 and their standard deviations are $\sqrt{5}\approx 2.2$ and $\sqrt{30}\approx5.5$ respectively.

First, however, we discuss some of the issues that arise in relation to the numerical implementation of our analytical results.

\subsection{Implementation}
\label{sec:implementation}

The numerical implementation of our asymptotic results concerning epidemic final size (the formulae laid out in Propositions~\ref{prop:final} and~\ref{prop:mrVar} and Conjecture~\ref{conj:nswCLT}) is straightforward, involving root-finding, numerical integration and derivatives up to order 3 of the degree distribution PGF $f_D$. For the degree distributions we use, we have $f_D^{(i)}(s) = \lambda^i \re^{-\lambda(1-s)}$ when $D\sim\mbox{Poi}(\lambda)$ and  $f_D^{(i)}(s) = \frac{i! p (1-p)^i}{(1-(1-p)s)^{i+1}}$ when $D\sim\mbox{Geo}(p)$, with both formulae being valid for $i=0,1,\dots$.
In the final size examples we always use the version of these results with $\epsilon_E=0$, i.e.\ we work under the asymptotic regime where the epidemic starts with a trace of infection. The results concerning the evolution of the epidemic through time (Theorems~\ref{thm:as},~\ref{thm:MRclt} and~\ref{thm:NSWtemporalCLT}) warrant discussion of some issues that arise.

An obvious first issue is initial conditions $(\bx(0),\by(0),z_E(0))$ and $\Sigma(0)$ for the system of ODEs given by~\eqref{diff1}-\eqref{diff3} together with the variance/covariance-related matrix ODE~\eqref{diffSigma} (see also Remark~\ref{rem:NSWSigma0}). In an MR network we take the initial infectives to comprise a fixed number of individuals, with numbers of individuals of the various degrees chosen in the same proportions as they are present in the whole population. In an NSW network we choose the required number of initial infectives uniformly at random from the population.
%In either case this amounts to assuming that the initial cases are infected from outside the network.
Ideally we might want the initial conditions to represent a large outbreak initiated by few initial infectives; this is a rather more complex situation and could be addressed using the results of Ball and House~\cite{BalHou:2017}.

Let $\epsilon$ be the proportion of individuals initially infected in the limit as $N\to\infty$. It is straightforward to show that $x_i(0) = \lim_{N\to\infty} N^{-1} E[X_i^N(0)] = p_i (1-\epsilon)$ and similarly that $y_i(0) = p_i \epsilon$ and $z_E(0) = 0$ (cf.\ the paragraph immediately before Theorem~\ref{thm:as}; with a NSW network these limits hold almost surely). Turning to $\Sigma(0)$, in the case of an MR network we have chosen the initial conditions so that there is no variability; i.e. all elements of $\Sigma_{\rm MR}(0)$ are zero. With an NSW network there is variability in the initial conditions; to characterise it we let $i_0^N = [\epsilon N]$ be the number of initially infected individuals (or assume that $i_0^N$ is a function of $N$ such that $\lim_{N\to\infty} N^{-1}i_0^N = \epsilon$) and use the notation $\sigma_{x_i,x_j}(0)$ for the $(i,j)$-th element of the submatrix of $\Sigma_{\rm NSW}(0)$ corresponding to the susceptible elements (cf.\ the partitioning in~\eqref{sigma0a}), so for example $\sigma_{x_i,y_j}(0) = \lim_{N\to\infty} N^{-1} \cov(X_i^N(0),Y_j^N(0))$. We find that the following elements of $\Sigma_{\rm NSW}(0)$ are non-zero: for all $i$, $\sigma_{x_i,x_i}(0) = p_i (1-p_i)(1-\epsilon)$ and $\sigma_{y_i,y_i}(0) = p_i (1-p_i) \epsilon$; and for all $i\neq j$, $\sigma_{x_i,x_j}(0) = -p_ip_j(1-\epsilon)$ and $\sigma_{y_i,y_j}(0) = -p_ip_j\epsilon$. Derivations can be found in Appendix~\ref{app:ODEinitConds}.

After solving the ODE systems numerically we can calculate the asymptotic means and variances for other quantities of interest, for example to approximate $I^N(t)$, the number of infected individuals at time $t$, we use
\begin{equation*}
\lim_{N\to\infty} N^{-1}E[I^N(t)] = \sum_{i=0}^\infty y_i(t) \quad \mbox{and} \quad \lim_{N\to\infty} N^{-1/2} \var[I^N(t)] = \sum_{i=0}^{\infty} \sum_{j=0}^{\infty} \sigma_{y_i,y_j}(t).
\end{equation*}
The final ODE-related issue is choosing the value of $M$, the maximum degree, to use when the degree distribution does not have finite support.  (This amounts to setting $x_i(t)=y_i(t)=0$ for all $t\geq0$ and $i=M+1,M+2,\dots$.) The upper bound $M$ needs to be large enough that the approximation is accurate but not so large that the systems of ODEs are impractical to solve numerically (the number of ODEs grows like $M^2$). To decide on an appropriate value for $M$ we compare plots of the asymptotic means and variances of $I(t)$ (i.e.\ the solid lines in the lower plots of Figure~\ref{fig:processApprox}), increasing $M$ until there is no observable difference in these plots. We also compare the predicted relative `final' size $x(0)-x(t_{\rm end})$ from the numerical ODE solution to the asymptotic final size predicted by Proposition~\ref{prop:final}. For the degree distributions we find that $M=15$ is sufficient when $D\sim\mbox{Poi}(5)$ and $M=50$ when $D\sim\mbox{Geo}(1/6)$.

Simulation of the epidemic process is relatively straightforward. Given a sequence of degrees (either [MR] a specified sequence or [NSW] independent realisations from the distribution $\{p_k\}$) we (i) generate the network, (ii) choose initial infectives, (iii) spread the epidemic on the network. There is therefore randomness in each simulation deriving not just from the evolution of the epidemic, but also the graph construction and, in the case of an NSW graph, the degree sequence. When we calculate confidence intervals (CIs) for quantities associated with simulations of the temporal evolution of the epidemic they are calculated independently for each time point; i.e. they are not confidence bands for the process. Endpoints of CIs for standard deviations are calculated as the square roots of the endpoints of standard symmetric (in terms of probability) CIs for the variance.

\subsection{Convergence and approximation of temporal properties}
\label{sec:cgceAppTime}
First we demonstrate numerically some of the limit theorems from earlier sections, showing both how the convergence is realised and thus how these limit theorems can be used for approximation. We give examples only with an NSW graph construction, but much the same observations apply in the MR graph scenario.

In Figure~\ref{fig:processApprox} we demonstrate using Theorem~\ref{thm:NSWtemporalCLT} for approximation of the temporal evolution of the epidemic, comparing simulated trajectories of the prevalence $I^N(t)$ (for $N=1000$) versus time $t$ of the model with predictions from the functional central limit theorem, for a Poisson and a Geometric degree distribution. The upper plots show the simulated trajectories together with the mean and a central 95\% probability band predicted by the CLT; they suggest that the approximation is fairly good. The lower plots compare the mean and variance of the prevalence through time with the LLN and CLT based asymptotic predictions.

In Figure~\ref{fig:processCgce} we investigate the convergence of the distribution of $I^N(t)$ to its $N\to\infty$ limit at three time points $t_1$, $t_2$ and $t_3$. The times are chosen so that $t_2$ is close to the time of peak prevalence and $t_1$ and $t_3$ are when prevalence is increasing and decreasing, respectively, at a level roughly half that of the peak prevalence. (Effectively we are examining the upper-right plot of Figure~\ref{fig:processApprox} in detail at these three time points.) In this figure we have used a geometric degree distribution, but very similar conclusions are obtained using different distributions. This convergence is further investigated/demonstrated in Figure~\ref{fig:KolDist}, where, separately for each of the same three time points, we plot the Kolmogorov distance between the empirical and asymptotic distributions of the number of infectives against population size $N$.

\begin{figure}
\begin{center}
\begin{tabular}{cc}
(a) $D$ Poisson & (b) $D$ Geometric \\
\includegraphics[width=\hfigwidth]{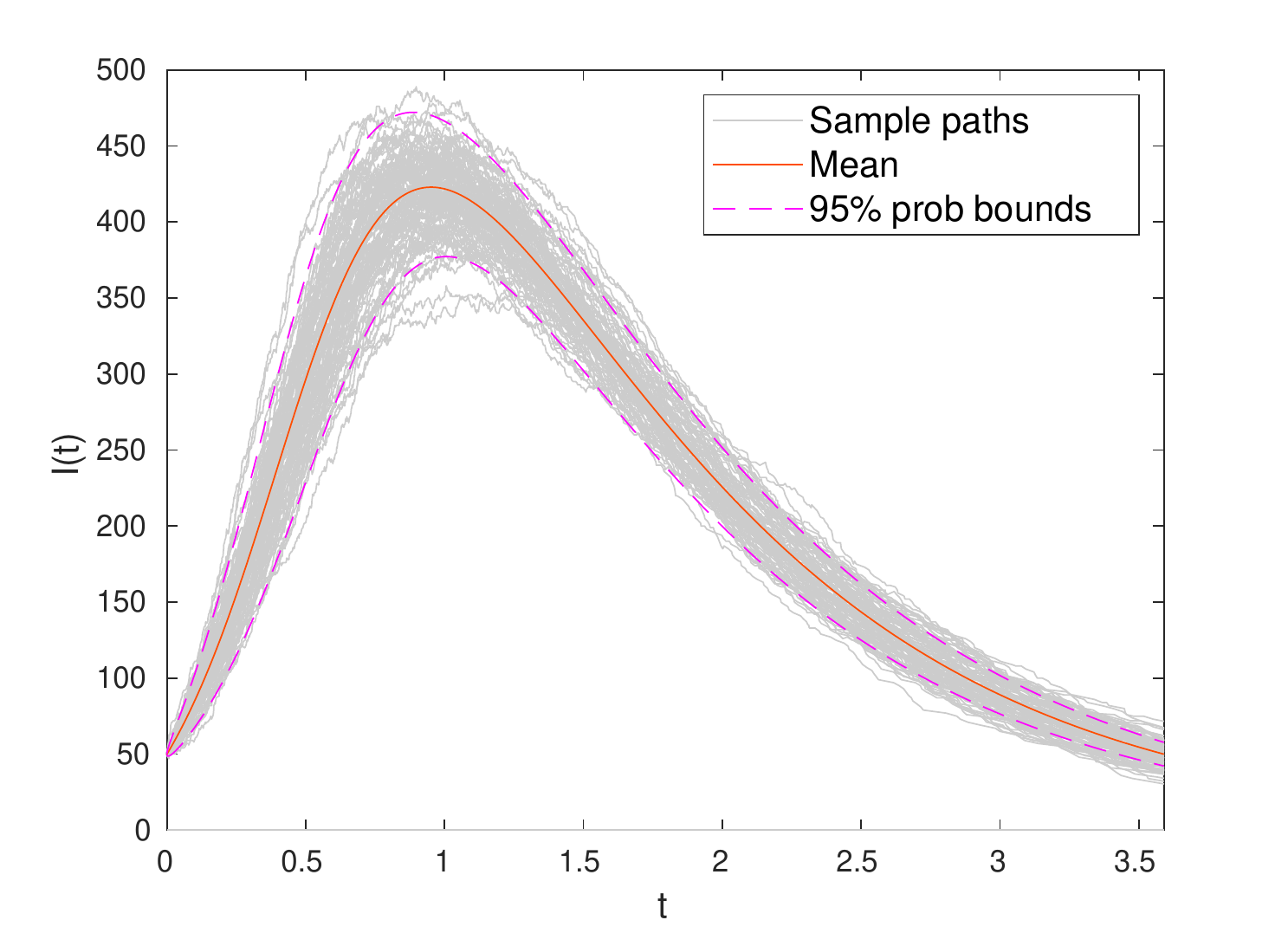} & \includegraphics[width=\hfigwidth]{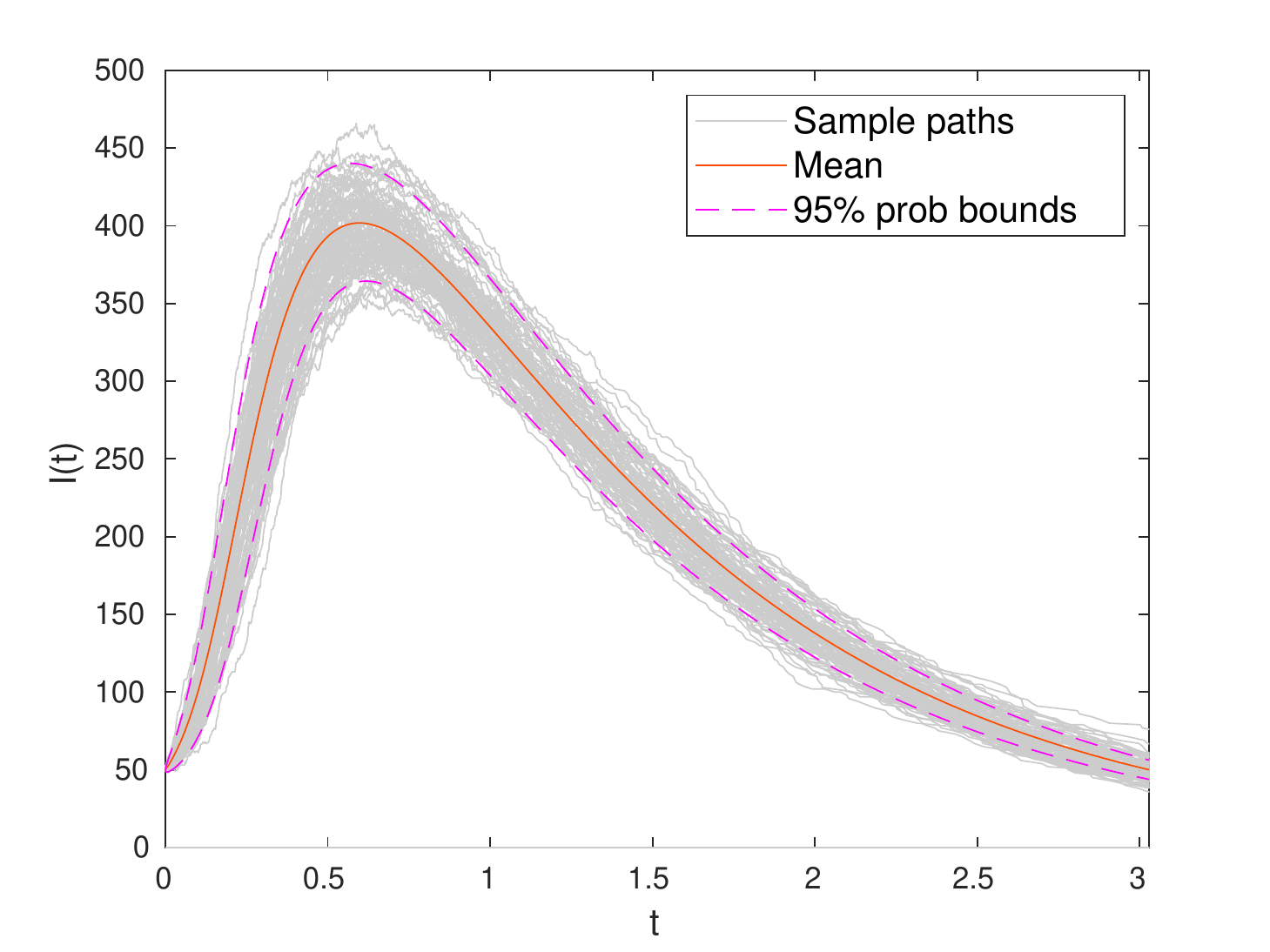}  \\
\includegraphics[width=\hfigwidth]{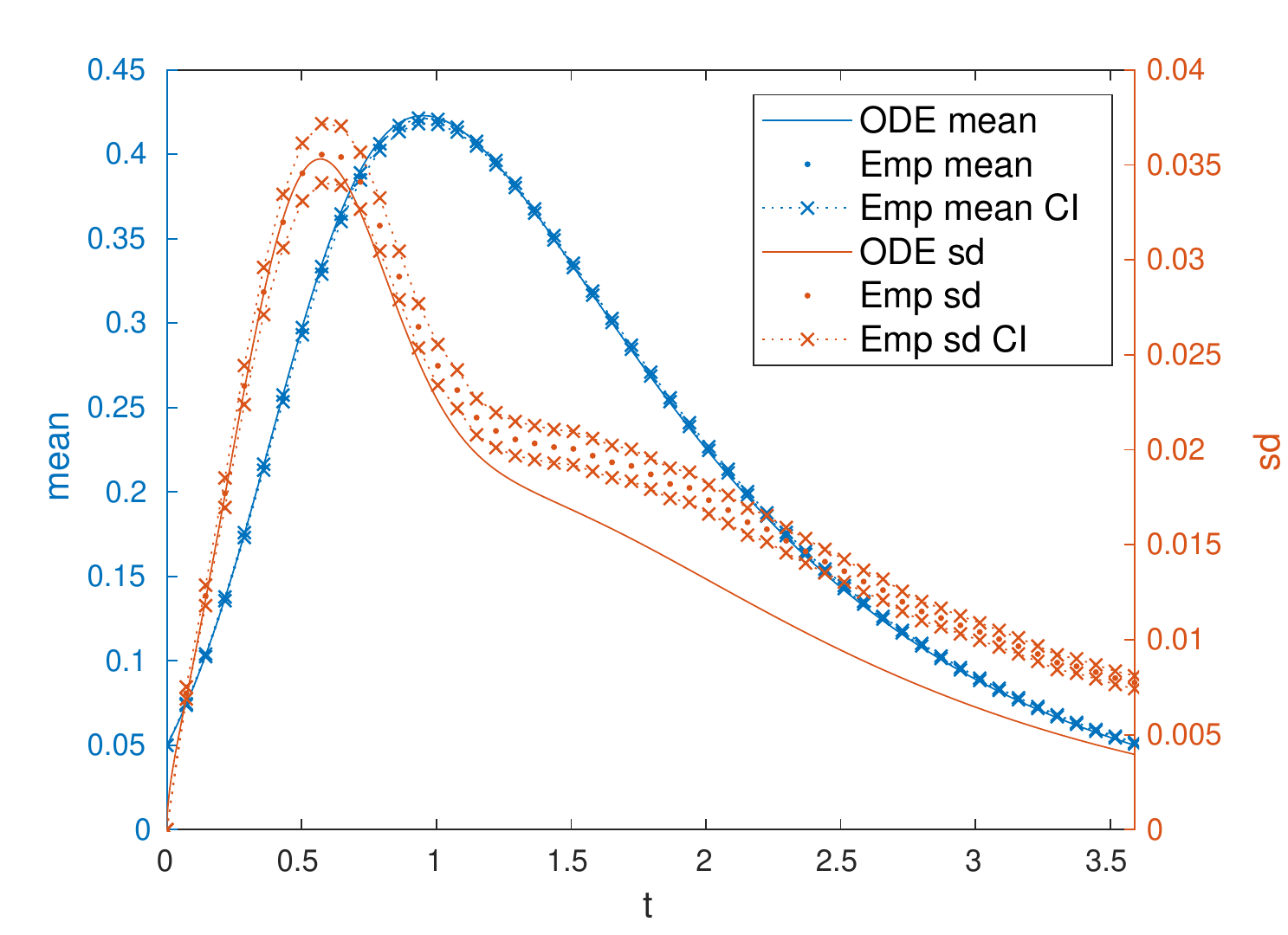}& \includegraphics[width=\hfigwidth]{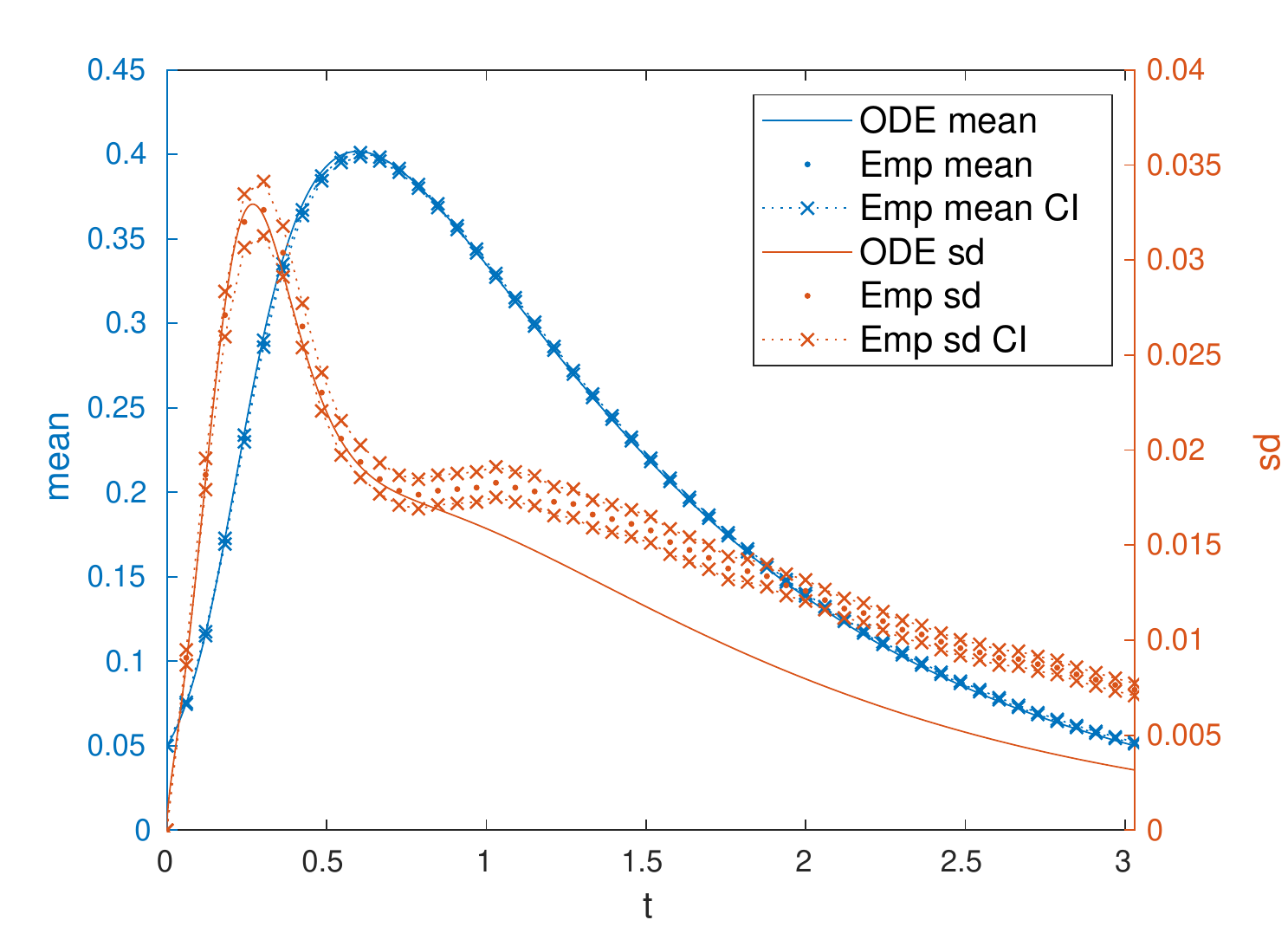}
\end{tabular}
\end{center}
\caption{Demonstration of approximation implied by Theorem~\ref{thm:NSWtemporalCLT}, for (a) $D\sim\mbox{Poi}(5)$ and (b) $D\sim\mbox{Geo}(1/6)$. The upper plots show 100 simulated sample paths of the number of infectives $I^N(t)$ against time $t$, together with the mean and central 95\% probability bands for $I^N(t)$ predicted by the functional CLT. The lower plots show asymptotic values and estimates from 1000 simulations of the scaled mean and standard deviation of the number of infectives through time.
Other parameters are $N=1000$, $\beta=3/2$, $\gamma=1$, $\omega=1$, $i^N_0=0.05N$. (All plots are truncated at the time when the proportion of individuals that are infective drops below 0.05.)}
\label{fig:processApprox}
\end{figure}

\begin{figure}
\begin{center}
\begin{tabular}{rccc}
 & $t_1=0.35$ & $t_2=0.6$ & $t_3=1.5$ \\
\begin{sideways}\hspace*{1cm}$N=200$\end{sideways}
 & \includegraphics[width=\tfigwidth]{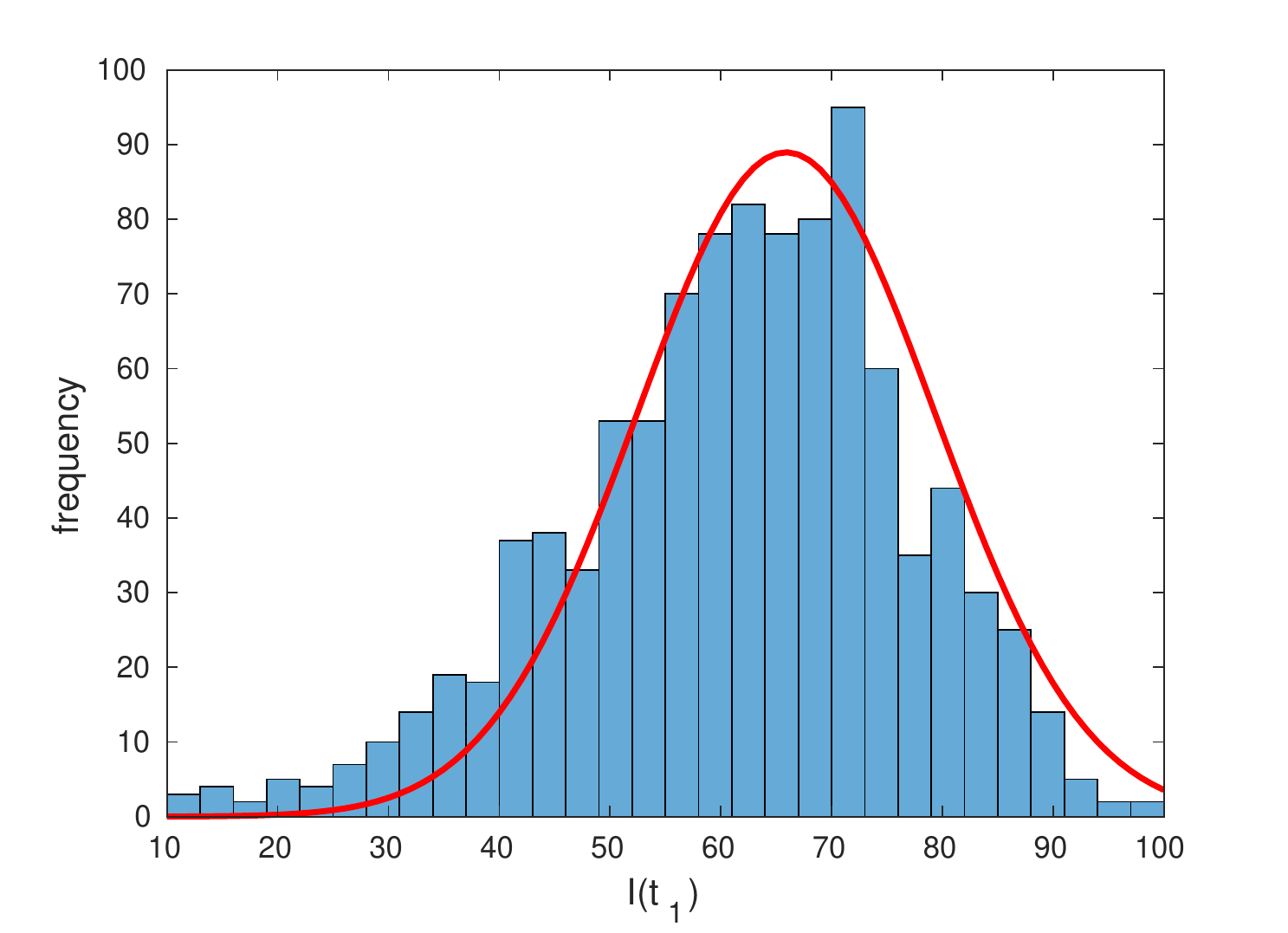}
 & \includegraphics[width=\tfigwidth]{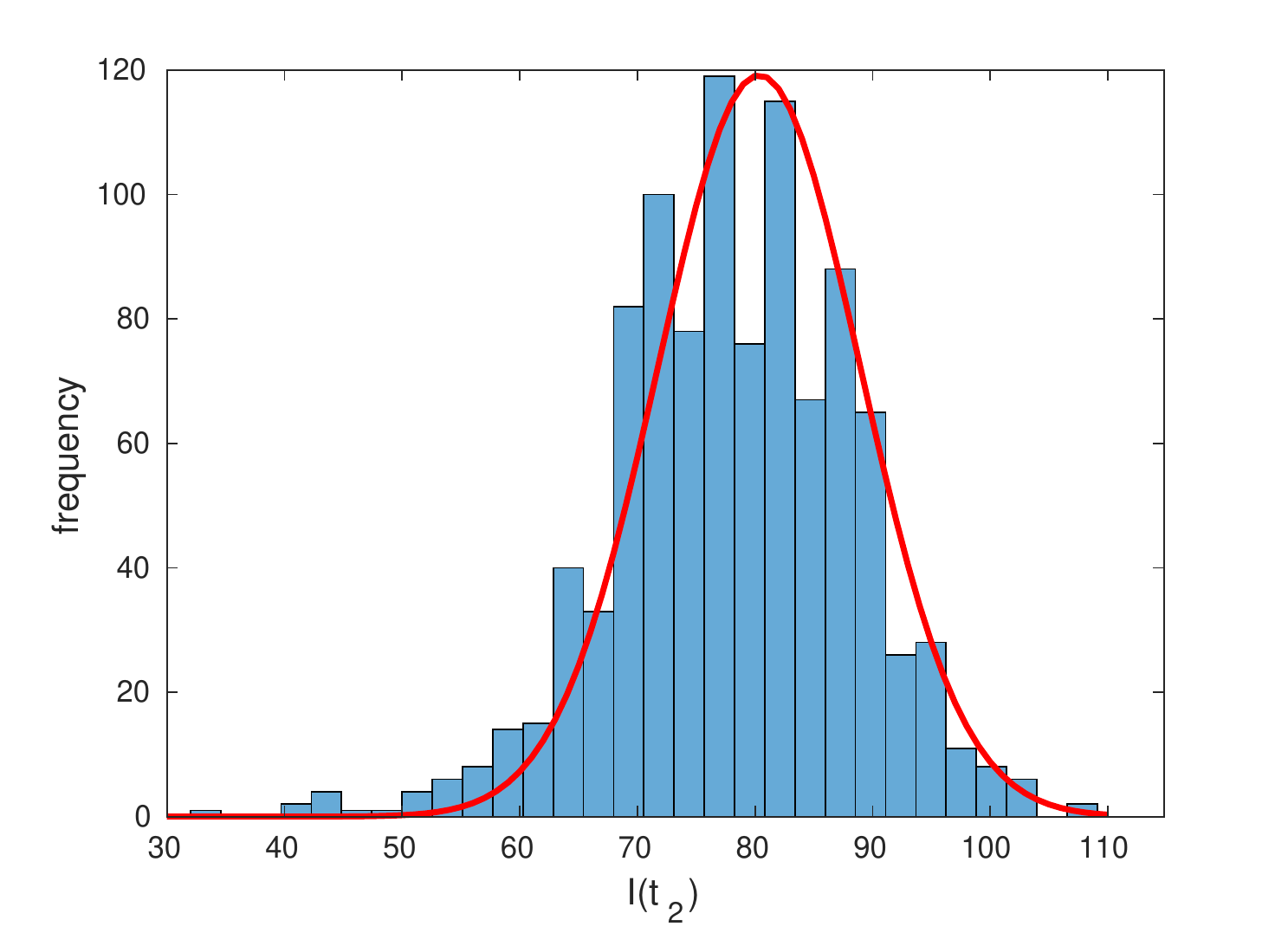}
 & \includegraphics[width=\tfigwidth]{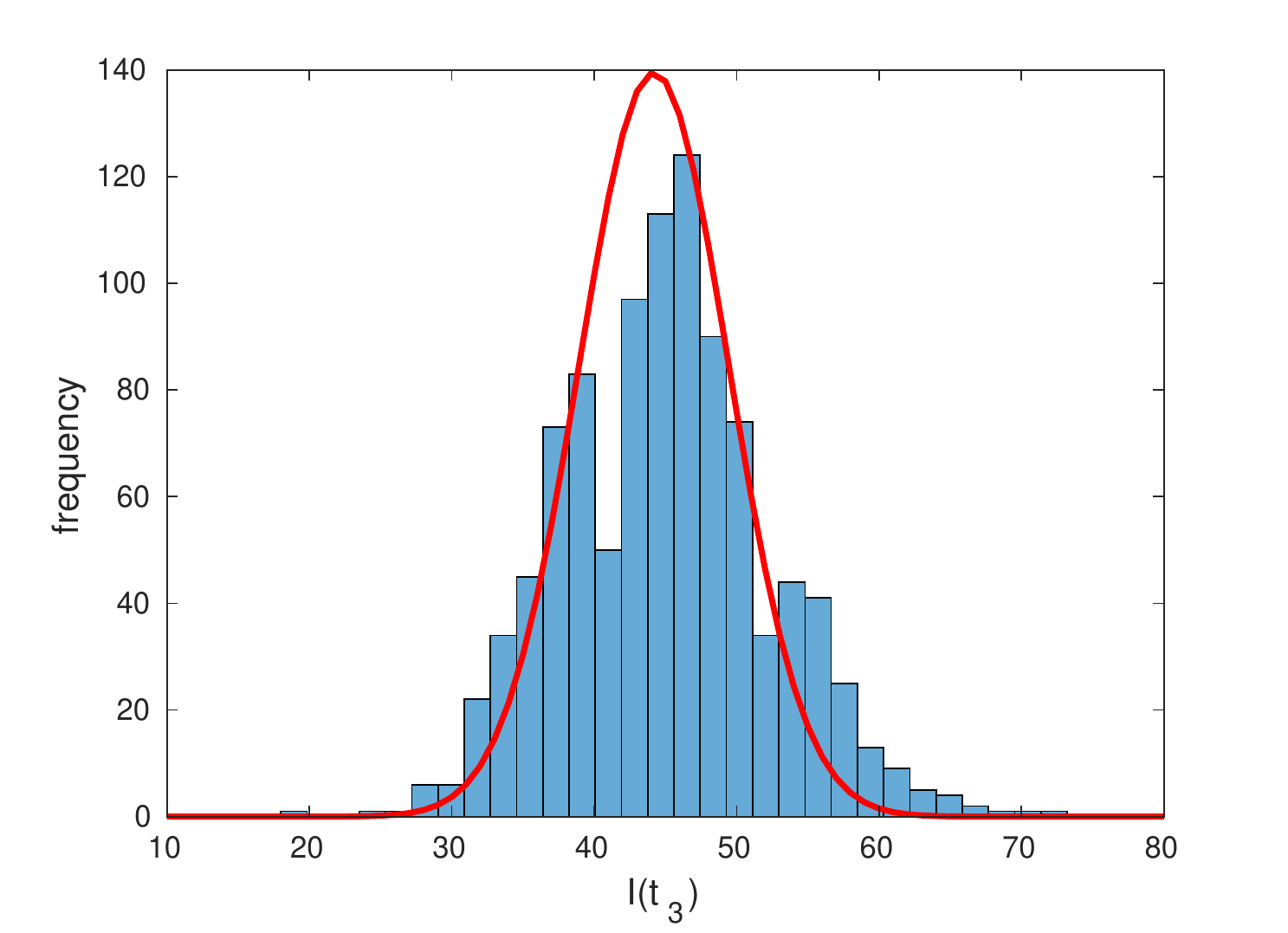} \\
\begin{sideways}\hspace*{1cm}$N=1000$\end{sideways}
 & \includegraphics[width=\tfigwidth]{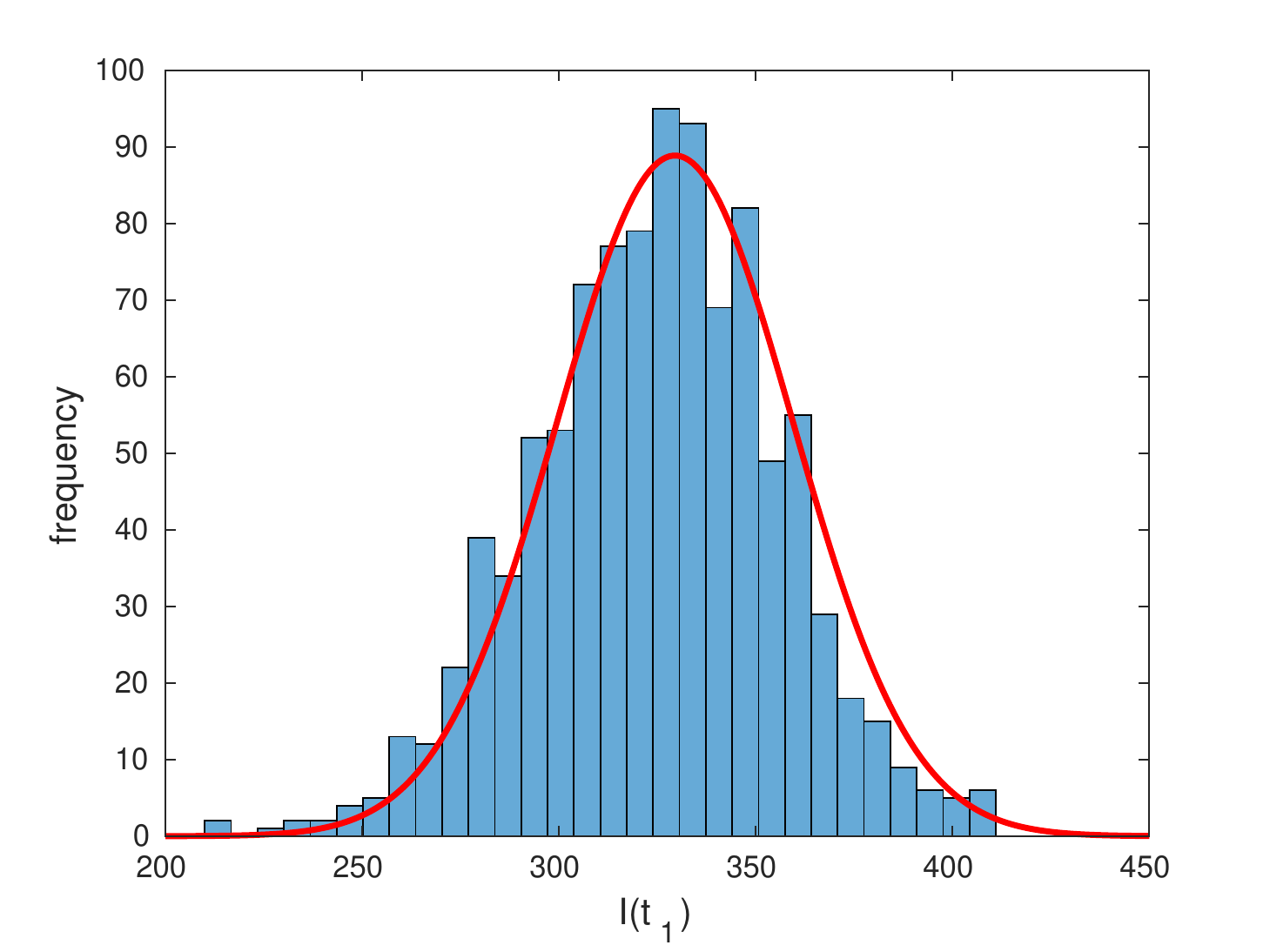}
 & \includegraphics[width=\tfigwidth]{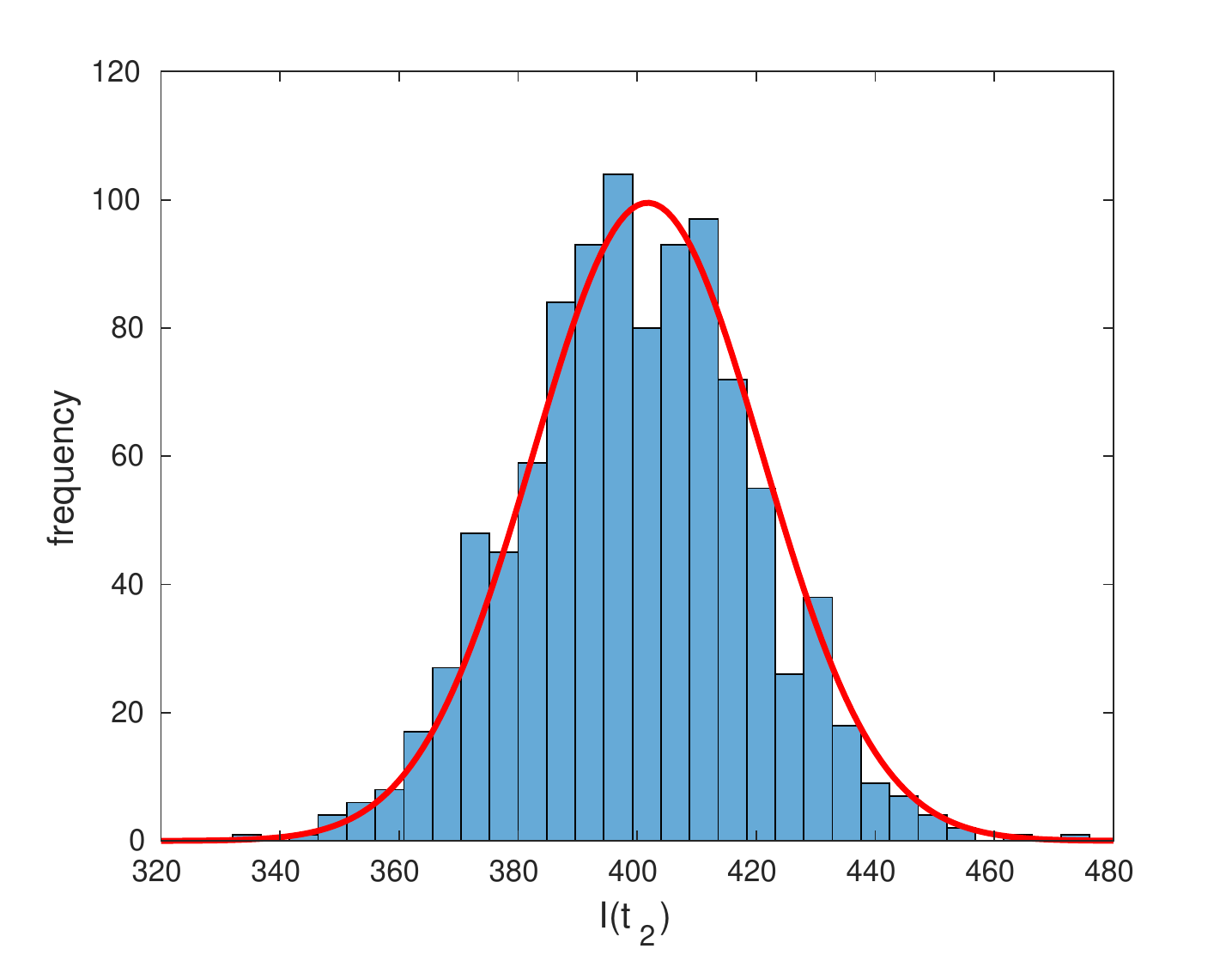}
 & \includegraphics[width=\tfigwidth]{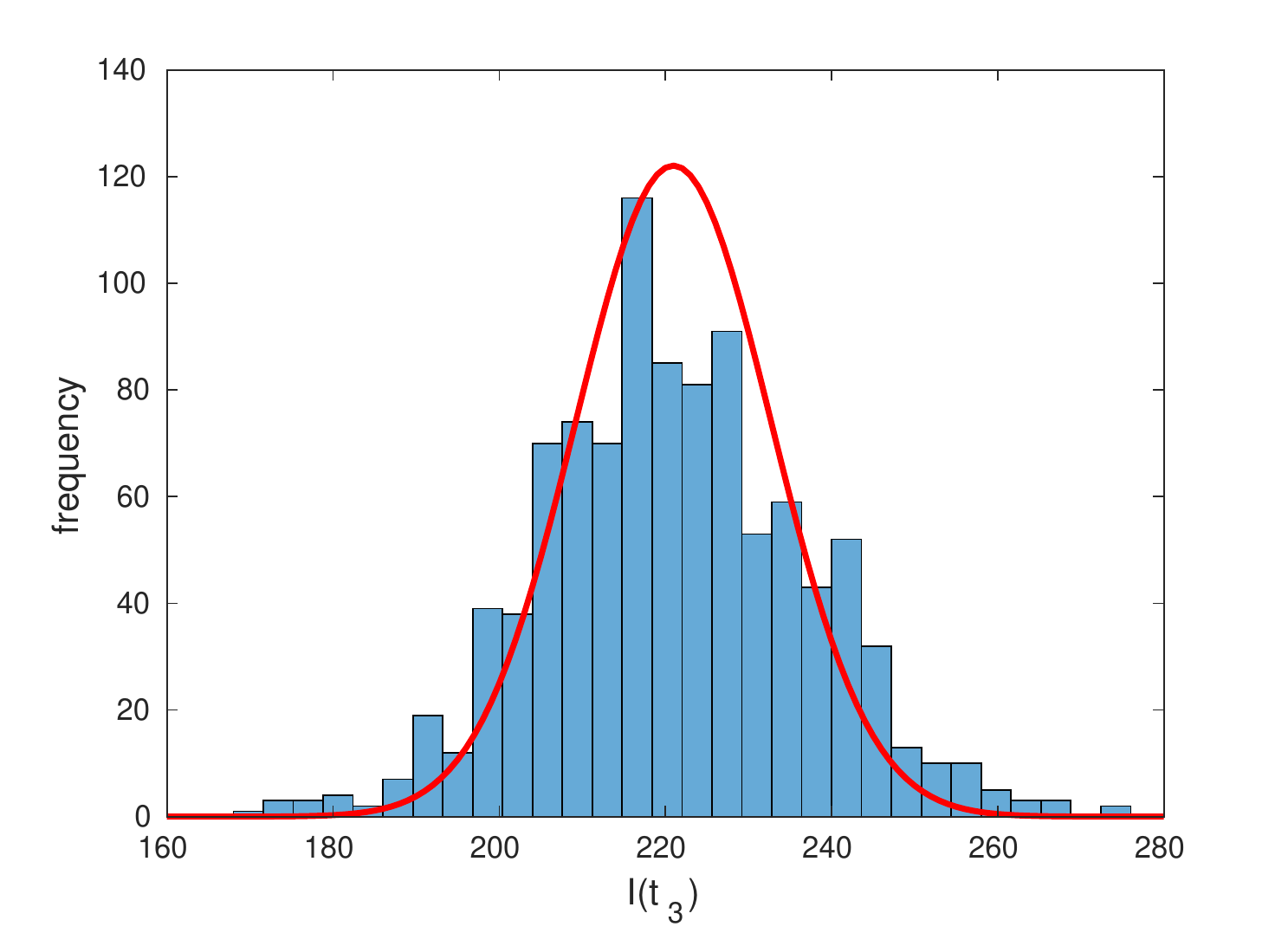} \\
\begin{sideways}\hspace*{1cm}$N=5000$\end{sideways}
 & \includegraphics[width=\tfigwidth]{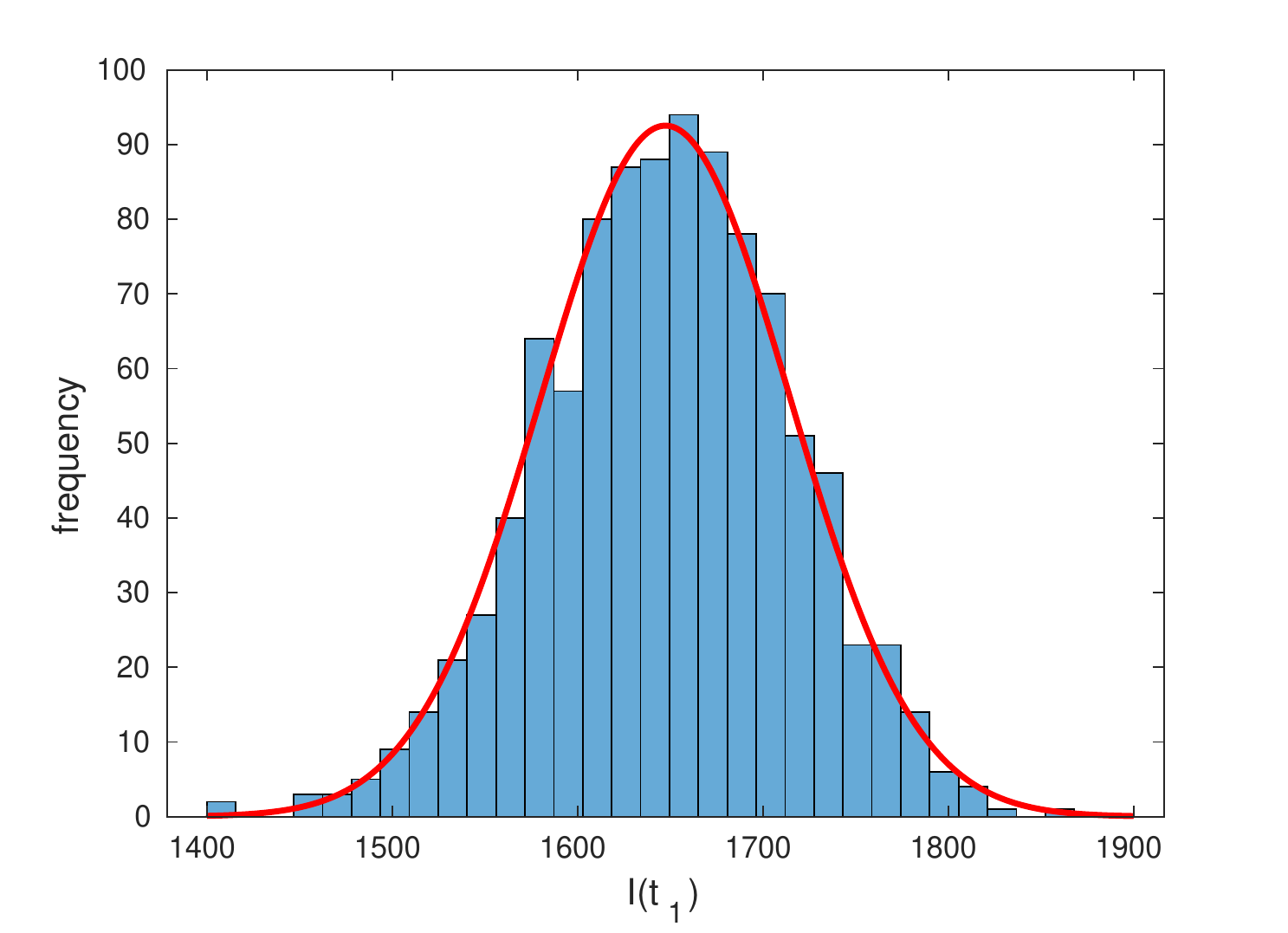}
 & \includegraphics[width=\tfigwidth]{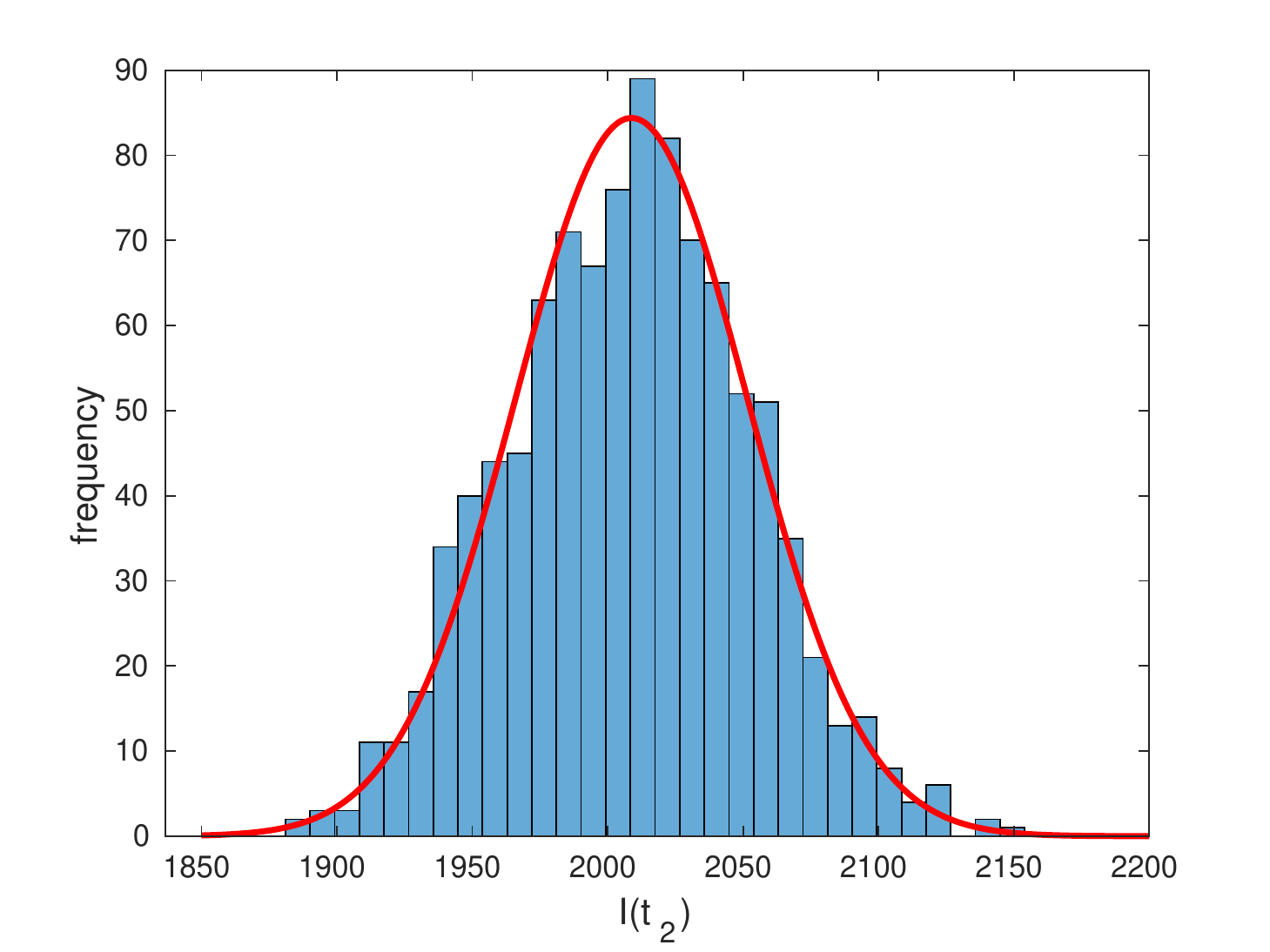}
 & \includegraphics[width=\tfigwidth]{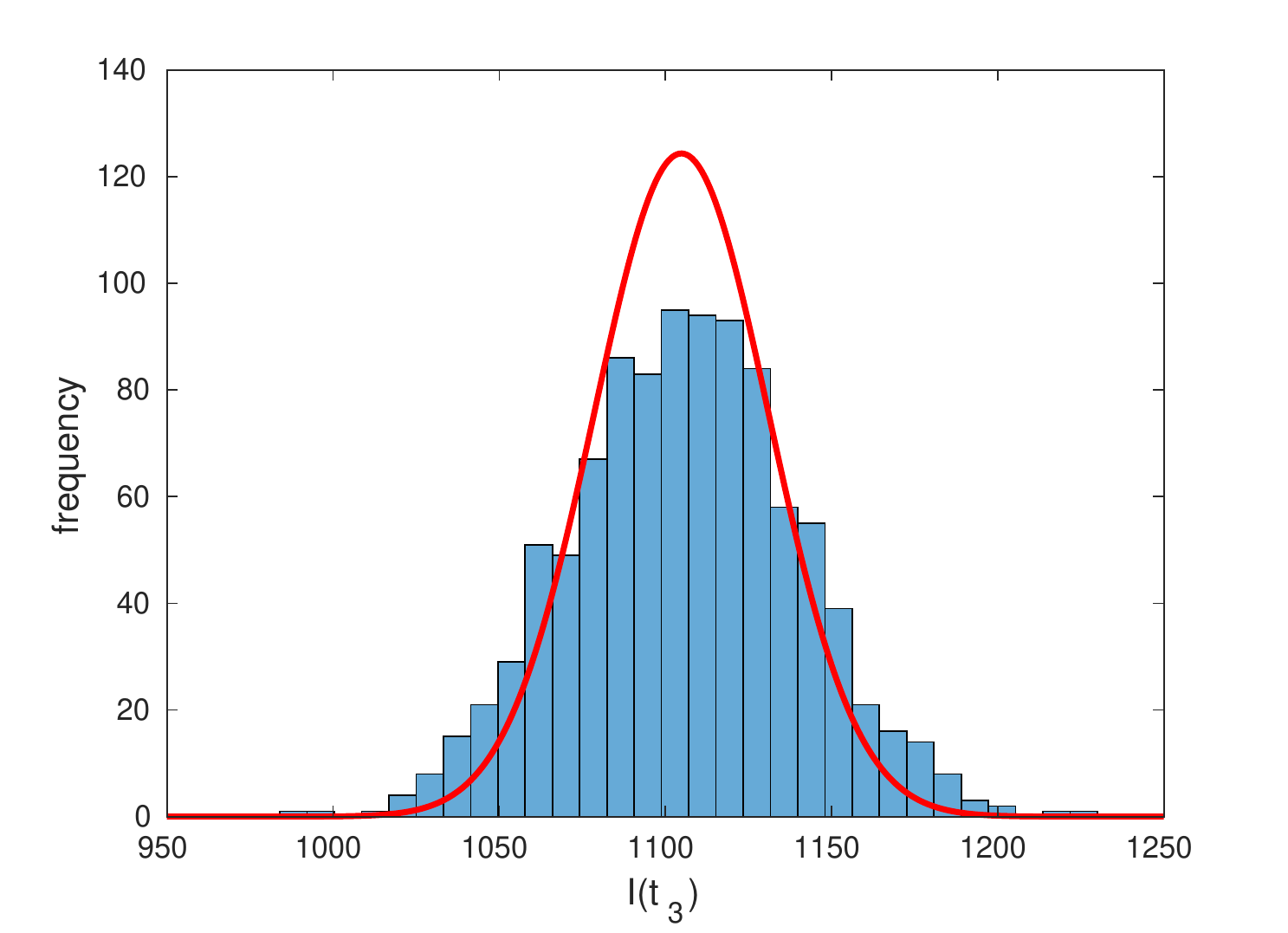}
\end{tabular}
\end{center}
\caption{Demonstration of convergence described by Theorem~\ref{thm:NSWtemporalCLT}, for $D\sim\mbox{Geo}(1/6)$. Histograms of $I^N(t_i)$ (based on 1000 simulated trajectories) and normal approximation for three fixed time points $t_1=0.35$, $t_2=0.6$, $t_3=1.5$ and 3 population sizes $N=200$, $N=1000$, $N=5000$. Other parameters are $\beta=3/2$, $\gamma=1$, $\omega=1$, $i^N_0=0.05N$.}
\label{fig:processCgce}
\end{figure}

\begin{figure}
\begin{center}
\includegraphics[width=\figwidth]{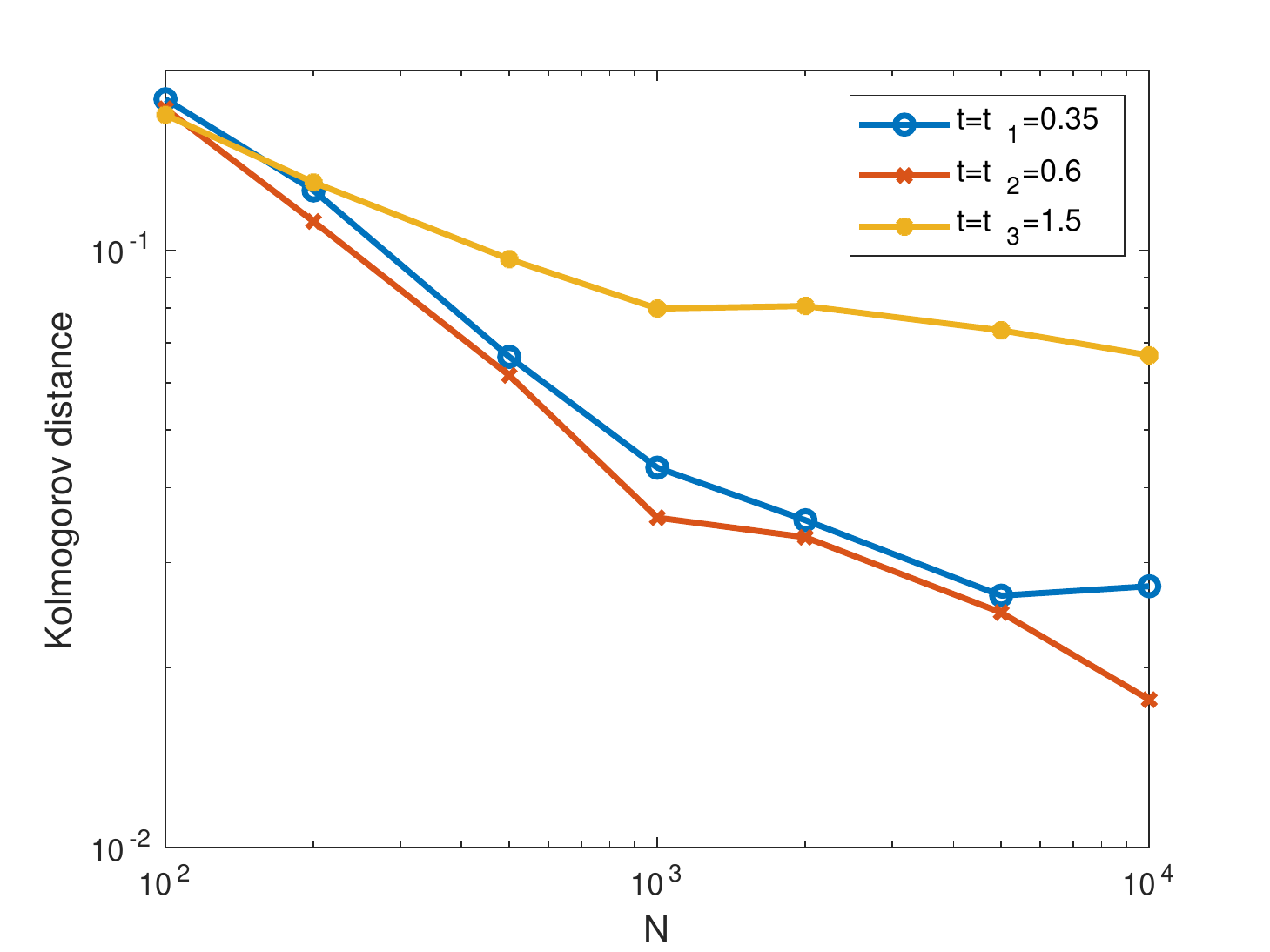}
\end{center}
\caption{Demonstration of convergence described by Theorem~\ref{thm:NSWtemporalCLT}, for $D\sim\mbox{Geo}(1/6)$. The distribution of $I^N(t_i)$ (based on 5000 simulated trajectories) and its normal approximation are compared using the Kolmogorov distance at three fixed time points $t_1=0.35$, $t_2=0.6$, $t_3=1.5$ for population sizes $N=100, 200, 500, 1000, 2000, 5000, 10000$. Other parameters are $\beta=3/2$, $\gamma=1$, $\omega=1$, $i^N_0=0.05N$.}
\label{fig:KolDist}
\end{figure}

Broadly speaking, Figure~\ref{fig:processApprox} and similar plots for other population sizes, together with Figures~\ref{fig:processCgce} and~\ref{fig:KolDist} and similar plots for other degree distributions, show that the predicted convergence is apparent, but seems slower for the later times. Even for quite small population sizes in the low hundreds, the asymptotic approximation to the mean behaviour of the epidemic is excellent. 
With smaller population sizes of a few hundred the approximation of the variability seems quite good in the early phase of epidemic growth, begins to worsen at or slightly before the time of peak prevalence and consistently underestimates the variability of $I^N(t)$ after that. As the population size increases, the approximation for the standard deviation improves but not as quickly as one might hope: the agreement between asymptotic and empirical distributions seems to improve fairly slowly as $N$ increases from 200 to 5000. Thus we can be very confident in using an LLN-based approximation for nearly any population size; but CLT-based approximations must be used with some caution, particularly at and after the time of peak prevalence. For these later times, a CLT-based approximation seems to systematically underestimate the variability in the number of infectives in the population. On a slightly more theoretical note, the plausibly linear (though also decidely noisy) behaviour of the plots in Figure~\ref{fig:KolDist} is consistent with these Kolmogorov distances tending to 0 as $N\to\infty$. Consistent with the observations above, this convergence is at roughly the same rate for the time points in the early growth phase and near peak prevalence but much more slowly for the later time point $t=t_3$ in the phase where the infection is dying out.

%\red{Another thing we haven't mentioned yet, but is of interest, is the following: of the initial fraction infected is small then the variance/sd is not monotone increasing then decreasing, it can have a second local max to the right of the `main' one. In the interests of brevity probably this should be left out---it was noted by Ball and House (Sec 9.1) and possibly earlier too?}

\subsection{Approximation of epidemic final size}
\label{sec:CgceAppFS}
In Figure~\ref{fig:FSapprox} we demonstrate approximation results for the final size of major outbreaks in our epidemic model on an NSW graph (Conjecture~\ref{conj:nswCLT}). Again we see that the approximation is quite reasonable for relatively small population sizes in the low hundreds and becomes very good indeed for population sizes in the thousands.

\begin{figure}
\begin{center}
\begin{tabular}{rcc}
 & (a) Poisson degree & (b) Geometric degree \\
\begin{sideways}\hspace*{1.5cm}$N=200$\end{sideways}
 & \includegraphics[width=\hfigwidth]{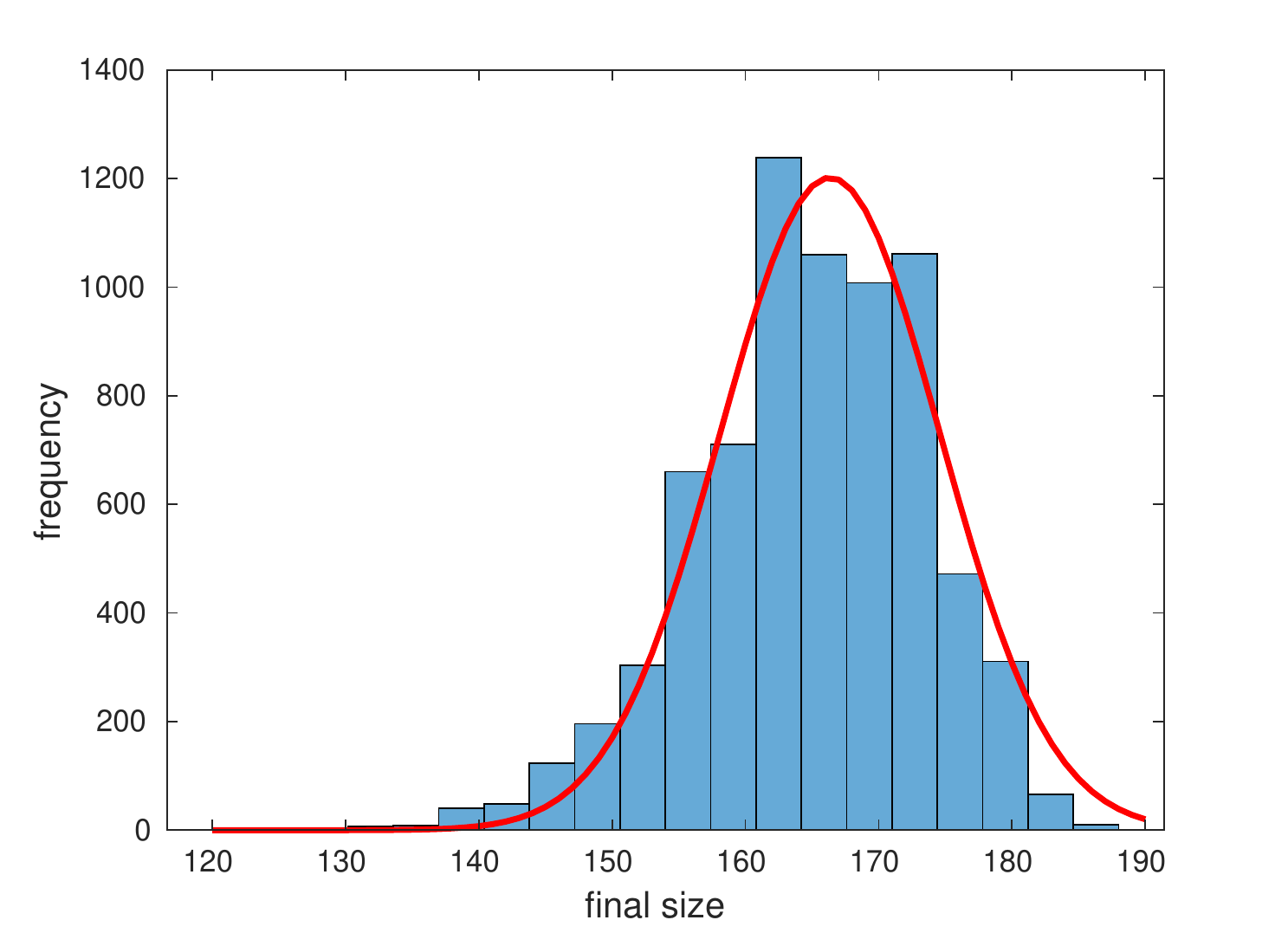}
 & \includegraphics[width=\hfigwidth]{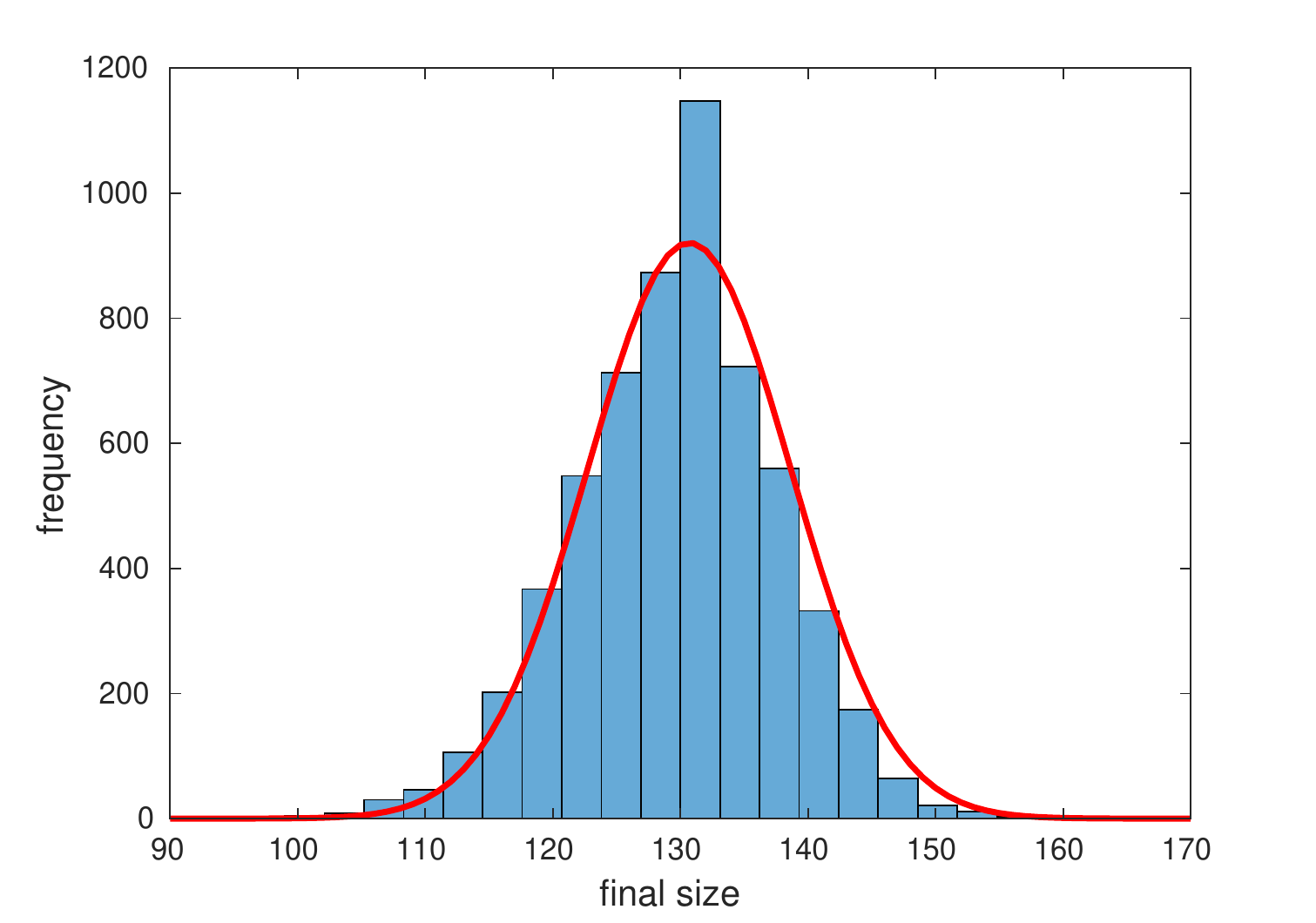} \\
\begin{sideways}\hspace*{1.5cm}$N=2000$\end{sideways}
 & \includegraphics[width=\hfigwidth]{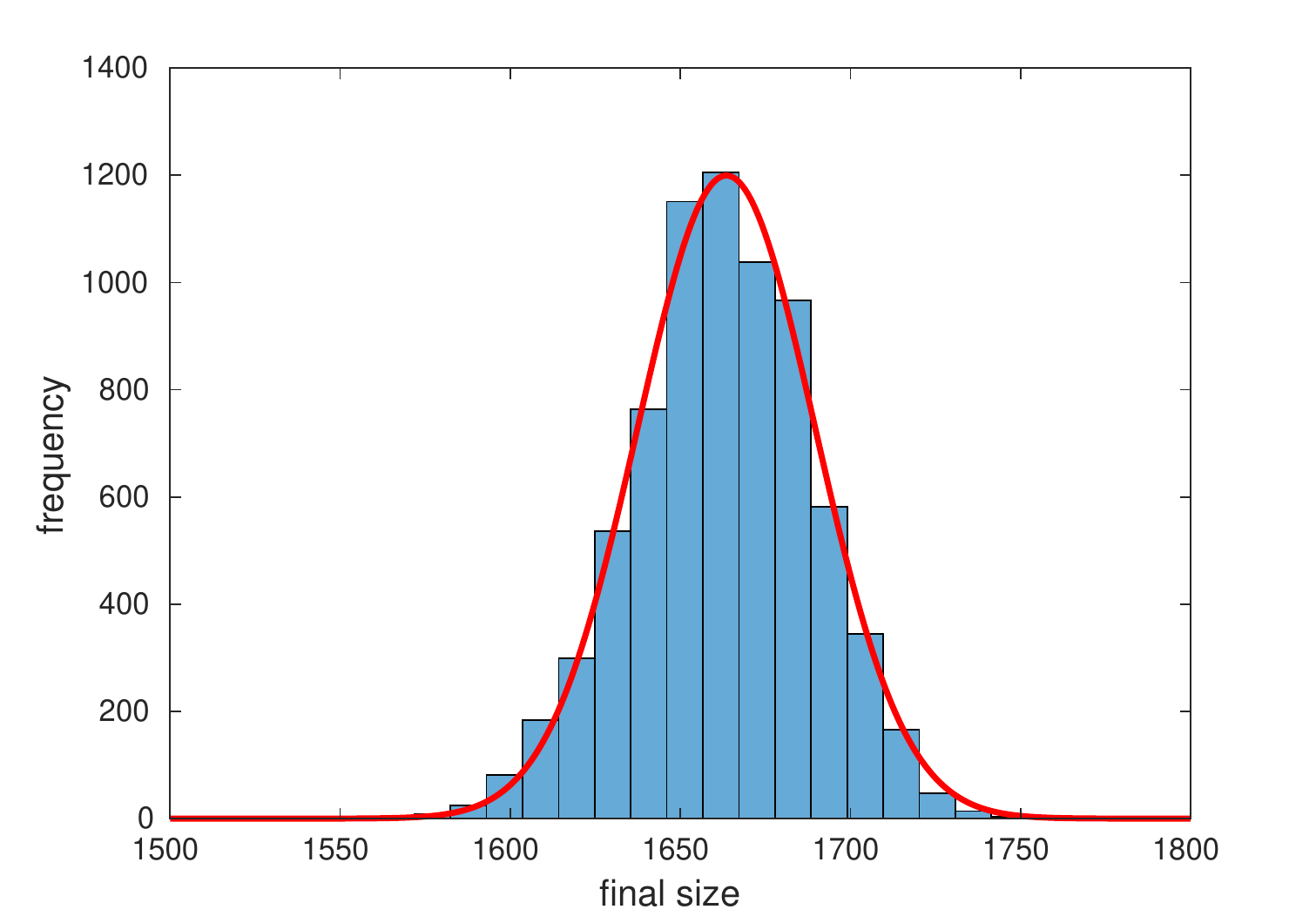}
 & \includegraphics[width=\hfigwidth]{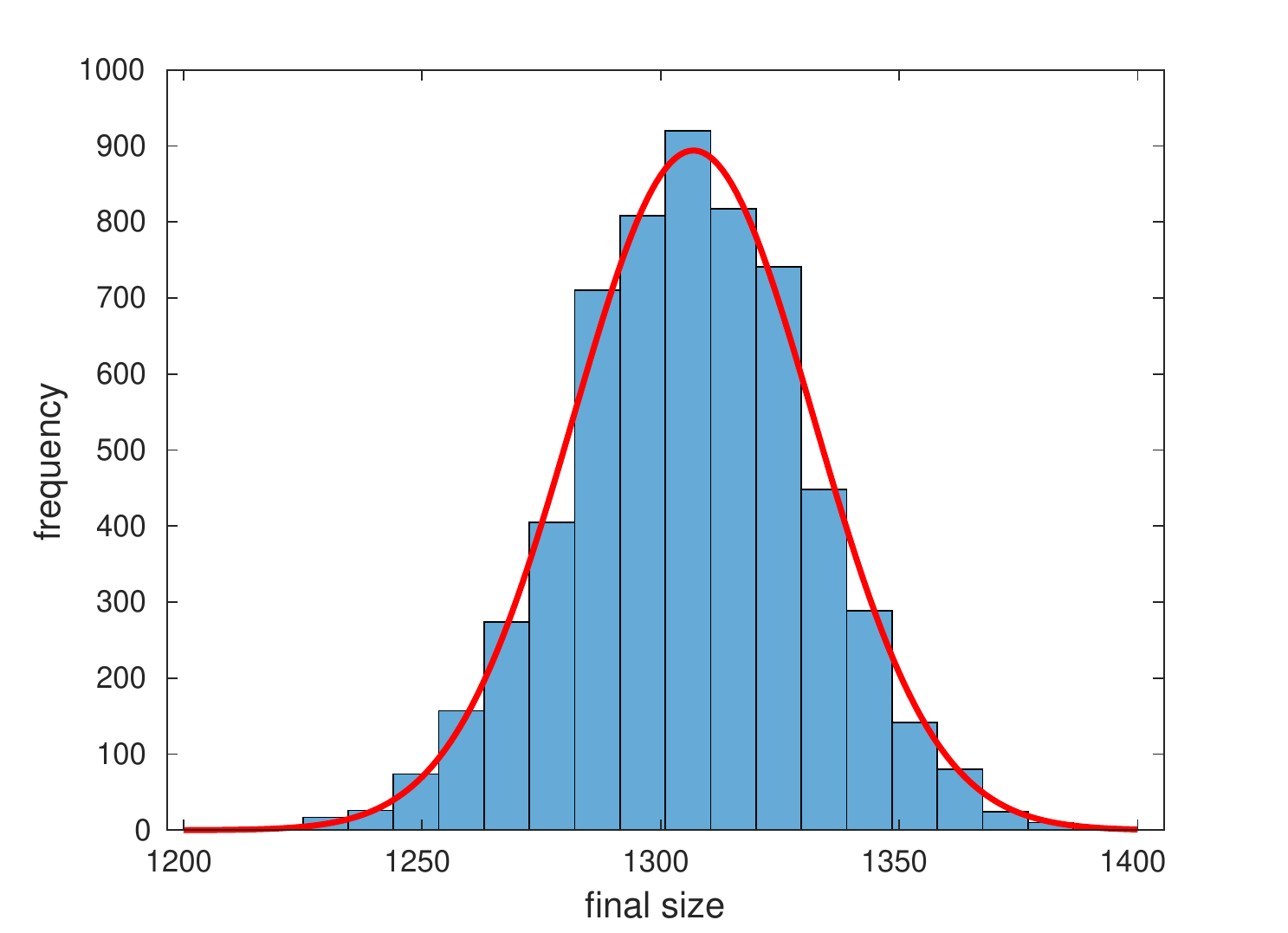}
\end{tabular}
\end{center}
\caption{Histograms (based on 10,000 simulations) and normal approximation of the final size distribution of major outbreaks for epidemics on graphs with (a) $D\sim\mbox{Poi}(5)$, (b) $D\sim\mbox{Geo}(1/6)$ and varying populations sizes $N=200, 2000$. Other parameter values are $\beta=3/2$, $\gamma=\omega=1$ and $i^N_0=10$.}
\label{fig:FSapprox}
\end{figure}

\subsection{The effect of dropping}
\label{sec:droppingEffect}

Next we investigate the behaviour of our model in respect of the introduction of the dropping mechanism. Starting with an epidemic without dropping we examine the behaviour of $R_0$ and $\rho$ (the fraction of the population that is ultimately infected in the limiting
determinstic model -- see Section~\ref{sec:finalsize}) as the dropping rate $\omega$ is increased from 0 (no dropping) to a value which brings the model below threshold. Figure~\ref{fig:dropping} does this for two `starting' models, one with a Poisson and one with a geometric degree distribution, both well above threshold with with $\rho$ comfortably above 0.5. (Recall that $R_0$ and $\rho$ are independent of whether the network is MR or NSW.)
In both cases we see that increasing $\omega$ reduces the virulence and severity of the epidemic as measured by $R_0$ and $\rho$. Perhaps noteworthy is that one of the plots of the mean final size $\rho$ is concave and the other convex.

\begin{figure}
\begin{center}
\begin{tabular}{cc}
(a) $D$ Poisson & (b) $D$ Geometric \\
\includegraphics[width=\hfigwidth]{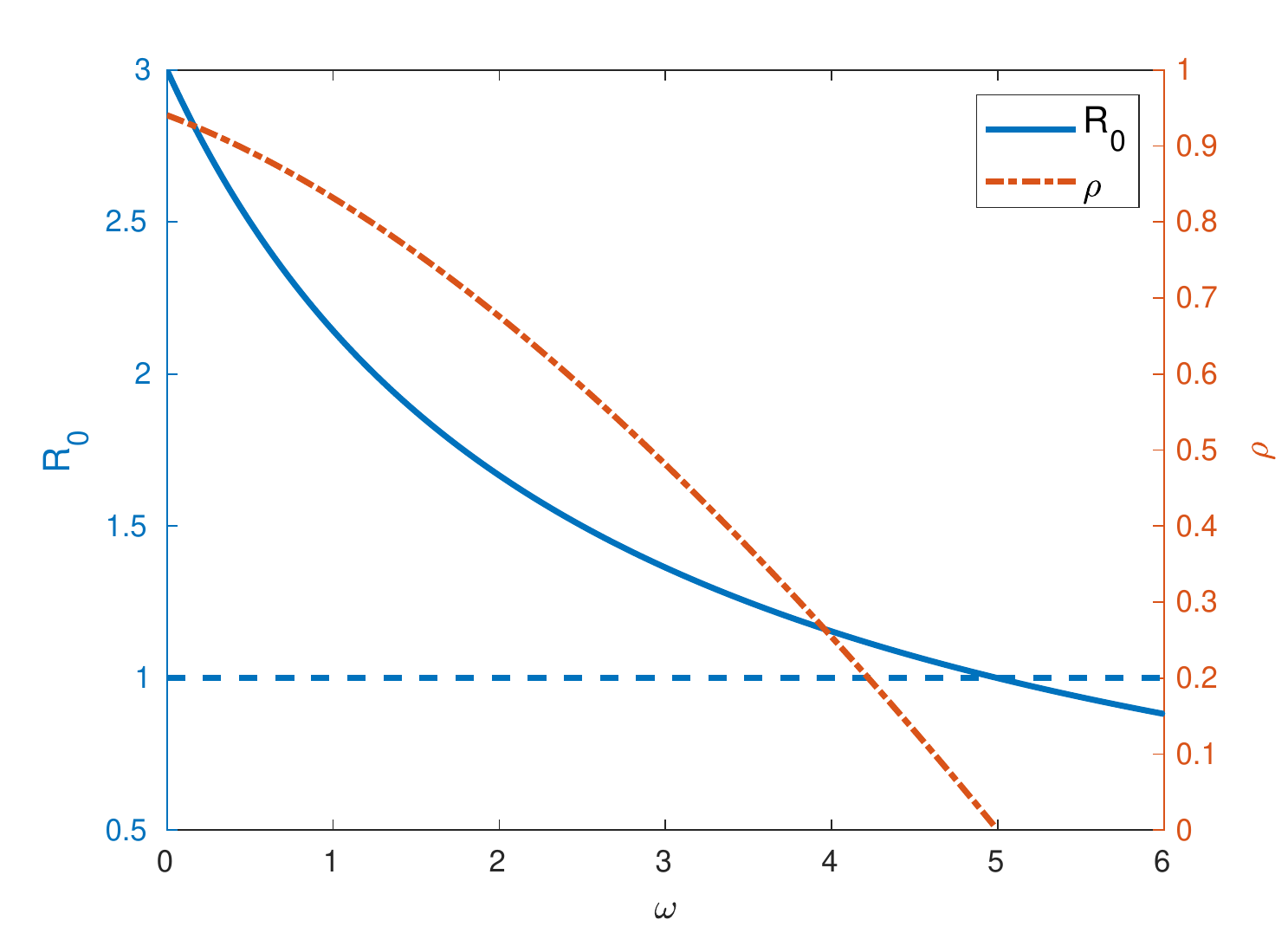} & \includegraphics[width=\hfigwidth]{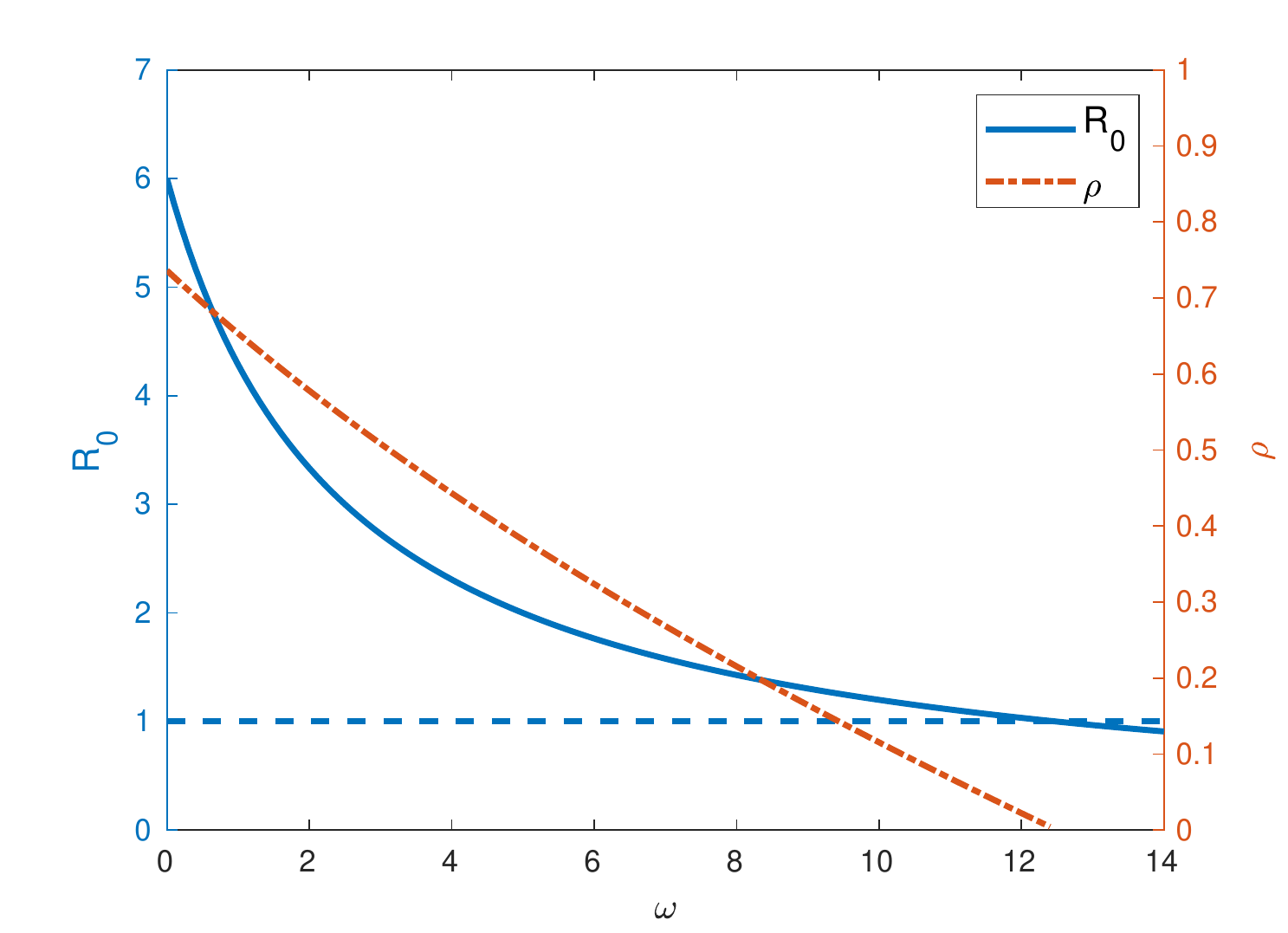}
\end{tabular}
\end{center}
\caption{Plots of $R_0$ and $\rho$ showing the impact of increasing the dropping rate from zero. Other parameters are $\beta=3/2$, $\gamma=1$, $\epsilon=0$ and (a) $D\sim\mbox{Poi}(5)$, (b) $D\sim\mbox{Geo}(1/6)$.}
\label{fig:dropping}
\end{figure}

\subsection{The effect of random graph model on variances}
\label{sec:variance}
We now demonstrate the effect of the random graph model (MR or NSW) on the variability of the final size of large outbreaks in our epidemic model.  Figure~\ref{fig:MRNSWvariance} compares how the asymptotic scaled standard deviations for the final size of a major outbreak (i.e.\ $\sigma_{\rm MR}(\beta,\gamma,\omega)$ and $\sigma_{\rm NSW}(\beta,\gamma,\omega)$ in Proposition~\ref{prop:mrVar} and Conjecture~\ref{conj:nswCLT} ) behave as dropping is included into a baseline model with no dropping. The upper plots show that these standard deviations can change quite dramatically with $\omega$; the lower plots show that the extra variability in the NSW network model can result in substantially more variability in the epidemic final size. As might be anticipated, this effect is more pronounced for the geometric compared to the Poisson case, i.e.~when the degree distribution is more variable.

\begin{figure}
\begin{center}
\begin{tabular}{cc}
(a) Poisson degree & (b) Geometric degree \\
\includegraphics[width=\hfigwidth]{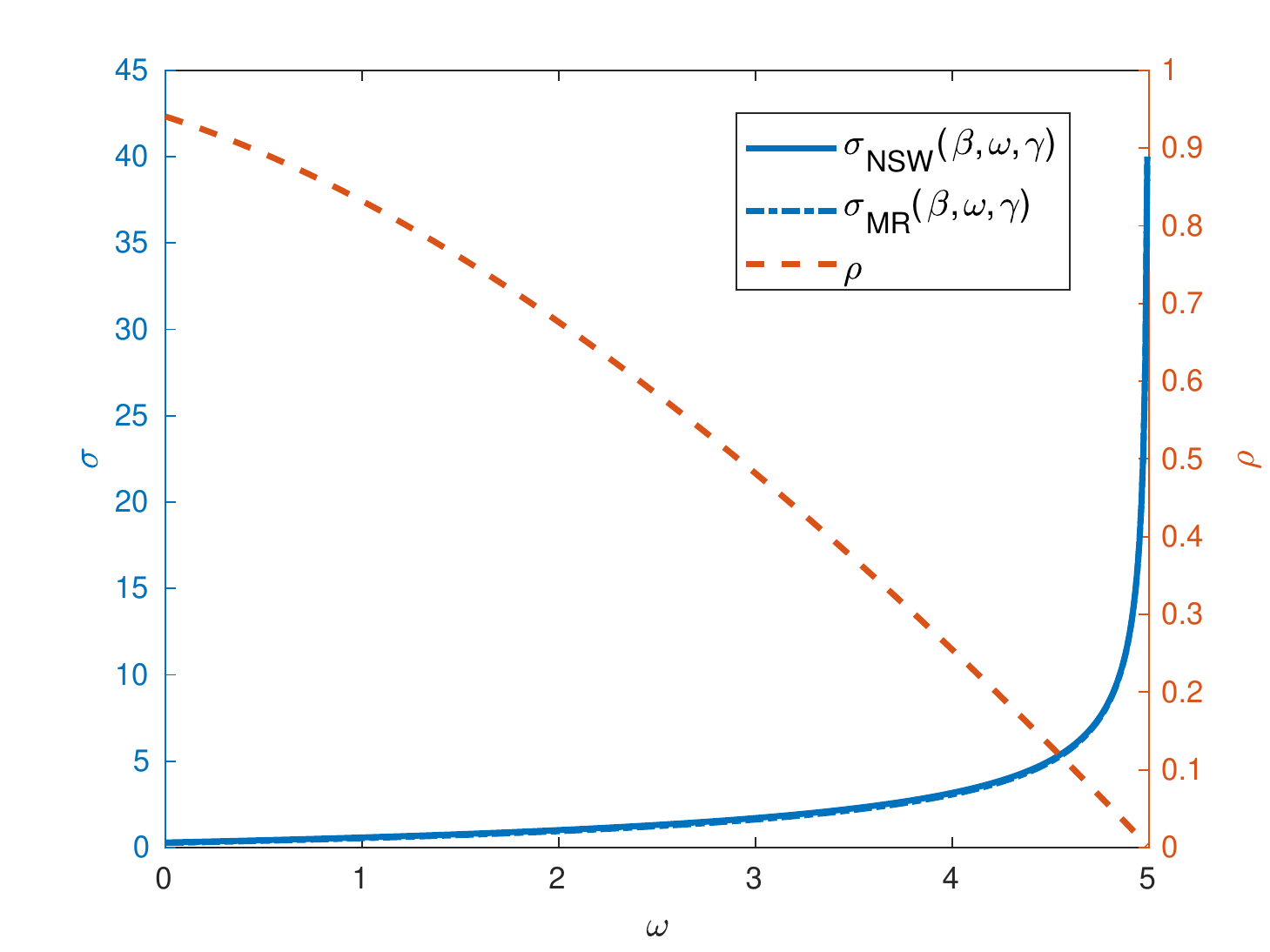} & \includegraphics[width=\hfigwidth]{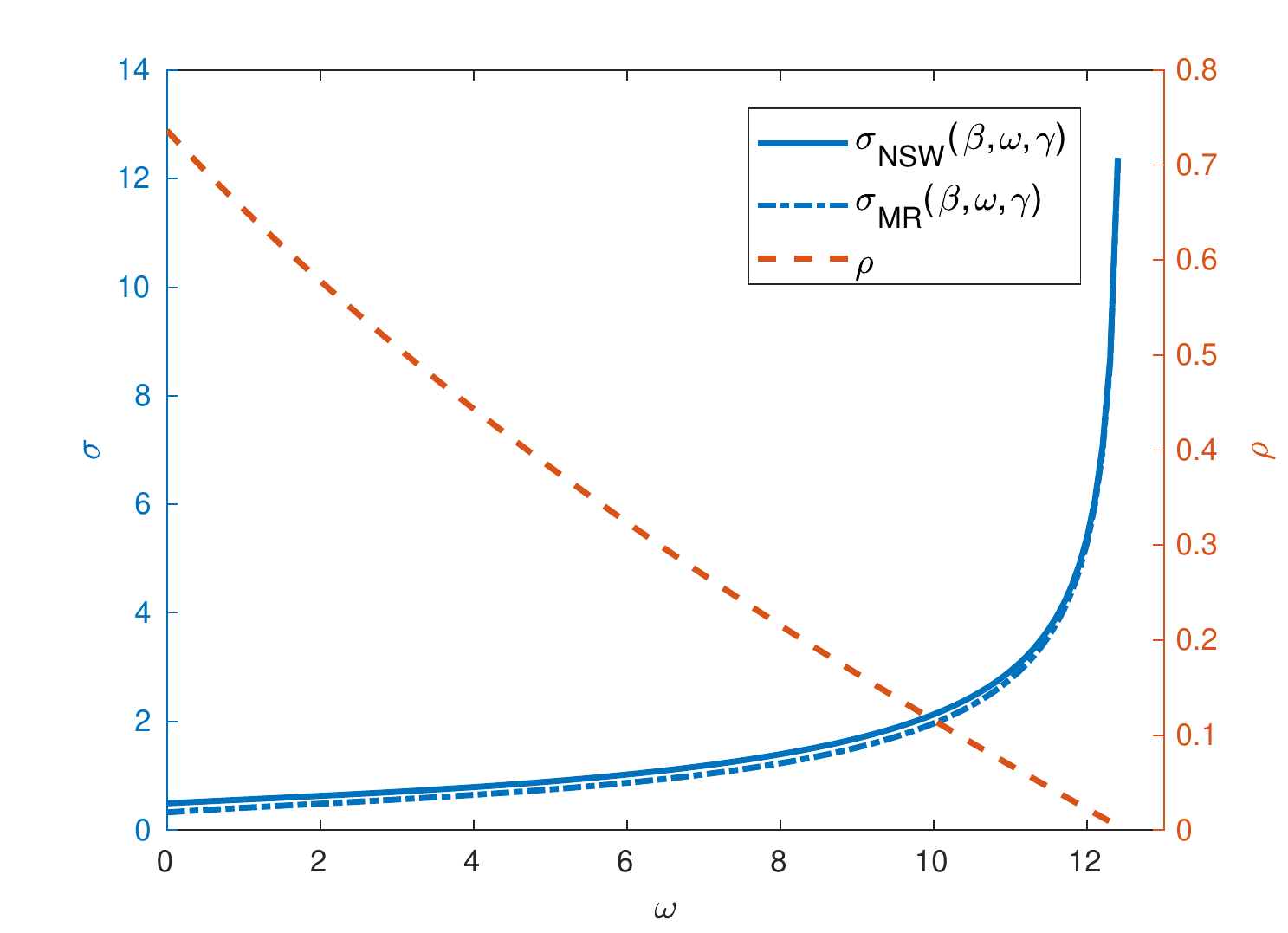} \\
\includegraphics[width=\hfigwidth]{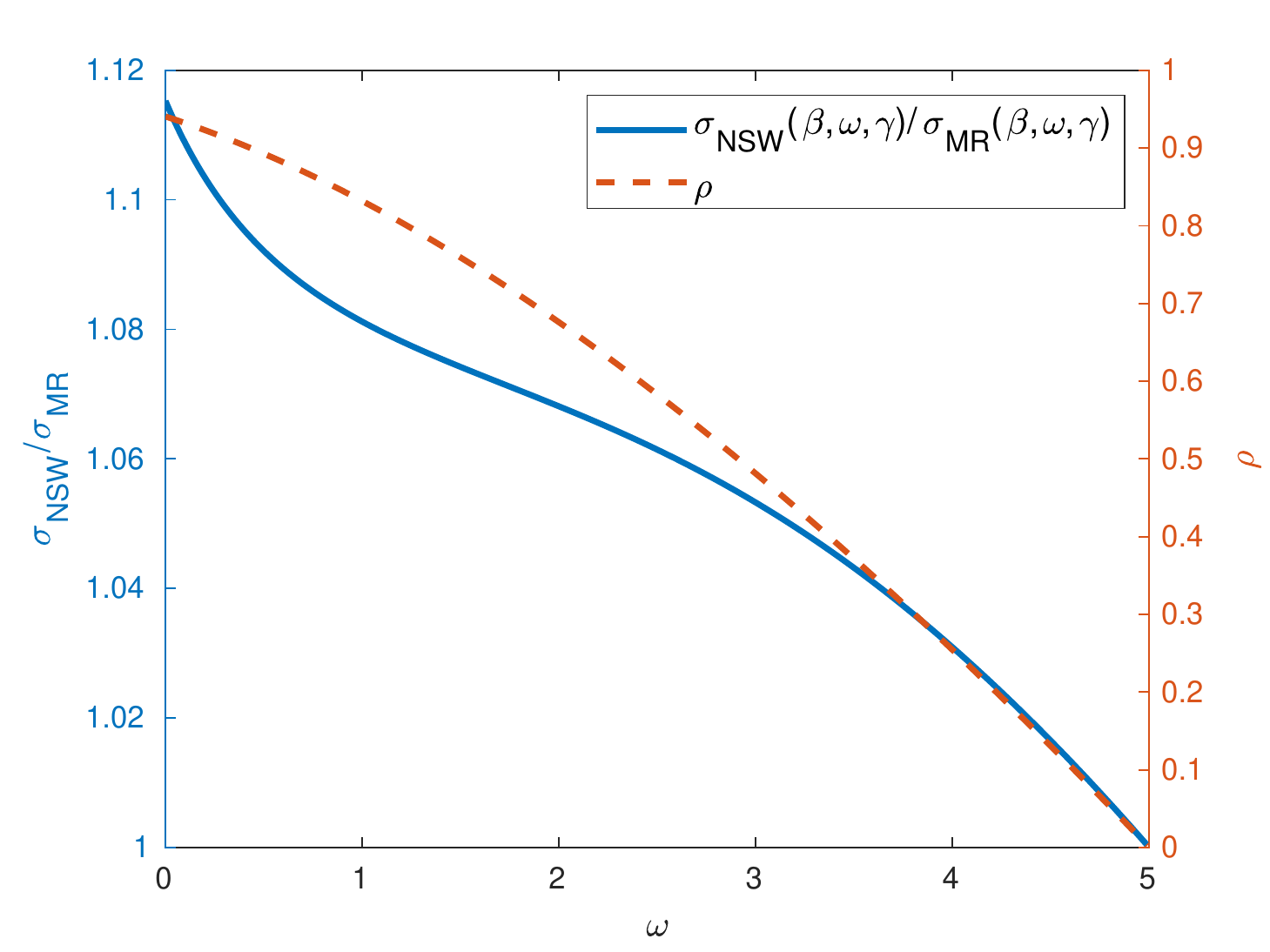} & \includegraphics[width=\hfigwidth]{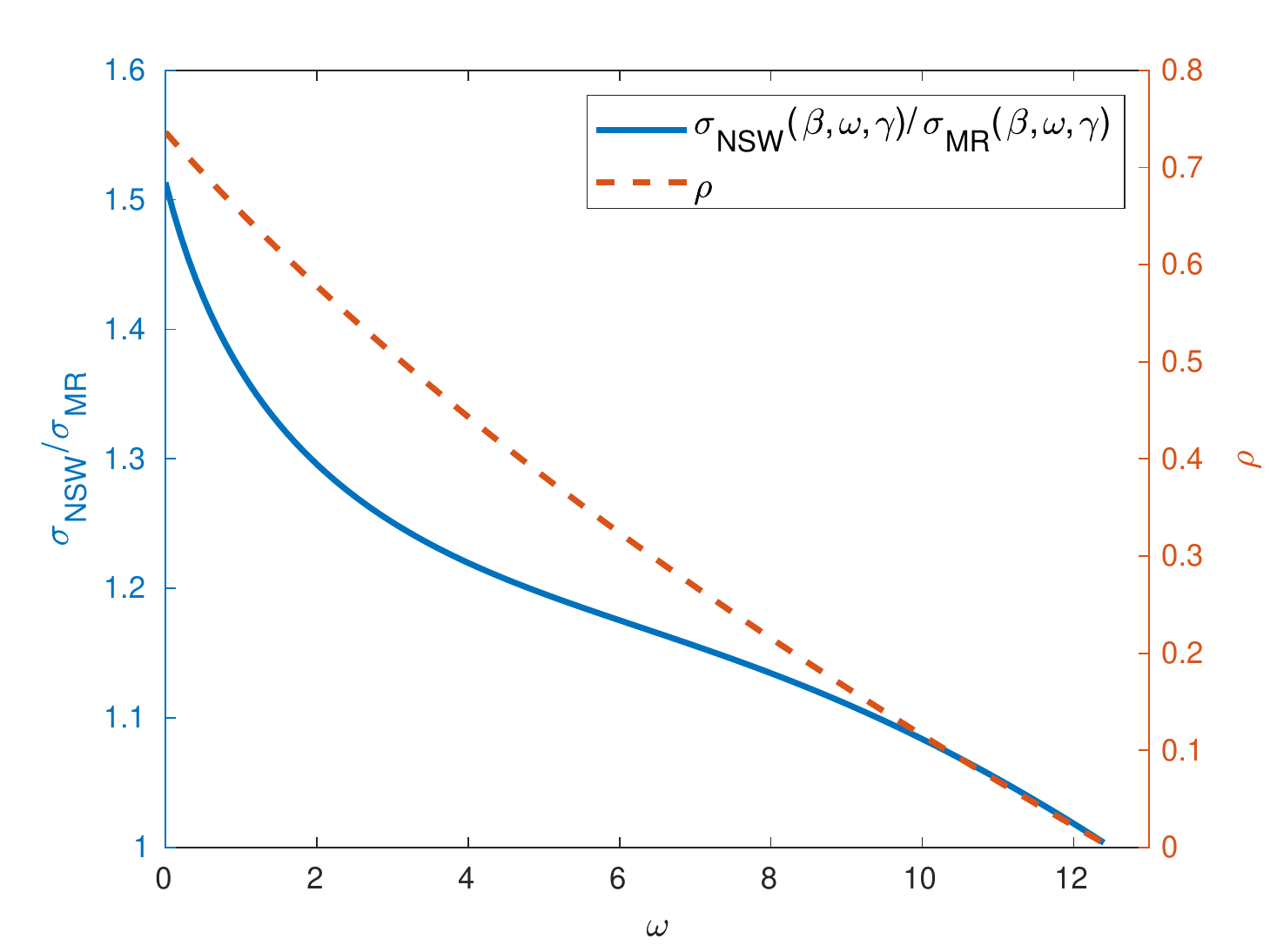}
\end{tabular}
\end{center}
\caption{Comparison of the scaled asymptotic standard deviations of the final size of a large outbreak in the MR and NSW models, as the dropping rate $\omega$ is increased from zero. In (a) the degree distribution is $D\sim\mbox{Poi}(5)$ and in (b) $D\sim\mbox{Geo}(1/6)$; other parameters are $\beta=3/2$, $\gamma=1$, $\epsilon=0$. The upper plots show the actual scaled asymptotic standard deviations $\sigma_{\rm MR}$ and $\sigma_{\rm NSW}$ and the lower plots show their ratio; all plots also show the relative final size $\rho$ for reference.}
\label{fig:MRNSWvariance}
\end{figure}

\subsection{Increased recovery rate instead of dropping}
\label{sec:incRecovery}
Lastly we investigate the relationship between our model and the related model with increased recovery rate instead of dropping, as discussed in Section~\ref{sec:relatedmodel}. We focus mainly on the claims about relative variability in the two models $E(\omega,\gamma)$ with dropping and $E(0,\gamma+\omega)$ with increased recovery rate, though the results we present also illustrate Theorem~\ref{prop:pmajor}, which gives an ordering of the major outbreak probabilities in the two models. Again we focus on the NSW graph model; similar conclusions (with less variability) are obtained with the MR graph model.

Figure~\ref{fig:incRecovery1} compares the final size distribution of the model with dropping to that of the model with increased recovery rate; again for two different degree distributions. The histograms and the normal approximation of the distribution of the size of a major outbreak confirm that the model with dropping does have a smaller variance in the size of major outbreaks and a larger chance of a major outbreak. Table~\ref{tab:incRecovery1} summarises the plots in Figure~\ref{fig:incRecovery1}. Here we see quite clearly that the major outbreak probabilities and the variances of the final size distributions are ordered as predicted by Theorem~\ref{prop:pmajor} and the argument involving differing dependence structures in Section~\ref{sec:relatedmodel}. Differences between the two degree distributions are not very marked.

\begin{figure}
\begin{center}
\begin{tabular}{cc}
(a) Poisson degree & (b) Geometric degree \\
$E(\omega,\gamma)$ & $E(\omega,\gamma)$ \\
\includegraphics[width=\hfigwidth]{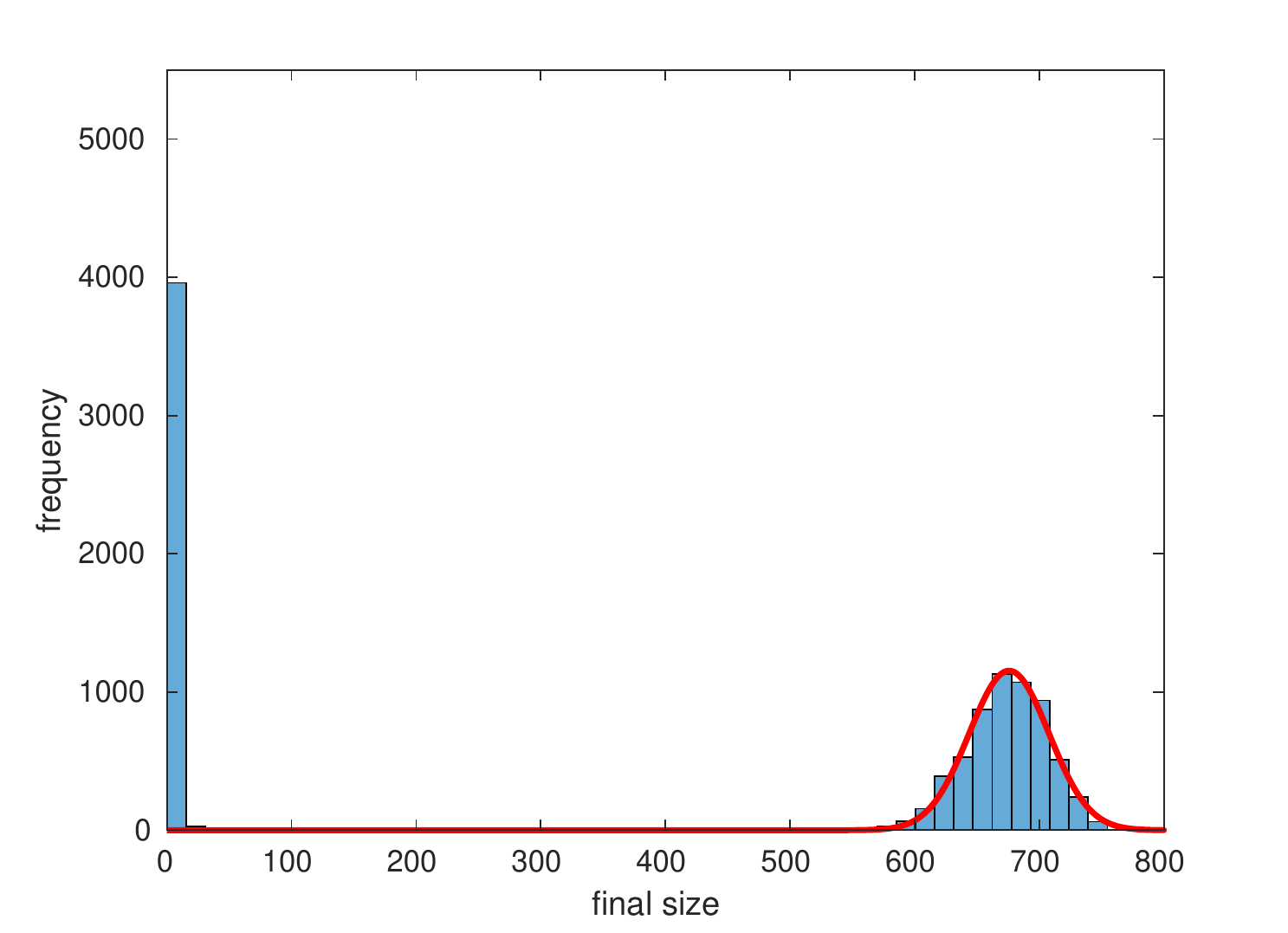} & \includegraphics[width=\hfigwidth]{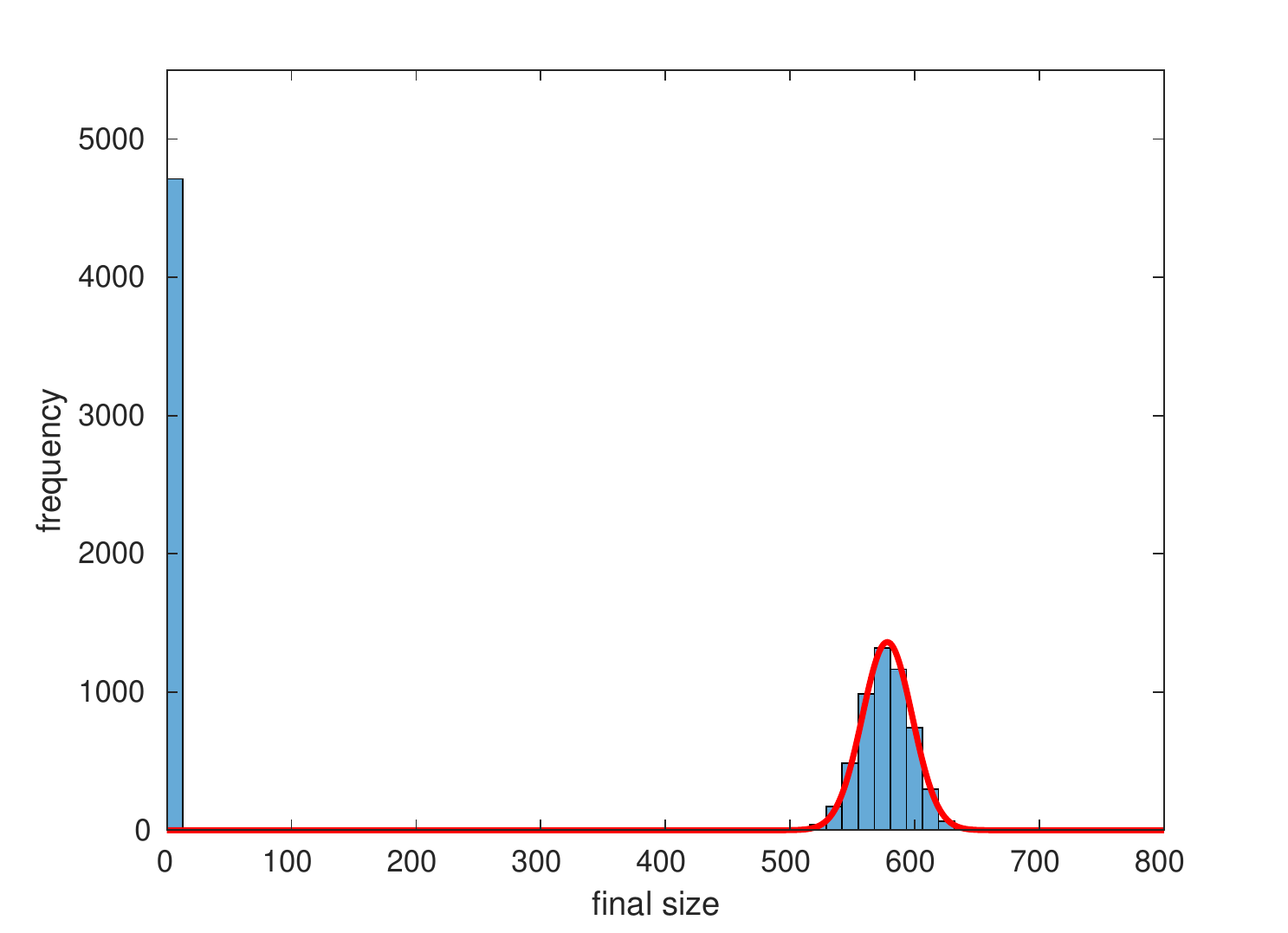} \\
$E(0,\gamma+\omega)$ & $E(0,\gamma+\omega)$ \\
\includegraphics[width=\hfigwidth]{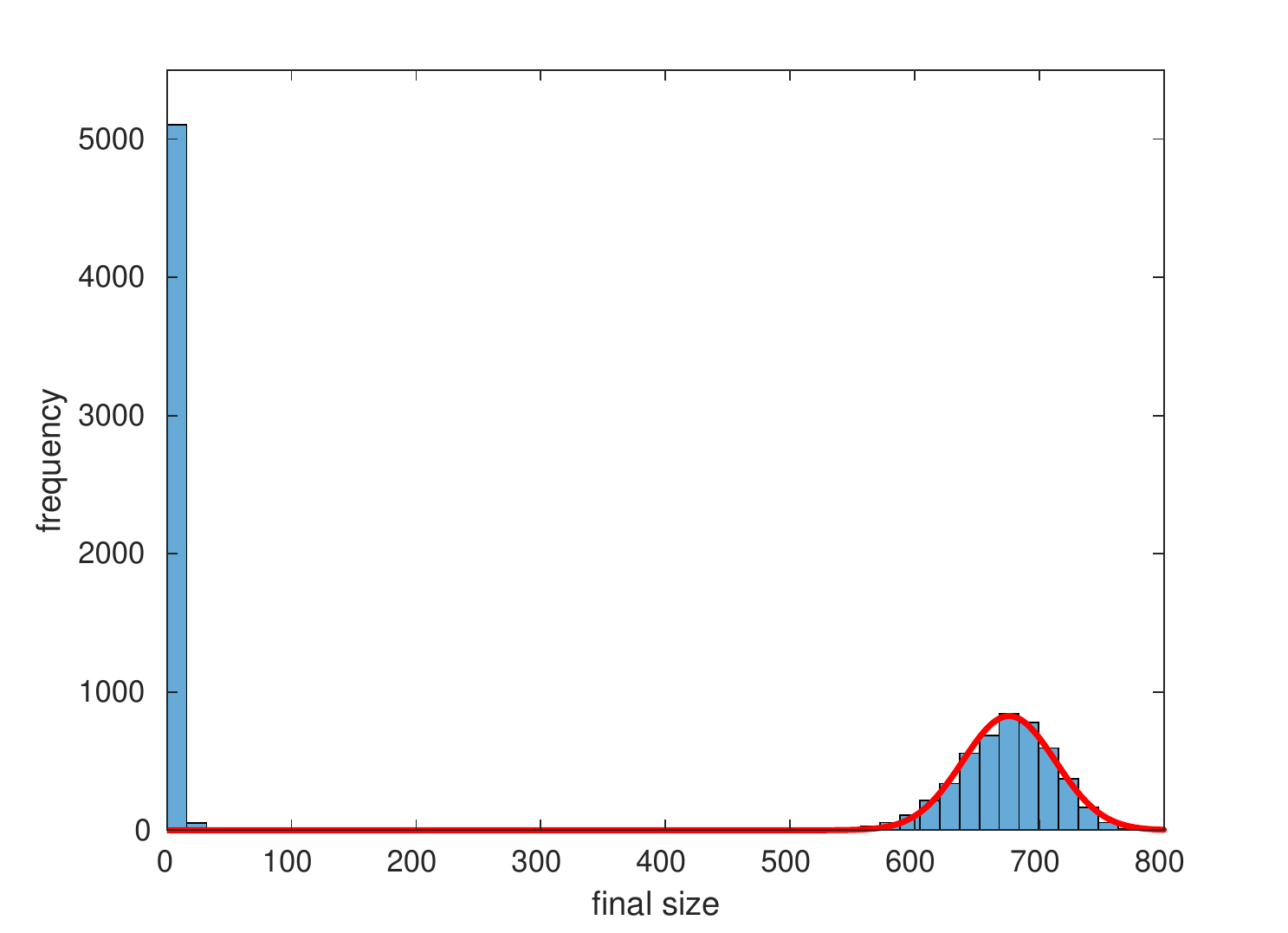} & \includegraphics[width=\hfigwidth]{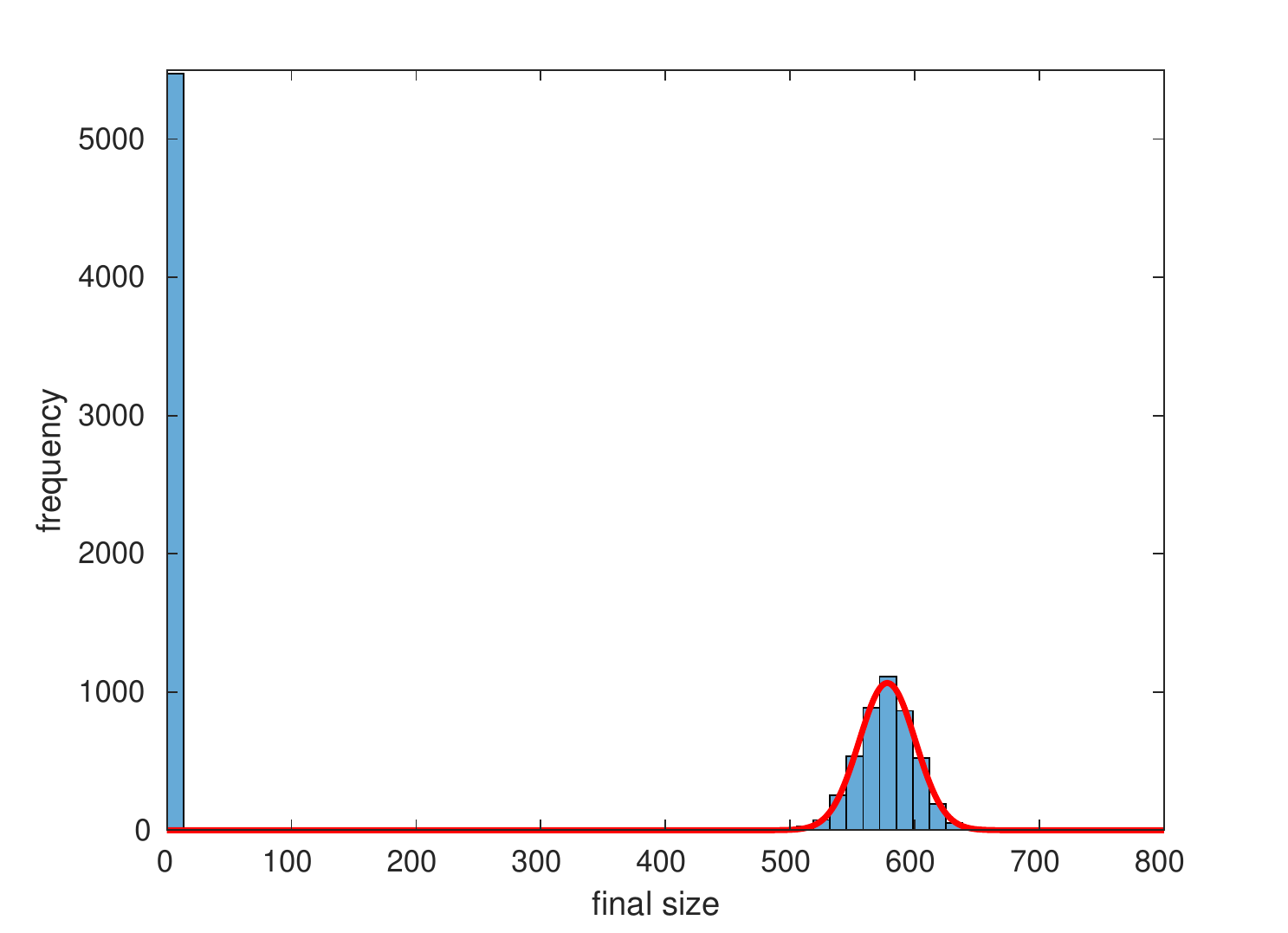}
\end{tabular}
\end{center}
\caption{Histograms of $10000$ simulated final sizes for the epidemics $E(\omega,\gamma)$ and $E(0,\gamma+\omega)$, with overlaid asymptotic approximations. Parameters are $\beta=3/2$, $\gamma=1$, $\omega=2$, $N=1000$, $i^N_0=5$ and the underlying graphs are of NSW type with (a) $D\sim\mbox{Poi}(5)$, (b) $D\sim\mbox{Geo}(1/6)$.}
\label{fig:incRecovery1}
\end{figure}

\begin{table}
\begin{center}
\begin{tabular}{m{2cm}|lll|lll}
Quantity & \multicolumn{3}{|c}{$D$ Poisson, Model $E(\omega,\gamma)$} & \multicolumn{3}{|c}{$D$ Geometric, Model $E(\omega,\gamma)$} \\ \hline
 & Asymp. & Point & 95\% CI & Asymp. & Point & 95\% CI \\
Prob.\ of MO & -- & 0.601 & (0.592,0.611) & -- & 0.529 & (0.519,0.539) \\
Mean of MO final size& 675.8 & 673.5 & (672.6,674.3) & 578.0 & 576.8 & (576.3,577.4)  \\
St.\ dev.\ of MO final size & 32.0 & 32.4 & (31.8,33.0) & 20.0 & 20.3 & (19.9,20.6) \\ \hline
 & \multicolumn{3}{|c}{$D$ Poisson, Model $E(0,\gamma+\omega)$} & \multicolumn{3}{|c}{$D$ Geometric, Model $E(0,\gamma+\omega)$} \\ \cline{2-7}
 & Asymp. & Point & 95\% CI & Asymp. & Point & 95\% CI \\
Prob.\ of MO & -- & 0.483 & (0.474,0.493) & -- & 0.453 & (0.443,0.463) \\
Mean of MO final size & 675.8 & 672.6 & (671.6,673.7) & 578.0 & 577.2 & (576.5,577.8)  \\
St.\ dev.\ of MO final size & 37.1 & 38.3 & (37.6,39.1) & 22.6 & 22.4 & (21.9,22.8)
\end{tabular}
\end{center}
\caption{Numerical summary of Figure~\ref{fig:incRecovery1}, using a final size of $0.15N$ to separate minor from major outbreaks. (In the first column we use the abbreviation MO for `major outbreak'.)}
\label{tab:incRecovery1}
\end{table}

Figure~\ref{fig:incRecovery2} shows how the discrepancy in these variabilities generally increases with the dropping rate. Interestingly, we see that with the (more variable) geometric degree distribution the relative discrepancy increases with $\omega$ for most values of $\omega$; but decreases slightly with $\omega$ when $\omega$ is sufficiently large that the size of large outbreaks gets close to zero and the variability is quite large.

\begin{figure}
\begin{center}
\begin{tabular}{cc}
(a) Poisson degree & (b) Geometric degree \\
\includegraphics[width=\hfigwidth]{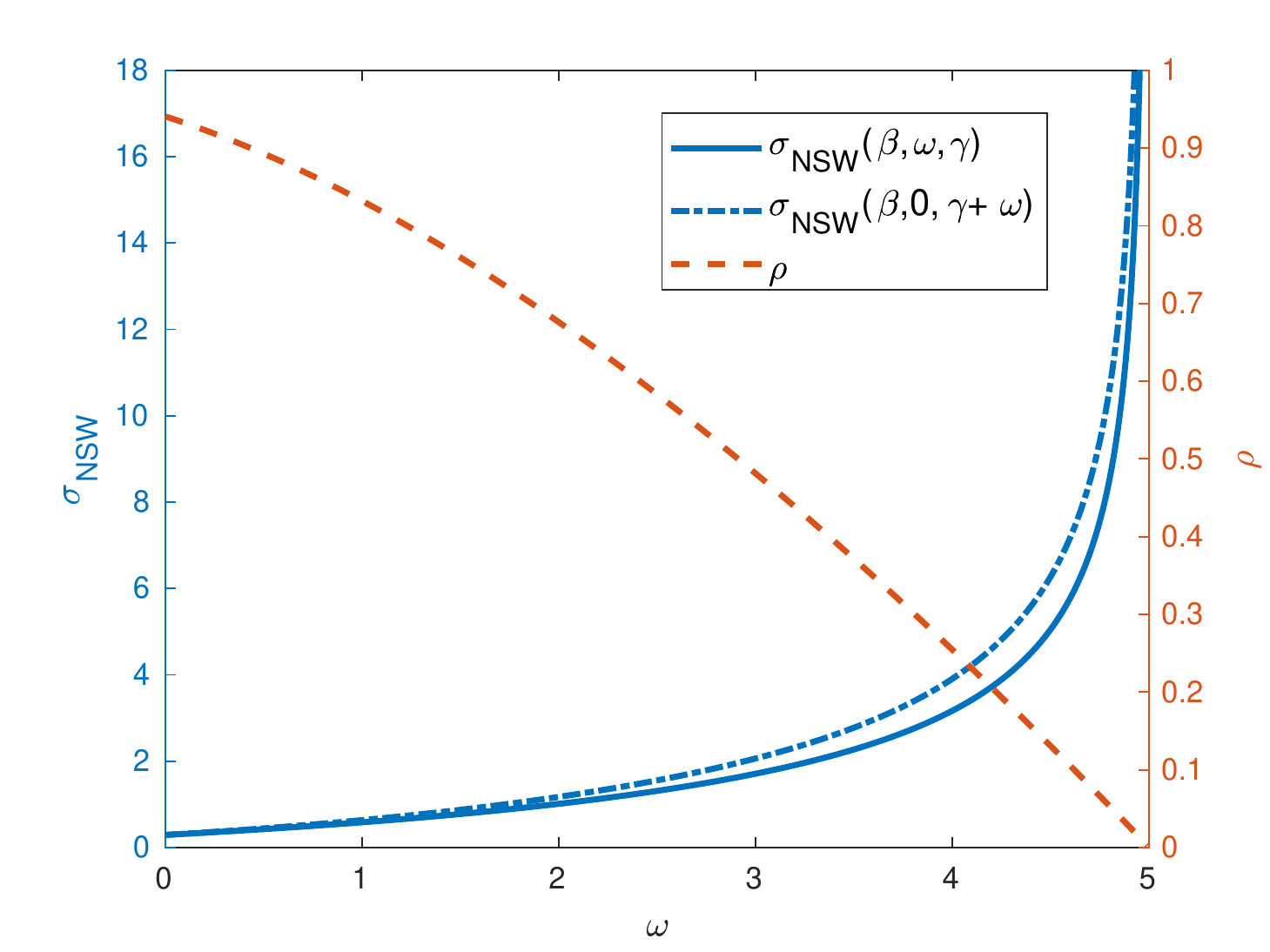} & \includegraphics[width=\hfigwidth]{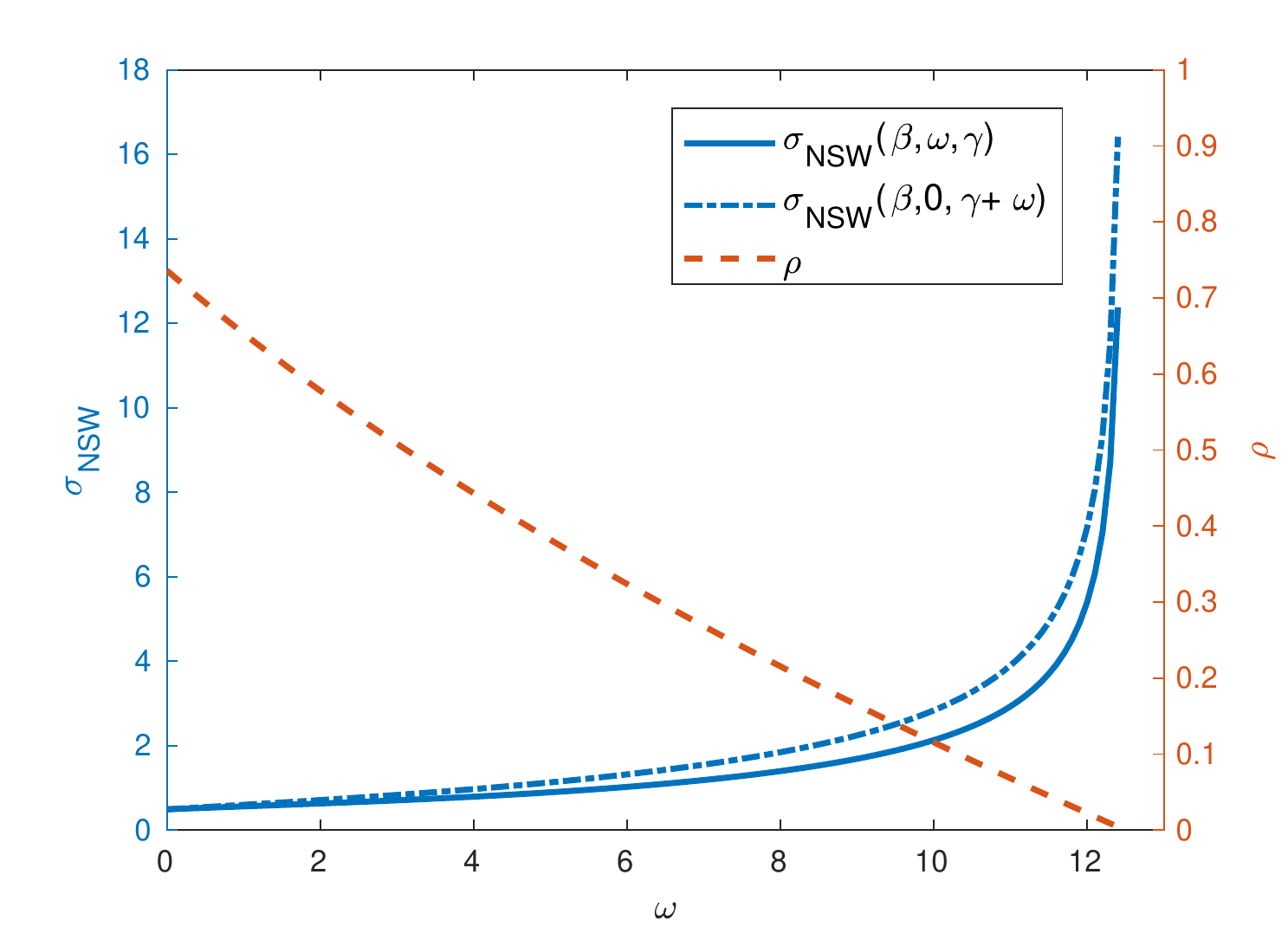} \\
\includegraphics[width=\hfigwidth]{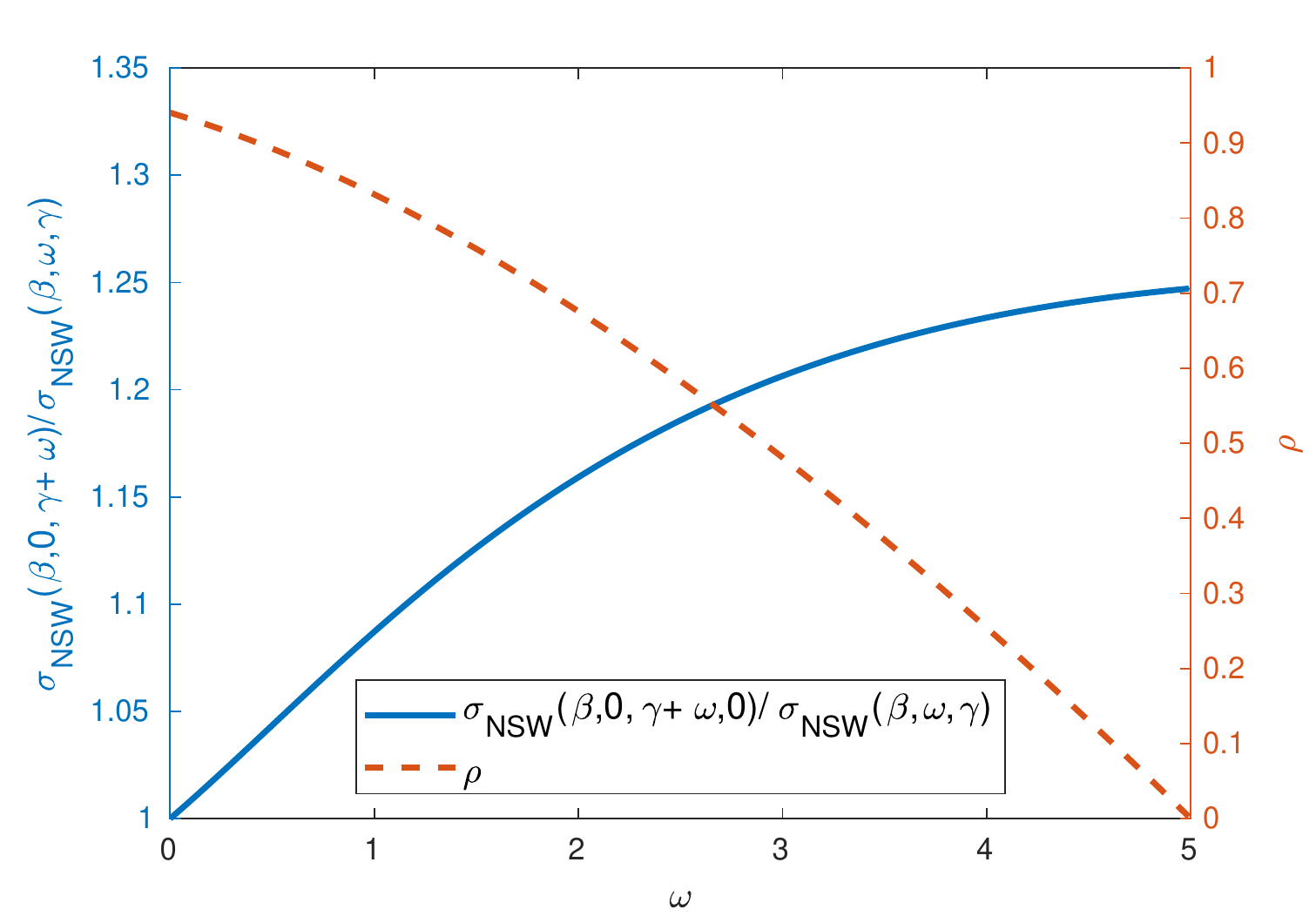} & \includegraphics[width=\hfigwidth]{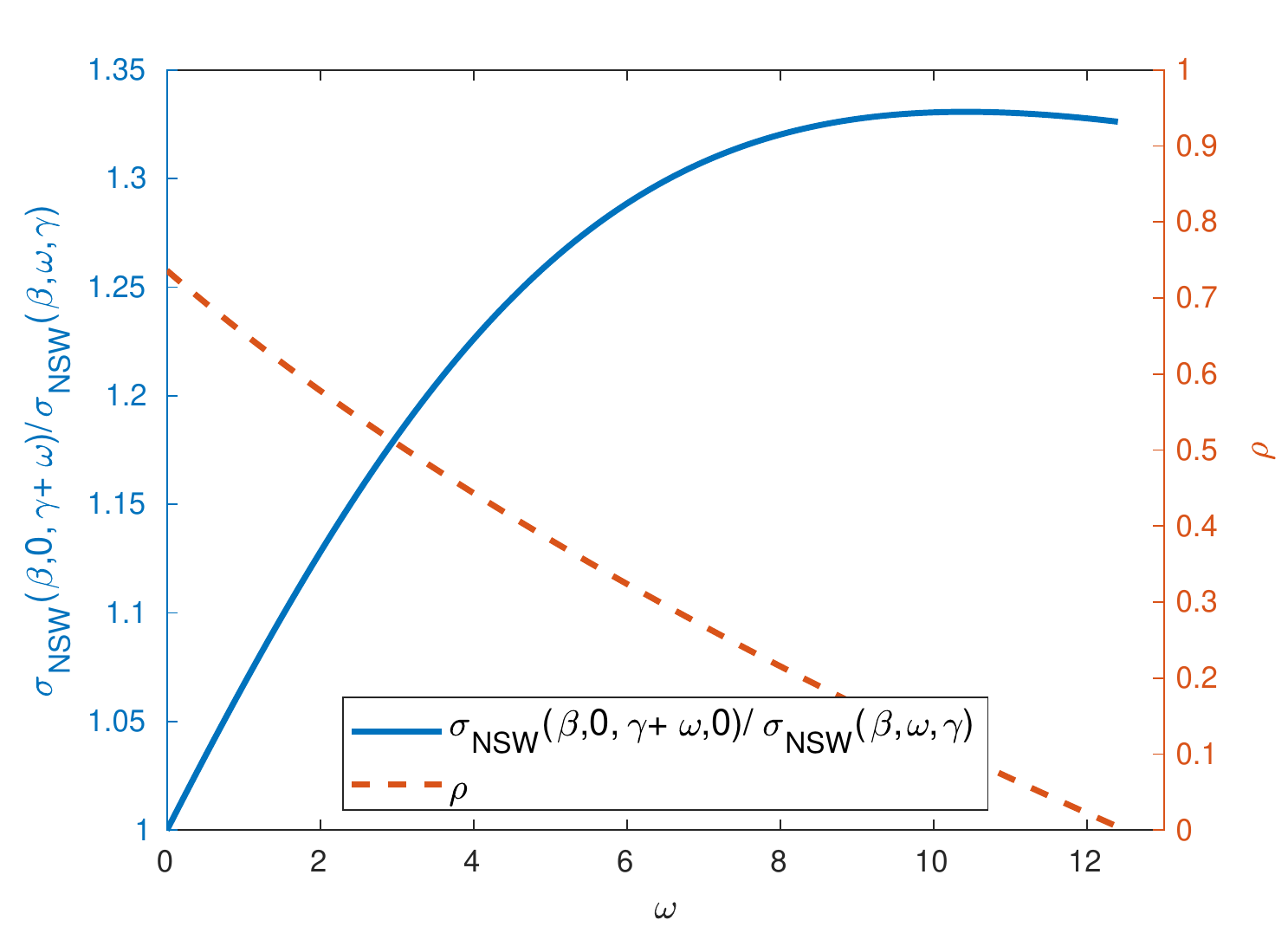}
\end{tabular}
\end{center}
\caption{Scaled asymptotic standard deviations of the final size of a large outbreak in the models $E(\omega,\gamma)$ and $E(0,\gamma+\omega)$ for increasing dropping rate $\omega$, starting from the `base model' with $\beta=3/2$, $\gamma=1$, $\omega=0$ and $\epsilon=0$. The underlying graphs are of the NSW type with (a) $D\sim\mbox{Poi}(5)$ (b) $D\sim\mbox{Geo}(1/6)$. The upper plots show the standard deviations and the lower plot their ratio; all plots also show the relative final size $\rho$ for reference.}
\label{fig:incRecovery2}
\end{figure}

Figure~\ref{fig:incRecoveryTimeCourse} shows how the asymptotic quantities relating to the mean and standard deviation of $S^N(t)$ and $I^N(t)$ compare through time for these models. In the model with dropping we denote the asymptotic mean proportion infected by $\mu^I(t; \beta,\omega,\gamma)$ and the asymptotic scaled standard deviation of $I^N(t)$ by $\sigma^I_{\rm NSW}(t;\beta,\omega,\gamma)$; we let $\mu^S(t; \beta,\omega,\gamma)$ and $\sigma^S_{\rm NSW}(t;\beta,\omega,\gamma)$ denote the corresponding quantities for the number of susceptibles $S^N(t)$. Note that the absolute scale of the standard deviations here is not directly meaningful (to approximate the standard deviation in a population of size $N$ these limiting quantities should be multiplied by $\sqrt{N}$); it is the relative values that are of interest here. Firstly, the upper plots confirm our assertions about the relative numbers of susceptibles in the two models: that the mean (LLN) behaviour of the two models is the same but the model with dropping exhibits less variability (cf.\ the final size behaviour in Figure~\ref{fig:incRecovery1} and Table~\ref{tab:incRecovery1}). In the lower plots the behaviour of the individual models $E(\omega,\gamma)$ and $E(0,\gamma+\omega)$ is broadly in keeping with that observed in Figure~\ref{fig:processApprox}, however the differences between the two models are quite stark. Even though the two models have the same final size they achieve this through very different temporal behaviour: in the $E(0,\gamma+\omega)$ model individuals are infectious for less time but during that time infect others at a higher rate.
\begin{figure}
\begin{center}
\begin{tabular}{cc}
(a) Poisson degree & (b) Geometric degree \\
\includegraphics[width=\hfigwidth]{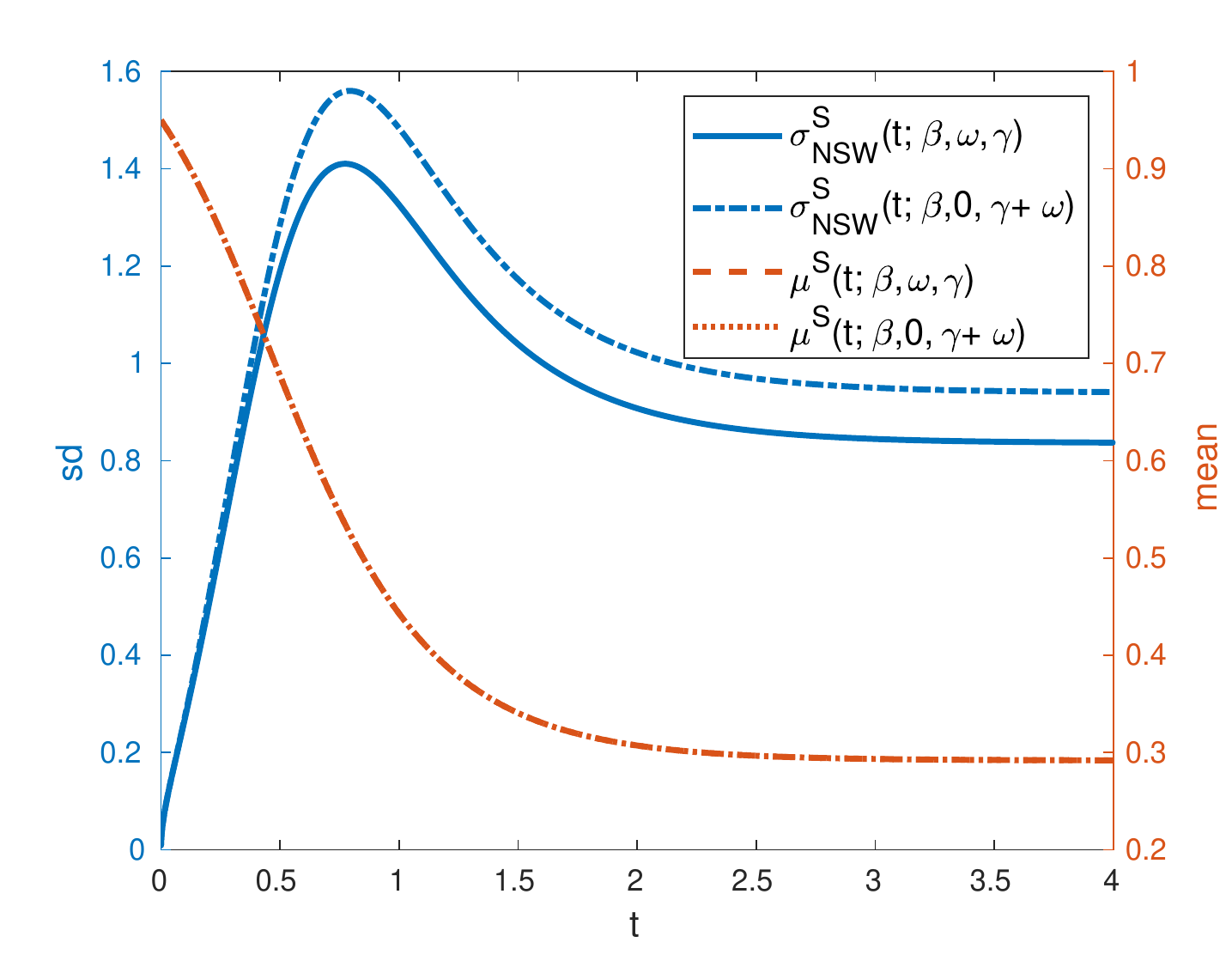} & \includegraphics[width=\hfigwidth]{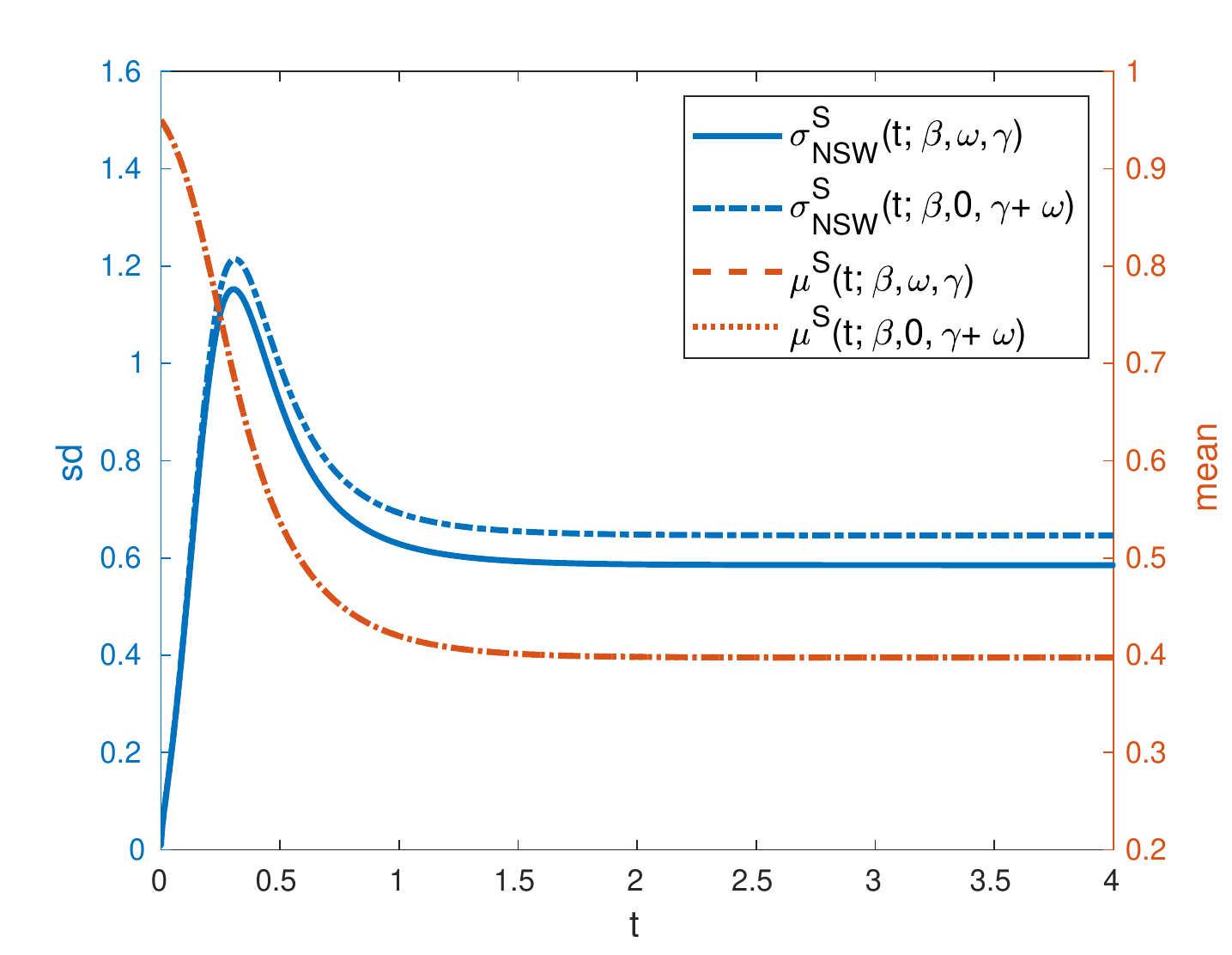} \\
\includegraphics[width=\hfigwidth]{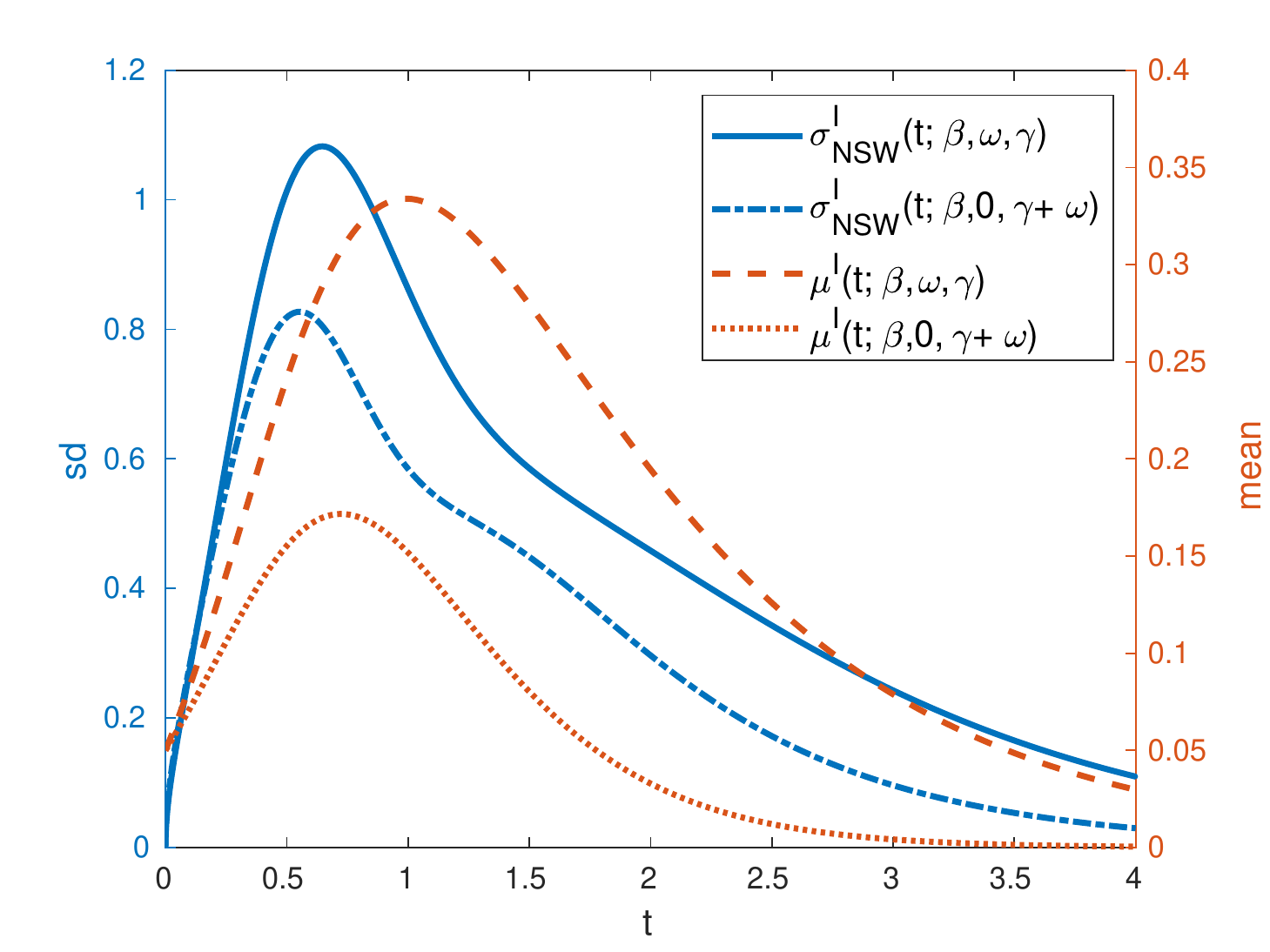} & \includegraphics[width=\hfigwidth]{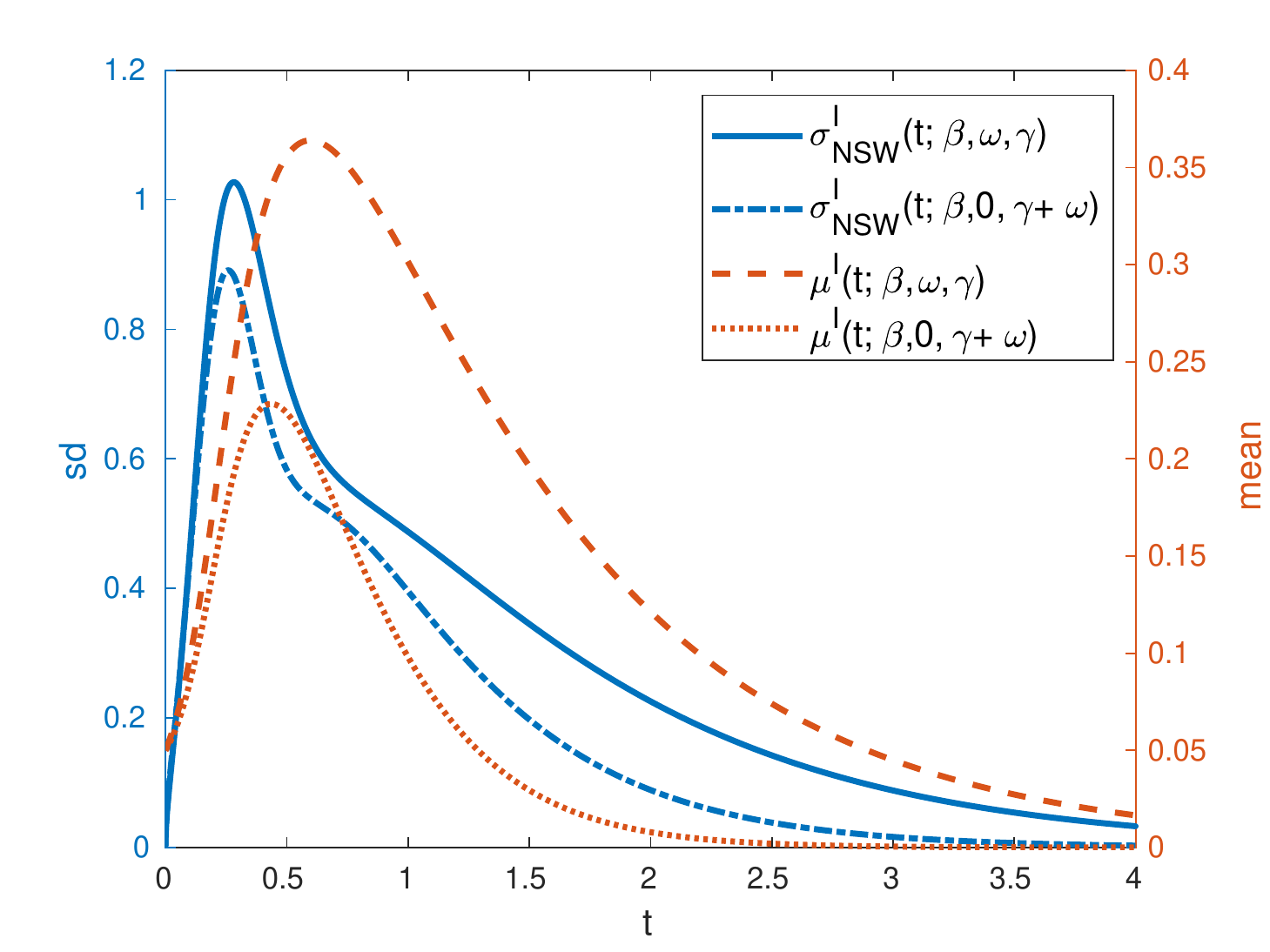}
\end{tabular}
\end{center}
\caption{Scaled asymptotic means and standard deviations of the number of susceptible and infectious individuals through time (in the upper and lower plots, respectively), comparing the model with dropping to that with increased recovery rate. Model parameters are as in Figure~\ref{fig:incRecovery1}, except that $i^N_0=0.05 N$. (Note that in the upper plots the two $\mu^S(\dots)$ quantities are exactly equal.)}
\label{fig:incRecoveryTimeCourse}
\end{figure}

\section{Concluding comments}
\label{sec:conc}
The current paper is concerned with a model for an epidemic taking place on a network in which susceptible individuals may drop their connections to infectious individuals as a preventive measure. A consequence of the behavioural dynamics is that the network changes in time, and the way the network changes depends on the epidemic process taking place on it (sometimes referred to as an adaptive network).  We derive limiting properties of the epidemic process assuming a large outbreak in a large community: the LLN and functional CLT for the epidemic process, as well as conjecture a LLN and CLT for the final number getting infected.
We also give a version of the functional CLT in Ethier and Kurtz~\cite{Ethier:1986}, Chapter 11, which allows for 
\emph{asymptotically random} initial conditions (Theorem~\ref{KurtzFCLTrandinit}).  Although it is a simple extension of
Ethier and Kurtz~\cite{Ethier:1986}, Theorem 11.2.3, we have not seen the result previously in the literature  and 
it (especially the covariance formula~\eqref{covinit}) clearly has interest and applications well beyond the present setting. Furthermore, from the analysis of the dropping model we also obtain results for the Markovian SIR epidemic on a configuration model and for the configuration model giant component. In particular, we  conjecture CLTs, with essentially fully explicit expressions for the asymptotic variances, for the final size of such epidemics on both MR and NSW random graphs, and for the 
size of the giant components of those graphs.

The above LLN and functional CLT are proved under the assumption of bounded degrees.  As noted in Remark~\ref{rmk:MRcltfin1},
the arguments in Ball~\cite{Ball:2018} should yield proofs of the final-size LLN and CLTs under this assumption.  Rigorous extension of these results to networks with unbounded degrees is a natural mathematical next step, though bounded degrees are clearly
sufficient for most biological purposes.

The simulations in Sections~\ref{sec:cgceAppTime} and~\ref{sec:CgceAppFS} show that the limiting approximations kick in for moderate population sizes. Further, from the numerical investigations, dropping of edges seems to have the greatest preventive effect when the basic reproduction number $R_0$ is not too large, more specifically when it is close to the epidemic threshold value of one. In fact, if $R_0$ is moderate in the absence of dropping of edges, a fairly small dropping rate can make the epidemic sub-critical implying that large outbreaks are no longer possible in the large population limit.

This paper is inspired by the model in Britton et al.~\cite{Britton2016}, who study only the initial stages of an outbreak. In the current paper, in order to make progress in the analysis of the complete outbreak, we assume that edges can only be dropped, in contrast to~\cite{Britton2016}, which allows for some of the dropped edges to rewire to other individuals. It would of course be of interest to study limiting properties of this more general dropping/rewiring model. However, the effective degree approach does not apply immediately in a rigorous fashion to this setting, and rigorous analysis of the non-initial stages of the  model including rewiring is left as an open problem. The model with rewiring is considered further in Leung~et al.~\cite{Leung:2018},
where it is demonstrated that such rewiring of edges, although always beneficial to the susceptible individual, can have an adverse effect at the population level. Other possible forms of social distancing include reducing contacts rather than dropping edges completely (e.g.~Viljoen et al.~\cite{Viljoen2014} and Zhang et al.~\cite{Zhang2014}) or only temporarily dropping the edge (e.g.~Althouse and H\'ebert-Dufresne\cite{Althouse2014}). 

Another extension of the current model would be to allow the network to change in time also for reasons other than the epidemic process. One could for example consider some type of dynamic network model as the base network model (e.g.~one of the dynamic network models of Leung and Diekmann~\cite{Leung:2016}), and increase the dropping rate indirectly by decreasing the rate of creation of new edges and/or increasing the rewiring rate between susceptible-infectious pairs of individuals, see e.g.~Reniers and Armbruster~\cite{Reniers2012} for a simulation study where partnership dissolution rates depend on the HIV status of the couple. Obviously, rigorous analysis of such models will be appreciably harder, if indeed possible.

Finally, we note that we have restricted ourselves to the Markovian setting throughout this paper. As always, this assumption is not realistic and is made for mathematical convenience. In the setting of this paper, it is possible to generalize some of our results to include non-exponentially distributed infectious periods. Using a susceptibility set argument, as in e.g.~Ball and Sirl~\cite{Ball:2013}, Section 2.1.2, we can prove results for the deterministic final size similar to Proposition~\ref{prop:final}(b). Specifically, if the infectious period follows a random variable $I$, the deterministic final size is the same as that for a standard  SIR epidemic on a configuration model network in which the infectious period is distributed as $I'=\min(I,W)$, where $W$ is independent of $I$ and has an exponential distribution with rate $\omega$.
Recently, Sherborne et al.~\cite{Sherborne:2018} have extended edge-based compartmental models of epidemics on networks
to allow for non-Markovian transmission and recovery processes, and that methodology should enable the limiting deterministic
model for our model with dropping of edges and non-exponentially distributed infectious periods to be determined, as can be done using the binding site formulation of Leung and Diekmann~\cite{Leung:2016}.  It seems likely that our effective degree approach, together with LLN and functional CLT theorems in Wang~\cite{Wang:1975,Wang:1977} for
age and density dependent population processes, can be used to put such  deterministic models in a fully rigorous asymptotic framework and provide an associated functional CLT.

%\appendix

\setcounter{section}{0}
\renewcommand{\thesection}{\Alph{section}}
\renewcommand\thefigure{\thesection.\arabic{figure}}
\renewcommand\theequation{\thesection.\arabic{equation}}

\section{Derivation of drift function $F(\bx,\by,z_E)$}
\label{app:drift}
In this appendix we derive the expression~\eqref{driftF} for $F(\bx,\by,z_E)$.  First note that~\eqref{lij1} and
~\eqref{intensityfun} yield
\begin{align}
\label{part1}
&\sum_{\bl \in \Delta_1}\bl \beta_{\bl}(\bx,\by,z_E)\nonumber\\
&=\sum_{i=1}^{\infty}\sum_{j=1}^{\infty} \frac{\beta i y_i j x_j}{\eta_E}(-\bei_i+\bei_{i-1}-\bes_j+\bei_{j-1}) \nonumber \\
&=\frac{\beta}{\eta_E}\left[x_E\sum_{i=1}^{\infty}i y_i (-\bei_i+\bei_{i-1})
+y_E\sum_{j=1}^{\infty} j x_j (-\bes_j+\bei_{j-1})\right] \nonumber \\
&=
\frac{\beta}{\eta_E}\sum_{i=0}^{\infty}\left\{  x_E\left[(i+1)y_{i+1}-iy_i\right] \bei_i +y_E \left[-i x_i \bes_i+(i+1)x_{i+1}\bei_i\right]\right\},
\end{align}
~\eqref{lij2} and~\eqref{intensityfun} yield
\begin{eqnarray}
\label{part2}
\sum_{\bl \in \Delta_2}\bl \beta_{\bl}(\bx,\by,z_E)
&=&\sum_{i=1}^{\infty}\sum_{j=1}^{\infty} \frac{(\beta+\omega)i y_i j y_j}{\eta_E}(-\bei_i+\bei_{i-1}-\bei_j+\bei_{j-1}) \nonumber \\
&=&2\frac{(\beta+\omega)}{\eta_E} \sum_{i=1}^{\infty}\sum_{j=1}^{\infty} i y_i j y_j (-\bei_i+\bei_{i-1}) \nonumber \\
&=&2\frac{(\beta+\omega)y_E}{\eta_E} \sum_{i=0}^{\infty}[-iy_i+(i+1)y_{i+1}] \bei_i,
\end{eqnarray}
and~\eqref{lij3} and~\eqref{intensityfun} yield
\begin{align}
\label{part3}
&\sum_{\bl \in \Delta_3}\bl \beta_{\bl}(\bx,\by,z_E)\nonumber\\
&=\sum_{i=1}^{\infty}\sum_{j=1}^{\infty} \frac{\omega i y_i j x_j}{\eta_E}(-\bei_i+\bei_{i-1}-\bes_j+\bes_{j-1}) \nonumber \\
&=\frac{\omega}{\eta_E}\left[x_E\sum_{i=1}^{\infty} i y_i (-\bei_i+\bei_{i-1})
+y_E\sum_{j=1}^{\infty} j x_j (\bes_j+\bes_{j-1})\right] \nonumber \\
&=\frac{\omega}{\eta_E}\sum_{i=0}^{\infty}\left\{ x_E [(i+1)y_{i+1}-iy_i] \bei_i+ y_E[(i+1)x_{i+1}-ix_i] \bes_i \right\}.
\end{align}
Similarly,~\eqref{li1} and~\eqref{intensityfun} yield
\begin{align}
\label{part4}
&\sum_{\bl \in \Delta_4}\bl \beta_{\bl}(\bx,\by,z_E)\nonumber\\
&= \sum_{i=1}^{\infty} \frac{(\beta+\omega)i y_i z_E}{\eta_E}(-\bei_i+\bei_{i-1}-\ber) \nonumber \\
&=-\frac{(\beta+\omega)y_Ez_E}{\eta_E}\ber+\frac{(\beta+\omega)z_E}{\eta_E} \sum_{i=0}^{\infty} [(i+1) y_{i+1}-i y_i]\bei_i,
\end{align}
and~\eqref{li2} and~\eqref{intensityfun} yield
\begin{eqnarray}
\label{part5}
\sum_{\bl \in \Delta_5}\bl \beta_{\bl}(\bx,\by,z_E)
&=& \sum_{i=0}^{\infty} \gamma y_i (-\bei_i+i\ber) \nonumber \\
&=& \gamma y_E \ber- \gamma \sum_{i=0}^{\infty} y_i \bei_i.
\end{eqnarray}
Adding~\eqref{part1} to~\eqref{part5} and recalling that $\eta_E=x_E+y_E+z_E$ gives~\eqref{driftF}.

\section{Application of theorems for density dependent population processes}
\label{app:kurtz}
In this appendix we show that the conditions of the Theorems 11.2.2 and 11.2.3 in Ethier and Kurtz~\cite{Ethier:1986},
Chapter 11, concerning density dependent population processes are satisfied when there is a maximum
degree, $\dmax$ say, and $\rho<1$.  
(Recall that $\rho$ is the fraction of the population that is ultimately infected by the limiting deterministic
model.)  Thus, for $t \ge 0$, 
\begin{equation*}
\bWN(t)=\left(X_0^N(t), X_1^N(t),\ldots,X_{\dmax}^N(t),Y_0^N(t), Y_1^N(t),\ldots,Y_{\dmax}^N(t),Z_E^N(t)\right),
\end{equation*} 
so
$\{\bWN(t)\}$ has dimension $d=2(\dmax+1)+1$. The limiting deterministic process is $\{\bw(t)\}$, where, for $t \ge 0$,
\begin{align*}
\bw(t)&=(x_0(t), x_1(t),\ldots,x_{\dmax}(t), y_0(t), y_1(t),\ldots,y_{\dmax}(t), z_E(t))\\
&=(w_1(t),w_2(t),\ldots,w_d(t)).
\end{align*}
The domain of the intensity functions $\beta_{\bl}(\bw)$ $(\bl \in \Delta)$ is 
\begin{equation*}
H_*=\left\{\bw:w_i \ge 0 \;(i=1,2,\ldots,d),\sum_{i=1}^d w_i \le 1\right\}.
\end{equation*}
The proofs of the theorems in Ethier and Kurtz~\cite{Ethier:1986}, Chapter 11, make it clear that the conditions
need only hold in some small neighbourhood of $\{\bw(t)\}$.  Thus, since $\rho<1$, there exists $\epsilon >0$, so that $H_*$ can be replaced
by $H_*(\epsilon)= \left\{\bw \in H_*: x_E \ge \epsilon\right\}$, where $x_E=\sum_{i=1}^{\dmax} ix_i$.  It follows that the density dependent
condition~\eqref{DDPPcond} is satisfied for all sample paths of $\{\bWN(t)\}$ such that $N^{-1}\bWN(t)$ remains within $H_*(\epsilon)$, which
is sufficient for the proofs in  Ethier and Kurtz~\cite{Ethier:1986}.

Considering first the LLN for $\{\bWN(t)\}$, the conditions of Ethier and Kurtz~\cite{Ethier:1986}, Theorem 11.2.1, are satisfied
if (i) $\sum_{\bl \in \Delta}|\bl|\sup_{\bw \in H_*(\epsilon)}\beta_{\bl}(\bw)<\infty$; (ii) the drift function $F$ is Lipschitz continuous on $H_*(\epsilon)$; and (iii) $\lim_{N\to \infty} N^{-1} \bWN(0)=\bw(0) \ne \bzero$.  It is easily seen from~\eqref{intensityfun} that (i) is satisfied, since $\Delta$ is finite
and $\eta_E \ge x_E \ge \epsilon>0$ for all $\bw \in H_*(\epsilon)$.  It follows from~\eqref{driftF} that the partial derivatives $\partial_j F_i(\bw)$ $(i,j=1,2,\ldots,d)$ are uniformly bounded on $H_*(\epsilon)$, since $\eta_E \ge \epsilon$ for all $\bw \in H_*(\epsilon)$, so (ii) is satisfied.
Finally, it is easily seen from the proof in Ethier and Kurtz~\cite{Ethier:1986} that the result still holds if the convergence in (iii) holds almost surely,
thus the LLN for $\{\bWN(t)\}$, stated in Section~\ref{sec:ED} holds for epidemics on both MR and NSW
random graphs.

Turning to the functional CLT~\eqref{FCLT}, where to be more explict $\Rightarrow$ denotes weak convergence
in the space of right-continuous functions $f:[0,\infty) \to \mathbb{R}^d$ 
having limits from the left (i.e.~c\`{a}dl\`{a}g functions), endowed with the Skorohod metric, the conditions of Ethier and Kurtz~\cite{Ethier:1986}, Theorem 11.2.3, are satisfied if, in addition to (i)-(iii),
(iv) $\sum{\bl \in \Delta}|\bl|^2\sup_{\bw \in H_*(\epsilon)}\beta_{\bl}(\bw)<\infty$; (v) the intensity functions $\beta_{\bl}(\bw)$ $(\bl \in \Delta)$
and the partial derivatives $\partial_j F_i(\bw)$ $(i,j=1,2,\ldots,d)$ are continuous on $H_*(\epsilon)$; and
(vi) $\lim_{N \to \infty} \sqrt{N}\left(N^{-1} \bWN(0)-\bw(0)\right)=\bV(0)$, where $\bV(0)$ is constant.  Now (iv) is satisfied, for similar reasons to (i).
It is easily seen from~\eqref{intensityfun} and~\eqref{driftF} that (v) is satisified, and (vi) follows from~\eqref{initialcond}.  Thus~\eqref{FCLT} is proved.

Consider now the random time-scale transformed process $\{\bWNt(t)\}$ introduced in Section~\ref{sec:final}. The limiting  determinstic
process is now $\{\bwt(t): t \ge 0\}$, where
\[
\bwt(t)=(\xt_0(t),\xt_1(t),\ldots,\xt_{\dmax}(t),\yt_0(t),\yt_1(t),\ldots,\yt_{\dmax}(t),\zEt(t)).
\]
For any $t_0 \in (0,\taut)$, there exists
$\epsilon'>0$ such that $\yEt(t)=\sum_{i=1}^{\dmax} i \yt_i(t) \ge \epsilon'$ for all $0 \le t \le t_0$.
Let $\tilde{H}_*(\epsilon')= \left\{\bwt \in H_*: \yEt \ge \epsilon\right\}$.  The proofs that the conditions of Ethier and Kurtz~\cite{Ethier:1986},Theorems
11.2.1 and 11.2.3, are satisfied for the process $\{\bWNt(t): 0 \le t \le t_0\}$ are analagous to those above, except $H_*(\epsilon)$ is replaced by
$\tilde{H}_*(\epsilon')$.  Note that the denominator in the intensity functions $\tilde{\beta}_{\bl}(\bw)$ $(\bl \in \Delta)$ given at~\eqref{intensityfun1} (and hence in the drift function $\tilde{F}$ given at~\eqref{driftF1}) is $\yEt$, where for the untransformed process it is $\eta_E$.

\section{Properties of $\tautd$}
\label{app:tautdelta}
In this appendix we prove that (i) $\taut<\infty$ and (ii)~\eqref{gradphidotF} holds for all $\delta \in [0,y_E(0))$. Recalling the definition of $\tautd$ at~\eqref{taudelta}, it follows that $\tautd$ is the smallest positive solution of $\tilde y_E(t)=\delta$ with $\tilde y_E(t)$ given by~\eqref{yet} (Clearly, $\tautd=0$ for $\delta>y_E(0)$.) Also, it follows from~\eqref{ataut1},~\eqref{etaEt}, and $\yEt(\tautd)=\delta$ that
\begin{align*}
&\nabla \varphi(\bwt(\tautd)) \cdot \tilde{F}(\bwt(\tautd))\\
&=-(\beta+\omega)\delta+
\re^{-2(\beta+\omega)\tautd}\left[\beta\fde''\left(\psi(\tautd)\right)
-(\beta+\omega+\gamma)\mud\right].
\end{align*}
Let $\zd=\re^{-(\beta+\omega)\tautd}$ and recall that $\psi(\tautd)=p_{\omega}+(1-p_{\omega})\re^{-(\beta+\omega)\tautd}=\psit(zd)$.  Then,
\begin{equation}\label{eq:nabla}
\nabla \varphi(\bwt(\tautd)) \cdot \tilde{F}(\bwt(\tautd))=-(\beta+\omega)\delta+\zd^2
\left[\beta \fde''\left(\psit(\zd)\right)-(\beta+\omega+\gamma)\mud\right].
\end{equation}
and from~\eqref{yet}, if follows that $\zd$ satisfies
\begin{equation}
\label{eq:zd}
\fde'\left(\psit(\zd)\right)-\frac{[(\beta+\omega+\gamma)\zd-\gamma]}{\beta+\omega}\mud=-\frac{\delta}{\zd}.
\end{equation}
For $z \in [0,1]$, let
\begin{equation}
\label{eq:A}
A(z)=\fde'\left(\psit(z)\right)-\frac{[(\beta+\omega+\gamma)z-\gamma]}{\beta+\omega}\mud,
\end{equation}
so $z_0=\re^{-(\beta+\omega)\taut}=\re^{-(\beta+\omega)\taut_0}$ satisfies $A(z_0)=0$. Now $A(0)= \fde'(p_{\omega})+\gamma/(\beta+\omega)$ and $A(1)=\fde'(1)-\mud=-y_E(0)<0$. (Recall the definition of $\fde$ at~\eqref{fdes}.) Further, unless $p_{\omega}=\gamma=\fde'(0)=0$, then $A(0)>0$, so since $A(z)$ is continuous, $z_0 \in (0,1)$ and $\taut$ (and hence also $\tautd$) is finite. For $\delta \in (0,y_E(0))$, note that $\zd$ satisfies $A(\zd)+\frac{\delta}{\zd}=0$. Thus, $A(1)+\frac{\delta}{1}=\delta-y_E(0)<0$ and $A(z)+\frac{\delta}{z} \to \infty$ as $z \downarrow 0$, so $\zd \in(0,1)$ and $\tautd< \infty$.  If $p_{\omega}=\gamma=\fde'(0)=0$ and $\delta=0$, then it is easily verified using the convexity of $\fde'$ that $z_0=0$, so
$\taut=\infty$.
%$\rho=1-\fde(0)=1$ and (see Remark~\ref{rmk:epsE=01}) such cases are excluded in our CLTs.

We show now that $\nabla \varphi(\bwt(\tautd)) \cdot \tilde{F}(\bwt(\tautd))<0$ for $\delta \in [0,y_E(0))$. Differentiating~\eqref{eq:A} and recalling that $p_{\omega}=\frac{\omega}{\beta+\omega}$ yields
\begin{equation*}
A'(z)=\frac{1}{\beta+\omega}\left[\beta \fde''\left(\psit(z)\right)-(\beta+\omega+\gamma)\mud\right].
\end{equation*}
Suppose, for contradiction, that $\nabla \varphi(\bwt(\tautd))\cdot \tilde{F}(\bwt(\tautd)) \ge 0$.  Then, recalling~\eqref{eq:nabla}, $A'(\zd) \ge \frac{\delta}{\zd^2}$, whence $A'(z) > \frac{\delta}{z^2}$ for $z \in [\zd,1]$,
since $A'$ is increasing on $[0,1]$.  It follows from~\eqref{eq:zd}
that $A(\zd)=-\frac{\delta}{\zd}$.  Thus,
\begin{equation*}
A(1) > A(\zd)+ \int_{\zd}^1 \frac{\delta}{z^2}\,{\rm d}z = -\delta.
\end{equation*}
But $A(1)=\fde'(1)-\mud=-y_E(0)$, since, using~\eqref{fdes}, $\fde'(1)=\sum_{k=1}^{\infty}k(p_k - \epsilon_k)=
\mud -y_E(0)$.  Thus, $y_E(0) < \delta$, which is a contradiction as $\delta \in [0,y_E(0))$.  Hence
$\nabla \varphi(\bwt(\tautd)) \cdot \tilde{F}(\bwt(\tautd))<0$, as required.

Finally, suppose that the epidemic is started by a trace of infection, so $y_E=0$, and that $\delta=0$.   Then, Proposition~\ref{prop:final}(b)
%and Remark~\ref{rmk:binding final}
shows that~\eqref{eq:zd} (with $D_{\epsilon}$ replaced by $D$ and $\delta=0$) has a (unique) solution, $z_0$, in $[0,1)$ if and only if $R_0>1$.  Moreover, $z_0>0$ unless $p_{\omega}=\gamma=f_D'(0)=0$. 
%which implies $\rho=1$.  Thus again $\taut<\infty$ in all cases covered by our CLTs.

Further, the above proof is easily modified to show that $\nabla \varphi(\bwt(\taut)) \cdot \tilde{F}(\bwt(\taut))<0$.

\section{Calculations pertaining to $\Phit(t,u)$}
\label{app:phitcalculations}
Expanding~\eqref{Phit} in partitioned form yields, using~\eqref{partialFtilde},
\begin{equation}
\label{PhitXX}
\dfrac{\partial}{\partial t}\Phit_{XX}(t,u)=\partial \tilde{F}_{XX}(\bwt(t))\Phit_{XX}(t,u),
\end{equation}
and, for $A=Y, Z$ and $B=X, Y, Z$,
\begin{align}
\label{PhitAB}
&\dfrac{\partial}{\partial t}\Phit_{AB}(t,u)\nonumber\\
&= \partial \tilde{F}_{AX}(\bwt(t))\Phit_{XB}(t,u)+
\partial \tilde{F}_{AY}(\bwt(t))\Phit_{YB}(t,u)+ \partial \tilde{F}_{AZ}(\bwt(t))\Phit_{ZB}(t,u),
\end{align}
where $\Phit_{XY}(t,u)=0$ and $\Phit_{XZ}(t,u)=\bzero^\top$.

It follows from~\eqref{driftF1} that
\[
\left(\partial \tilde{F}_{XX}(\bwt(t))\right)_{ij}=-\beta i \delta_{i,j}+\omega\left[-i \delta_{i,j}+(i+1)\delta_{i+1,j}\right].
\]
Thus, letting $\tilde{\phi}_{ij}(t,u)$ denote the $(i,j)$th
element of $\Phit_{XX}(t,u)$, it follows from~\eqref{PhitXX} that, for $t \ge u$,
\begin{equation}
\label{phiij}
\dfrac{\partial}{\partial t}\tilde{\phi}_{ij}=-(\beta+\omega)i \tilde{\phi}_{ij}+\omega(i+1)\tilde{\phi}_{i+1,j}\qquad (i=0,1,\ldots),
\end{equation}
with the initial condition $\tilde{\phi}_{ij}(u,u)=\delta_{i,j}$.  For fixed $j$, apart from the initial condition, $\tilde{\phi}_{ij}(t,u)$ $(i=0,1,\dots)$ satisfies the same system of ODEs given at~\eqref{difft1} for $\xt_i$ $(i=0,1,\dots)$, and it follows from~\eqref{pij} that, for $t \ge u$,
\begin{equation}
\label{Phixij}
\tilde{\phi}_{ij}(t,u)=\begin{cases} \binom{j}{i} \re^{-(\beta+\omega)i(t-u)}\left(1-\re^{-(\beta+\omega)(t-u)t}\right)^{j-i} p_{\omega}^{j-i} & \text{ for } j \ge i,\\
0& \text{ for } j < i,
\end{cases}
\end{equation}
so
\begin{eqnarray}
\left(\bone \Phit_{XX}(t,u)\right)_j&=&\sum_{i=0}^j \binom{j}{i} \re^{-(\beta+\omega)i(t-u)}\left(1-\re^{-(\beta+\omega)(t-u)t}\right)^{j-i} p_{\omega}^{j-i} \nonumber\\
&=&\left(p_{\omega}+(1-p_{\omega})\re^{-(\beta+\omega)(t-u)}\right)^j\nonumber\\
&=& \psi(t-u)^j \qquad (j=0,1,\ldots),\label{bonePhitxxj}
\end{eqnarray}
where $\psi(t)$ is defined at~\eqref{eq:psi}.

From~\eqref{driftF1}, the coefficient of $\beii$ in $\tilde{F}(\bx,\by,z_E)$ is
\[
(\beta+\omega)[-i y_i+(i+1) y_{i+1}]\left(1+\frac{\eta_E}{y_E}\right)+\beta (i+1)x_{i+1}-\gamma y_i\frac{\eta_E}{y_E},
\]
so
\[
\left(\partial \tilde{F}_{YX}(\bwt(t))\right)_{ij}=(\beta+\omega)[-i \yt_i(t)+(i+1) \yt_{i+1}(t)]\frac{j}{\yEt(t)}+\beta(i+1)\delta_{i+1,j}-\gamma
\frac{j \yt_i(t)}{\yEt(t)}.
\]
Hence
\[
\sum_{i=1}^{\infty} i \left(\partial \tilde{F}_{YX}(\bwt(t))\right)_{ij}=-(\beta+\omega+\gamma)j+\beta j(j-1) \qquad (j=0,1,\ldots),
\]
so
\begin{eqnarray}
\label{pfyx}
\bp \, \partial \tilde{F}_{YX}(\bwt(t))&=&-(\beta+\omega+\gamma)\bp +\beta \bptwo,
\end{eqnarray}
where $\bptwo=(p_{[2],0}, p_{[2],1}, \ldots)$ with $p_{[2],i}=i(i-1)$ $(i=0,1,\ldots)$.
Similar calculations show that
\begin{eqnarray}
\bp \, \partial \tilde{F}_{YY}(\bwt(t))&=&-[2(\beta+\omega)+\gamma]\bp,\label{pfyy}\\
\bp \, \partial \tilde{F}_{YZ}(\bwt(t))&=&-(\beta+\omega+\gamma),\label{pfyz}\\
\partial \tilde{F}_{ZX}(\bwt(t))&=&\gamma \bp,\label{pfzx}\\
\partial \tilde{F}_{ZY}(\bwt(t))&=&\gamma \bp,\label{pfzy}\\
\partial \tilde{F}_{ZZ}(\bwt(t))&=&\gamma-\beta-\omega.\label{pfzz}\\
\nonumber
\end{eqnarray}

Setting $A=Y$ in~\eqref{PhitAB} and using~\eqref{pfyx}-\eqref{pfyz} yields, for $B=X,Y,Z$,
\begin{align}
\label{pPhitYB}
\dfrac{\partial}{\partial t}\bp \,\Phit_{YB}(t,u)&= -(\beta+\omega+\gamma)\bp \, \Phit_{XB}(t,u)
+\beta \bptwo \, \Phit_{XB}(t,u)\nonumber\\
&\phantom{=\ } -[2(\beta+\omega)+\gamma]\bp \, \Phit_{YB}(t,u)-(\beta+\omega+\gamma) \Phit_{ZB}(t,u).
\end{align}
Setting $A=Z$ in~\eqref{PhitAB} and using~\eqref{pfzx}-\eqref{pfzz} yields, for $B=X,Y,Z$,
\begin{equation}
\label{PhitZB}
\dfrac{\partial}{\partial t} \Phit_{ZB}(t,u)= \gamma\bp \, \Phit_{XB}(t,u)
+\gamma\bp \, \Phit_{YB}(t,u)+(\gamma-\beta-\omega) \Phit_{ZB}(t,u).
\end{equation}

Setting $B=Z$ in~\eqref{pPhitYB} and~\eqref{PhitZB}, and recalling that
$\Phit_{XY}(t,u)$ and $\Phit_{XZ}(t,u)$ are both identically zero, yields
\begin{eqnarray*}
\dfrac{\partial}{\partial t}\bp \,\Phit_{YZ}(t,u)&=&-[2(\beta+\omega)+\gamma]\bp \, \Phit_{YZ}(t,u)
-(\beta+\omega+\gamma) \Phit_{ZZ}(t,u),\\
\dfrac{\partial}{\partial t} \Phit_{ZZ}(t,u)&=&\gamma\bp \, \Phit_{YZ}(t,u)+(\gamma-\beta-\omega) \Phit_{ZZ}(t,u),
\end{eqnarray*}
with initial condition
\[
\bp \,\Phit_{YZ}(u,u)=0 \qquad\mbox{ and }\qquad \Phit_{ZZ}(u,u)=1.
\]
This linear system of two ODEs has solution, for $t \ge u$,
\begin{eqnarray}
\label{pPhityz}
\bp \,\Phit_{YZ}(t,u)&=&-\frac{\beta+\omega+\gamma}{\beta+\omega}\re^{-(\beta+\omega)(t-u)}\left(1-\re^{-(\beta+\omega)(t-u)}\right),\\
\Phit_{ZZ}(t,u)&=&\frac{\beta+\omega+\gamma}{\beta+\omega}\re^{-(\beta+\omega)(t-u)}-\frac{\gamma}{\beta+\omega}\re^{-2(\beta+\omega)(t-u)}.
\nonumber
\end{eqnarray}

Similarly, setting $B=Y$ in~\eqref{pPhitYB} and~\eqref{PhitZB} yields
\begin{eqnarray*}
\dfrac{\partial}{\partial t}\bp \,\Phit_{YY}(t,u)&=&-[2(\beta+\omega)+\gamma]\bp \, \Phit_{YY}(t,u)
-(\beta+\omega+\gamma) \Phit_{ZY}(t,u),\\
\dfrac{\partial}{\partial t} \Phit_{ZY}(t,u)&=&\gamma\bp \, \Phit_{YY}(t,u)+(\gamma-\beta-\omega) \Phit_{ZY}(t,u),
\end{eqnarray*}
with initial condition
\[
\bp \,\Phit_{YY}(u,u)=\bp \qquad\mbox{ and }\qquad \Phit_{ZY}(u,u)=\bzero
\]
and solution, for $t \ge u$,
\begin{eqnarray}
\label{pPhityy}
\bp \,\Phit_{YY}(t,u)&=&\left(\frac{\beta+\omega+\gamma}{\beta+\omega}\re^{-2(\beta+\omega)(t-u)}-
\frac{\gamma}{\beta+\omega}\re^{-(\beta+\omega)(t-u)}\right) \bp,\\
\Phit_{ZY}(t,u)&=&\frac{\gamma}{\beta+\omega}\re^{-(\beta+\omega)(t-u)}\left(1-\re^{-(\beta+\omega)(t-u)}\right)\bp.
\nonumber
\end{eqnarray}

Setting $B=X$ in~\eqref{pPhitYB} and~\eqref{PhitZB} yields
\begin{align}
\dfrac{\partial}{\partial t}\bp \,\Phit_{YX}(t,u)&=-(\beta+\omega+\gamma)\bp \, \Phit_{XX}(t,u)+\beta \bptwo\, \Phit_{XX}(t,u)
\nonumber\\
&\qquad-[2(\beta+\omega)+\gamma]\bp \, \Phit_{YX}(t,u)
-(\beta+\omega+\gamma) \Phit_{ZX}(t,u),\label{dpPhityx}\\
\dfrac{\partial}{\partial t} \Phit_{ZX}(t,u)&=\gamma\bp \, \Phit_{XX}(t,u)+\gamma\bp \, \Phit_{YX}(t,u)+(\gamma-\beta-\omega) \Phit_{ZX}(t,u),
\label{dPhityz}
\end{align}
with initial condition
\begin{equation}
\label{pPhityinit}
\bp \,\Phit_{YX}(u,u)=\bzero \qquad\mbox{ and }\qquad \Phit_{ZX}(u,u)=\bzero.
\end{equation}
Further, using~\eqref{Phixij}, for $j=0,1,\ldots$,
\begin{eqnarray}
\left(\bp \Phit_{XX}(t,u)\right)_j&=&j \re^{-(\beta+\omega)(t-u)}\psi(t-u)^{j-1},\label{pPhixij}\\
\left(\bptwo \Phit_{XX}(t,u)\right)_j&=&j(j-1) \re^{-2(\beta+\omega)(t-u)}\psi(t-u)^{j-2}.\label{p2Phixij}
\end{eqnarray}
Note that~\eqref{dpPhityx}-\eqref{p2Phixij} imply that, for $0 \le u \le t$,
\begin{equation}
\label{pPhity}
\bp \,\Phit_{YX}(t,u)=\bp \,\Phit_{YX}(t-u,0) \qquad\mbox{ and }\qquad \Phit_{ZX}(t,u)=\Phit_{ZX}(t-u,0),
\end{equation}
so we consider the case when $u=0$.

Let
\begin{equation*}
D=\begin{bmatrix}
-2(\beta+\omega)-\gamma &\quad -(\beta+\omega+\gamma) \\
 \gamma &\quad \gamma-\beta-\omega
\end{bmatrix}.
\end{equation*}
Then,
%see, for example, Bellman~\cite{Bellman:1970}, page 173,
\begin{align}
\label{Phiint}
&\begin{pmatrix}
\bp \, \Phit_{YX}(t,0)\\
\Phit_{ZX}(t,0)
\end{pmatrix}\nonumber\\
&=
\int_0^t \re^{-D(t-s)}
\begin{pmatrix}
-(\beta+\omega+\gamma)\bp \, \Phit_{XX}(s,0)+\beta \bptwo\, \Phit_{XX}(s,0)\\
\gamma\bp \, \Phit_{XX}(s,0)
\end{pmatrix}
\,{\rm d}s,
\end{align}
%where $\re^{-Dt}=\sum_{k=0}^{\infty} \frac{t^k D^k}{k!}$ is the usual matrix exponential
%(see, for example, Bellman~\cite{Bellman:1970}, page 169).  Now $X(t)=\re^{-Dt}$ is the solution of
%\[
%\dfrac{dX}{dt}=DX, \qquad X(0)=I,
%\]
%so it is easily verified that
with
\begin{align}
\label{Dexp}
\re^{-Dt}&=\frac{1}{\beta+\omega}\re^{-2(\beta+\omega)t}
\begin{bmatrix}
\beta+\omega+\gamma &\quad \beta+\omega+\gamma\\
-\gamma &\quad -\gamma
\end{bmatrix}\nonumber\\
&\phantom{=\ }+\frac{1}{\beta+\omega}\re^{-(\beta+\omega)t}
\begin{bmatrix}
-\gamma &\quad -(\beta+\omega+\gamma)\\
\gamma &\quad \beta+\omega+\gamma
\end{bmatrix}.
\end{align}

Substituting~\eqref{Dexp} into~\eqref{Phiint} yields, after using~\eqref{pPhixij} and~\eqref{p2Phixij}, that, for $j=0,1,\ldots$,
\begin{equation}
\left(\bp \, \Phit_{YX}(t,0)\right)_j=
I_j^{(1)}(t)+I_j^{(2)}(t)+I_j^{(3)}(t), \label{bPhityxI}
\end{equation}
where
\begin{eqnarray*}
I_j^{(1)}(t)&=&-(\beta+\omega+\gamma)\re^{-2(\beta+\omega)t}\int_0^t j\re^{(\beta+\omega)s}\psi(s)^{j-1}\,{\rm d}s,\\
I_j^{(2)}(t)&=&\frac{\beta(\beta+\omega+\gamma)}{\beta+\omega}\re^{-2(\beta+\omega)t}\int_0^t
j(j-1)\psi(s)^{j-2}\,{\rm d}s,\\
I_j^{(3)}(t)&=&-\frac{\beta \gamma}{\beta+\omega}\re^{-(\beta+\omega)t}\int_0^t
j(j-1)\re^{-(\beta+\omega)s}\psi(s)^{j-2}\,{\rm d}s
\end{eqnarray*}
and, recalling~\eqref{eq:psi}, $\psi(s)= p_{\omega}+(1-p_{\omega})\re^{-(\beta+\omega)s}$.  Integrating by parts,
\begin{align*}
\int_0^t &
j(j-1)\psi(s)^{j-2}\,{\rm d}s\\
&=\left[-\frac{1}{\beta}\re^{(\beta+\omega)s}j\psi(s)^{j-1}\right]_0^t
+
\int_0^t \frac{\beta+\omega}{\beta} j \re^{(\beta+\omega)s}\psi(s)^{j-1}\,{\rm d}s,
\end{align*}
so
\[
I_j^{(2)}=\frac{\beta+\omega+\gamma}{\beta+\omega}\re^{-2(\beta+\omega)t}j\left[1-\re^{(\beta+\omega)t}\psi(t)^{j-1}\right]
-I_j^{(1)}.
\]
Also,
\begin{eqnarray*}
I_j^{(3)}&=&-\frac{\beta \gamma}{\beta+\omega}\re^{-(\beta+\omega)t}j \left[-\frac{1}{(\beta+\omega)(1-p_{\omega})}\psi(s)^{j-1}\right]_0^t\\
&=&\frac{\gamma}{\beta+\omega}\re^{-(\beta+\omega)t}j\left[\psi(t)^{j-1}-1\right].
\end{eqnarray*}
It then follows using~\eqref{bPhityxI} and~\eqref{pPhity} that, for $j=0,1,\ldots$,
\begin{align}
\left(\bp \,\Phit_{YX}(t,u)\right)_j&=I_j^{(1)}(t-u)+I_j^{(2)}(t-u)+I_j^{(3)}(t-u)\\
&=\re^{-(\beta+\omega)(t-u)}\frac{\left((\beta+\omega+\gamma)
\re^{-(\beta+\omega)(t-u)}-\gamma\right)}{\beta+\omega}j\nonumber\\
&\qquad-\re^{-(\beta+\omega)(t-u)}\psi(t-u)^{j-1}j.\label{bonePhityxj}
\end{align}

\section{Calculation of $\SigmaMR(\beta,\omega,\gamma)$}
\label{app:mrvariance}
Recall~\eqref{mrsigma2} for $\SigmaMR(\beta,\omega,\gamma)$,
where $\sigma^2_1,\sigma^2_2,\ldots,\sigma^2_5$ are given by~\eqref{mrsigma2a}.  We first obtain closed-form expressions for
the integrands in the definitions of $\sigma^2_1,\sigma^2_2,\ldots,\sigma^2_5$, then evaluate the integrals as a function
of $\taut$ and finally show that the expression for $\SigmaMR(\beta,\omega,\gamma)$ reduces to that given in Proposition~\ref{prop:mrVar}.

\subsection{Integrands}
We determine the integrands for $\sigma^2_1,\sigma^2_2,\ldots,\sigma^2_5$ in reverse order.
\subsubsection{Integral for $\sigma^2_5$}
For $i=0,1,\ldots$, it follows from~\eqref{li2}, \eqref{boldc} and~\eqref{hRtauu} that
\[
\bc(\taut,u) \bl_i^{(5)}=i[h_R(\taut,u)-h_I(\taut,u)]=-ib(\taut)\re^{-(\beta+\omega)(\taut-u)},
\]
so, using~\eqref{intensityfun1} and recalling~\eqref{etaEt} for $\etaEt(t)$,
\begin{eqnarray*}
\sum_{i=0}^{\infty} (\bc(\taut,u)\bl_i^{(5)})^2 \betat_{\bl}^{(5)}(\bwt(u))&=&
\sum_{i=0}^{\infty} i^2 b(\taut)^2\re^{-2(\beta+\omega)(\taut-u)}\gamma \yt_i(u)\frac{\etaEt(u)}{\yEt(u)}\\
&=&
\gamma \mud b(\taut)^2\re^{-2(\beta+\omega)\taut}\frac{\yEt^{(2)}(u)}{\yEt(u)},
\end{eqnarray*}
where $\yEt^{(2)}(u)=\sum_{i=1}^{\infty} i^2 \yt_i(u)$.  Thus, using~\eqref{mrsigma2a},
\begin{equation}
\label{sigmasq5}
\sigma^2_5=\gamma \mud b(\taut)^2\re^{-2(\beta+\omega)\taut} \int_0^{\taut} \frac{\yEt^{(2)}(u)}{\yEt(u)}\,{\rm d}u.
\end{equation}

\subsubsection{Integral for $\sigma^2_4$}
For $i=1,2,\ldots$, it follows from~\eqref{li1}, \eqref{boldc} and~\eqref{hRtauu} that
\[
\bc(\taut,u) \bl_i^{(4)}=-h_I(\taut,u)-h_R(\taut,u)=b(\taut)\re^{-(\beta+\omega)(\taut-u)}-2h_I(\taut,u),
\]
so, using~\eqref{intensityfun1},
\begin{align*}
&\sum_{i=1}^{\infty} (\bc(\taut,u)\bl_i^{(4)})^2 \betat_{\bl}^{(4)}(\bwt(u))\\
&=\sum_{i=1}^{\infty} \left(b(\taut)\re^{-(\beta+\omega)(\taut-u)}-2h_I(\taut,u)\right)^2(\beta+\omega)i\yt_i(u)\frac{\zEt(u)}{\yEt(u)}\\
&=
(\beta+\omega)\left(b(\taut)\re^{-(\beta+\omega)(\taut-u)}-2h_I(\taut,u)\right)^2 \zEt(u).
\end{align*}
Thus, using~\eqref{mrsigma2a},
\begin{equation}
\label{sigmasq4}
\sigma^2_4=\int_0^{\taut} (\beta+\omega)\left(b(\taut)\re^{-(\beta+\omega)(\taut-u)}-2h_I(\taut,u)\right)^2 \zEt(u) \,{\rm d}u.
\end{equation}

\subsubsection{Integral for $\sigma^2_3$}
For $i,j=1,2,\ldots$, it follows from~\eqref{lij3}, \eqref{boldc} and~\eqref{cStauu} that
\[
\bc(\taut,u) \bl_{ij}^{(3)}=-[2h_I(\taut,u)+\hat{c}_j(\taut,u)],
\]
where $\hat{c}_j(\taut,u)=\tilde{c}_j(\taut,u)-\tilde{c}_{j-1}(\taut,u)$.  Hence, using~\eqref{intensityfun1},
\[
\sum_{i=1}^{\infty}\sum_{j=1}^{\infty} (\bc(\taut,u)\bl_{ij}^{(3)})^2 \betat_{\bl}^{(3)}(\bwt(u))
= \omega \sum_{j=1}^{\infty} \left(2h_I(\taut,u)+\hat{c}_j(\taut,u)\right)^2 j \xt_j(u) ,
\]
and, using~\eqref{mrsigma2a},
\begin{equation}
\label{sigmasq3}
\sigma^2_3=\omega \int_0^{\taut} \sum_{j=1}^{\infty} \left(2h_I(\taut,u)+\hat{c}_j(\taut,u)\right)^2 j \xt_j(u)\,{\rm d}u.
\end{equation}

\subsubsection{Integral for $\sigma^2_2$}
For $i,j=1,2,\ldots$, it follows from~\eqref{lij2} and~\eqref{boldc} that
\[
\bc(\taut,u) \bl_{ij}^{(2)}=-2h_I(\taut,u),
\]
so, using~\eqref{intensityfun1},
\[
\sum_{i=1}^{\infty}\sum_{j=1}^{\infty} (\bc(\taut,u)\bl_{ij}^{(2)})^2 \betat_{\bl}^{(2)}(\bwt(u))
=4 h_I(\taut,u)^2 (\beta+\omega) \yEt(u),
\]
and, using~\eqref{mrsigma2a},
\begin{equation}
\label{sigmasq2}
\sigma^2_2=4(\beta+\omega) \int_0^{\taut}  h_I(\taut,u)^2  \yEt(u) \,{\rm d}u.
\end{equation}

\subsubsection{Integral for $\sigma^2_1$}

For $i,j=1,2,\ldots$, it follows from~\eqref{lij1}, \eqref{boldc} and~\eqref{cStauu} that
\[
\bc(\taut,u) \bl_{ij}^{(1)}=-[2h_I(\taut,u)+\tilde{c}_j(\taut,u)],
\]
so, using~\eqref{intensityfun1},
\[
\sum_{i=1}^{\infty}\sum_{j=1}^{\infty} (\bc(\taut,u)\bl_{ij}^{(1)})^2 \betat_{\bl}^{(1)}(\bwt(u))
= \beta \sum_{j=1}^{\infty} \left(2h_I(\taut,u)+\tilde{c}_j(\taut,u)\right)^2 j \xt_j(u) ,
\]
and, using~\eqref{mrsigma2a},
\begin{equation}
\label{sigmasq1}
\sigma^2_1=\beta \int_0^{\taut} \sum_{j=1}^{\infty} \left(2h_I(\taut,u)+\tilde{c}_j(\taut,u)\right)^2 j \xt_j(u)\,{\rm d}u.
\end{equation}

\subsection{Evaluation of integrals}
Recall that $\etaEt(u)=\xEt(u)+\yEt(u)+\zEt(u)$.  Then adding~\eqref{sigmasq4}-\eqref{sigmasq1} gives,
\begin{equation}
\label{sigmasum}
\sum_{i=1}^4 \sigma_i^2=\sum_{i=1}^7 I_i,
\end{equation}
where
\begin{eqnarray}
I_1&=& 4(\beta+\omega)\int_0^{\taut} h_I(\taut,u)^2 \etaEt(u) \,{\rm d}u,\label{integralI1}\\
I_2&=& -4(\beta+\omega)b(\taut)\int_0^{\taut} h_I(\taut,u) \re^{-(\beta+\omega)(\taut-u)} \zEt(u) \,{\rm d}u,\label{integralI2}\\
I_3&=& (\beta+\omega)b(\taut)^2\int_0^{\taut} \re^{-2(\beta+\omega)(\taut-u)} \zEt(u) \,{\rm d}u,\label{integralI3}\\
I_4&=& 4 \omega \int_0^{\taut} h_I(\taut,u)\sum_{j=1}^{\infty} \hat{c}_j(\taut,u)j \xt_j(u) \,{\rm d}u,\label{integralI4}\\
I_5&=& \omega \int_0^{\taut} \sum_{j=1}^{\infty} \hat{c}_j(\taut,u)^2j \xt_j(u) \,{\rm d}u,\label{integralI5}\\
I_6&=& 4 \beta \int_0^{\taut} h_I(\taut,u)\sum_{j=1}^{\infty} \tilde{c}_j(\taut,u)j \xt_j(u) \,{\rm d}u,\label{integralI6}\\
I_7&=& \beta \int_0^{\taut} \sum_{j=1}^{\infty} \tilde{c}_j(\taut,u)^2j \xt_j(u) \,{\rm d}u.\label{integralI7}
\end{eqnarray}

Recalling~\eqref{etaEt}, \eqref{zet} and~\eqref{hItauu}, allows us to evaluate immediately $I_1, I_2$ and $I_3$:
\begin{align}
I_1&=\frac{4\mud b(\taut)^2 \re^{-2(\beta+\omega)\taut}}{\beta+\omega}\left[\gamma^2 \taut -\frac{2\gamma(\beta+\omega+\gamma)}{\beta+\omega}\left(1-\re^{-(\beta+\omega)\taut}\right)\right.\nonumber\\
&\qquad \qquad\left.+\frac{(\beta+\omega+\gamma)^2}{2(\beta+\omega)}\left(1-\re^{-2(\beta+\omega)\taut}\right)
\right], \label{integralI1a}\\
I_2&=-4\frac{\gamma \mud b(\taut)^2\re^{-(\beta+\omega)\taut}}{\beta+\omega}
\left[\gamma \taut \re^{-(\beta+\omega)\taut}\right.\nonumber\\
&\qquad\qquad\left.-\frac{(\beta+\omega+\gamma)\re^{-(\beta+\omega)\taut}+\gamma}{\beta+\omega}
\left(1-\re^{-(\beta+\omega)\taut}\right)\right.\nonumber\\
&\qquad\qquad\left.+\frac{\beta+\omega+\gamma}{2(\beta+\omega)}\left(1-\re^{-2(\beta+\omega)\taut}\right)\right],\label{integralI2a}\\
I_3&=\frac{\gamma \mud b(\taut)^2\re^{-(\beta+\omega)\taut}}{\beta+\omega}
\left\{1-\re^{-(\beta+\omega)\taut}\left[1+(\beta+\omega)\taut\right]\right\}. \label{integralI3a}
\end{align}

For $j,k=0,1,\ldots$, let $j_{[k]}=j(j-1)\ldots(j-k+1)$ denote a falling factorial, with the convention
that $j_{[0]}=1$.  To calculate $I_4, I_5, I_6$ and $I_7$, observe first using~\eqref{xit} that, for $\theta \in [0,1]$ and $k=1,2,\ldots$,
\begin{eqnarray}
\label{expansion}
\sum_{j=1}^{\infty} j_{[k]} \xt_j(u) \theta^{j-k}
&=&\sum_{j=k}^{\infty} \frac{j!}{(j-k)!}\theta^{j-k}\frac{ \re^{-(\beta+\omega)ju}}{j!}\fde^{(j)}\left(p_{\omega}\left[1-\re^{-(\beta+\omega)u}\right]\right)\nonumber\\
&=&\re^{-(\beta+\omega)ku}\sum_{j=k}^{\infty}\frac{\left[\theta \re^{-(\beta+\omega)u}\right]^{j-k}}{(j-k)!}\fde^{(j)}\left(p_{\omega}\left[1-\re^{-(\beta+\omega)u}\right]\right)\nonumber\\
&=&\re^{-k(\beta+\omega)u}\fde^{(k)}\left(\theta\re^{-(\beta+\omega)u}+p_{\omega}\left[1-\re^{-(\beta+\omega)u}\right]\right),
\end{eqnarray}
and that
\[
\re^{-(\beta+\omega)u}\psi(\taut-u)+p_{\omega}\left[1-\re^{-(\beta+\omega)u}\right]
=\psi(\taut).
\]
Thus, using~\eqref{expansion} with $\theta=\psi(\taut-u)$ and $k=1,2$,
\begin{align*}
&\sum_{j=1}^{\infty}\tilde{c}_j(\taut,u)j \xt_j(u)\\
&= \psi(\taut-u)\re^{-(\beta+\omega)u}\fde'(\psi(\taut))-b(\taut)\re^{-(\beta+\omega)(\taut-u)}
\left[\re^{-(\beta+\omega)u}\fde'(\psi(\taut))\right.\\
&\qquad\left.+\psi(\taut-u)\re^{-2(\beta+\omega)u}\fde^{(2)}(\psi(\taut))\right]\\
&= \psi(\taut-u)\re^{-(\beta+\omega)u}\left[\fde'(\psi(\taut))-b(\taut)\re^{-(\beta+\omega)\taut}\fde^{(2)}(\psi(\taut))\right]\\
&\phantom{=\ }-b(\taut)\re^{-(\beta+\omega)\taut}\fde'(\psi(\taut))
\end{align*}
and
\begin{align*}
&\sum_{j=1}^{\infty} \tilde{c}_{j-1}(\taut,u)j \xt_j(u)\\
&=\sum_{j=1}^{\infty} j \xt_j(u)\psi(\taut-u)^{j-1}-b(\taut)\re^{-(\beta+\omega)(\taut-u)}\sum_{j=2}^{\infty}j(j-1)\xt_j(u)\psi(\taut-u)^{j-2}\\
&=\re^{-(\beta+\omega)u}\left[\fde'(\psi(\taut))-b(\taut)\re^{-(\beta+\omega)\taut}\fde^{(2)}(\psi(\taut))\right].
\end{align*}
Hence, recalling~\eqref{hItauu},
%\begin{align*}
%I_4+I_6&=4 (\beta+\omega) \left[\fde'(\psi(\taut))-b(\taut)\re^{-(\beta+\omega)\taut}\fde^{(2)}(\psi(\taut))\right] \int_0^{\taut}
%\psi(\taut-u)\re^{-(\beta+\omega)u}h_I(\taut,u) \,{\rm d}u\\
%&\qquad-4 (\beta+\omega) b(\taut)\re^{-(\beta+\omega)\taut}\fde'(\psi(\taut)) \int_0^{\taut} h_I(\taut,u) \,{\rm d}u\\
%&\qquad -4 \omega \left[\fde'(\psi(\taut))-b(\taut)\re^{-(\beta+\omega)\taut}\fde^{(2)}(\psi(\taut))\right]
%\int_0^{\taut} \re^{-(\beta+\omega)u}h_I(\taut,u) \,{\rm d}u.
%\end{align*}
%Further, $(\beta+\omega)\psi(\taut-u)\re^{-(\beta+\omega)u}=\omega \re^{-(\beta+\omega)u}+\beta \re^{-(\beta+\omega)\taut}$, so
\begin{align}
\label{integralI4p6}
I_4+I_6&=4\left\{ \beta \left[\fde'(\psi(\taut))-b(\taut)\re^{-(\beta+\omega)\taut}\fde^{(2)}(\psi(\taut))\right]\re^{-(\beta+\omega)\taut}\right.\nonumber\\
&\phantom{=\ }\left.-4 (\beta+\omega) b(\taut)\re^{-(\beta+\omega)\taut}\fde'(\psi(\taut))\right\} I_8,
\end{align}
where
\begin{align}
\label{integralI8}
I_8&=\int_0^{\taut} h_I(\taut,u) \,{\rm d}u \nonumber\\
&= -\frac{b(\taut)}{\beta+\omega}\left[\frac{\gamma\left(1-\re^{-(\beta+\omega)\taut}\right)}
{\beta+\omega}-\frac{(\beta+\omega+\gamma)\left(1-\re^{-2(\beta+\omega)\taut}\right)}{2(\beta+\omega)}\right].
\end{align}

Turning to $I_5$ and $I_7$, note that
\begin{equation}
\label{cjtilde}
\tilde{c}_j(\taut,u)=\psi(\taut-u)^{j-1}\left(\psi(\taut-u)-b(\taut)j\re^{-(\beta+\omega)(\taut-u)}\right),
\end{equation}
so
\begin{align*}
\sum_{j=1}^{\infty} \tilde{c}_j(\taut,u)^2j \xt_j(u)&=\psi(\taut-u)^2 S_1(\taut,u)
-2b(\taut)\psi(\taut-u)\re^{-(\beta+\omega)(\taut-u)}S_2(\taut,u)\\
&\qquad+b(\taut)^2 \re^{-2(\beta+\omega)(\taut-u)}S_3(\taut,u),
\end{align*}
where
\[
S_k(\taut,u)=\sum_{j=1}^{\infty}\psi(\taut-u)^{2(j-1)} j^k \xt_j(u) \qquad (k=1,2,3).
\]

Let
\begin{equation}
\label{psi2}
\psi_2(\taut,u)=\re^{-(\beta+\omega)u}\psi(\taut-u)^2 +p_{\omega}\left(1-\re^{-(\beta+\omega)u}\right).
\end{equation}
Then, since $j^2=j_{[2]}+j$ and $j^3=j_{[3]}+3j_{[2]}+j$, it follows using~\eqref{expansion} that
\begin{align*}
S_1(\taut,u)&= \re^{-(\beta+\omega)u}\fde'(\psi_2(\taut,u)),\\
S_2(\taut,u)&= \psi(\taut-u)^2 \re^{-2(\beta+\omega)u}\fde^{(2)}(\psi_2(\taut,u))+ \re^{-(\beta+\omega)u}\fde'(\psi_2(\taut,u)),\\
S_3(\taut,u)&= \psi(\taut-u)^4 \re^{-3(\beta+\omega)u}\fde^{(3)}(\psi_2(\taut,u))\\
&\qquad+3\psi(\taut-u)^2 \re^{-2(\beta+\omega)u}\fde^{(2)}(\psi_2(\taut,u))+\re^{-(\beta+\omega)u}\fde'(\psi_2(\taut,u)),
\end{align*}
whence
\begin{align}
\label{sumctilde2}
&\sum_{j=1}^{\infty} \tilde{c}_j(\taut,u)^2j \xt_j(u)\nonumber\\
&=\left[\psi(\taut-u)-b(\taut)\re^{-(\beta+\omega)(\taut-u)}\right]^2
\re^{-(\beta+\omega)u} \fde'(\psi_2(\taut,u))\nonumber\\
&\quad+b(\taut) \psi(\taut-u)^2 \re^{-(\beta+\omega)(\taut+u)}\left[3b(\taut)\re^{-(\beta+\omega)(\taut-u)}-2\psi(\taut-u)\right]
\fde^{(2)}(\psi_2(\taut,u))\nonumber\\
&\quad+b(\taut)^2 \psi(\taut-u)^4 \re^{-(\beta+\omega)(2\taut+u)} \fde^{(3)}(\psi_2(\taut,u)).
\end{align}

Further,~\eqref{cjtilde} implies
\begin{align*}
\hat{c}_j(\taut,u)&=\psi(\taut-u)^{j-2}\left\{\psi(\taut-u)\left[\psi(\taut-u)-1-b(\taut)\re^{-(\beta+\omega)(\taut-u)}\right]\right.\\
&\left. \qquad-(j-1)b(\taut)(\psi(\taut-u)-1)\re^{-(\beta+\omega)(\taut-u)}\right\},
\end{align*}
so
\begin{align}
\label{sumchat2}
&\sum_{j=1}^{\infty} \hat{c}_j(\taut,u)^2j \xt_j(u)\nonumber\\
&=\left[\psi(\taut-u)-1-b(\taut)\re^{-(\beta+\omega)(\taut-u)}\right]^2 \re^{-(\beta+\omega)u} \fde'(\psi_2(\taut,u))\nonumber\\
&\phantom{=\ }-2b(\taut)\psi(\taut-u)(\psi(\taut-u)-1)\Big[\psi(\taut-u)-1\nonumber\\
&\phantom{=\ }\qquad\left.-b(\taut)\re^{-(\beta+\omega)(\taut-u)}\right]
\re^{-(\beta+\omega)(\taut+u)}\fde^{(2)}(\psi_2(\taut,u))\nonumber\\
&\phantom{=\ }+b(\taut)^2(\psi(\taut-u)-1)^2\left[\psi(\taut-u)^2 \re^{-(\beta+\omega)(2\taut+u)}\fde^{(3)}(\psi_2(\taut,u))\right.\nonumber\\
&\phantom{=\ }\qquad\left.+\re^{-2(\beta+\omega)\taut}\fde^{(2)}(\psi_2(\taut,u))\right],
\end{align}

To calculate the integral in~\eqref{sigmasq5} for $\sigma^2_5$, let
\begin{equation}
\label{yet2}
\yEt^{[2]}(t)=\sum_{i=2}^{\infty} i(i-1)\yt_i(t)
\end{equation}
and note that
\[
\sum_{i=2}^{\infty} \left[(i+1)i(i-1)\yt_{i+1}(t)-i^2(i-1)\yt_i(t)\right]=-2\yEt^{[2]}(t).
\]
Multiplying~\eqref{difft2} by $i(i-1)$ and summing over $i=2,3,\ldots$ yields, after recalling
~\eqref{etaEt} and invoking~\eqref{expansion} with $\theta=1$ and $k=3$, that
 \[
 \dfrac{d\yEt^{[2]}}{dt}+2(\beta+\omega)\yEt^{[2]}=-\mud\re^{-2(\beta+\omega)t}[2(\beta+\omega)+\gamma]
 \frac{\yEt^{[2]}}{\yEt}+\beta\re^{-3(\beta+\omega)t}\fde^{(3)}\left(\psi(t)\right),
 \]
so
\begin{eqnarray*}
\dfrac{d}{dt}\left(\re^{2(\beta+\omega)t}\yEt^{[2]}(t)\right)&=&
-\mud[2(\beta+\omega)+\gamma]\frac{\yEt^{[2]}(t)}{\yEt(t)}+\beta\re^{-(\beta+\omega)t}
\fde^{(3)}\left(\psi(t)\right)\\
&=&-\mud[2(\beta+\omega)+\gamma]\frac{\yEt^{[2]}(t)}{\yEt(t)}
-\dfrac{d}{dt}\left[\fde^{(2)}\left(\psi(t)\right)\right],
\end{eqnarray*}
since $p_{\omega}=\frac{\omega}{\beta+\omega}$.
Thus,
\begin{eqnarray*}
\int_0^{\taut} \frac{\yEt^{[2]}(u)}{\yEt(u)}\,{\rm d}u&=&
\left[-\frac{1}{\mud[2(\beta+\omega)+\gamma]}\left(\re^{2(\beta+\omega)u}\yEt^{[2]}(u)+
\fde^{(2)}\left(\psi(u)\right)\right)\right]_0^{\taut}\\
&=&\frac{1}{\mud[2(\beta+\omega)+\gamma]}\left[\yEt^{[2]}(0)+\fde^{(2)}(1)-
\fde^{(2)}\left(\psi(\taut)\right)\right],
\end{eqnarray*}
as $\yt_i(\taut)=0$ $(i=1,2,\ldots)$.  Setting $t=0$ in~\eqref{yet2} gives $\yEt^{[2]}(0)=\sum_{i=2}^{\infty} i(i-1)\epsilon_i$ and differentiating~\eqref{fdes} twice yields $\fde^{(2)}(1)=\sum_{i=2}^{\infty}i(i-1)(p_i-\epsilon_i)$.  Thus, $\yEt^{[2]}(0)+\fde^{(2)}(1)=\sum_{i=2}^{\infty}i(i-1)p_i=f_D''(1)$.
Further, $\yEt^{(2)}(u)=\yEt^{[2]}(u)+\yEt(u)$, so $\int_0^{\taut} \frac{\yEt^{(2])}(u)}{\yEt(u)}\,{\rm d}u
=\taut+\int_0^{\taut} \frac{\yEt^{[2]}(u)}{\yEt(u)}\,{\rm d}u$ and using~\eqref{sigmasq5},
\begin{equation}
\label{sigmasq5b}
\sigma^2_5=\gamma \mud b(\taut)^2\re^{-2(\beta+\omega)\taut}\left\{\taut+
\frac{1}{\mud[2(\beta+\omega)+\gamma]}\left[f_D''(1)-
\fde^{(2)}\left(\psi(\taut)\right)\right]\right\}.
\end{equation}

\subsection{Expression for $\SigmaMR(\beta,\omega,\gamma)$}
We now use~\eqref{mrsigma2}, \eqref{sigmasum} and~\eqref{sigmasq5b} to obtain an expression for $\SigmaMR(\beta,\omega,\gamma)$.

Let $z=\re^{-(\beta+\omega)\taut}$. Then, since $\taut$ is the unique solution in $(0,\infty)$ of~\eqref{finaltau}, $z$ is the unique solution in $[0,1)$ of~\eqref{zdef}.  Recall that $\psit(z)=p_{\omega}+(1-p_{\omega})z$.  It then follows from~\eqref{ataut} that
\[
a(\taut)=z^2\left[\beta\fde''\left(\psit(z)\right)-(\beta+\omega+\gamma)\mud\right],
\]
whence, using $b(\taut)=a(\taut)^{-1}\beta \xEt(\taut)$ and~\eqref{xet}
\begin{eqnarray}
\label{bttaut}
b(\taut)&=&\frac{\beta z\fde'\left(\psit(z)\right)}{z^2 \left[\beta \fde''\left(\psit(z)\right)-(\beta+\omega+\gamma)\mud\right]}\nonumber\\
&=&\frac{\beta\left[\frac{(\beta+\omega+\gamma)z-\gamma}{\beta+\omega}\right] \mud}
{z\left[\beta \fde''\left(\psit(z)\right)-(\beta+\omega+\gamma)\mud\right]},
\end{eqnarray}
using~\eqref{zdef}, so $b(\taut)=\btz$ defined at~\eqref{btz}.  For future reference, note that~\eqref{bttaut} implies
\begin{eqnarray}
b(\taut)z\fde''\left(\psit(z)\right)&=&\fde'\left(\psit(z)\right)
+\frac{(\beta+\omega+\gamma)}{\beta}zb(\taut)\mud\label{btautzfd2a}\\
&=&\left[(\beta+\omega+\gamma)\left(\frac{1}{\beta+\omega}
+\frac{b(\taut)}{\beta}\right)z-\frac{\gamma}{\beta+\omega}\right]\mud\label{btautzfd2b}.
\end{eqnarray}

Adding~\eqref{integralI1a} and ~\eqref{integralI2a} yields, after substituting $z=\re^{-(\beta+\omega)\taut}$,
\begin{align}
\label{I1pI2}
I_1+I_2&=2\frac{\mud b(\taut)^2 z(1-z)}{(\beta+\omega)^2}
\Big[\gamma(\gamma-\beta-\omega)\nonumber\\
&\phantom{=\ }\qquad+(\beta+\omega+\gamma)(\beta+\omega-2\gamma)z+(\beta+\omega+\gamma)^2 z^2\Big].
\end{align}
Using~\eqref{integralI4p6} and \eqref{btautzfd2a},
\begin{eqnarray*}
I_4+I_6&=&-4\left[(\beta+\omega+\gamma)z^2 b(\taut)\mud-4(\beta+\omega)z b(\taut)\fde'\left(\psit(z)\right)\right]I_8\\
&=&-4z b(\taut)\mud\left[(2(\beta+\omega+\gamma)z-\gamma\right]I_8,
\end{eqnarray*}
using~\eqref{zdef}.  Substituting for $I_8$ from~\eqref{integralI8} and rearranging then gives
\begin{equation}
\label{I4pI6}
I_4+I_6=2\frac{b(\taut)^2z(1-z)}{(\beta+\omega)^2}\left[2(\beta+\omega+\gamma)z-\gamma\right]
\left[\gamma-\beta-\omega-(\beta+\omega+\gamma)z\right]\mud.
\end{equation}
Adding~\eqref{I1pI2} and~\eqref{I4pI6} yields after a little algebra that
\begin{align}
\label{I4pI6a}
&I_1+I_2+I_4+I_6\nonumber\\
&=2\frac{(\beta+\omega+\gamma)[\gamma-\beta-\omega-(\beta+\omega+\gamma)z]}{(\beta+\omega)^2}\mud b(\taut)^2 z^2(1-z).
\end{align}

Adding~\eqref{integralI3a} and~\eqref{sigmasq5b} yields
\[
I_3+\sigma^2_5=\frac{\gamma}{\beta+\omega}\mud b(\taut)^2z(1-z)+\frac{\gamma}{2(\beta+\omega)+\gamma}b(\taut)^2 z^2\left[f_D''(1)-\fde''\left(\psit(z)\right)\right].
\]
Substituting from~\eqref{btautzfd2b} and noting that
$f_D''(1)=\sigma_D^2+\mud^2-\mud$ yields, after a little algebra, that
\begin{align}
\label{I3psigma5}
I_3+\sigma^2_5&=\frac{\gamma}{\beta(\beta+\omega)}\mud b(\taut)^2 z\left[\beta-(2\beta+\omega)z\right]\nonumber\\
&\phantom{=\ }+ \frac{\gamma}{\beta[2(\beta+\omega)+\gamma]}b(\taut)^2 z^2\left[\beta(\sigma_D^2+\mud^2)+\omega\mud\right]\nonumber\\
&\phantom{=\ }
-\frac{\gamma[(\beta+\omega+\gamma)z-\gamma]z}{[2(\beta+\omega)+\gamma](\beta+\omega)} \mud b(\taut).
\end{align}

Adding~\eqref{I4pI6a} and~\eqref{I3psigma5}, and comparing with~\eqref{MRCLTvar}, shows that
\begin{equation}
\label{I1pI2etc}
I_1+I_2+I_3+I_4+I_6+\sigma^2_5=\SigmaMR(\beta,\omega,\gamma)-I_A-I_B-I_C-I_D,
\end{equation}
with $I_A, I_B, I_C$ and $I_D$ given by~\eqref{IA}-\eqref{ID}.
Making the substitution $v=\re^{-(\beta+\omega)u}$ in the integrals in~\eqref{integralI5} and~\eqref{integralI7},
using the expressions~\eqref{sumctilde2} and~\eqref{sumchat2} for the respective integrands, shows that
\begin{equation}
\label{I5pI7}
I_5+I_7=I_A+I_B+I_C+I_D.
\end{equation}
The expression~\eqref{MRCLTvar} for $=\SigmaMR(\beta,\omega,\gamma)$ then follows using~\eqref{mrsigma2} and~\eqref{sigmasum}.

\section{Proof of Lemma~\ref{lemma:PGFcomparison}}
\label{app:PGFcomparison}
In this appendix we prove Lemma~\ref{lemma:PGFcomparison}, assuming without loss of generality that $\gamma=1$.  (If $\gamma \ne 1$
then time can be scaled linearly so that $\gamma=1$.)
For $k=1,2,\ldots$, let $X_k^{(\gamma,\omega)}=k-Y_k^{(\gamma,\omega)}$ and $X_k^{(\gamma+\omega,0)}=k-Y_k^{(\gamma+\omega,0)}$. Thus, for example, $X_k^{(\gamma,\omega)}$ is the number of neighbours an infective, $i^*$ say, with $k$ susceptible neighbours fails to infect in the dropping model. For $k,r \in \mathbb{Z}_+$, let $k_{[r]}=k(k-1)\ldots(k-r+1)$ denote a falling factorial, with the convention that $k_{[0]}=1$. Further let $\mu_{k,[r]}^{(\gamma,\omega)}=\E\left[X_{k,[r]}^{(\gamma,\omega)}\right]$, where $X_{k,[r]}^{(\gamma,\omega)}=X_k^{(\gamma,\omega)}(X_k^{(\gamma,\omega)}-1)\ldots(X_k^{(\gamma,\omega)}-r+1)$, be the
$r$th factorial moment of $X_k^{(\gamma,\omega)}$ and define $\mu_{k,[r]}^{(\gamma+\omega,0)}$ analogously for the modified model. Note that $\mu_{k,[r]}^{(\gamma,\omega)}= \mu_{k,[r]}^{(\gamma+\omega,0)}=0$ for all $r>k$. We prove first that
\begin{equation}
\label{facmominequ}
\mu_{k,[r]}^{(\gamma,\omega)} \le \mu_{k,[r]}^{(\gamma+\omega,0)}\quad\mbox{for all } k,r,
\end{equation}
with strict inequality for $2 \le r \le k$, and then consider the Taylor expansions of $f_k^{(\gamma,\omega)}(s)$ and $f_k^{(\gamma+\omega,0)}(s)$ about $s=1$ to prove Lemma~\ref{lemma:PGFcomparison}.

To determine the factorial moment $\mu_{k,[r]}^{(\gamma,\omega)}$, fix $k \ge 1$, give the $k$ neighbours of $i^*$ the labels $1,2,\ldots,k$ and let $A_k^{(\gamma,\omega)}$ be
the set of neighbours that are not infected by $i^*$.  Then, for any $B \subseteq \{1,2,\ldots,k\}$, $\Pp\left(A_k^{(\gamma,\omega)}=B\right)$ depends on $B$ only through
its size $|B|$, so $A_k^{(\gamma,\omega)}$ is a symmetric sampling procedure (Martin-L{\"o}f~\cite{MLof:1986}).  It follows from Lemma 1 in that paper that
$\mu_{k,[r]}^{(\gamma,\omega)}= k_{[r]} P_{k,r}^{(\gamma,\omega)}$ $(r=0,1,\ldots,k)$, where $P_{k,r}^{(\gamma,\omega)}$ is the probability that no one in any fixed set of $r$ neighbours of
$i^*$ is infected by $i^*$, with $P_{k,0}^{(\gamma,\omega)}=1$.  Similarly, in an obvious notation, $\mu_{k,[r]}^{(\gamma+\omega,0)}= k_{[r]} P_{k,r}^{(\gamma+\omega,0)}$ $(r=0,1,\ldots,k)$.
Note that $P_{k,r}^{(\gamma,\omega)}= P_{r,r}^{(\gamma,\omega)}=\Pp\left(Y_r^{(\gamma,\omega)}=0\right)$, so using~\eqref{Mbindrop},
\begin{eqnarray*}
P_{k,r}^{(\gamma,\omega)}&=&\E\left[\left(1-\frac{\beta}{\beta+\omega}\left(1-\re^{-(\beta+\omega)I}\right) \right)^r\right]\\
&=&\E\left[\left(\frac{\omega}{\beta+\omega}+\frac{\beta}{\beta+\omega}\re^{-(\beta+\omega)I}\right)^r\right]\\
&=&\sum_{i=0}^r \binom{r}{i}\left(\frac{\omega}{\beta+\omega}\right)^{r-i} \left(\frac{\beta}{\beta+\omega}\right)^i \E\left[\re^{-i(\beta+\omega)I}\right]\\
&=&\sum_{i=0}^r \binom{r}{i}\left(\frac{\omega}{\beta+\omega}\right)^{r-i} \left(\frac{\beta}{\beta+\omega}\right)^i \frac{1}{1+i(\beta+\omega)},
\end{eqnarray*}
since $I \sim \Exp(1)$.  A similar but simpler argument using~\eqref{Mbinmod} yields
\begin{equation*}
P_{k,r}^{(\gamma+\omega,0)}=\frac{1+\omega}{1+\omega+r\beta}.
\end{equation*}
Thus $\mu_{k,[r]}^{(\gamma,\omega)} \le \mu_{k,[r]}^{(\gamma+\omega,0)}$ for all $k,r$ if and only if
\begin{equation}
\label{inequality1}
\sum_{i=0}^r \binom{r}{i}\left(\frac{\omega}{\beta+\omega}\right)^{r-i} \left(\frac{\beta}{\beta+\omega}\right)^i \frac{1}{1+i(\beta+\omega)}
\le \frac{1+\omega}{1+\omega+r\beta} ,
\end{equation}
$(r=0,1,\ldots)$, which we now show.

First note that both sides of~\eqref{inequality1} equal $1$ when $r=0$. Suppose $r>0$.  Then
\begin{align*}
\sum_{i=0}^r &\binom{r}{i}\left(\frac{\omega}{\beta+\omega}\right)^{r-i} \left(\frac{\beta}{\beta+\omega}\right)^i \frac{1}{1+i(\beta+\omega)}
\le \frac{1+\omega}{1+\omega+r\beta}\\
&\iff \sum_{i=0}^r \binom{r}{i}\left(\frac{\omega}{\beta+\omega}\right)^{r-i} \left(\frac{\beta}{\beta+\omega}\right)^i \left[1-\frac{i(\beta+\omega)}{1+i(\beta+\omega)}\right]\le 1-\frac{r\beta}{1+\omega+r\beta}\\
&\iff \sum_{i=0}^r \binom{r}{i}\left(\frac{\omega}{\beta+\omega}\right)^{r-i} \left(\frac{\beta}{\beta+\omega}\right)^i \frac{i(\beta+\omega)}{1+i(\beta+\omega)} \ge \frac{r\beta}{1+\omega+r\beta}\\
&\iff r \beta \sum_{i=1}^r \binom{r-1}{i-1}\left(\frac{\omega}{\beta+\omega}\right)^{r-i}\left(\frac{\beta}{\beta+\omega}\right)^{i-1}
\frac{1}{1+i(\beta+\omega)} \ge \frac{r\beta}{1+\omega+r\beta}\\
&\iff \sum_{i=0}^{r-1}\binom{r-1}{i}\left(\frac{\omega}{\beta+\omega}\right)^{r-1-i}\left(\frac{\beta}{\beta+\omega}\right)^i
\frac{1}{1+(i+1)(\beta+\omega)} \ge \frac{1}{1+\omega+r\beta}\\
&\iff H(r) \ge 0,
\end{align*}
where
\begin{align*}
&H(r)\\
&=\sum_{i=0}^{r-1} \binom{r-1}{i}\left(\frac{\omega}{\beta+\omega}\right)^{r-1-i}\left(\frac{\beta}{\beta+\omega}\right)^i
\left[\frac{1}{1+(i+1)(\beta+\omega)}-\frac{1}{1+\omega+r\beta}\right].
\end{align*}

Now $H(1)=0$, so $\mu_{k,[1]}^{(\gamma,\omega)} = \mu_{k,[1]}^{(\gamma+\omega,0)}$ $(k=0,1,\ldots)$, as noted (in a different notation) after~\eqref{Mbinmod}.
For $r \ge 2$,
\begin{eqnarray*}
H(r)&=&\sum_{i=0}^{r-1}\binom{r-1}{i}\left(\frac{\omega}{\beta+\omega}\right)^{r-1-i}\left(\frac{\beta}{\beta+\omega}\right)^i
\left[\frac{(r-1-i)\beta-i\omega}{[1+(i+1)(\beta+\omega)](1+\omega+r\beta)}\right]\\
&=&\frac{1}{1+\omega+r\beta}\left(\frac{1}{\beta+\omega}\right)^{r-1}\tilde{H}(r),
\end{eqnarray*}
where
\begin{align*}
\tilde{H}(r)&=\sum_{i=0}^{r-2}\binom{r-1}{i}\omega^{r-1-i}\beta^i\frac{(r-1-i)\beta}{1+(i+1)(\beta+\omega)}\\
&\phantom{=\ }-\sum_{i=1}^{r-1}\binom{r-1}{i}\omega^{r-1-i}\beta^i \frac{i \omega}{1+(i+1)(\beta+\omega)}\\
&=\sum_{i=0}^{r-2}\binom{r-1}{i}\omega^{r-1-i}\beta^{i+1}\frac{(r-1-i)}{1+(i+1)(\beta+\omega)}\\
&\phantom{=\ }-\sum_{i=0}^{r-2}\binom{r-1}{i+1}\omega^{r-1-i}\beta^{i+1}\frac{i+1}{1+(i+2)(\beta+\omega)}\\
&=(r-1)\sum_{i=0}^{r-2} \binom{r-2}{i}\omega^{r-1-i}\beta^{i+1}\left[\frac{1}{1+(i+1)(\beta+\omega)}-\frac{1}{1+(i+2)(\beta+\omega)}\right]\\
&>0.
\end{align*}
Thus, $H(r)>0$ for $r=2,3,\ldots$, proving~\eqref{facmominequ}.

Turning to Lemma~\ref{lemma:PGFcomparison} note that for $k=1,2,\ldots$ and $s \ne 0$, $f_k^{(\gamma,\omega)}(s)=s^k \hat{f}_k^{(\gamma,\omega)}(s^{-1})$, where
$\hat{f}_k^{(\gamma,\omega)}(s)=\E\left[s^{X_k^{(\gamma,\omega)}}\right]$ $(s \in \mathbb{R})$ is the PGF of $X_k^{(\gamma,\omega)}$. Similarly, in an obvious notation,
$f_k^{(\gamma+\omega,0)}(s)=s^k \hat{f}_k^{(\gamma+\omega,0)}(s^{-1})$.  Now, for $s<1$,
\begin{eqnarray*}
\hat{f}_k^{(\gamma,\omega)}(s^{-1}) = \sum_{r=0}^k \mu_{k,[r]}^{(\gamma,\omega)} (s^{-1}-1)^r \le \sum_{r=0}^k \mu_{k,[r]}^{(\gamma+\omega,0)} (s^{-1}-1)^r = \hat{f}_k^{(\gamma+\omega,0)}(s^{-1}),
\end{eqnarray*}
with strict inequality if $k \ge 2$.  Thus, $f_k^{(\gamma,\omega)}(s) \le f_k^{(\gamma+\omega,0)}(s)$ for all $s \in (0,1)$, again with strict inequality if $k \ge 2$,
proving Lemma~\ref{lemma:PGFcomparison} for $s \in (0,1)$.  The lemma holds trivially when $s=1$ since $f_k^{(\gamma,\omega)}(1)=f_k^{(\gamma+\omega,0)}(1)=1$.
Finally, note that $f_k^{(\gamma,\omega)}(0)=\Pp(Y_k^{(\gamma,\omega)}=0)=\Pp(X_k^{(\gamma,\omega)}=k)=\mu_{k,[k]}^{(\gamma,\omega)}/k_{[k]}$ and, similarly, $f_k^{(\gamma+\omega,0)}(0)= \mu_{k,[k]}^{(\gamma+\omega,0)}/k_{[k]}$, so~\eqref{facmominequ} implies the lemma holds also when $s=0$.

\section{Derivation of asymptotic variances in Conjecture~\ref{conj:nodroppingCLT}} 
\label{app:nodroppingCLT}
In this appendix we derive the expressions for $\SigmaMRE(\beta,\gamma)$ and $\SigmaNSW(\beta,\gamma)$ given in Conjecture~\ref{conj:nodroppingCLT} by setting $\omega=0$ in 
Conjectures~\ref{conj:mrCLT} and~\ref{conj:nswCLT}.
We consider first the epidemic on an MR random network.

From~\eqref{integralI5} and~\eqref{I5pI7}, $I_A+I_B+I_C+I_D=I_7$, since $\omega=0$.  We derive a closed-form expression for
$I_7$ when $\omega=0$.  Note that now $p_{\omega}=0$, so using~\eqref{eq:psi} and~\eqref{psi2}, $\psi(t)=\re^{-\beta t}$ and
$\psi_2(\taut,u)=\re^{-\beta(2\taut-u)}$.  Substituting these into~\eqref{sumctilde2} yields
\begin{align}
\label{sumctilde2zero}
\sum_{j=1}^{\infty} \tilde{c}_j(\taut,u)^2j \xt_j(u)&=
(1-b(\taut))^2 \re^{-\beta(2\taut-u)}\fde'\left(\re^{-\beta(2\taut-u)}\right)\nonumber\\
&\phantom{=\ } +b(\taut)(3b(\taut)-2)\re^{-2\beta(2\taut-u)}\fde^{(2)}\left(\re^{-\beta(2\taut-u)}\right)\nonumber\\
&\phantom{=\ }+b(\taut)^2 \re^{-3\beta(2\taut-u)}\fde^{(3)}\left(\re^{-\beta(2\taut-u)}\right).
\end{align}
For $k=0,1,\ldots$, let
\[
J_k=\int_0^{\taut}  \re^{-k\beta(2\taut-u)}\fde^{(k)}\left(\re^{-\beta(2\taut-u)}\right) \,{\rm d}u.
\]
Integrating by parts, for $k=1,2,\ldots$,
\begin{align}
\label{reduction}
J_k&=\left[\re^{-(k-1)\beta(2\taut-u)} \frac{1}{\beta} \fde^{(k-1)}\left(\re^{-\beta(2\taut-u)}\right)\right]_0^{\taut}\nonumber\\
&\phantom{=\ }-\int_0^{\taut} (k-1) \beta \re^{-(k-1)\beta(2\taut-u)}\frac{1}{\beta} \fde^{(k-1)}\left(\re^{-\beta(2\taut-u)}\right)\,{\rm d}u\nonumber\\
&=\frac{1}{\beta}\left[\re^{-(k-1)\beta\taut} \fde^{(k-1)}\left(\re^{-\beta \taut}\right)-
\re^{-2(k-1)\beta\taut}\fde^{(k-1)}\left(\re^{-2\beta \taut}\right)\right]-(k-1)J_{k-1},
\end{align}
so, setting $k=1$,
\begin{equation}
\label{J1integral}
J_1=\frac{1}{\beta}\left[\fde\left(\re^{-\beta \taut}\right)-\fde\left(\re^{-2\beta \taut}\right)\right].
\end{equation}
Substituting~\eqref{sumctilde2zero} into~\eqref{integralI7}, and  using~\eqref{J1integral} and
\eqref{reduction} with $k=2,3$ yields
\begin{align}
\label{I7integralzero}
I_7&=\fde\left(\re^{-\beta \taut}\right)-\fde\left(\re^{-2\beta \taut}\right)\nonumber\\
&\phantom{=\ }+b(\taut)(b(\taut)-2)\left[\re^{-\beta\taut} \fde'\left(\re^{-\beta \taut}\right)-
\re^{-2\beta\taut}\fde'\left(\re^{-2\beta \taut}\right)\right]\nonumber\\
&\phantom{=\ }+b(\taut)^2 \left[\re^{-2\beta\taut} \fde''\left(\re^{-\beta \taut}\right)-
\re^{-4\beta\taut}\fde''\left(\re^{-2\beta \taut}\right)\right].
\end{align}

Recall that $z=\re^{-\beta \taut}$ and $\btz=b(\taut)$.  Setting $\omega=0$ in~\eqref{btautzfd2b}
gives
\[
\btz z \fde''(z)=\left[\frac{(\beta+\gamma)(1+\btz)z-\gamma}{\beta}\right]\mud.
\]
Substituting these into~\eqref{I7integralzero} and using~\eqref{zdefND} yields
\begin{align*}
%\label{I7integralzeroa}
I_7&=\fde\left(z\right)-\fde\left(z^2\right)
-\btz(\btz-2)z^2\fde'\left(z^2\right)-\btz^2 z^4 \fde''\left(z^2\right)\nonumber\\
&\quad+\btz^2 z \left(\frac{2(\beta+\gamma)z-\gamma}{\beta}\right)\mud-\btz z \left(\frac{(\beta+\gamma)z-\gamma}{\beta}\right)\mud.
\end{align*}
Setting $\omega=0$ in~\eqref{MRCLTvar} and recalling that now $I_A+I_B+I_C+I_D=I_7$ then yields
\begin{align}
\label{MRCLTvarNDa}
\SigmaMRE(\beta,\gamma)&=\fde\left(z\right)-\fde\left(z^2\right)
-\btz(\btz-2)z^2\fde'\left(z^2\right)-\btz^2 z^4 \fde''\left(z^2\right)\nonumber\\
&\phantom{=\ }+\left(\frac{\gamma}{2\beta+\gamma}\right)\btz^2 z^2 (\sigma_D^2+\mud^2)\nonumber\\
&\phantom{=\ }+2\left(\frac{\gamma-(\beta+\gamma)z}{\beta}\right)\left(\frac{\beta+\gamma}{2\beta+\gamma}\right)z\btz \mud\nonumber\\
&\phantom{=\ }+2\left(\frac{\gamma-(\beta+\gamma)z}{\beta}\right)^2 z^2\btz^2\mud.
\end{align}
Setting $\omega=0$ in~\eqref{btaut1} shows that $h(\beta,\gamma,z)=z\btz$, where $h(\beta,\gamma,z)$
is defined at~\eqref{hdef}.  Further $\fde\left(z\right)=1-\rho$; see immediately after~\eqref{zdefND}.
The expression~\eqref{MRCLTvarND} for $\SigmaMRE(\beta,\gamma)$ then follows immediately from~\eqref{MRCLTvarNDa}.

Turning to the epidemic on an NSW random network, setting $\omega=0$ in~\eqref{sigma0} and noting that
then $\psit(z)=z$, yields
\begin{align}
\label{sigma0ND}
\sigma_0^2(\beta, 0, \gamma)&=f_D\left(z^2\right)-(1-\rho)^2+\btz^2 z^4 f_D''\left(z^2\right)+\btz(\btz-2)z^2\fde'\left(z^2\right)\nonumber\\
&\quad+\btz^2z^2\left(\frac{(\beta+\gamma)z-\gamma}{\beta}\right)^2\left(\sigma_D^2+\mud^2\right)\nonumber\\
&\quad-2 \left(\frac{(\beta+\gamma)z-\gamma}{\beta}\right)
\left(\frac{(\beta+\gamma)z-\gamma}{\beta}+\frac{(\beta+\gamma)}{\beta}z\right)z^2 \btz^2 \mud.
\end{align}
Setting $\omega=0$ in~\eqref{sigmatNSW1} shows that $\SigmaNSW(\beta,\gamma)$ is given by the sum of the right-hand sides of~\eqref{MRCLTvarNDa}, with $D_{\epsilon}$ replaced by $D$, and~\eqref{sigma0ND}.  The expression~\eqref{NSWCLTvarND} for $\SigmaNSWE(\beta,\gamma)$ now follows since $f_D(z)=1-\rho$ and $h(\beta,\gamma,z)=z\btz$.

\section{ODE initial conditions for the epidemic on an NSW graph}
\label{app:ODEinitConds}
In this appendix we derive the initial conditions $\Sigma_{\rm NSW}(0)$ that are given in Section~\ref{sec:implementation}. We assume that the number of initial infectives is $i^N_0 = [\epsilon N]$ (or that $i^N_0$ is any function of $N$ such that $\lim_{N\to\infty} N^{-1} i^N_0 = \epsilon$) and that these individuals are chosen uniformly from the population. Since there is nothing special about the labelling of the individuals $1,2,\dots,N$ in the population we can assume that individuals $1,2,\dots,i^N_0$ are initially infected.

First consider the term $\sigma_{x_i,x_i}(0) = \lim_{N\to\infty} N^{-1} \var(X_i^N(0))$. Writing $X_i^N(0)$ as a sum of indicator variables, we use the independence of different individuals' degrees to find that
\begin{align*}
\var(X_i^N(0)) & = \var(\sum_{k=1}^N \ind{\textrm{indiv $k$ is deg $i$ \& susc}}) \\
& = \var(\sum_{k=i_0^N+1}^N \ind{\textrm{indiv $k$ is deg $i$}}) \\
  & = \sum_{k=i_0^N+1}^N \var(\ind{\textrm{indiv $k$ is deg $i$}}) \\
 & = (N-i_0^N) p_i(1-p_i),
\end{align*}
so
\begin{equation*}
\sigma_{x_i,x_i}(0) = \lim_{N\to\infty} N^{-1} \var(X_i^N(0)) = (1-\epsilon) p_i (1-p_i) %\label{eq:Sigmaxixi0}
\end{equation*}
for all $i$. Considering infectives instead, essentially the same arguments establish that
\begin{equation*}
\sigma_{y_i,y_i}(0) = \lim_{N\to\infty} N^{-1} \var(Y_i^N(0)) = \epsilon p_i (1-p_i).
\end{equation*}

For the covariances we use the same independence and $\cov(X,Y)=E[XY]-E[X]E[Y]$ to find that, for $i\neq j$,
\begin{align*}
\cov(X_i^N(0),X_j^N(0)) & = \cov(\sum_{k=1}^N \ind{\textrm{indiv $k$ is deg $i$ \& susc}},\sum_{l=1}^N \ind{\textrm{indiv $l$ is deg $j$ \& susc}}) \\
& = \cov(\sum_{k=i_0^N+1}^N \ind{\textrm{indiv $k$ is deg $i$}},\sum_{l=i_0^N+1}^N \ind{\textrm{indiv $l$ is deg $j$}}) \\
 & = \sum_{k=i_0^N+1}^N \sum_{l=i_0^N+1}^N \cov(\ind{\textrm{indiv $k$ is deg $i$}},\ind{\textrm{indiv $l$ is deg $j$}}) \\
 & = \sum_{k=i_0^N+1}^N \cov(\ind{\textrm{indiv $k$ is deg $i$}},\ind{\textrm{indiv $k$ is deg $j$}}) \\
 & = (N-i_0^N) (0 - p_ip_j) = -(N-i_0^N) p_ip_j,
\end{align*}
so that we have
\begin{equation*}
\sigma_{x_i,x_j}(0) = \lim_{N\to\infty} N^{-1} \cov(X_i^N(0),X_j^N(0)) = -(1-\epsilon) p_ip_j.
\end{equation*}
The same calculations for $\cov(Y_i^N(0),Y_j^N(0))$ yield
\begin{equation*}
\sigma_{y_i,y_j}(0) = \lim_{N\to\infty} N^{-1} \cov(Y_i^N(0),Y_j^N(0)) = -\epsilon p_ip_j
\end{equation*}
for $i\neq j$. Next, for all $i,j$,
\begin{align*}
\cov(X_i^N(0),Y_j^N(0)) & = \cov(\sum_{k=1}^N \ind{\textrm{indiv $k$ is deg $i$ \& susc}},\sum_{l=1}^N \ind{\textrm{indiv $l$ is deg $j$ \& inf}}) \\
& = \cov(\sum_{k=i_0^N+1}^N \ind{\textrm{indiv $k$ is deg $i$}},\sum_{l=1}^{i_0^N} \ind{\textrm{indiv $l$ is deg $j$}}) \\
 & = \sum_{k=i_0^N+1}^N \sum_{l=1}^{i_0^N} \cov(\ind{\textrm{indiv $k$ is deg $i$}},\ind{\textrm{indiv $l$ is deg $j$}}) \\
 & = 0
\end{align*}
by independence of individuals (there are no terms with $k=l$ since the indices take values in disjoint sets). Thus
\begin{equation*}
\sigma_{x_i,y_j}(0) = \lim_{N\to\infty} N^{-1} \cov(X_i^N(0),Y_j^N(0)) = 0.
\end{equation*}
Finally, we have $Z_E^N(0) = 0$, so all (co)variances involving it are zero and remain so when divided by $N$, whence for all $i$ we have
\begin{equation*}
\sigma_{x_i,z_E}(0) = \sigma_{y_i,z_E}(0) = \sigma_{z_E,z_E}(0) = 0.
\end{equation*}

\paragraph{Acknowledgements}
This work was partially supported by a grant from the Simons Foundation and was carried out as a result of the authors' visit to the Isaac Newton Institute for Mathematical Sciences during the programme Theoretical Foundations for Statistical Network Analysis in 2016 (EPSRC Grant Number EP/K032208/1). KYL is supported by the Swedish Research Council (VR) Grant Number 2015-05015. This work was also supported by a grant from the Knut and Alice Wallenberg Foundation, which enabled FB to be a guest professor at the Department of Mathematics, Stockholm University.  We thank Phil Pollett for some helpful discussions relating to Theorem~\ref{KurtzFCLTrandinit}
and the reviewers for their constructive comments which have improved the presentation of the paper.

\bibliographystyle{spmpsci}
\bibliography{mybib3a}

\begin{thebibliography}{10}
\providecommand{\url}[1]{{#1}}
\providecommand{\urlprefix}{URL }
\expandafter\ifx\csname urlstyle\endcsname\relax
  \providecommand{\doi}[1]{DOI~\discretionary{}{}{}#1}\else
  \providecommand{\doi}{DOI~\discretionary{}{}{}\begingroup
  \urlstyle{rm}\Url}\fi

\bibitem{Althouse2014}
Althouse, B., H\'ebert-Dufresne, L.: Epidemic cycles driven by host behaviour.
\newblock Journal of the Royal Society Interface \textbf{11}, 20140575 (2014)

\bibitem{vBahr:1980}
von Bahr, B., Martin-L{\"o}f, A.: Threshold limit theorems for some epidemic
  processes.
\newblock Advances in Applied Probability \textbf{12}(2), 319--349 (1980)

\bibitem{Ball:2018}
Ball, F.: Central limit theorems for {SIR} epidemics and percolation on
  configuration model random graphs.
\newblock arXiv p. 1812.03105v1 (2018)

\bibitem{BalHou:2017}
Ball, F., House, T.: Heterogeneous network epidemics: real-time growth,
  variance and extinction of infection.
\newblock Journal of Mathematical Biology \textbf{75}(3), 577--619 (2017)

\bibitem{Ball:2003}
Ball, F., Neal, P.: The great circle epidemic model.
\newblock Stochastic Processes and their Applications \textbf{107}(2), 233--268
  (2003)

\bibitem{Ball:2008}
Ball, F., Neal, P.: Network epidemic models with two levels of mixing.
\newblock Mathematical Biosciences \textbf{212}(1), 69--87 (2008)

\bibitem{BallNeal:2017}
Ball, F., Neal, P.: The asymptotic variance of the giant component of
  configuration model random graphs.
\newblock The Annals of Applied Probability \textbf{27}(2), 1057--1092 (2017)

\bibitem{Ball:2012}
Ball, F., Sirl, D.: An {SIR} epidemic model on a population with random network
  and household structure, and several types of individuals.
\newblock Advances in Applied Probability \textbf{44}(1), 63--86 (2012)

\bibitem{Ball:2013}
Ball, F., Sirl, D.: Acquaintance vaccination in an epidemic on a random graph
  with specified degree distribution.
\newblock Journal of Applied Probability \textbf{50}(4), 1147--1168 (2013)

\bibitem{Barbour:2013}
Barbour, A., Reinert, G.: Approximating the epidemic curve.
\newblock Electronic Journal of Probability \textbf{18}(54), 1--30 (2013)

\bibitem{Barbour:2012b}
Barbour, A.D., Luczak, M.J.: Central limit approximations for {M}arkov
  population processes with countably many types.
\newblock Electronic Journal of Probability \textbf{17}(90), 1--16 (2012)

\bibitem{Barbour:2012a}
Barbour, A.D., Luczak, M.J.: A law of large numbers approximation for {M}arkov
  population processes with countably many types.
\newblock Probability Theory and Related Fields \textbf{153}(3-4), 725--757
  (2012)

\bibitem{Barbour:2017}
Barbour, A.D., R{\"o}llin, A.: Central limit theorems in the configuration
  model.
\newblock arXiv p. 1710.02644v1 (2017)

\bibitem{Bohman:2012}
Bohman, T., Picollelli, M.: {SIR} epidemics on random graphs with a fixed
  degree sequence.
\newblock Random Structures and Algorithms \textbf{41}(2), 179--214 (2012)

\bibitem{Bollobas1980}
Bollob\'as, B.: A probabilistic proof of an asymptotic formula for the number
  of labelled regular graphs.
\newblock European Journal of Combinatorics \textbf{1}(4), 311--316 (1980)

\bibitem{Bollobas2001}
Bollob\'as, B.: Random Graphs.
\newblock Academic Press, New York (2001)

\bibitem{Britton2016}
Britton, T., Juher, D., Salda{\~{n}}a, J.: A network epidemic model with
  preventive rewiring: comparative analysis of the initial phase.
\newblock Bulletin of Mathematical Biology \textbf{78}(12), 2427--2454 (2016)

\bibitem{Decreusefond:2012}
Decreusefond, L., Dhersin, J.S., Moyal, P., Tran, V.C.: Large graph limit for
  an {{SIR}} process in random network with heterogeneous connectivity.
\newblock The Annals of Applied Probability \textbf{22}(2), 541--575 (2012)

\bibitem{Durrett:2007}
Durrett, R.: Random Graph Dynamics.
\newblock Cambridge University Press, Cambridge (2007)

\bibitem{Ethier:1986}
Ethier, S.N., Kurtz, T.G.: Markov Processes: Characterization and Convergence.
\newblock Wiley Series in Probability and Mathematical Statistics. John Wiley
  and Sons, Hoboken, New Jersey (1986)

\bibitem{Gross2006}
Gross, T., D'Lima, C.J.D., Blasius, B.: Epidemic dynamics on an adaptive
  network.
\newblock Physical Review Letters \textbf{96}, 208701 (2006)

\bibitem{Jacobsen2016}
Jacobsen, K.A., Burch, M.G., Tien, J.H., Rempala, G.A.: The large graph limit
  of a stochastic epidemic on a dynamic multilayer network.
\newblock Journal of Biological Dynamics \textbf{12}(1), 746--788 (2018)

\bibitem{Janson:2014}
Janson, S., Luczak, M., Windridge, P.: Law of large numbers for the {SIR}
  epidemic on a random graph with given degrees.
\newblock Random Structures and Algorithms \textbf{45}(4), 726--763 (2014)

\bibitem{Kenah:2007}
Kenah, E., Robins, J.M.: Second look at the spread of epidemics on networks.
\newblock Physical Review E \textbf{76}, 036113 (2007)

\bibitem{KhudaBukhsh:2017}
KhudaBukhsh, W.R., Woroszylo, C., Rempala, G.A., Koeppl, H.: Functional central
  limit theorem for susceptible-infected process on configuration model graphs.
\newblock arXiv p. 1703.06328v1 (2017)

\bibitem{Kiss2017}
Kiss, I.Z., Miller, J.C., Simon, P.: Mathematics of Epidemics on Networks: From
  Exact to Approximate Models.
\newblock Springer (2017)

\bibitem{Kurtz:1970}
Kurtz, T.G.: Solutions of ordinary differential equations as limits of pure
  jump markov processes.
\newblock Journal of Applied Probability \textbf{7}(1), 49--58 (1970)

\bibitem{Kurtz:1971}
Kurtz, T.G.: Limit theorems for sequences of jump markov processes
  approximating ordinary differential equations.
\newblock Journal of Applied Probability \textbf{8}(2), 344--356 (1971)

\bibitem{Leung:2018}
Leung, K., Ball, F., Sirl, D., Britton, T.: Individual preventive social
  distancing during an epidemic may have negative population-level outcomes.
\newblock Journal of The Royal Society Interface \textbf{15}, 20180296 (2018)

\bibitem{Leung:2016}
Leung, K., Diekmann, O.: Dangerous connections: on binding site models of
  infectious disease dynamics.
\newblock Journal of Mathematical Biology \textbf{74}(3), 619--671 (2016)

\bibitem{Lindquist2011}
Lindquist, J., Ma, J., van~den Driessche, P., Willeboordse, F.H.: Effective
  degree network disease models.
\newblock Journal of Mathematical Biology \textbf{62}(2), 143--164 (2011)

\bibitem{MLof:1986}
Martin-L{\"o}f, A.: Symmetric sampling procedures, general epidemic processes
  and their threshold limit theorems.
\newblock Journal of Applied Probability \textbf{23}(2), 265--282 (1986)

\bibitem{Miller:2011}
Miller, J.C.: A note on a paper by {{Erik Volz: SIR}} dynamics in random
  networks.
\newblock Journal of Mathematical Biology \textbf{62}(3), 349--358 (2011)

\bibitem{Molloy:1995}
Molloy, M., Reed, B.: A critical point for random graphs with a given degree
  sequence.
\newblock Random Structures and Algorithms \textbf{6}(2-3), 161--179 (1995)

\bibitem{Newman:2002}
Newman, M.: Spread of epidemics on networks.
\newblock Physical Review E \textbf{66}, 016128 (2002)

\bibitem{Newman:2001}
Newman, M., Strogratz, S., Watts, D.: Random graphs with arbitrary degree
  distributions and their applications.
\newblock Physical Review E \textbf{64}, 026118 (2001)

\bibitem{Reniers2012}
Reniers, G., Armbruster, B.: {HIV} status awareness, partnership dissolution
  and {HIV} transmission in generalized epidemics.
\newblock PLOS one \textbf{7}(12), e50669 (2012)

\bibitem{STomba:1985}
Scalia-Tomba, G.P.: Asymptotic final size distribution for some chain-binomial
  models.
\newblock Advances in Applied Probability \textbf{17}(3), 477--495 (1985)

\bibitem{Shaw2008}
Shaw, L., Schwartz, I.B.: Fluctuating epidemics on adaptive networks.
\newblock Physical Review E \textbf{77}, 066101 (2008)

\bibitem{Sherborne:2018}
Sherborne, N., Miller, J.C., Blyuss, K.B., Kiss, I.Z.: Mean-field models for
  non-{M}arkovian epidemics on networks.
\newblock Journal of Mathematical Biology \textbf{76}(3), 755--778 (2018)

\bibitem{Tunc2014}
Tunc, I., Shaw, L.B.: Effects of community structure on epidemic spread in an
  adaptive network.
\newblock Physical Review E \textbf{90}, 022801 (2014)

\bibitem{Viljoen2014}
Viljoen, T., Spoelstra, J., Hemerik, L., Molenaar, J.: Modelling the impact of
  {HIV} on the populations of {S}outh {A}frica and {B}otswana.
\newblock Acta Biotheoretica \textbf{62}(1), 91--108 (2014)

\bibitem{Volz:2008}
Volz, E.M.: {SIR} dynamics in random networks with heterogeneous connectivity.
\newblock Journal of Mathematical Biology \textbf{56}(3), 293--310 (2008)

\bibitem{Wang:1975}
Wang, F.J.S.: Limit theorems for age and density dependent stochastic
  population processes.
\newblock Journal of Mathematical Biology \textbf{2}(4), 373--400 (1975)

\bibitem{Wang:1977}
Wang, F.J.S.: A central limit theorem for age- and density-dependent population
  processes.
\newblock Stochastic Processes and their Applications \textbf{5}(2), 173--193
  (1977)

\bibitem{Zanette2008}
Zanette, D.H., Risau-Gusm\'an, S.: Infection spreading in a population with
  evolving contacts.
\newblock Journal of Biological Physics \textbf{34}(1-2), 135--148 (2008)

\bibitem{Zhang2014}
Zhang, H.F., Xie, J.R., Tang, M., Lai, Y.C.: Suppression of epidemic spreading
  in complex networks by local information based behavioral responses.
\newblock Chaos: An Interdisciplinary Journal of Nonlinear Science \textbf{24},
  043106 (2014)

\end{thebibliography}

\end{document}